\theoremstyle{plain}
\newtheorem{theorem}{Theorem}[section]
\newtheorem{proposition}[theorem]{Proposition}
\newtheorem{corollary}[theorem]{Corollary}
\newtheorem{lemma}[theorem]{Lemma}
\newtheorem{conjecture}[theorem]{Conjecture}
\newtheorem{claim}[theorem]{Claim}
\theoremstyle{definition}
\newtheorem{definition}[theorem]{Definition}
\newtheorem{construction}[theorem]{Construction}
\numberwithin{equation}{section}
\newcommand{\es}{\emptyset}
\newcommand{\eps}{\varepsilon}
\newcommand{\sm}{\setminus}
\renewcommand{\subset}{\subseteq}
\newcommand{\NATS}{\mathbb{N}}
\newcommand{\REALS}{\mathbb{R}}
\def\le{\leqslant}
\def\leq{\leqslant}
\def\ge{\geqslant}
\def\geq{\geqslant}
\newcommand{\cA}{\mathcal{A}}
\newcommand{\cB}{\mathcal{B}}
\newcommand{\cC}{\mathcal{C}}
\newcommand{\cF}{\mathcal{F}}
\newcommand{\cG}{\mathcal{G}}
\newcommand{\cJ}{\mathcal{J}}
\newcommand{\cH}{\mathcal{H}}
\newcommand{\cP}{\mathcal{P}}
\newcommand{\cQ}{\mathcal{Q}}
\newcommand{\cS}{\mathcal{S}}
\newcommand{\cT}{\mathcal{T}}
\newcommand{\cX}{\mathcal{X}}
\newcommand{\cY}{\mathcal{Y}}
\newcommand{\fS}{\mathfrak{S}}
\newcommand{\sF}{\mathscr{F}}
\newcommand{\expectation}{\mathbf{E}}
\newcommand{\vnorm}[1]{||#1||_1}
\newcommand{\ori}[1]{\smash{\overrightarrow{#1}}}
\newcommand{\bvec}[1]{\mathbf{#1}}
\newcommand{\hc}{\mathrm{hc}}
\newcommand{\ext}{\mathrm{ext}}
\newcommand{\reldeg}{\overline{\deg}} 
\newcommand{\Hy}{{\mathcal H}}
\newcommand{\Part}{{\mathcal P}}
\newcommand{\Qb}{{\bf Q}}
\newcommand{\reld}{d^*}
\newenvironment{proofclaim}[1][Proof of the claim]{\begin{proof}[#1]}{\end{proof}}
\tikzset{snake it/.style={decorate, decoration=snake}}
\definecolor{DarkDesaturatedBlue}{HTML}{3A3556}
\definecolor{VividOrange}{HTML}{F15918}
\definecolor{PureOrange}{HTML}{FFBA00}
\definecolor{LightGrayishPink}{HTML}{EEC5D5}
\definecolor{VerySoftBlue}{HTML}{B5AFDB}
\newcommand{\triple}[7]{
	\ifx\relax#4\relax
	\def\qoffs{0pt}
	\else
	\def\qoffs{#4}
	\fi
	\def\qhedge{
		($#1+#3!\qoffs!-90:#2-#3$) --
		($#2+#1!\qoffs!-90:#3-#1$) --
		($#3+#2!\qoffs!-90:#1-#2$) -- cycle}
	
	\coordinate (12) at ($#1!\qoffs!90:#2$);
	\coordinate (13) at ($#1!\qoffs!-90:#3$);
	\coordinate (23) at ($#2!\qoffs!90:#3$);
	\coordinate (21) at ($#2!\qoffs!-90:#1$);
	\coordinate (31) at ($#3!\qoffs!90:#1$);
	\coordinate (32) at ($#3!\qoffs!-90:#2$);
	
	\def\nqhedge{
		(13) let \p1=($(13)-#1$), \p2=($(12)-#1$) in
		arc[start angle={atan2(\y1,\x1)}, delta angle={atan2(\y2,\x2)-atan2(\y1,\x1)-360*(atan2(\y2,\x2)-atan2(\y1,\x1)>0)}, x radius=\qoffs, y radius=\qoffs] --
		(21) let \p1=($(21)-#2$), \p2=($(23)-#2$) in
		arc[start angle={atan2(\y1,\x1)}, delta angle={atan2(\y2,\x2)-atan2(\y1,\x1)-360*(atan2(\y2,\x2)-atan2(\y1,\x1)>0)}, x radius=\qoffs, y radius=\qoffs] --
		(32) let \p1=($(32)-#3$), \p2=($(31)-#3$) in
		arc[start angle={atan2(\y1,\x1)}, delta angle={atan2(\y2,\x2)-atan2(\y1,\x1)-360*(atan2(\y2,\x2)-atan2(\y1,\x1)>0)}, x radius=\qoffs, y radius=\qoffs] --
		cycle}
	
	\ifx\relax#5\relax
	\def\qlwidth{1pt}
	\else
	\def\qlwidth{#5}
	\fi
	
	\ifx\relax#7\relax
	\fill \nqhedge;
	\else
	\fill[#7]\nqhedge;
	\fi
	
	\ifx\relax#6\relax
	\draw[line width=\qlwidth,rounded corners=\qoffs]\nqhedge;
	\else
	\draw[line width=\qlwidth,#6]\nqhedge;
	\fi
}
\title[]{Minimum degree conditions\\for tight Hamilton cycles}
\date{\today}
\author[R.~Lang]{Richard Lang}
\address[R.~Lang]{University of Heidelberg,
	Institute for Computer Science,
	Im Neuenheimer Feld 205,
	69120 Heidelberg, Germany}
\email{lang@informatik.uni-heidelberg.de}
\author[N.~Sanhueza-Matamala]{Nicolás Sanhueza-Matamala}
\address[N.~Sanhueza-Matamala]{The Czech Academy of Sciences, Institute of Computer Science, Pod Vod\'{a}renskou v\v{e}\v{z}\'{\i} 2, 182 07 Prague, Czechia}
\email{nicolas@sanhueza.net}
\thanks{The research leading to these results was supported by the Czech Science Foundation, grant number GA19-08740S with institutional support RVO: 67985807 (N.~Sanhueza-Matamala) and also partially by the Deutsche Forschungsgemeinschaft (DFG, German Research Foundation) -- 42821240 (R. Lang)}
\begin{document}

\begin{abstract}
	We develop a new framework to study minimum $d$-degree conditions in $k$-uniform hypergraphs, which guarantee the existence of a tight Hamilton cycle.
	Our main theoretical result deals with the typical absorption, path cover and connecting arguments for all $k$ and $d$ at once, and thus sheds light on the underlying structural problems.
	Building on this, we show that one can study minimum $d$-degree conditions of $k$-uniform tight Hamilton cycles by focusing on the inner structure of the neighbourhoods.
	This reduces the matter to an Erdős--Gallai-type question for $(k-d)$-uniform hypergraphs, which is of independent interest.
	
	Once this framework is established, we can easily derive two new bounds.
	Firstly, we extend a classic result of R\"odl, Ruciński and Szemerédi for $d=k-1$ 
	by determining asymptotically best possible degree conditions for $d = k-2$ and all $k \ge 3$.
	This was proved independently by Polcyn, Reiher, Rödl and Schülke.
	Secondly, we provide a general upper bound of $1-1/(2(k-d))$ for the tight Hamilton cycle $d$-degree threshold in $k$-uniform hypergraphs, thus narrowing the gap to the lower bound of $1-1/\sqrt{k-d}$ due to Han and Zhao.
\end{abstract}

\maketitle
\thispagestyle{empty}
\vspace{-0.4cm}

\section{Introduction}
A widely researched question in modern graph theory is whether a given graph contains certain vertex-spanning substructures such as a perfect matching or a Hamilton cycle.
Since the corresponding decision problems are usually computationally intractable, we do not expect to find a `simple' characterisation of the (hyper)graphs that contain a particular spanning structure.
The extremal approach to these questions has therefore focused on easily verifiable sufficient conditions.
A classic example of such a result is Dirac's theorem~\cite{Dir52}, which states that a graph, whose minimum degree is at least as large as half the size of the vertex set, contains a Hamilton cycle.
Since its inception, Dirac's theorem has been generalised in numerous ways~\cite{Gou14,KO14}.
Here we study its analogues for hypergraphs.

To formulate Dirac-type problems in hypergraphs, let us introduce the corresponding degree conditions.
In this paper, we consider \emph{$k$-uniform} hypergraphs (or shorter \emph{$k$-graphs}), in which every edge consists of exactly $k$ vertices.
The \emph{degree}, written $\deg(S)$, of a subset of vertices $S$ in a $k$-graph $H$ is the number of edges which contain $S$.
It is often convenient to state results and problems in terms of the \emph{relative degree} $\reldeg(S) = \deg(S)/\binom{n-d}{k-d}$, where $n$ is the number of vertices of~$H$ and $d$ is the size of $S$.
The \emph{minimum relative $d$-degree} of a $k$-graph $H$, written $\overline{\delta}_d(H)$, is the minimum of $\reldeg(S)$ over all sets $S$ of $d$ vertices.
The case of $d=k-1$ is also known as the \emph{minimum codegree}.
Observe that minimum degrees exhibit a monotone behaviour: 
\begin{equation}\label{equ:degree-types-monotonicity}
\overline{\delta}_{k-1}(H) \leq \dotsb \leq \overline{\delta}_{1}(H).
\end{equation}
It is therefore not surprising that minimum degree conditions for the existence of Hamilton cycles were first studied for the structurally richer settings when $d$ is close to $k$.
Before we come to this, let us specify the notion of cycles that we are interested in.

Hamilton cycle in hypergraphs have been considered in several ways. 
The oldest variant are \emph{Berge cycles}~\cite{Ber72}, whose
minimum degree conditions were studied by Bermond,  Germa, Heydemann and Sotteau~\cite{BGHS78}.
In the last two decades, however, research has increasingly focused on a stricter notion of Hamilton cycles~\cite{ABCM17,GPW12,HLS17,PRR+20,PRRS21,RRR19,RRS06,RRS09a,RRS11,Sch19} introduced by~Kierstead and Katona~\cite{KK99}.
A \emph{tight cycle} in $k$-graph is a cyclically ordered set of at least $k+1$ vertices such that every interval of $k$ subsequent vertices forms an edge.

Asymptotic Dirac-type results for hypergraphs can be stated compactly using the notion of thresholds.
\begin{restatable}[Tight Hamilton cycle threshold]{definition}{defcyclethreshold}\label{def:cycle-threshold}
	For  $1 \leq d \leq k-1$, the \emph{minimum $d$-degree threshold for tight Hamilton cycles}, denoted by $\hc_d(k)$, is the smallest number $\delta >0$ with the following property:
	
	For every $\mu >0$ there is an $n_0 \in \NATS$ such that every $k$-graph $H$ on $n \geq n_0$ vertices with minimum relative degree $\overline{\delta}_d(H) \geq \delta + \mu$ contains a tight Hamilton  cycle.
\end{restatable}
For instance, we have $\hc_1(2)=1/2$ by Dirac's theorem.
Codegree thresholds for tight Hamilton cycles of larger uniformity were first investigated by Katona and Kierstead~\cite{KK99}, who showed that  $1/2\leq \hc_{k-1}(k) \leq 1-1/(2k)$ and conjectured the threshold to be $\hc_{k-1}(k)=1/2$.
This was confirmed in a seminal contribution by Rödl, Ruciński and Szemerédi~\cite{RRS06,RRS09a}.
For $k=3$ and large enough hypergraphs, the same authors also obtained an exact result~\cite{RRS11}.
For more background on these problems and their history, we refer the reader to the recent surveys of Kühn and Osthus~\cite{KO14}, Rödl and Ruciński~\cite{RR10}, Simonovits and Szemerédi~\cite{SS19} and Zhao~\cite{Zha16}.

Here, we investigate the thresholds $\hc_d(k)$ when $1 \leq d \leq k-2$. 
As noted by Kühn and Osthus~\cite{KO14} and Zhao~\cite{Zha16}, it appears that the problem gets significantly harder in this setting.
After preliminary results of Glebov, Person and Weps~\cite{GPW12}, Rödl and Ruciński~\cite{RR14}, Rödl, Ruciński, Schacht and Szemerédi~\cite{RRSS17} and Cooley and Mycroft~\cite{CM17}, it was shown by Reiher, Rödl, Ruciński, Schacht and Szemerédi~\cite{RRR19} that $\hc_{1}(3) = 5/9$, which resolves the case of $d=k-2$ when $k=3$.
With regards to general bounds, Rödl and Ruciński~\cite{RR10} conjectured that the threshold $\hc_d(k)$ coincides with the analogous threshold of perfect matchings.
However, this conjecture has been disproved.
Han and Zhao~\cite{HZ16} showed (amongst other things) that $\hc_d(k) \geq 1-1/\sqrt{k-d}$ (see Section~\ref{sec:discussion} for more details).
As a consequence, we have $\hc_d(k) \to 1$ when the difference $k-d$ goes to infinity.
This behaviour differs for perfect matchings, where the corresponding threshold is bounded away from $1$ by a universal constant, as discussed by Ferber and Jain~\cite{FJ19}.

The best general upper bound for $h_d(k)$ is due to Glebov, Person and Weps~\cite{GPW12}, who proved that $\hc_d(k) \leq \hc_1(k) \leq 1-1/(Ck^3)^{k-1}$ for a $C > 1$ independent of $d$ and $k$.
Note that this is a function of $k$.
Given the known lower bounds, it is natural to ask whether $\hc_d(k)$ can be bounded by a function of $k-d$ instead. 
Here we answer this question in the affirmative.

\begin{theorem}\label{thm:main-simple-general}
	For all $k \ge 2$ and $1 \leq d \leq k-1$, we have $\hc_{d}(k) {\leq 2^{-1/(k-d)}}.$
\end{theorem}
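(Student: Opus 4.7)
The plan is to invoke the general framework developed earlier in the paper, which reduces finding a tight Hamilton cycle in a $k$-graph $H$ satisfying the hypothesis to verifying an Erdős--Gallai-type condition on the $(k-d)$-uniform link hypergraphs of $H$. Setting $r := k-d$ and $\alpha := 2^{-1/r}+\mu$, the hypothesis $\overline{\delta}_d(H) \geq \alpha$ states precisely that for every $d$-set $S \subset V(H)$ the link $N_H(S)$ is an $r$-uniform hypergraph on $n-d$ vertices of edge density at least $\alpha$. Consequently the task reduces to verifying the Erdős--Gallai property for $r$-uniform hypergraphs of density at least $\alpha$.

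The heart of the argument is the identity $\alpha^r \geq 1/2$, which characterises the threshold $2^{-1/r}$ and drives an intersection/pigeonhole argument for connecting tight paths. Given two ordered $(r-1)$-tuples $A, B$ in the link that we wish to join by a short tight path, I would consider the set $N^r(A)$ of ordered $(r-1)$-tuples reachable from $A$ by $r$ successive single-vertex tight-walk extensions. Each such step expands the reachable set by an average factor of $\alpha$, so after $r$ steps $N^r(A)$ covers an $\alpha^r \geq 1/2$ fraction of the $(r-1)$-tuple space. Running the symmetric argument backwards from $B$ yields $N^r(B)$ of comparable measure, and pigeonhole then forces $N^r(A) \cap N^r(B) \neq \emptyset$, producing a tight path of length at most $2r$ joining $A$ to $B$. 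This is exactly the robust connectivity that the framework's absorbing and covering lemmas need in order to assemble the final tight Hamilton cycle.

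The main obstacle will be promoting the average-case density bound $\alpha^r$ into a typical-case statement, since the framework's connecting step must hold uniformly over almost all pairs $(A,B)$ encountered during the absorption and covering phases, not just on average. This calls for controlling the variance of extension degrees across $(r-1)$-tuples --- for instance via a convexity or second-moment argument showing that all but an $o(1)$-fraction of $(r-1)$-tuples have extension degree close to the mean $\alpha n$ --- with the slack $+\mu$ in the hypothesis absorbing the resulting loss. Once this uniformity is in place, the remaining work is the quantitative bookkeeping to interface with the framework's absorbing path, almost-spanning tight path, and closing-up lemmas, which is routine because the framework was developed in full generality in the earlier sections precisely so that density-dependent verifications of this shape could be plugged in to yield concrete thresholds such as $2^{-1/(k-d)}$.
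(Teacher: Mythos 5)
The overall architecture you identify is correct: the theorem does follow by chaining the Framework and Vicinity Theorems and then verifying a density condition on the $(k-d)$-uniform links, and the identity $\delta^{\ell} = 1/2$ for $\delta = 2^{-1/\ell}$, $\ell = k-d$, is indeed the crux. But the specific argument you propose for tight connectivity in the link graphs is not what the paper does, and as you have sketched it, it cannot be repaired.

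The gap is exactly where you flag it, and the proposed fix does not work. You want a second-moment or concentration argument showing that almost every $(\ell-1)$-tuple has extension degree close to the mean $\alpha n$. But edge density $\alpha$ gives no such control: one can have an $\ell$-graph where a constant fraction of $(\ell-1)$-tuples have extension degree close to $n$ and another constant fraction have extension degree close to $0$, with the same total density. Consequently $|N^\ell(A)|$ can be far below $\alpha^\ell$ of the tuple space for many starting tuples $A$, and the pigeonhole argument for $N^\ell(A) \cap N^\ell(B) \neq \es$ does not go through uniformly. The slack $+\mu$ cannot absorb this because the fluctuations are of constant order, not $o(1)$.

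The paper sidesteps the uniformity problem entirely by working extremally rather than probabilistically. Proposition~\ref{prop:large-component} (due to Allen, Böttcher, Cooley and Mycroft) shows that any $\ell$-graph $L$ has \emph{some} tight component $C$ with $e_\ell(C)/e_{\ell-1}(C) \geq e_\ell(L)/e_{\ell-1}(L)$, i.e.\ a component at least as ``space-efficient'' as $L$ itself. Plugging this into Kruskal--Katona (Proposition~\ref{prop:dense-subgraph}) gives that $C$ has edge density at least $\delta^\ell = 1/2$. One does not need the whole link to be well-connected, nor any control on individual degrees --- one dense component suffices, and its density $>1/2$ forces pairwise intersection between components of different links (outer connectivity, property \ref{def:vicinity-outer-connectivity}). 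This is a cleaner and genuinely different argument from an expansion/BFS approach.

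You also omit the verification of the remaining vicinity properties, which are not routine: the switcher needed for divisibility (\ref{def:vicinity-divisibility}) is constructed via a Cauchy--Schwarz argument (Proposition~\ref{prop:switcher}), the arc via a Kruskal--Katona estimate (Proposition~\ref{prop:arc}), and the fractional matching of density $1/k + \gamma$ (\ref{def:vicinity-space}) via Frankl's shadow bound (Lemma~\ref{lem:large-matching}). These each use the ratio bound $\nu_S/\nu_S' \geq \delta + \mu$ that the dense-component selection provides for free; they would not follow from the expansion argument even if it were made rigorous.
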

{To compare this with the above results, note that $2^{-1/(k-d)} \leq 1-1/(2(k-d))$.}

In the particular case of $k \ge 3$ and $d = k-2$, the construction of Han and Zhao shows that $\hc_{k-2}(k) \geq 5/9$.
Very recently, Polcyn, Reiher, Rödl, Ruciński, Schacht and Schülke~\cite{PRR+20} proved that $\hc_{2}(4)=5/9$.
They also conjectured that $\hc_{k-2}(k)=5/9$ for all $k \ge 5$~\cite{RRR+19}.
Here we resolve this problem.

\begin{theorem}\label{thm:main-simple-k-2}
	For all $k \ge 3$, we have $\hc_{k-2}(k) = 5/9$.
\end{theorem}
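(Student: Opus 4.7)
The lower bound $\hc_{k-2}(k) \geq 5/9$ comes from the Han--Zhao construction mentioned in the introduction, so the content is the matching upper bound $\hc_{k-2}(k) \leq 5/9$. My plan is to plug the hypothesis $\overline{\delta}_{k-2}(H) \geq 5/9 + \mu$ into the general framework announced in the abstract: the framework handles the absorption, path cover and connecting arguments uniformly in $k$ and $d$, and reduces the hypergraph problem to an Erdős--Gallai-type question in the $(k-d)$-uniform link hypergraphs. In the case $d = k-2$ we have $k - d = 2$, so the links are \emph{graphs}, and the hypergraph question collapses to a purely graph-theoretic one on these links.

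Concretely, for a $(k-2)$-set $S$ let $L_S$ denote the graph on $V(H) \setminus S$ with edges $\{u,v\}$ such that $S \cup \{u,v\} \in E(H)$. The degree hypothesis translates into edge density at least $5/9 + \mu/2$ in each $L_S$. The first step is to invoke the framework and extract the precise graph-theoretic property that the collection of links $L_S$ must satisfy in order to build a tight Hamilton cycle (schematically: enough long paths and cycles of a prescribed type to feed the absorption, path cover and connecting modules). The bulk of the proof is then to prove this graph statement with threshold $5/9$, a number which matches the graph shadow of the Han--Zhao extremal configuration and is therefore tight.

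To prove the graph statement I would use a stability approach. If a link $L_S$ is far from the extremal configuration at density $5/9$, then classical long-path tools (Erdős--Gallai, P\'osa rotations, Chv\'atal--Erd\H{o}s-type arguments) should produce the required structure directly. If $L_S$ is close to the extremal configuration, then the fact that $\overline{\delta}_{k-2}(H) \geq 5/9 + \mu$ holds for \emph{every} $(k-2)$-set can be used to upgrade this local near-extremality to a global near-partition of $V(H)$ of Han--Zhao shape, where a dedicated ad-hoc construction produces a tight Hamilton cycle.

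The hardest step I foresee is the near-extremal case. Different links $L_S$ could resemble the extremal graph in different ways, and one has to consistently glue the local near-partitions into a single global structure on $V(H)$, independent of the choice of $S$. This compatibility analysis, together with the hand-crafted tight Hamilton cycle construction in the resulting near-extremal hypergraph, is where I expect the majority of the technical work to concentrate; the generic, non-extremal side is where the strength of the new framework should pay off by making the absorption/path-cover/connecting bookkeeping essentially automatic.
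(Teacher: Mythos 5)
Your top-level reduction is the right one: the framework pushes the question into the $(k-d)$-uniform link graphs, and for $d=k-2$ these are ordinary graphs, so the problem becomes a graph-density question at threshold $5/9$. Erdős--Gallai does indeed appear. But from there, your strategy diverges from the paper's in a way that both misses what the framework is actually asking for and mispredicts where the work lies.

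First, the target graph statement is not a schematic ``enough long paths and cycles''; it is the precise Hamilton vicinity conditions (the paper's Definition~\ref{def:vicinity-Hamilton}, specialised to $\ell=k-d=2$). One must pick in each link $L(S)$ a subgraph $C_S$ such that (a) each $C_S$ is connected, (b) any two $C_S,C_{S'}$ share an edge, (c) each $C_S$ contains a \emph{switcher} (here essentially a triangle) and the family admits an \emph{arc}, (d) each $C_S$ has a fractional matching of density $1/k+\gamma$, and (e) each $C_S$ has edge density at least $4/9+\gamma$. The paper's Lemma~\ref{lem:vicinity-threshold-d-k=2} simply takes $C_S$ to be the component of $L(S)$ with the maximum number of edges and verifies (a)--(e) via a Cooley--Mycroft-style counting argument (Lemma~\ref{lem:cooley-mycroft}), using Erdős--Gallai for the matching density and elementary quadratic estimates for the span, density, edge-intersection, and switcher parts.

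Second, and more seriously: your stability dichotomy does not belong in this problem. The threshold $\hc_d(k)$ is defined with a $+\mu$ slack, so the hypothesis is $\overline{\delta}_{k-2}(H)\geq 5/9+\mu$, and the direct counting in Lemma~\ref{lem:cooley-mycroft} goes through uniformly for every link — there is no near-extremal regime to separate out. In particular the step you flag as hardest (consistently gluing local near-partitions into a Han--Zhao-shaped global structure and hand-building a tight Hamilton cycle there) never arises, and is precisely the kind of ad-hoc construction the framework is designed to eliminate. If you insist on a stability route you would be re-doing, by hand and outside the framework, the absorption/cover/connecting machinery that Theorems~\ref{thm:framework} and~\ref{thm:hamilton-vicinities} already encapsulate; and the global gluing you sketch is not obviously doable, since different $(k-2)$-sets $S$ could be near-extremal with respect to genuinely different vertex partitions, and it is not clear these can be made compatible. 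So the proposal is not so much wrong as over-engineered in a way that abandons the main benefit of the framework and replaces a short counting lemma with a structure-classification argument of uncertain difficulty.
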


We remark that Theorem~\ref{thm:main-simple-k-2} was shown independently by Polcyn, Reiher, Ruciński and Schülke~\cite{PRRS21}.

The above presented bounds on the tight Hamilton cycle thresholds follow from a new method that is suitable to approach these issues in a general manner.
The argument has three parts.
First, it is shown that thresholds of tight Hamilton cycles can be studied in a structurally cleaner setting related to what we call Hamilton frameworks.
Next, we reduce the task of finding $k$-uniform Hamilton frameworks under minimum $d$-degree conditions to an Erdős--Gallai-type questions for $(k-d)$-graphs, which is of independent interest.
Finally, we derive Theorem~\ref{thm:main-simple-general} and~\ref{thm:main-simple-k-2} by means of two short solutions to the respective Erdős--Gallai-type problems.

\subsection{Hamilton frameworks}\label{sec:hamilton-frameworks}
In the following, we introduce the notion of Hamilton frameworks, which is a structural relaxation of Hamilton cycles.	
We then show that under minimum degree conditions the problem of finding tight Hamilton cycles can be reduced to finding Hamilton frameworks (Theorem~\ref{thm:framework}).

To motivate the concept of Hamilton frameworks, let us identify some characteristic properties of tight Hamilton cycles.
We start by observing that a tight cycle is connected, in the sense that one can navigate `tightly' between any two edges without leaving the cycle.
This property is formalised as follows.

\begin{definition}[Tight walk]\label{def:tight-walk}
	A \emph{(closed) tight walk} in a $k$-graph $G$ is a (cyclically) ordered set of vertices such that every interval of $k$ consecutive vertices forms an edge.
	The \emph{length} of a tight walk is its number of vertices.
\end{definition}

Note that vertices and edges in a tight walk are allowed to repeat.
In other words, a closed tight walk is a homomorphism of a tight cycle.
Tight walks allow us to define a notion of connectivity that is appropriate for tight cycles.

\begin{definition}[Tight connectivity]\label{def:tightconnectivity}
	A subgraph $H$ of a $k$-graph $G$ is \emph{tightly connected}, if every two edges of $H$ lie on a common tight walk.
	A \emph{tight component} of $G$ is an edge maximal tightly connected subgraph.
\end{definition}
Let us now consider a $k$-graph $G$ with a subgraph $H$.
We will discuss five natural conditions that $H$ needs to satisfy in order to find a tight Hamilton cycle.

Firstly, we require $H$ to be almost {vertex-spanning} in $G$.
Secondly, as explained above, a tight Hamilton cycle in $G$ is tightly connected and therefore we should ask the same for $H$.
This is called the \emph{connectivity} property.
To motivate the third property, let us make the following observation about divisibility.
Let $v(G)$ denote the size of the vertex set of $G$.
Clearly, $v(C) \equiv v(G) \bmod k$ for all tight Hamilton cycles $C$ of $G$.
Hence, we will ask for $H$ to contain a closed tight walk $W$ with $v(W) \equiv v(G) \bmod k$.
In fact, to disassociate this observation from the particular value of $v(G)$, we simply require $H$ to contain a tight walk of every length congruent modulo $k$, or equivalently a closed tight walk of length $1 \bmod k$.
We call this the \emph{divisibility} property.
Fourthly, note that tight Hamilton cycles are quite spacious, as they contain perfect fractional matchings.
Let us explain this in more detail.
We can think of a \emph{matching} as an edge weighting $\bvec w \colon E(G) \to \{0,1\}$ such that  $\sum_{e \ni v} \bvec w(e)\leq1$ for every vertex $v \in V(H)$.
As usual, the matching is \emph{perfect} if these bounds are met with equality.
The linear relaxation of this notion, where $\bvec w$ is allowed to take any value between $[0,1]$, is called a \emph{fractional matching}.
Every $k$-uniform tight Hamilton cycle has a perfect fractional matching, this can be seen by giving each edge weight $1/k$.
We will require the following strengthening of this property to which we refer as the \emph{space} property.

\begin{restatable}[Robustly matchable]{definition}{defrobustperfectmatching}\label{def:robust-perfect-matching}
	Let $H$ be a $k$-graph and $\gamma >0$.
	We say that $H$ is \emph{$\gamma$-robustly matchable} if the following holds.
	For every vertex weighting $\bvec b\colon V(H) \rightarrow [1-\gamma, 1]$, there is an edge weighting $\bvec w\colon E(H) \rightarrow [0, 1]$ with  $\sum_{e \ni v} \bvec w(e) = \bvec b(v)$ for every vertex $v \in V(H)$.
\end{restatable}

Lastly, it will be helpful to have a property that allows us to add a small number of vertices on short paths going into $H$.
Suppose for simplicity that $G$ is $2$-uniform and consider a vertex $v \in V(G)$.
Suppose that there are vertices $x,y \in V(H)$ such that the neighbourhoods of $x$ and $y$ intersect each with distinct vertices in the neighbourhood of $v$.
Say, $a$ is in the common neighbourhood of $x$ and $v$, and $b$ is in the common neighbourhood of $y$ and $v$.
Then $(x,a,v,b,y)$ is a path that attaches $v$ to $H$.
As we will see, this property is quite useful to include the last couple of vertices on an already large cycle and is easily generalisable to higher uniformities.
Now suppose that the minimum relative $1$-degree of $G$ is at least $\delta +o(1)$.
In this case, it suffices to require that $H$ has minimum relative $1$-degree at least $1-\delta$ to guarantee that the aforementioned short paths exist.
We will refer to this as the \emph{outreach} property.
Let us wrap up these observations in the following definition.

\begin{restatable}[Hamilton framework]{definition}{defframework}\label{def:framework}
	Let $k \geq 2$ and $\alpha,\gamma,\delta>0$.
	Suppose that $R$ is a $k$-graph.
	We say that a subgraph $H\subset R$ is an $(\alpha,\gamma,\delta)$-\emph{Hamilton framework}, if $H$ has the following properties:
	\begin{enumerate}[(F1)]
		\item \label{def:framework-spanning} $v(H) \geq (1-\alpha)v(R)$,  \hfill (spanning)
		\item \label{def:framework-connectivity} $H$ is tightly connected,  \hfill (connectivity)
		\item \label{def:framework-divisibility} $H$ contains a tight closed walk of length $1 \bmod k$,  \hfill (divisibility)
		\item \label{def:framework-space} $H$ is $\gamma$-robustly matchable and  \hfill (space)
		\item \label{def:framework-degree} $H$ has minimum relative vertex degree at least $1-\delta +\gamma$. \hfill ({outreach})
	\end{enumerate}
\end{restatable}

\newcommand{\hf}{{\mathrm{hf}}}
We argue that the properties of Hamilton frameworks are not only (essentially) necessary, but also sufficient to find tight Hamilton cycles under minimum degree conditions.
In support of this, it is shown that in order to find a tight Hamilton cycle in a hypergraph $G$ of large minimum degree, it suffices to find a Hamilton framework $H$ in an appropriately defined auxiliary graph $R$.
In our setting, $R$ will inherit the large minimum degree of $G$ with a small error term.
However, some of the edges of $R$ will be, informally speaking, perturbed and therefore need to be avoided.

To express the constant hierarchies in our results and definitions, we use the following standard notation.
We write $x \ll y$ to mean that for any $y \in (0, 1]$ there exists an $x_0 \in (0,1)$ such that for all $x \leq x_0$ the subsequent statements hold.
Hierarchies with more constants are defined in a similar way and are to be read from the right to the left.
Let us now define the Hamilton framework threshold.

\begin{restatable}[Hamilton framework threshold]{definition}{defframeworkthreshold}\label{def:framework-threshold}
	For  $1 \leq d \leq k-1$, the \emph{minimum $d$-degree threshold for $k$-uniform Hamilton frameworks}, denoted by $\hf_d(k)$, is the smallest number $\delta >0$ such that the following holds:
	
	Suppose $\eps,\alpha,\gamma,\mu >0$ and $t \in \NATS$ with $ 1/t  \ll \eps \ll \alpha \ll \gamma \ll \mu$.
	If $R$ is a $k$-graph on $t$ vertices with relative minimum $d$-degree at least $\delta+\mu$ and a set $I \subset E(R)$ of at most $\eps \binom{t}{k}$ \emph{perturbed} edges,
	then $R$ contains an $(\alpha,\delta,\gamma)$-Hamilton framework $H$ that avoids the edges of $I$.
\end{restatable}

Our first main structural result reduces the problem of bounding the tight Hamilton cycle threshold to bounding the Hamilton framework threshold.

\begin{theorem}[Framework Theorem]\label{thm:framework}
	For $1\leq d \leq k-1$, we have $\hc_d (k) \leq \hf_d(k).$
\end{theorem}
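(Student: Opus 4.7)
The plan is to combine the hypergraph regularity method with the absorption method, so that the Framework Theorem amounts to saying: once we have a Hamilton framework in the reduced graph, the standard absorption machinery can be executed in $G$. Concretely, fix $\mu>0$ and a large $k$-graph $G$ on $n$ vertices with $\overline{\delta}_d(G)\ge \hf_d(k)+\mu$. First I would apply the strong hypergraph regularity lemma to $G$, with a suitably small regularity parameter $\eps$, to obtain a vertex equipartition into clusters $V_1,\dots,V_t$ and a reduced $k$-graph $R$ on $[t]$ whose edges correspond to dense regular $k$-tuples of clusters. A standard slicing/counting argument gives $\overline{\delta}_d(R)\ge \hf_d(k)+\mu/2$. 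The set $I\subset E(R)$ of ``perturbed'' edges collects $k$-tuples that one wants to forbid: those lying on atypical layers of the regular partition, or those meeting a pre-selected random reservoir used later for connecting. Provided $\eps$ is small enough, $|I|\le \eps\binom{t}{k}$, and by the definition of $\hf_d(k)$, for $1/t\ll \eps\ll \alpha\ll \gamma\ll \mu$ one extracts an $(\alpha,\gamma,\hf_d(k)+\mu/2)$-Hamilton framework $H\subseteq R$ that avoids $I$.

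Next, I would lift the five properties of $H$ into the four ingredients of the absorption method inside $G$. (a) \emph{Absorbing path.} Here the space property (F4) is the heart of the matter: the $\gamma$-robust matchability of $H$ provides fractional matchings for every slightly perturbed demand vector on $V(H)$; by a random sampling and rounding argument (performed inside the regular $k$-tuples of clusters encoded by $H$) this converts to a short tight path $P_{\mathrm{abs}}$ in $G$ with the absorbing property that any small, appropriately balanced set of leftover vertices can be swallowed into $P_{\mathrm{abs}}$ to form a longer tight path. (b) \emph{Almost-spanning cover.} Using the spanning property (F1) together with standard in-cluster tight-path constructions in regular tuples produces a constant number of pairwise-disjoint long tight paths in $G$ covering all but $o(n)$ vertices and respecting the cluster structure induced by $H$. (c) \emph{Connecting.} The connectivity property (F2) says every two edges of $H$ lie on a common tight walk; lifting such walks through the reservoir yields short tight paths in $G$ joining any prescribed pair of endpoints of path-cover segments or of $P_{\mathrm{abs}}$. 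The divisibility property (F3) is used precisely to adjust total length modulo $k$ so that the final cycle length equals $n$. (d) \emph{Outreach.} The vertex-degree bound in F5 matches the complement of the lower bound on $\overline{\delta}_d(G)$, which is exactly what is needed to route the $o(n)$ vertices outside the cluster cover (the ``exceptional'' vertices of the regularity lemma, plus leftovers) onto the main tight path via short attachment paths through $H$'s support. Assembling $P_{\mathrm{abs}}$, the path cover and the connectors into one long tight cycle, and finally absorbing the leftover vertices, gives a tight Hamilton cycle.

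The main obstacle I expect is step~(a): converting the \emph{fractional} robustness in F4 into a \emph{discrete} absorbing tight path in~$G$. One must show that the slack built into $\bvec b\in [1-\gamma,1]^{V(H)}$ translates into enough flexibility of the gadgets produced from the rounded matching to swallow any leftover vertex from any cluster, while keeping $P_{\mathrm{abs}}$ itself a single tight path. The remaining pieces (connectivity, divisibility, outreach) are more routine adaptations of the standard argument, and the regularity reduction is standard; the delicate parameter hierarchy $1/n\ll 1/t\ll \eps\ll\alpha\ll\gamma\ll\mu$ is exactly the one built into Definition~\ref{def:framework-threshold}, which was crafted to make this lifting go through in one step.
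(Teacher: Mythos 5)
Your high-level scaffolding is the same as the paper's: apply a hypergraph regularity lemma, obtain a reduced graph $R$ with inherited degree condition and a small set $I$ of bad $k$-tuples, invoke the definition of $\hf_d(k)$ to extract a Hamilton framework $H\subset R$ avoiding $I$, then build an absorbing path, an almost-spanning cover, and connectors. That much is exactly what the paper does (via its Regular Setup Lemma, Absorption Lemma and Cover Lemma).

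However, you have swapped the roles of properties~\ref{def:framework-space} and~\ref{def:framework-degree}, and this swap creates a genuine gap. You place the $\gamma$-robust matchability~\ref{def:framework-space} at the heart of the \emph{absorbing path} and propose to round the resulting fractional matching into an absorbing structure; but a fractional matching only encodes a balanced coverage of vertices and gives no mechanism for an absorbing \emph{gadget}, which must be able to swallow an arbitrary $k$-set in two interchangeable ways. Rounding a fractional matching (even a robust one) does not produce such gadgets. In the paper's Absorption Lemma the essential hypothesis is instead the \emph{outreach} property~\ref{def:framework-degree}: $\overline{\delta}_1(G)\ge\delta+\mu$ on the host-graph side together with $\overline{\delta}_1(H)\ge 1-\delta$ on the reduced-graph side guarantee (via Lemma~\ref{lemma:neighboursincomplex} and representativity of the slice) that \emph{every} vertex of $G$, including those outside $V(\cP)$, has many $(k-1)$-tuples of clusters of $H$ supporting its $G$-neighbourhood; this is what lets one find the peripheral paths of each absorbing gadget (Definition~\ref{def:absorbing-gadget}, Lemma~\ref{lemma:countinggadgets-local}). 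Conversely, the robust matchability~\ref{def:framework-space} is used in the \emph{Cover Lemma}: once the absorbing path has occupied a $\lambda$-sparse set of vertices in each cluster, one needs a fractional matching whose cluster-marginals are $1$ minus those occupancies (a perturbed demand vector), precisely the flexibility that $\gamma$-robustness supplies and a plain perfect fractional matching would not. Your item (b) therefore also underuses~\ref{def:framework-space}, and your item (d) mischaracterises~\ref{def:framework-degree} as a separate ``routing of exceptional vertices'' step, when in fact exceptional vertices are handled by the absorbing path itself. Until the absorbing-path construction is rebuilt around the outreach property (and the cover argument is rebuilt around robust matchability), the plan as written does not go through.
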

We remark that Theorem~\ref{thm:framework} does itself not lead to new bounds on the threshold for tight Hamilton cycles.	
However, similar to the work of Glock, Kühn, Lo, Montgomery and Osthus~\cite{GLMD19} for graph decompositions, it reduces the matter to structural questions closer to the core of the problem.
In particular, past contributions in this line of research had to iterate a series of complex arguments concerning absorption, cover and connection results in order to harness new structural ideas.	
This process is now encapsulated in Theorem~\ref{thm:framework}.
(An overview of the proof can be found in Section~\ref{sec:framework-threshold-overview}.)
Note that the space and outreach conditions of Ham\-ilton frameworks are somewhat stronger than the actual properties of tight Hamilton cycles.
However, these features appear to be easily satisfiable under minimum degree conditions.
It is therefore conceivable that future work on minimum degree thresholds of tight Hamilton cycles can by carried out by studying Hamilton frameworks alone (see Section~\ref{sec:discussion}).
To conclude the exposition of Hamilton frameworks, let us mention that this notion was inspired by our work with Garbe, Lo and Mycroft~\cite{GLL+21} on monochromatic cycle partitioning.
We now turn to finding frameworks under minimum degree conditions.

\subsection{Hamilton vicinities}\label{sec:hamilton-vicinities}
In light of the Framework Theorem (Theorem~\ref{thm:framework}), it suffices to find a Hamilton framework in a hypergraph $R$ of large minimum degree in order to bound the threshold of tight Hamilton cycles.
In the following, we study the existence of such frameworks through the structure of individual neighbourhoods or, more formally, link graphs of $R$.

\begin{definition}(Link graph, neighbourhood)
	Consider a $k$-graph $R$ and a set of $d$ vertices $S \subseteq V(R)$.
	We define the \emph{link graph of $S$ in $R$} as the $(k-d)$-uniform graph $L_R(S)$ with vertex set $V(R)$ and edge set
	$\{X \sm S \colon X \in N(S)\}$, where $N(S)=\{X\in E(R)\colon S\subset X \}$ is the \emph{neighbourhood} of $S$.
	If $R$ is  clear from the context, then we simply write $L(S)$.
	
\end{definition}

To illustrate the approach, let us consider the problem of deciding whether a hypergraph has a perfect fractional matching.
The \emph{density} of a fractional matching in a $k$-graph $H$ is the sum of its weights over all edges divided by~$v(H)$.
Hence any fractional matching in a $k$-graph has density between $0$ and $1/k$, and the fractional matchings which attain the value $1/k$ are precisely the perfect  fractional matchings.
A result of Alon, Frankl, Huang, Rödl, Ruciński and Sudakov~\cite{AFH+12} states that a hypergraph has a perfect fractional matching, provided that every link graph contains a fractional matching of sufficiently large density.

\begin{theorem}[{\cite[Proposition 1.1]{AFH+12}}]\label{thm:fractional-matching}
	For $1\leq d\leq k-1$, consider a $k$-graph~$H$. If every link graph of $H$ on $d$ vertices contains a matching of density $1/k$, then $H$ has a perfect fractional matching.
\end{theorem}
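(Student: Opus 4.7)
The plan is to use LP duality. Recall that a $k$-graph $H$ on $n$ vertices admits a perfect fractional matching (density $1/k$) if and only if every fractional vertex cover $\bvec{c}\colon V(H)\to\REALS_{\ge 0}$ (i.e., one satisfying $\sum_{v\in e}\bvec{c}(v)\ge 1$ for every $e\in E(H)$) has total weight $W:=\sum_v\bvec{c}(v)\ge n/k$. I would verify this latter inequality directly, using the hypothesis on the link graphs.

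Let $\bvec{c}$ be a fractional vertex cover of $H$ and, for each $d$-subset $S\subseteq V(H)$, set $\alpha_S:=\sum_{v\in S}\bvec{c}(v)$. The key observation is that, whenever $\alpha_S<1$, the scaled restriction $\bvec{c}|_{V(H)\sm S}/(1-\alpha_S)$ is a fractional vertex cover of the link graph $L(S)$. Indeed, for every edge $e'\in E(L(S))$, the cover condition applied to $e'\cup S\in E(H)$ gives $\sum_{v\in e'}\bvec{c}(v)\ge 1-\alpha_S$. Now the hypothesis on $L(S)$ combined with LP duality \emph{inside} the $(k-d)$-graph $L(S)$ says that every fractional vertex cover of $L(S)$ has weight at least $n/k$. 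Applying this to the restriction yields
\[
W\;\ge\;\alpha_S+(1-\alpha_S)\frac{n}{k}\;=\;\frac{n}{k}+\alpha_S\Bigl(1-\frac{n}{k}\Bigr).
\]
For $n\ge k$ this bound is maximized at $\alpha_S=0$ and in that case gives exactly $W\ge n/k$, as required.

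Hence it suffices to exhibit a $d$-set $S$ on which $\bvec{c}$ vanishes. To do this I would take $\bvec{c}$ to be a \emph{minimum} fractional vertex cover and pair it with a dual maximum fractional matching. By complementary slackness, if $\bvec{c}(v)>0$ for every vertex $v$, then the matching saturates every vertex, is therefore perfect, and attains $W=n/k$ directly. Otherwise the zero set $V_0:=\{v:\bvec{c}(v)=0\}$ is nonempty, and the task reduces to boosting $|V_0|$ to $d$. The natural route is induction on $d$ (and $n$): remove a vertex $v_0\in V_0$ from $H$, verify that the resulting smaller hypergraph $H'$ still satisfies the link-graph density hypothesis (up to a vanishing slack coming from the change $n\mapsto n-1$), and close by the inductive statement applied to $H'$.

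The main obstacle is precisely this bootstrapping step: the density threshold $1/k$ and the vertex count both change under vertex deletion, so one must argue carefully that the hypothesis persists. An alternative that sidesteps the induction is to choose $\bvec{c}$ as an extreme point of the polytope of minimum fractional vertex covers; such extreme points have highly structured support, which, combined with complementary slackness, can be used to guarantee $|V_0|\ge d$ directly. Either route delivers a $d$-set $S\subseteq V_0$, and the displayed calculation then concludes $W\ge n/k$, completing the verification via LP duality.
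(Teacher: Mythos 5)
Your LP-duality framing is the right one, and you correctly identify that the argument would close if one could exhibit a $d$-set $S$ on which a minimum fractional vertex cover $\bvec c$ vanishes. The gap is precisely in establishing such an $S$, and neither of your two proposed routes closes it.

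Complementary slackness shows only that the zero set $V_0$ is nonempty when $W:=\sum_v \bvec c(v) < n/k$. One can say a bit more: pairing $\bvec c$ with a dual maximum fractional matching of the same weight $W$, the total vertex-unsaturation equals $n-kW$, and each unsaturated vertex lies in $V_0$, so $|V_0|\ge\lceil n-kW\rceil$. This yields $|V_0|\ge d$ only when $W<(n-d+1)/k$; for $W$ in the window $[(n-d+1)/k,\,n/k)$, nothing forces the zero set to have size $d$, and appealing to extreme points of the minimum-cover polytope does not help --- structured support of basic feasible solutions gives no lower bound on the number of zeros. Your Route~1 (induction on $n$ and $d$ by deleting a vertex of $V_0$) also does not go through: after removing a vertex, the link-matching hypothesis degrades and the minimum covers of $H$ and $H-v_0$ are not related in a way that would let you accumulate $d$ zeros.

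The missing idea is the shift-and-scale transformation that the paper deploys in its own Proposition~\ref{proposition:usingduality} (following the cited source): from an optimal cover $\bvec x$ with $\sum_v \bvec x(v) < n/k$, let $u$ minimize $\bvec x(u)$; then $\bvec x(u) < 1/k$, and $\bvec x'(v) := (\bvec x(v)-\bvec x(u))/(1-k\bvec x(u))$ is again a fractional vertex cover, still of total weight below $n/k$, with $\bvec x'(u)=0$. Restricting $\bvec x'$ to the $1$-link $L(\{u\})$ then contradicts the hypothesis for that link. Note that this zeroes out only a \emph{single} vertex and cannot be iterated directly (once a zero is present the minimum is $0$ and the transformation is the identity). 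The passage from $d$-links to $1$-links is instead an induction on $d$ at the level of link graphs, as in Proposition~\ref{proposition:fromdlinksto1links}: for any $(d-1)$-set $S$, the $1$-links of $L_H(S)$ are exactly the $d$-links $L_H(S\cup\{v\})$ of $H$, so the $d=1$ result lifts the matching hypothesis down one level; iterating $d$ times reaches $H$ itself. This side-steps entirely the need to find a $d$-set of simultaneous zeros in a single cover, which is where your proposal gets stuck.
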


Hence, the problem of bounding the minimum $d$-degree threshold of perfect fractional matchings in $k$-graphs can be approached by studying edge density conditions under which large matchings appear in $(k-d)$-graphs; a well-known problem raised by Erdős~\cite{Erd65}.

In light of this, one can wonder whether there exists a similar reduction for the setting of tight Hamilton cycles.
Cooley and Mycroft~\cite{CM17} used such an approach to find almost spanning tight cycles under minimum vertex degree conditions in $3$-graphs.
Here, we solve this problem in its full generality and develop an analogue of Theorem~\ref{thm:fractional-matching} for tight Hamilton cycles for all $1 \leq d \leq k-1$.
We propose a series of properties, whose union we call Hamilton vicinities, and show that one can derive Hamilton frameworks from Hamilton vicinities under of minimum degree thresholds (Theorem~\ref{thm:hamilton-vicinities}).
Together with the Framework Theorem (Theorem~\ref{thm:framework}), this allows us to reduce the problem of bounding the minimum $d$-degree threshold of tight Hamilton cycles in $k$-graphs to the question under which density conditions $(k-d)$-graphs satisfy the aforementioned properties.

Let us now introduce these concepts more formally.
We call a set (edge) of $j$ elements a \emph{${j}$-set} (\emph{$j$-edge}).
For $1 \leq j \leq k$ and a $k$-graph $R$, we let the \emph{shadow graph} $\partial_j(R)$ of $R$ at level $j$ be the $j$-graph on $V(R)$ whose edges are the $j$-sets contained in the edges of $R$.
Our definitions will be based on edges $S$ in  $\partial_d(R)$ instead of $d$-sets of vertices in $V(R)$.
This will be helpful to avoid perturbed edges as encountered in the setting of the Hamilton framework threshold.
In practice, the $d$-th level shadow graph will always be nearly complete.
In a slight abuse of notation, we will often abbreviate $S \in E(\partial_d(R))$ with $S \in \partial_d(R)$.

\begin{definition}[Vicinity, generated graph]\label{def:vicinity-vanilla}
	Given a $k$-graph $R$, we say that a family $\cC = (C_S)_{S \in \partial_d(R)}$ is a \emph{$d$-vicinity}, if $C_S$ is a subgraph of $L(S)$ for every edge $S \in \partial_d(R)$.
	We define the $k$-graph $H$ \emph{generated by $\cC$} as the subgraph of $R$ with vertex set $V(H)=V(R)$ and edge set
	\[E(H)=\bigcup_{S \in \partial_d(R)} \{A \cup S \colon A \in C_S\}.\]
\end{definition}

We remark that, for a $d$-vicinity $\cC$ in $R$ which generates a graph $H$, we have $C_S \subset L_{H}(S)$ for every element $S \in \partial_d(R)$, however this is not necessarily an equality.
Some edges in $L_{H}(S)$ might only appear due to another $C_{S'} \in \cC$.
More precisely, it may happen that there exist $S' \neq S$ in $\partial_d(R)$ such that $X' \in C_{S'}$ and $S \subseteq X' \cup S'$.
In this case, $X' \cup S'$ belongs to $H$ and thus $X = (X' \cup S') \setminus S$ belongs to $L_H(S)$, but the definition of vicinities does not guarantee that $X$ belongs to $C_S$.

Now consider a  $k$-graph $R$ whose $d$-th level shadow graph is  complete (for simplicity) and a subgraph $H$ generated by a $d$-vicinity $\cC=(C_S)_{S \in \partial_d(H)}$.
What kind of properties of $C_S$ could guarantee that $H$ is a Hamilton framework (Definition~\ref{def:framework})?
We know from Theorem~\ref{thm:fractional-matching} that $H$ has a perfect fractional matching, provided that every member of the vicinity contains a large matching.
We will show that a marginal strengthening of this property suffices to cover the space property~\ref{def:framework-space}.

Next, let us have a closer look at tight connectivity, which is required for the connectivity property~\ref{def:framework-connectivity}.
We will obtain this condition through the combination of two separate properties.
For sake of illustration, suppose that $R$ is $3$-uniform and assume that each ($2$-uniform) element of the vicinity $(C_S)_{S \in \partial_d(R)}$ is tightly connected, which in $2$-graphs is equivalent to ordinary graph connectivity.
We refer to this property as \emph{inner connectivity}.
It follows that each of the $3$-uniform subgraphs $\{A \cup S \colon A \in E(C_S)\} \subset R$ is tightly connected as well.
Now suppose that, in addition to this, every two elements of the vicinity have an edge in common.
This property is called \emph{outer connectivity}.
It follows that $H$ must be tightly connected (see Figure~\ref{fig:connectivity}).
This idea generalises to hypergraphs with larger uniformities and hence we can obtain the tight connectivity of $H$ by combining inner and outer connectivity.

In a similar vein, the divisibility property~\ref{def:framework-divisibility} of frameworks can be derived from a combination of local and global structure.
We define the following two substructures.

\begin{restatable}[Switcher]{definition}{defswitcher}\label{def:switcher}
	A \emph{switcher} in an $\ell$-graph $C$ is an edge $A$ of $C$ with a distinguished \emph{central vertex} $a\in A$ such that, for every $b \in A$, the $(\ell-1)$-sets $A \setminus \{ a \}$ and $A \setminus \{ b \}$ share a neighbour in $C$; more formally, there is a vertex $c \neq a,b$ such that $(A \cup \{c\}) \sm \{a\}$ and  $(A \cup \{c\}) \sm \{b\}$ are both edges in $C$.
\end{restatable}

\begin{restatable}[Arc]{definition}{defarc}\label{def:arc}
	Let $R$ be a $k$-graph with a $d$-vicinity $\cC= (C_S)_{S \in \partial_d(R)}$.
	We say that a tuple of $k+1$ vertices  $(v_1,\dots,v_{k+1})$ is an \emph{arc} for $\cC$, if
	\begin{itemize}
		\item $\{v_1,\dots,v_{d}\} \in \partial_d(R)$ with $\{v_{d+1},\dots,v_{k}\} \in C_{\{v_1,\dots,v_{d}\}}$ and
		\item $\{v_2,\dots,v_{d+1}\} \in \partial_d(R)$ with $\{v_{d+2},\dots,v_{k+1}\} \in C_{\{v_2,\dots,v_{d+1}\}}$.
	\end{itemize}
\end{restatable}

We will show that $H$ has a tight cycle of length $1 \bmod k$, provided that $\cC$ admits an arc and every member of $\cC$ has a switcher.
From this it follows that $H$ has the divisibility property~\ref{def:framework-divisibility}.

Finally,  by inequality~\eqref{equ:degree-types-monotonicity} the spanning and outreach properties~\ref{def:framework-spanning} and~\ref{def:framework-degree} are satisfied provided that every element of $\cC$ has large edge density.
To be precise, the \emph{edge density} of a $k$-graph $G$ on $n$ vertices is defined as $e(G)/\binom{n}{k}$.
Motivated by this discussion, we summarise the structural properties of vicinities as follows:

\begin{restatable}[Hamilton vicinity]{definition}{defvicinity}\label{def:vicinity-Hamilton}
	Let $1 \leq d \leq k-1$ and $\gamma,\delta > 0$.
	Suppose that $R$ is a $k$-graph. 
	We say that a $d$-vicinity $\cC= (C_S)_{S \in \partial_d(R)}$ is \emph{$(\gamma,\delta)$-Hamilton}, if for every $S,S' \in \partial_d(R)$:
	\begin{enumerate}[label=\textnormal{(V\arabic*)}]
		\item \label{def:vicinity-inner-connectivity} $C_S$ is tightly connected, \hfill (inner connectivity)
		\item \label{def:vicinity-outer-connectivity} $C_S$ and $C_{S'}$ intersect in an edge, \hfill (outer connectivity)
		\item \label{def:vicinity-divisibility} $C_S$ has a switcher and the vicinity $\cC$ has an arc, \hfill (divisibility)
		\item \label{def:vicinity-space} $C_S$ has a fractional matching of density $ 1/k + \gamma$ and \hfill (space)
		\item \label{def:vicinity-density} $C_S$ has edge density at least $1-\delta + \gamma$.  \hfill ({density})
	\end{enumerate}
\end{restatable}

Recall that in order to bound the Hamilton framework threshold, we need to find a Hamilton framework $H$ in a graph $R$, where $H$ avoids a small set of perturbed edges of $R$.
When working with vicinities, it will be convenient to ignore vertex sets that are contained in too many perturbed edges.
The following definition captures this observation with appropriate precision.
For a $k$-graph $G$ on vertex set $V$, we denote by $\overline{G}$ the complement of $G$, that is the $k$-graph on $V$ that contains precisely the $k$-sets of $V$ that are not in $G$.

\begin{restatable}[Perturbed degree]{definition}{defperturbeddegree} \label{def:perturbed-degree}
	Let $1\leq d \leq k-1$ and $\alpha,\delta>0$.
	We say that a $k$-graph $R$ has \emph{$\alpha$-perturbed} minimum relative $d$-degree at least $\delta$, if the following holds for every $1\leq j\leq d$:
	\begin{enumerate}[(P1)]
		\item \label{itm:perturbed-degree-density} every edge of $\partial_j(R)$ has relative degree at least $\delta$ in $R$,
		\item \label{itm:perturbed-degree-shadow-almost-complete} $\overline{\partial_j(R)}$ has edge density at most $\alpha$ and
		\item \label{itm:perturbed-degree-lower-levels} each $(j-1)$-edge of $\partial_{j-1}(R)$ has relative degree less than $\alpha$ in $\overline{\partial_j(R)}$.
	\end{enumerate} 
\end{restatable}

We remark that in our context the transition from perturbed edges to perturbed minimum degree comes at negligible costs  (see Lemma~\ref{lem:degree-cleaning}).
When looking for Hamilton vicinities, it will also be convenient to ignore \emph{isolated} vertices, which are vertices not contained in any edge.
Next, we define a threshold for Hamilton vicinities.

\newcommand{\hv}{{\mathrm{hv}}}
\begin{restatable}[Hamilton vicinity threshold]{definition}{defvicinitythreshold}\label{def:vicinity-threshold}
	For $1 \leq d\leq k-1$, the \emph{minimum} \emph{$d$-degree} \emph{threshold} \emph{for} \emph{$k$-uniform} \emph{Hamilton} \emph{vicinities}, denoted by $\hv_d(k)$, is the smallest number $\delta >0$  such that the following holds: 
	
	 Suppose $\alpha,\gamma,\mu >0$ and $t \in \NATS$ satisfy $1/t  \ll \alpha \ll \gamma \ll \mu$.
	Every $k$-graph $R$ on $t$ vertices with $\alpha$-perturbed minimum relative $d$-degree at least $\delta+\mu$ and no isolated vertices admits a $(\gamma,\delta)$-Hamilton $d$-vicinity.
\end{restatable}

Our second main structural result shows that we can bound the Hamilton framework threshold by the Hamilton vicinity threshold.
\begin{theorem}[Vicinity Theorem]\label{thm:hamilton-vicinities}
	For $1 \leq d \leq k-1$, we have $\hf_d(k) \leq \hv_d(k)$.
\end{theorem}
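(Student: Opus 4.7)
The plan is to apply the vicinity threshold on a preprocessed version of $R$, take $H$ to be the $k$-graph generated by the resulting vicinity, and then verify the five framework conditions of Definition~\ref{def:framework}. So fix $\delta > \hv_d(k)$ and $1/t \ll \eps \ll \alpha \ll \gamma \ll \mu$, together with a $k$-graph $R$ on $t$ vertices of minimum relative $d$-degree at least $\delta + \mu$ and a perturbed set $I \subset E(R)$ of size at most $\eps \binom{t}{k}$. First I would apply the degree-cleaning lemma (Lemma~\ref{lem:degree-cleaning}) to convert the edge perturbation $I$ into an $\alpha$-perturbed minimum relative $d$-degree at least $\delta + \mu/2$ on a subgraph $R' \subseteq R$ obtained by deleting at most $\alpha t$ vertices, and additionally remove any isolated vertices. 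Since $\alpha \ll \gamma \ll \mu/2$, the definition of $\hv_d(k)$ applies to $R'$ and produces a $(\gamma,\delta)$-Hamilton $d$-vicinity $\cC = (C_S)_{S \in \partial_d(R')}$. Let $H \subseteq R'$ be the graph generated by $\cC$. The remaining task is to check that $H$ is an $(\alpha,\gamma,\delta)$-Hamilton framework, noting that $H$ avoids $I$ automatically since $E(H) \subseteq E(R')$.

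The outreach property (F\ref{def:framework-degree}) is immediate: V\ref{def:vicinity-density} forces every $S \in \partial_d(R')$ to have relative $d$-degree at least $1 - \delta + \gamma$ in $H$, hence the minimum relative vertex degree of $H$ is at least $1 - \delta + \gamma$ by the monotonicity~\eqref{equ:degree-types-monotonicity}. This also yields the spanning property (F\ref{def:framework-spanning}), since no vertex of $R'$ is left isolated in $H$ (every vertex lies in some $S \in \partial_d(R')$ by the near-completeness of shadows, and every $C_S$ is non-empty). For tight connectivity (F\ref{def:framework-connectivity}) I argue in two layers. Inner connectivity V\ref{def:vicinity-inner-connectivity} implies that each piece $H_S := \{A \cup S : A \in C_S\}$ is tightly connected, because a tight-walk chain of $(k-d)$-edges in $C_S$ with consecutive $(k-d-1)$-overlaps lifts to a chain of $k$-edges in $H_S$ with consecutive $(k-1)$-overlaps. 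Outer connectivity V\ref{def:vicinity-outer-connectivity} then bridges the pieces: whenever $S, S' \in \partial_d(R')$ differ in a single vertex, any common $A \in C_S \cap C_{S'}$ gives edges $A \cup S$ and $A \cup S'$ sharing a $(k-1)$-set, and for arbitrary $S, S'$ the near-completeness P\ref{itm:perturbed-degree-shadow-almost-complete} lets me interpolate by a short chain of single-vertex swaps.

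The divisibility property (F\ref{def:framework-divisibility}) is derived from V\ref{def:vicinity-divisibility}. The arc $(v_1,\dots,v_{k+1})$ unpacks directly into two overlapping edges $\{v_1,\dots,v_k\}, \{v_2,\dots,v_{k+1}\}$ of $H$, giving a tight walk of length $k+1$, and tight connectivity (F\ref{def:framework-connectivity}) extends this to some closed tight walk $W$ of length $\ell_0$. To shift the length modulo $k$, I would exploit a switcher $A \ni a$ in a convenient $C_S$: the defining property of the switcher, applied to a carefully chosen $b \in A$, produces auxiliary vertices $c$ that give two alternative routings through the edge $A \cup S$ of lengths differing by exactly one. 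Splicing each of them into $W$ yields two closed tight walks whose lengths differ by one, and concatenating suitable multiples produces closed tight walks of every sufficiently large length, in particular one of length $1 \bmod k$.

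The main obstacle is the space property (F\ref{def:framework-space}): robust matchability of $H$. This is genuinely stronger than the single perfect fractional matching given by Theorem~\ref{thm:fractional-matching}, since one must simultaneously realise every vertex target $\bvec b \colon V(H) \to [1-\gamma,1]$. My plan is to combine the fractional matchings of density $1/k + \gamma$ supplied by V\ref{def:vicinity-space} on each $C_S$ into a single fractional matching of $H$ with vertex load exceeding $1$ by a uniform margin of $\Omega(\gamma)$, via a symmetric aggregation that averages the weight of an edge $e \in E(H)$ over the $d$-subsets $S \subset e$. Given any admissible $\bvec b$, the slack $1 - \bvec b(v) \in [0,\gamma]$ can then be trimmed off by a second, scaled-down fractional matching produced by the same aggregation applied to a rescaled target. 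Maintaining $\bvec w(e) \in [0,1]$ uniformly in $\bvec b$ is the quantitatively delicate step, and I expect it to go through by a Farkas-type duality argument in which the $\gamma$-slack produced by V\ref{def:vicinity-space} dominates any dual certificate of infeasibility.
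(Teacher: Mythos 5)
Your skeleton matches the paper's: clean $R$ via Lemma~\ref{lem:degree-cleaning}, apply the vicinity threshold, generate $H$, and verify the five framework conditions. Spanning and connectivity are sketched acceptably (the paper even remarks that V\ref{def:vicinity-inner-connectivity} and V\ref{def:vicinity-outer-connectivity} alone suffice for F\ref{def:framework-connectivity}, though its Lemma~\ref{lem:connectivity} uses V\ref{def:vicinity-divisibility} to make the lift clean: the switcher gives a closed $C_S$-walk of length $-1 \bmod (k-d)$ via Prop.~\ref{prop:switcher->closed-tight-walk-mod-k}, which lets one arrange $C_S$-walks of length $0 \bmod (k-d)$ that interleave with $S$ to form a tight walk in $H$, Prop.~\ref{prop:inner-walk}). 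The first real gap is divisibility. Your mechanism is wrong: a switcher in $C_S$ governs walk lengths modulo $k-d$ \emph{inside $C_S$}, and there is no sense in which it produces "two alternative routings through $A\cup S$ of lengths differing by exactly one" in $H$ — that would immediately give closed walks of every residue, which is far more than a single switcher yields. The correct derivation is the paper's: the arc $(v_1,\dots,v_{k+1})$ supplies a tight path of length $k+1$, the connectivity machinery produces a return walk of length $0 \bmod k$ (Prop.~\ref{prop:outer-walk-multi-step}), and concatenation gives a closed walk of length $1 \bmod k$.

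The second gap is the space property, which you explicitly leave unproved. Your "symmetric aggregation over $d$-subsets of $e$" plus a Farkas argument is only a heuristic; nothing guarantees the averaged weights form a feasible fractional matching of $H$, let alone one hitting every admissible target $\bvec b$. The paper's argument is a clean two-step lift: inductively pass matchings from $d$-links to $1$-links (Prop.~\ref{proposition:fromdlinksto1links}), then use the LP-duality argument of Alon et al.\ (Prop.~\ref{proposition:usingduality}), both carried out directly in the vertex-weighted $\bvec b$-fractional matching polytope so that robust matchability (not merely one perfect matching) falls out. Incidentally, your final concern about keeping $\bvec w(e)\in[0,1]$ is not where the difficulty lies — that bound is automatic since $\bvec w(e)\le\sum_{e'\ni v}\bvec w(e')\le\bvec b(v)\le 1$ for any $v\in e$; the actual content is that the maximum $\bvec b$-fractional matching attains value $\vnorm{\bvec b}/k$. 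One further small hole: your derivation of F\ref{def:framework-degree} from V\ref{def:vicinity-density} via the monotonicity~\eqref{equ:degree-types-monotonicity} implicitly treats every $d$-set as lying in $\partial_d(R')$. You need the $\alpha$-perturbed estimates (Definition~\ref{def:perturbed-degree}~\ref{itm:perturbed-degree-lower-levels}) to control the small fraction of $d$-sets outside $\partial_d(H)$, as the paper does with its $(1-2d\alpha)$ correction.
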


Note that a straightforward exercise shows that $\hv_{d}(k)\leq 1/2$ for all $k \ge 2$ and $d=k-1$, which reproduces the results of R\"odl, Ruciński and Szemerédi~\cite{RRS09a} on codegree thresholds.
The following two lemmas (whose proofs are also fairly short) imply Theorem~\ref{thm:main-simple-general} and~\ref{thm:main-simple-k-2} in conjunction with Theorem~\ref{thm:framework} and Theorem~\ref{thm:hamilton-vicinities}.

\begin{restatable}{lemma}{lemvicinitythresholdgeneral}\label{lem:vicinity-threshold-general}
	For $k \ge 2$ and $1 \leq d \leq k-1$, we have $\hv_d(k) \leq 2^{-1/(k-d)} $.
\end{restatable}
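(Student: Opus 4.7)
Plan. Write $\ell = k-d$ and $\delta = 2^{-1/\ell}$. For each $S \in \partial_d(R)$, I will construct a subgraph $C_S \subseteq L(S)$ and verify the five Hamilton vicinity properties directly. The argument is driven by three numerical facts about $\delta$:
\[
2\delta > 1, \qquad \delta > \ell/k, \qquad \delta > 1 - \delta,
\]
the middle one following from $2^{-1/\ell} > \ell/(\ell+1) \geq \ell/k$ (where $k \geq \ell + 1$ since $d \geq 1$), and the others from $\delta \geq 1/2$. These will yield the outer connectivity (V2), space (V4), and density (V5) properties respectively.

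For the construction, the perturbed minimum degree hypothesis implies that, for each $S \in \partial_d(R)$, the link $L(S)$ is an $\ell$-graph on (essentially all of) $V(R)$ with edge density at least $\delta + \mu$. The plan is to trim from $L(S)$ all vertices whose $L(S)$-degree is below $(\delta + \mu/2)\binom{n-d-1}{\ell-1}$ (at most $O(\sqrt{\mu})\,n$ such vertices, by averaging), and then define $C_S$ to be the tight component of the trimmed link containing the most edges. In the degenerate case $\ell = 1$, every $1$-graph is vacuously tightly connected, so I set $C_S = L(S)$.

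The key claim is that the trimmed link has a dominant tight component satisfying $|E(C_S)| \geq (\delta + \mu/4)\binom{n-d}{\ell}$. I would approach this via codegree expansion: under the vertex-degree lower bound, most $(\ell-1)$-sets of the shadow have codegree at least $(\delta - o(1))(n-d) > (n-d)/2$, so any two such sets share many common neighbours; chaining these single-vertex swaps produces tight walks between any two edges, forcing almost all of them into a single tight component. Once the density of $C_S$ is pinned down, V1 holds by construction, V5 follows from $\delta > 1-\delta$, and V2 is immediate since $C_S$ and $C_{S'}$ each cover more than half of $\binom{V(R)}{\ell}$. For V4, the double-counting bound $|E(C_S)| \leq \Delta(C_S)\,\tau^*(C_S) \leq \binom{n-d-1}{\ell-1}\tau^*(C_S)$ together with LP duality $\nu^*(C_S) = \tau^*(C_S)$ yields a fractional matching of density $\geq \delta/\ell > 1/k + \gamma$.

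Condition V3 splits into two short counting arguments. For switchers: any edge $A \in C_S$ with any central $a \in A$ works, since the codegrees of $A \setminus \{a\}$ and $A \setminus \{b\}$ both exceed $(n-d)/2$ and thus share a point distinct from $a, b$. For an arc: take any tight path $(v_1, \dots, v_{k+1})$ in $R$, which exists by greedy extension from the minimum $d$-degree hypothesis. The main obstacle, in my view, is the concentration claim for tight components above; tight components can share vertices, so convexity bounds on the supports are not immediate, and the single-vertex-swap argument needs to be carried out carefully in the trimmed link. Everything else is routine density bookkeeping organised around the three numerical inequalities.
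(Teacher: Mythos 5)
Your proposal takes a genuinely different route from the paper, and the difference is where the gap lies.

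The paper's engine is the Allen--B\"ottcher--Cooley--Mycroft averaging proposition (Proposition~\ref{prop:large-component}): \emph{some} tight component $C_S \subset L(S)$ satisfies $e_\ell(C_S)/e_{\ell-1}(C_S) \ge e_\ell(L(S))/e_{\ell-1}(L(S))$, hence $\nu_S/\nu_S' \ge \delta+\mu$. Feeding this ratio into Kruskal--Katona (Proposition~\ref{prop:dense-subgraph}) yields only $\nu_S \ge \delta^\ell + \mu/2 = 1/2 + \mu/2$, which is all that is needed for~\ref{def:vicinity-outer-connectivity} and~\ref{def:vicinity-density}, and feeding the same \emph{ratio} (not the density) into Frankl's lemma (Lemma~\ref{lem:large-matching}) gives the matching for~\ref{def:vicinity-space}. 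You instead try to establish that a dominant tight component of the trimmed link has edge density $\geq \delta + \mu/4$, which is strictly stronger than $\ge 1/2$ and is not what the paper proves. The trimming you describe only controls \emph{vertex} degrees, not $(\ell-1)$-set codegrees, so the step ``most $(\ell-1)$-sets of the shadow have codegree $\geq (\delta-o(1))(n-d) > (n-d)/2$'' does not follow for $\ell \geq 3$: a high-minimum-degree $\ell$-graph can have many $(\ell-1)$-sets of codegree~$0$. Consequently the single-vertex-swap chaining cannot be pushed through as stated, and the ``main obstacle'' you flag is in fact a fatal one, not a technicality. The cleaner move is to not ask for the tight component to be dense or to span at all; the ratio $\nu/\nu'$ from Proposition~\ref{prop:large-component} is the right quantity, and it is available unconditionally.

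Two more pieces need repair. Your switcher argument assumes the same uniform codegree bound (``both exceed $(n-d)/2$''), which does not hold; the paper's Proposition~\ref{prop:switcher} avoids this by an averaging argument that produces a single \emph{zentral} edge whose shadow degrees collectively satisfy a Cauchy--Schwarz bound, and it needs the sharper threshold $\delta \geq \ell/(\ell + 2\sqrt{\ell-1})$, which you would have to verify (the paper does so via $2^{-1/\ell} \geq \ell/(\ell+1)$). Your arc argument is also incomplete: Definition~\ref{def:arc} requires the two consecutive $(k-d)$-sets of the $(k+1)$-tuple to lie in the \emph{chosen} vicinity elements $C_{\{v_1,\dots,v_d\}}$ and $C_{\{v_2,\dots,v_{d+1}\}}$, so a greedy tight path in $R$ is not enough; the paper's Proposition~\ref{prop:arc} deliberately picks $v_{d+1}$ of high relative degree in $C_S$ and then uses Kruskal--Katona on $\partial(C_{S'})$ to force the intersection. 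Your LP-duality route to~\ref{def:vicinity-space} would give a usable fractional matching \emph{if} the density claim held, but since it does not, it is better to follow the paper and extract the matching directly from the $\nu/\nu'$ ratio via Lemma~\ref{lem:large-matching}.
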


\begin{restatable}{lemma}{lemvicinitythresholdspecific}\label{lem:vicinity-threshold-d-k=2}
	For $k \geq 3$, we have $\hv_{k-2}(k) \leq  5/9$.
\end{restatable}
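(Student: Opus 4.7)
The plan is to set $C_S$ equal to the largest connected component of the link graph $L_R(S)$ for each $S \in \partial_{k-2}(R)$. Since $k - d = 2$, each $L_R(S)$ is a graph with edge density at least $5/9 + \mu - o(1)$. A convexity argument yields the lower bound $s := |V(C_S)|/t \ge 2/3 + 3\mu/2$, since two disjoint cliques of relative sizes $1/3$ and $2/3$ already saturate density $5/9$; explicitly, if the largest component had at most $(2/3 - \epsilon)t$ vertices, the total edge count would be at most $\binom{(2/3-\epsilon)t}{2} + \binom{(1/3+\epsilon)t}{2}$, forcing density below $5/9$. Consequently $e(C_S) \ge e(L_R(S)) - \binom{(1-s)t}{2} \ge (4/9 + \Theta(\mu))\binom{t}{2}$, giving~\ref{def:vicinity-density}, while~\ref{def:vicinity-inner-connectivity} holds by construction.

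For the divisibility condition~\ref{def:vicinity-divisibility}, first observe that the internal density $\bigl((5/9 + \mu) - (1-s)^2\bigr)/s^2$ of $C_S$ on its own vertex set exceeds $1/2$ throughout $s \in [2/3 + 3\mu/2, 1]$, so Mantel's theorem yields a triangle in $C_S$; as $k - d = 2$, any such triangle is a switcher. An arc is built directly: pick any $S_1 \in \partial_{k-2}(R)$ and, using~\ref{itm:perturbed-degree-lower-levels} together with the fact that most vertices have large degree in $C_{S_1}$, choose a vertex $v_{k-1}$ and a $v_1 \in S_1$ so that $S_2 := (S_1 \setminus \{v_1\}) \cup \{v_{k-1}\} \in \partial_{k-2}(R)$ and an edge $\{v_{k-1}, v_k\} \in C_{S_1}$ with $v_k \in V(C_{S_2})$ exists; then take any neighbour $v_{k+1}$ of $v_k$ in $C_{S_2}$. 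For the space condition~\ref{def:vicinity-space} it suffices (since $k \ge 3$) to treat $k = 3$ and show $\nu^*(C_S) \ge (1/3 + \gamma) t$: supposing otherwise, LP duality yields an independent set $I \subseteq V(C_S)$ with $|I| - |N_{C_S}(I)| > (s - 2/3 - 2\gamma)t$; writing $a = |I|/t$, $b = |N_{C_S}(I)|/t$ and maximising the resulting bound $e(C_S) \le (2ab + (s-a)^2)t^2/2$ over the feasible region (split into the cases $s \ge 5/6 + 5\gamma/2$ and $s < 5/6 + 5\gamma/2$) produces a value strictly below our lower bound $(-4/9 + \mu + 2s - s^2)t^2/2$, a contradiction once $\gamma \ll \mu$.

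The main obstacle will be outer connectivity~\ref{def:vicinity-outer-connectivity}. The plan is to restrict $C_S$ and $C_{S'}$ to $W := V(C_S) \cap V(C_{S'})$ and show that the two restricted edge sets have total size exceeding $\binom{|W|}{2}$, forcing a common edge. Writing $v_S, v_{S'}, w$ for the normalised sizes, one has $w \ge v_S + v_{S'} - 1$ and $e(C_S|_W) \ge e(C_S) - \bigl(\binom{v_S t}{2} - \binom{w t}{2}\bigr)$; substituting $e(C_S) \ge \bigl((5/9 + \mu) - (1 - v_S)^2\bigr) t^2/2$ together with the analogous bounds for $S'$ and simplifying yields
\[
 e(C_S|_W) + e(C_{S'}|_W) - \binom{|W|}{2} \ge \bigl(1/9 + 2\mu - (v_S - v_{S'})^2\bigr)\, t^2/2.
\]
Since $v_S, v_{S'} \in [2/3 + 3\mu/2, 1]$, we have $|v_S - v_{S'}| \le 1/3 - 3\mu/2$, so the right-hand side is at least $(3\mu - O(\mu^2))\, t^2/2 > 0$, forcing $C_S$ and $C_{S'}$ to share an edge and completing the verification.
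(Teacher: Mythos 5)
Your proof is correct and establishes $\hv_{k-2}(k) \leq 5/9$, but it re-derives from scratch the conclusions that the paper packages into Lemma~\ref{lem:cooley-mycroft}, using genuinely different arguments in several places. Where the paper verifies the switcher property by combining Propositions~\ref{prop:large-component}, \ref{prop:dense-subgraph} and~\ref{prop:switcher} (the same machinery reused for the general bound $2^{-1/(k-d)}$), you observe directly that the internal density of $C_S$ on its own vertex set exceeds $1/2$ and invoke Mantel's theorem to extract a triangle, which for $\ell = 2$ is exactly a switcher; this is shorter for the $\ell=2$ case. Where the paper obtains the arc via Proposition~\ref{prop:arc} (checking $4/9 + (4/9)^{1/2} > 1$), you build one by hand using~\ref{itm:perturbed-degree-lower-levels} and a pigeonhole argument; this works but needs one more sentence of care than you give it --- you should explicitly pick $v_{k-1}$ of degree at least roughly $t/3 + \Omega(\mu t)$ in $C_{S_1}$ (a degree-sum count shows a positive proportion of vertices qualify and also satisfy~\ref{itm:perturbed-degree-lower-levels}), rather than gesturing at ``most vertices have large degree''. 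Where the paper applies the Erd\H{o}s--Gallai matching theorem, you use LP duality on the fractional matching polytope together with the half-integral structure of fractional vertex covers; that approach is sound, though the two-case optimisation is only sketched and would need to be carried through. Your outer-connectivity computation via inclusion--exclusion on $W = V(C_S) \cap V(C_{S'})$ is essentially the paper's part~(d) of Lemma~\ref{lem:cooley-mycroft} with slightly different bookkeeping, and your final inequality $1/9 + 2\mu - (v_S - v_{S'})^2 > 0$ checks out. Two small notes: the exact lower bound from your convexity argument is $s \geq 2/3 + 3\mu/2 - O(\mu^2)$ rather than $2/3 + 3\mu/2$ outright, which is harmless downstream since all later steps only require $s \geq 2/3 + c\mu$; and, for what it is worth, your $\Theta(\mu)$ correction is the correct order --- the $\sqrt{\mu/2}$ claimed in part~(a) of the paper's Lemma~\ref{lem:cooley-mycroft} does not actually follow from the stated convexity bound, though the paper only uses the weaker $x < 1/3 - O(\gamma)$ later so its proof is unaffected.
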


\subsection{Organisation of the paper}
The rest of the paper is organised into two parts.
The first part is dedicated to the structural analysis of the auxiliary graph $R$ and spans Section~\ref{sec:vicinities->framworks}--\ref{sec:finding-vicinities}.
In Section~\ref{sec:vicinities->framworks}, it is described how Hamilton frameworks can be obtained from Hamilton vicinities and together with a proof of the Vicinity Theorem (Theorem~\ref{thm:hamilton-vicinities}).
In Section~\ref{sec:finding-vicinities}, we show how to find Hamilton vicinities under minimum degree conditions and prove Lemma~\ref{lem:vicinity-threshold-general} and~\ref{lem:vicinity-threshold-d-k=2}.

The second part includes Section~\ref{sec:framework-threshold-overview}--\ref{sec:absorbing} and covers the proof of the Framework Theorem (Theorem~\ref{thm:framework}).
In Section~\ref{sec:framework-threshold-overview} we outline the proof in a simplified setting and explain how the properties of Hamilton frameworks translate into the typical absorption, path cover and connecting arguments used to construct tight Hamilton cycle.
We deploy these techniques in the setting of hypergraph regularity which we introduce in Section~\ref{sec:regularity}.
In Section~\ref{sec:framework-threshold-proof} we give a formal proof of the Framework Theorem subject to an Absorption Lemma (Lemma~\ref{lem:absorption}) and a Cover Lemma (Lemma~\ref{lem:cover}).
The remaining sections are devoted to the proof of these two lemmas.
In Section~\ref{sec:tools} we collect tools to work with regularity and use them to give the proof of the Cover Lemma in Section~\ref{sec:connecting}.
In Section~\ref{sec:preabsorbing} we prove further technical results which are required for the proof of the Absorbing Lemma in Section~\ref{sec:absorbing}.
We conclude the paper with a discussion and open problems in Section~\ref{sec:discussion}.

\section{From vicinities to frameworks}\label{sec:vicinities->framworks}
The goal of this section is to prove the Vicinity Theorem (Theorem~\ref{thm:hamilton-vicinities}).
We will derive this result from the following three lemmas.

\begin{restatable}[Connectivity and divisibility]{lemma}{lemconnectivity}\label{lem:connectivity}
	Let $1 \leq d\leq k-1$, and let $R$ be a $k$-graph with a $d$-vicinity $\cC= (C_S)_{S \in \partial_d(R)}$.
	Suppose that, for every $S,S' \in \partial_d(R)$:
	\begin{enumerate}[label=\textnormal{(V\arabic*)}]
		\item  $C_S$ is tightly connected,  
		\item  $C_S$ and $C_{S'}$ intersect if $|S \cap S'|=d-1$ and  
		\item  $C_S$ has a switcher and the vicinity $\cC$ has an arc.  
	\end{enumerate}
	Then the graph $H \subset R$ generated by $\cC$ satisfies:
	\begin{enumerate}[label=\textnormal{(F\arabic*)}] \addtocounter{enumi}{1}
		\item   $H$ is tightly connected and
		\item   $H$ contains a tight closed walk of length $1 \bmod k$.   
	\end{enumerate}
\end{restatable}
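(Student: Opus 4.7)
The plan is to prove (F2) and (F3) separately, by exploiting the decomposition of $H$ into layers $H_S := \{A \cup S : A \in E(C_S)\}$ indexed by $S \in \partial_d(R)$, so that $E(H) = \bigcup_S E(H_S)$.

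For tight connectivity (F2), I proceed in three steps. First, within a single layer $H_S$, tight connectivity is inherited from $C_S$: by (V1), any two edges $A, A' \in E(C_S)$ are joined by a $(k-d-1)$-shared chain $A = B_0, \ldots, B_m = A'$ in $C_S$, which upon adding $S$ becomes a $(k-1)$-shared chain in $H_S$. I realise this chain as a single tight walk in $H$ by inserting ordered copies of $S$ between consecutive $B_i$'s: for each transition $B_i \to B_{i+1}$ (where $B_{i+1} = \{b'_1, \ldots, b'_{k-d}\}$) one appends the segment $(s_1, \ldots, s_d, b'_1, \ldots, b'_{k-d})$ to the current walk, and a direct check shows that every resulting $k$-interval equals either $S \cup B_i$ or $S \cup B_{i+1}$. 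Second, for adjacent $S, S' \in \partial_d(R)$ with $|S \cap S'| = d-1$, the common edge $A^* \in E(C_S) \cap E(C_{S'})$ guaranteed by (V2) produces $H$-edges $A^* \cup S$ and $A^* \cup S'$ sharing the $(k-1)$-set $A^* \cup (S \cap S')$, placing them on a common tight walk of length $k+1$. Third, any two edges of $H$ are joined via a chain of layers: within any single $e \in E(H) \subseteq E(R)$, all $d$-subsets lie in $\partial_d(R)$ and form a connected Johnson graph under adjacency, and crossing between distinct edges of $H$ is handled by iterating step two.

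For divisibility (F3), the arc $(v_1, \ldots, v_{k+1})$ supplies an open tight walk of length $k+1 \equiv 1 \pmod{k}$ in $H$, passing through edges of two adjacent layers. By (F2), this walk extends to a closed tight walk in $H$, though the resulting length modulo $k$ need not be $1$. This is where the switchers are essential: the switcher at $A \in C_S$ with centre $a$ provides, for each $b \in A \setminus \{a\}$, three pairwise $(k-1)$-overlapping $H$-edges $S \cup A$, $S \cup ((A \setminus \{a\}) \cup \{c\})$, $S \cup ((A \setminus \{b\}) \cup \{c\})$, enabling a local rerouting of any tight walk through $S \cup A$ that alters its length by a specific residue modulo $k$. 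By composing the arc traversal with an appropriate number of switcher detours one obtains a closed tight walk of length exactly $\equiv 1 \pmod{k}$.

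The main obstacle is the length bookkeeping in (F3): one must show that the residues modulo $k$ of closed tight walks in $H$ form a subgroup of $\mathbb{Z}/k\mathbb{Z}$ containing $1$, requiring a careful analysis of how the arc (contributing a non-closed length-$(k+1)$ walk) and switcher detours (each a local modification of controlled residue) combine into a closed walk of the correct residue. By contrast, (F2) is largely mechanical once the layer decomposition is set up, reducing to verifying the explicit tight-walk constructions above and observing the Johnson-graph connectivity within each edge of $H$.
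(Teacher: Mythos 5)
There is a real gap in Step~1: the appending construction does not produce a tight walk when $k-d \geq 2$. Consecutive edges $B_i, B_{i+1}$ on a tight walk in $C_S$ overlap in $k-d-1$ vertices, so take $B_0 = \{a_1, \dotsc, a_{k-d}\}$ and $B_1 = \{a_2, \dotsc, a_{k-d+1}\}$; with the natural ordering $b'_j = a_{j+1}$ the proposed walk $(s_1, \dotsc, s_d, a_1, \dotsc, a_{k-d}, s_1, \dotsc, s_d, a_2, \dotsc, a_{k-d+1})$ has, at position $d+2$, the $k$-interval $(a_2, \dotsc, a_{k-d}, s_1, \dotsc, s_d, a_2)$, in which $a_2$ repeats; other orderings of the appended $B$-blocks give $k$-sets of the form $S\cup Z$ where $Z$ is a $(k-d)$-set skipping a middle vertex of the $C_S$-walk, which need not be an edge of $C_S$. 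So the ``direct check'' fails. The paper instead partitions a $C_S$-walk into \emph{non-overlapping} chunks of length $k-d$ and prepends $\cS$ to each chunk (Proposition~\ref{prop:inner-walk}); for the chunks to align with the target edge the $C_S$-walk must have length $\equiv 0 \pmod{k-d}$, which is arranged by padding with a closed walk of controlled residue obtained from the switcher (Proposition~\ref{prop:switcher->closed-tight-walk-mod-k}). So the switcher enters already in the connectivity argument, not only in the length bookkeeping for~(F3) that you identify as the main obstacle.

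The Johnson-graph idea in Step~3 is also unsound as stated: every $d$-subset $S$ of an $H$-edge $e$ does lie in $\partial_d(R)$, but the generated graph only guarantees that \emph{some} decomposition $e = S \cup A$ has $A \in C_S$, not that all of them do, so one cannot freely rotate the $d$-set inside a fixed edge to hop between layers. The paper sidesteps this by working with ordered tuples and proving (Propositions~\ref{prop:inner-walk}--\ref{prop:outer-walk-multi-step}) that for any $\cS,\cT \in \partial_d(R)$ with $\cA\in C_S$, $\cB\in C_T$ there is a tight walk from $\cS\cA$ to $\cT\cB$ of length $\equiv 0 \pmod k$, choosing the intermediate $d$-set $\cS'$ as a $d$-subset of an edge $S\cup Q$ of $R$ (for a shared $Q\in C_S\cap C_T$) so that it automatically lies in $\partial_d(R)$. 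That uniform $0 \pmod k$ length control is what makes~(F3) immediate once the arc is in place: the walk back from the arc's last $k$ vertices to its first $k$ has length $0 \pmod k$, so closing the loop gives length $1 \pmod k$ --- there is no need for the ``residue-subgroup'' analysis you anticipate.
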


\begin{restatable}[Space]{lemma}{lemspace}\label{lem:space}
	Let $1 \leq d \leq k-1$ and $\gamma, \alpha, \delta > 0$ such that $\alpha, \gamma \ll 1/k$.
	Let $R$ be a $k$-graph with $\alpha$-perturbed minimum $d$-degree at least $\delta$ and no isolated vertices.
	Let $\cC= (C_S)_{S \in \partial_d(R)}$ be $d$-vicinity in $R$.
	Suppose that, for every $S \in \partial_d(R)$:
	\begin{enumerate}[label=\textnormal{(V\arabic*)}]\addtocounter{enumi}{3}
		\item   $C_S$ has a fractional matching of density $ 1/k + \gamma  $.  
	\end{enumerate}
	Then the graph $H \subset R$ generated by $\cC$ satisfies:
	\begin{enumerate}[label=\textnormal{(F\arabic*)}]\addtocounter{enumi}{3}
		\item  $H$ is $\gamma$-robustly matchable.  
	\end{enumerate}
\end{restatable}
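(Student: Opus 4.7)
The plan is to reformulate ``$H$ is $\gamma$-robustly matchable'' as a family of linear programs parametrised by $\bvec{b}$, and to establish feasibility via LP duality, leveraging the fractional matchings inside the vicinity $\cC$. Fix any $\bvec{b}\colon V(H)\to[1-\gamma,1]$; the search for $\bvec{w}\colon E(H)\to[0,1]$ with $\sum_{e\ni v}\bvec{w}(e)=\bvec{b}(v)$ is a linear program, and by Farkas' lemma, infeasibility for some such $\bvec{b}$ would be certified by some $\bvec{y}\colon V(H)\to \REALS$ with $\sum_v\bvec{b}(v)\bvec{y}(v) > \sum_{e\in E(H)}(\sum_{v\in e}\bvec{y}(v))^{+}$. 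Maximising the left-hand side over $\bvec{b}\in[1-\gamma,1]^{V(H)}$ (by setting $\bvec{b}(v)=1$ when $\bvec{y}(v)\ge 0$ and $\bvec{b}(v)=1-\gamma$ otherwise) reduces the task to establishing
\[
\sum_{v\in V(H)} \bvec{y}(v) \,+\, \gamma \sum_{v\in V(H)} \bvec{y}^{-}(v) \,\le\, \sum_{e\in E(H)} \Bigl(\sum_{v\in e} \bvec{y}(v)\Bigr)^{+}
\qquad (\diamond)
\]
for every $\bvec{y}\colon V(H)\to\REALS$, where $\bvec{y}^{-}(v)=\max(0,-\bvec{y}(v))$.

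The baseline inequality without the $\gamma$-term is, by LP duality in the reverse direction, equivalent to $H$ possessing a perfect fractional matching, and follows from Theorem~\ref{thm:fractional-matching} applied via the inclusions $C_S\subseteq L_H(S)$ valid for every $S\in\partial_d(R)$. The $\alpha$-perturbed minimum degree condition ensures that only an $\alpha$-fraction of $d$-sets are missing from $\partial_d(R)$ (property~\ref{itm:perturbed-degree-shadow-almost-complete}), and iterating property~\ref{itm:perturbed-degree-lower-levels} across levels $j=1,\dots,d$ shows that for each vertex $v$ only an $O(\alpha)$-fraction of the $d$-sets containing $v$ lie outside $\partial_d(R)$, so the contribution of those ``invisible'' sets to any aggregation is absorbable. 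To upgrade the baseline to $(\diamond)$, I would exploit that each $C_S$ carries a fractional matching of density $1/k+\gamma$, strictly above the $1/k$ threshold of Theorem~\ref{thm:fractional-matching}. Following the aggregation of link-graph matchings in the proof of that theorem, the $\gamma$-surplus in each $\bvec{\mu}_S$ should translate to an extra $\Omega(\gamma)|\bvec{y}(v)|$ contribution on the right-hand side of $(\diamond)$ for each $v$ with $\bvec{y}(v)<0$, delivering precisely the $\gamma\sum_v \bvec{y}^{-}(v)$ term.

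\textbf{Main obstacle.} The delicate step is this last upgrade: carefully tracking how the extra $\gamma$-density slack in each $C_S$ propagates through the aggregation and interacts with the sign pattern of $\bvec{y}$ across the vertex set, so as to yield the additional $\gamma \sum_v \bvec{y}^{-}(v)$ on the right-hand side. Once that core inequality is in hand, the LP-duality reformulation and the bookkeeping of perturbed $d$-sets (absorbed into the $\gamma$-slack via $\alpha\ll\gamma$) are comparatively routine.
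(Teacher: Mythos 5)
Your reformulation via Farkas and the target inequality $(\diamond)$ is a legitimate sufficient condition for $\gamma$-robust matchability, and your observation that the $\gamma$-free baseline is equivalent to $H$ having a perfect fractional matching is correct. However, the proposal stops precisely at the step that carries all the weight: you flag the ``upgrade'' from the baseline to $(\diamond)$ as the ``main obstacle'' and leave it unresolved. That is exactly the part of the argument one must actually do, and it is not a routine propagation: the $\gamma$-surplus in each $C_S$ is a surplus in the \emph{size} of a local fractional matching, whereas the extra term $\gamma\sum_v\bvec{y}^-(v)$ in $(\diamond)$ is a global correction indexed by the sign pattern of $\bvec{y}$, and it is not clear how to aggregate the local surpluses into precisely that term without, in effect, redoing a $\bvec b$-weighted duality argument from scratch. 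As written, this is a genuine gap, not a detail.

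The paper sidesteps this difficulty with a short rescaling trick that you appear not to have noticed: for each vertex $v$, take a fractional matching $\bvec{x}$ in $L_H(\{v\})$ of size at least $(1/k+\gamma)\,t$ (obtained from the $C_S$'s by lifting from $d$-sets to singletons via Proposition~\ref{proposition:fromdlinksto1links}), and set $\bvec w' = (1-\gamma)\bvec{x}$. The factor $1-\gamma$ makes $\bvec w'$ automatically a $\bvec b$-fractional matching, since $\bvec b\ge 1-\gamma$ pointwise, while the $\gamma$-surplus in density ensures $\vnorm{\bvec w'}\ge(1-\gamma)(1/k+\gamma)t\ge t/k\ge\vnorm{\bvec b}/k$. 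Then Proposition~\ref{proposition:usingduality} (the $\bvec b$-weighted version of the link-lifting duality argument) immediately lifts this to $H$. In other words, the density surplus is converted into slack for the $\bvec b$-constraints \emph{before} any global aggregation, so there is no delicate sign-pattern bookkeeping to do. This is both simpler and, crucially, it actually closes the step you left open; I would recommend abandoning the global $(\diamond)$ target and using the rescaling idea together with the two auxiliary propositions instead.

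One further point: your plan invokes Theorem~\ref{thm:fractional-matching} directly on the $d$-link graphs, but that theorem produces only ordinary (not $\bvec b$-weighted) perfect fractional matchings, and it lifts from $d$-links to $H$ in one step. For $\gamma$-robust matchability you need the $\bvec b$-weighted statement, and the paper proves and uses the singleton-link version (Proposition~\ref{proposition:usingduality}) together with a separate induction (Proposition~\ref{proposition:fromdlinksto1links}) to bridge from $d$-links to $1$-links under the $\alpha$-perturbed degree hypothesis. Any complete proof along your lines would still have to reproduce both of these ingredients.
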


\begin{restatable}[Perturbed degree]{lemma}{lemdegreecleaning} \label{lem:degree-cleaning}
	Let $t,d,k$ be integers with $1 \leq d \leq k-1$ and $\delta, \alpha, \eps >0$ with $1/t \ll \eps \ll \alpha \ll \delta, 1/k$.
	Let $R$ be a $k$-graph on $t$ vertices with minimum relative $d$-degree $\delta_{d}^*(R) \ge \delta$.
	Let $I$ be a subgraph of $R$ of edge density at most $\eps$.
	Then there exists a vertex spanning subgraph $R' \subseteq R - I$ of $\alpha$-perturbed minimum relative $d$-degree at least $\delta-\alpha$.
\end{restatable}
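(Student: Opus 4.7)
My plan is to construct $R'$ by an iterative cleaning procedure applied to $R - I$. Set $R_0 := R - I$; since $e(I) \le \eps \binom{t}{k}$ is small, most $j$-sets retain near their original degree. Given $R_i$, if there is some level $j \in \{1,\ldots,d\}$ and some $S \in \partial_j(R_i)$ with $\deg_{R_i}(S) < (\delta - \alpha/2)\binom{t-j}{k-j}$, delete all edges of $R_i$ containing $S$ to obtain $R_{i+1}$; otherwise stop and let $R' := R_i$. The procedure terminates, and property \ref{itm:perturbed-degree-density} then holds for $R'$ with slack $\alpha/2$. Writing $B_j := \binom{V}{j}\setminus\partial_j(R')$ for $1 \le j \le d$, the remaining work is to bound $|B_j|$.

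I would bound $|B_j|$ by downward induction on $j$, aiming for $|B_j| \le \alpha^2 \binom{t}{j}$. For the base case $j = d$: each $S \in B_d$ has $\deg_R(S) - \deg_{R'}(S) \ge \alpha\binom{t-d}{k-d}$, and these lost edges are either in $I$ or were removed by earlier iterations. Summing this loss over $B_d$, charging each removed edge to its $d$-subsets, and using the identity $\binom{t}{d}\binom{t-d}{k-d} = \binom{t}{k}\binom{k}{d}$ together with $\delta^*_d(R) \ge \delta$, yields a fixed-point estimate $|B_d| \le O_k(\eps/\alpha^{c_d})\binom{t}{d}$ for a constant $c_d = c_d(k)$. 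For the inductive step $j < d$: a set $T \in B_j$ is either directly damaged by $I$ (at most $O(\eps/\alpha)\binom{t}{j}$ such $T$ by averaging) or acquires its degree loss via cascades from higher levels $j' > j$, captured by contributions $\sum_{j' > j}\sum_{S \in B_{j'}, S \supset T}|\{e : T \cup S \subseteq e\}|$. A Markov-type bound using the inductive estimates on $|B_{j'}|$ then shows that the number of cascading $T$'s is $O(\eps/\alpha^{c_j})\binom{t}{j}$. Under the hierarchy $\eps \ll \alpha$, read strongly as $\eps \le \alpha^{C}$ for a suitable $C = C(k,d)$, this forces $|B_j| \le \alpha^2\binom{t}{j}$.

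With $|B_j| \le \alpha^2\binom{t}{j}$ in hand, property \ref{itm:perturbed-degree-shadow-almost-complete} is immediate. For \ref{itm:perturbed-degree-lower-levels}, given $T \in \partial_{j-1}(R')$, each $v$ with $T\cup\{v\} \notin \partial_j(R')$ satisfies $T\cup\{v\} \in B_j$. The double count $\sum_{T'}\deg_{B_j}(T') = j|B_j| \le j\alpha^2\binom{t}{j}$ implies that at most $O(\alpha)\binom{t}{j-1}$ of the $(j-1)$-sets $T'$ have codegree $\deg_{B_j}(T') \ge \alpha(t-j+1)$; crucially, any such heavy $T'$ loses so much degree through the cascade that it must itself be removed by the iterative cleaning and hence lies in $B_{j-1}$. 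Consequently every surviving $T \in \partial_{j-1}(R')$ has $\deg_{B_j}(T) < \alpha(t-j+1)$, establishing \ref{itm:perturbed-degree-lower-levels}.

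The central obstacle is controlling the cascade of bad sets: removing edges around one bad set can push the degree of other sets below the cleaning threshold, triggering further removals. The downward inductive scheme contains this cascade by allowing only a polynomial overhead $1/\alpha^{O(1)}$ per level, and the strong form of the hierarchy $\eps \ll \alpha$ absorbs this overhead.
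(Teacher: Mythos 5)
Your plan follows a genuinely different route from the paper's. The paper does not iterate: it computes a \emph{gradation} $(I_1,\dots,I_k)$ of $I$ (with $I_k=I$ and $I_j$ the $j$-sets of high relative degree in $I_{j+1}$), deletes from $R$ \emph{in a single pass} every edge containing some $I_j$ together with $I$ itself, and then proves by a self-consistency argument (Propositions~\ref{claim:cleaninglemma-largedegree-notspoiled} and~\ref{claim:cleaninglemma-notspoiled-largedegree}) that a second gradation of the deleted set coincides with $\overline{\partial_j(R')}$ at every level $j$. The bad sets are thus predicted in advance from $I$ alone, so there is no avalanche to control; the price is the self-consistency check. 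Your greedy cleaning is the more natural first idea, but it shifts the entire difficulty onto bounding the cascade and then characterising which sets were flagged.

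Within your route the argument for \ref{itm:perturbed-degree-lower-levels} has a concrete gap. You assert that any $(j-1)$-set $T'$ with $\deg_{B_j}(T')\ge\alpha(t-j+1)$ must ``lose so much degree'' that it lands in $B_{j-1}$. But heaviness of $T'$ only forces
\[
\deg_{R'}(T')\;=\;\frac{1}{k-j+1}\sum_{v\colon T'\cup\{v\}\notin B_j}\deg_{R'}\bigl(T'\cup\{v\}\bigr)\;\le\;(1-\alpha)\binom{t-j+1}{k-j+1},
\]
and this is entirely compatible with $\deg_{R'}(T')\ge(\delta-\alpha/2)\binom{t-j+1}{k-j+1}$, hence with $T'$ never being flagged, whenever $\delta<1-\alpha/2$ --- the typical situation, since the lemma assumes only $\alpha\ll\delta$ and nothing like $\delta\ge1/2$. (The complementary lower bound on the loss, $\deg_R(T')-\deg_{R'}(T')\ge\alpha\delta\binom{t-j+1}{k-j+1}$ using $\overline{\delta}_j(R)\ge\delta$, beats your cleaning trigger $(\alpha/2)\binom{t-j+1}{k-j+1}$ only if $\delta>1/2$.) So the Markov count of heavy $T'$ does not translate into \ref{itm:perturbed-degree-lower-levels}. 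Any repair seems to require decoupling the cleaning slack from the heavy cutoff --- for instance clean at slack $\alpha^2$ while keeping ``heavy'' at $\alpha$ --- after which the $|B_j|$ bounds have to be redone with the tighter slack. Separately, the cascade bound itself is asserted rather than proved: the crude charge that the total loss on $B_j$ is at most $\binom{k}{j}$ times the number of removed edges, with each flagged $j'$-set removing at most $\delta\binom{t-j'}{k-j'}$ edges, normalises to $y\le 2^{k+1}\eps/\alpha+(2^{k+1}\delta/\alpha)\,y$, whose coefficient exceeds $1$ because $\alpha\ll\delta$, and is therefore vacuous. The right statement $|B_j|=O(\eps/\alpha)\binom{t}{j}$ does look attainable via a ``two-fixed-point'' argument (the one-set-at-a-time process cannot jump the gap between the roots of the underlying quadratic) together with careful bookkeeping of the cross-level terms $\deg_R(S\cup S')$ graded by $|S\cap S'|$, but the sentence ``a Markov-type bound then shows the number of cascading $T$'s is $O(\eps/\alpha^{c_j})\binom{t}{j}$'' is a statement of the goal, not a proof of it.
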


We postpone the proofs of these results to the following subsections and continue by showing  the Vicinity Theorem.

\begin{proof}[Proof of Theorem~\ref{thm:hamilton-vicinities}]
	Let $\delta = \hv_d(k)$  and $\eps,\alpha,\gamma >0$ with $t_0 \in \NATS$ such that
	\[\frac{1}{t} \ll \eps \ll \alpha \ll \alpha' \ll \gamma \ll \mu \ll \delta,\frac{1}{k}.\]
	More precisely, we choose these constants such that $t,\eps,\alpha,\mu$ are compatible with the constant hierarchy given by the definition of the {minimum}  {$d$-degree}  {threshold}  {for}  {$k$-uniform}  {Hamilton}  {vicinities} (Definition~\ref{def:vicinity-threshold}),
	moreover $t,\eps,2\alpha,\mu$ satisfy the conditions of Lemma~\ref{lem:space} and $t,\eps,\alpha,\delta$  satisfy the conditions of Lemma~\ref{lem:degree-cleaning}.
	
	Consider a $k$-graph $R$ on $t\geq t_0$ vertices with minimum relative $d$-degree at least $\delta+ 3\mu$ and a set $I$ of at most $\eps \binom{t}{k}$ perturbed edges.
	In order to bound the 	{minimum}  {$d$-degree}  {threshold}  {for}  {$k$-uniform}  {Hamilton}  frameworks (Definition~\ref{def:framework-threshold}), we need to show that $R$ contains an $(\alpha, \gamma, \delta)$-Hamilton framework $H$ that avoids all edges of $I$.
	
	We start by selecting a subgraph of $R$, whose degree properties are well-behaved.
	By Lemma~\ref{lem:degree-cleaning}, there exists a vertex spanning subgraph $R' \subseteq R - I$ of $\alpha$-perturbed minimum relative $d$-degree at least $\delta+ 2\mu$.
	By~\ref{itm:perturbed-degree-shadow-almost-complete} of perturbed degrees (Definition~\ref{def:perturbed-degree}) we have $\overline{\partial_1(R')} \leq  \alpha t$, which is equivalent to say that $R'$ has at most $\alpha t$ isolated vertices.
	Let $R''$ be obtained from $R'$ by deleting its isolated vertices.
	It follows that $R''$ has still $(2\alpha)$-perturbed minimum relative $d$-degree at least $\delta+\mu$ and at least $(1-\alpha)t$ vertices, none of which is isolated.
	
	By the definition of the Hamilton vicinity threshold, $R''$ has a $(2\gamma, \delta)$-Hamilton $d$-vicinity $\cC=(C_S)_{S \in \partial_d(R)}$ which generates a graph $H$.
	Note that, by construction, $H$ does not have any edge in common with $I$.
	Moreover, $V(H) = V(R'')$, and thus $H$ spans at least $(1 - \alpha)t$ vertices, so~\ref{def:framework-spanning} of Definition~\ref{def:framework} holds.
	By Lemma~\ref{lem:connectivity} and~\ref{lem:space} the generated graph $H$ also satisfies~\ref{def:framework-connectivity}--\ref{def:framework-space} of Definition~\ref{def:framework}.
	
	It remains to show that $H$ has property~\ref{def:framework-degree}.
	Recall that $R''$ has no isolated vertices and $R''$ has $(2\alpha)$-perturbed minimum relative $d$-degree at least $\delta + \mu$.
	By repeatedly applying Definition~\ref{def:perturbed-degree}~\ref{itm:perturbed-degree-lower-levels}, we deduce  that each $v \in V(R'')$ is contained in at least $(1 - 2 \alpha)^{d-1} \binom{v(R'')-1}{k-1} \ge (1 - 2 d \alpha) \binom{v(R'')-1}{k-1}$ many $d$-sets in $\partial_d(R'')$.
	Since $\partial_d(R'') = \partial_d(H)$, this implies that each $v \in V(H)$ has relative degree at least $1 - 2 d \alpha$ in the $d$-graph $\partial_d(H)$.
	Moreover, every $d$-set in $\partial_d(H)$ has relative degree at least $1 - \delta + 2 \gamma$ in $H$, because $H$ was generated from a $(2\gamma, \delta)$-Hamilton $d$-vicinity and Definition~\ref{def:vicinity-Hamilton}~\ref{def:vicinity-density}.
	Combining this with $\alpha \ll \gamma, 1/k$, we obtain that $H$ satisfies $\delta^\ast_1(H) \ge (1 - 2 d \alpha)(1 - \delta + 2 \gamma) \ge 1 - \delta + \gamma$, which implies property~\ref{def:framework-degree}.
	Thus, $H$ is an $(\alpha, \gamma, \delta)$-framework.
\end{proof}

The remainder of this section is dedicated to the proofs of Lemma~\ref{lem:connectivity},~\ref{lem:space} and~\ref{lem:degree-cleaning}.
The arguments consist of a series of isolated observations.
In order to preserve the overall narrative, we will use the following naming conventions:
\begin{center}
	\begin{tabular}{ c  *{1}{|l} }
		symbol & role in the final proof\\
		\hline
		$R$, $d$, $k$, $t$ & basic $k$-graph with $t$ vertices and minimum large $d$-degree
		\\ 	\hline $H$ & subgraph of $R$, candidate for a Hamilton framework
		\\ 	\hline $L(S), \ell$ & link graph in $R$ of a $d$-set $S$ of $V(R)$, is an $\ell$-graph with $\ell =k-d$
		\\ 	\hline $C$ & subgraph of an $L(S)$, element of a (Hamilton) $d$-vicinity
	\end{tabular}
\end{center}

\subsection{Connectivity and divisibility}\label{sec:connected+dense->odd-cycle}
In this section, it is shown how the connectivity and divisibility properties of Hamilton vicinities (Definition~\ref{def:vicinity-Hamilton}) imply the respective properties of Hamilton frameworks (Definition~\ref{def:framework}).
More precisely, we prove Lemma~\ref{lem:connectivity}.

We start by introducing a directed version of tight connectivity.
This is followed by a few technical observations about switchers (Proposition~\ref{prop:switcher->tightly-connected}--\ref{prop:switcher->closed-tight-walk-mod-k}) and the connectivity of graphs generated by vicinities (Proposition~\ref{prop:inner-walk}--\ref{prop:outer-walk-multi-step}), which lead into the proof of Lemma~\ref{lem:connectivity}.
Finally, we discuss a few possibilities of generalising the notion of vicinities with regards to connectivity and divisibility.

\newcommand{\ord}[1]{{\overrightarrow{#1}}}
Tight cycles and walks visit the vertices of edges in certain orders.
While tight connectivity guarantees that there is tight walk between any two edges, it does not tell us anything about the vertex ordering in which these edges are connected.
To be more precise about this, we define a \emph{directed edge} in a $k$-graph to be a $k$-tuple whose vertices correspond to an \emph{underlying} edge.
For instance the directed edges $(a,b,c)$, $(c,b,a)$ share the underlying edge $\{a,b,c\}$.
The next definition is a strengthening of tight connectivity.

\begin{definition}[Strong connectivity]\label{def:strongtconnectivity}
	A uniform hypergraph is \emph{strongly connected}, if every two directed edges are on a common tight walk.
\end{definition}

In the following, we often have to combine several tight walks to a longer one.
Let us introduce some notation to express this.
Given two sequences of vertices $A=(a_1,\dotsc,a_s)$ and $B=(b_1,\dotsc, b_q)$, we denote their \emph{concatenation} by \[AB =(a_1,\dotsc,a_s)(b_1,\dotsc, b_q) = (a_1,\dotsc,a_s,b_1,\dotsc, b_q).\]
If $m \ge 3$ and $A_1, A_2, \dotsc, A_m$ are sequences of vertices, then we iteratively define $A_1 \dotsm A_m = A_1 ( A_2 \dotsm A_m )$, as usual.

Since we have defined tight walks as sequences of vertices (Definition~\ref{def:tight-walk}), this allows us to talk about concatenation of tight walks.
But note that even if $A=(a_1,\dots,a_s)$ and $B=(b_1,\dots, b_q)$ are tight walks in a $k$-graph $R$, it does not follow from the definition that the sequence $AB$ is a tight walk (since for some $1 \leq i \leq k-1$, the edge $\{ a_{s - k + i + 1}, \dotsc, a_{s}, b_1, \dotsc, b_i \}$ might not be present in $R$).
However, this will never be an issue in all of our uses of this notation, and we will always be clear to mention that a given concatenation of tight walks is a tight walk if that is true in the given context.

Now let us see how switchers can help us to control the ordering of tight walks and, together with this, their length.
We start by recalling the definition.

\defswitcher*

As mentioned above, strong connectivity implies tight connectivity, but the converse is in general not true.
However, tightly connected hypergraphs which contain a switcher (Definition~\ref{def:switcher}) are also strongly connected as the next proposition shows.
Informally speaking, a switcher allows us to switch the direction of an edge in a closed walk, hence the name.

\begin{proposition}\label{prop:switcher->tightly-connected}
	Let $C$ be a tightly connected $\ell$-graph that contains a switcher.
	Then $C$ is strongly connected.
\end{proposition}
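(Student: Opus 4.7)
My plan is to work in the auxiliary digraph $\tilde C$ whose vertices are the directed edges of $C$---i.e., orderings $(x_1, \dots, x_\ell)$ of edges of $C$---with an arc $(x_1, \dots, x_\ell) \to (x_2, \dots, x_\ell, y)$ whenever $\{x_2, \dots, x_\ell, y\}$ is an edge of $C$. Tight walks in $C$ correspond exactly to directed walks in $\tilde C$, so proving that $\tilde C$ is strongly connected as a digraph yields the proposition. I will make repeated use of the orientation-reversing involution $\iota \colon (y_1, \dots, y_\ell) \mapsto (y_\ell, \dots, y_1)$, which is an arc-reversing automorphism of $\tilde C$.

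The first half of the argument is a local statement: all $\ell!$ orientations of the switcher edge $A$ lie in a common strongly connected component $M$ of $\tilde C$. Two types of permutations on orientations can always be realised through tight walks. The cyclic shift (an $\ell$-cycle on positions) is obtained from the trivial closed walk $(x_1, \dots, x_\ell, x_1)$. A switcher permutation is obtained by using the vertex $c_2$ given by the switcher condition for $b = v_2$ to construct the tight walk
\[
(v_1, v_2, v_3, \dots, v_\ell, c_2, v_1, v_3, v_4, \dots, v_\ell, v_2),
\]
whose every $\ell$-subinterval is one of $A$, $\{v_2, \dots, v_\ell, c_2\}$, or $(A \cup \{c_2\}) \setminus \{v_2\}$---all edges of $C$. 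This walk sends $(v_1, \dots, v_\ell)$ to $(v_1, v_3, v_4, \dots, v_\ell, v_2)$, realising an $(\ell{-}1)$-cycle on positions $\{2, \dots, \ell\}$. Since an $\ell$-cycle together with an $(\ell{-}1)$-cycle fixing one element generates $S_\ell$, and since $\iota$ turns each forward walk into a backward walk between reversed orientations, all orientations of $A$ are mutually reachable in $\tilde C$ and so share a single SCC $M$.

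The second half propagates this to every edge of $C$ using tight connectivity. The key observation is: if $e, f \in E(C)$ satisfy $|e \cap f| = \ell - 1$, say $e = \{z_1, \dots, z_{\ell-1}, x\}$ and $f = \{z_1, \dots, z_{\ell-1}, y\}$, then for every $\pi \in S_{\ell-1}$ there is an arc in $\tilde C$ from the orientation $(x, z_{\pi(1)}, \dots, z_{\pi(\ell-1)})$ of $e$ to $(z_{\pi(1)}, \dots, z_{\pi(\ell-1)}, y)$ of $f$. As $\pi$ varies, the $(\ell{-}1)!$ targets hit precisely one representative from each cyclic orbit of orientations of $f$; combined with cyclic shifts on $f$, every orientation of $f$ becomes reachable from some orientation of $e$. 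Tight connectivity supplies a chain of edges from $A$ to any other edge of $C$ with consecutive members sharing $\ell - 1$ vertices, and iterating along this chain shows every directed edge of $C$ is reachable from $M$. Applying $\iota$ and using that $M$ is $\iota$-invariant (it comprises all orientations of $A$, which $\iota$ permutes) yields reachability in the opposite direction, so every directed edge in fact lies in $M$, completing the proof.

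The main obstacle I anticipate is the local step: it requires a careful index-level verification that the walk above is tight, together with the group-theoretic fact that an $\ell$-cycle and an $(\ell{-}1)$-cycle fixing a point generate $S_\ell$. Should the latter claim need more care for some $\ell$, the remaining switcher vertices $c_3, \dots, c_\ell$ would furnish additional permutations which should comfortably suffice to generate $S_\ell$.
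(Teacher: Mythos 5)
Your framework — the auxiliary digraph $\tilde C$, the reversal automorphism $\iota$, and the propagation of reachability along a tight walk through edges sharing $\ell-1$ vertices — is a clean way to set things up, and the second half of your proof (propagating from the orientations of $A$ to all of $C$) is correct, given the first. The gap is in the local step. Your switcher walk tacitly requires $v_1 = a$: the window $(v_2, \dots, v_\ell, c_2)$ has underlying set $(A \cup \{c_2\}) \setminus \{v_1\}$, and the switcher condition only guarantees this is an edge when $v_1$ is the central vertex. So the $(\ell-1)$-cycle move is available only from orientations of $A$ with $a$ in the first position, whereas the $\ell$-cycle (cyclic shift) is always available. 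The fact that these two permutations generate $S_\ell$ as a group does \emph{not} yield mutual reachability in $\tilde C$, because a word in the generators may call for the $(\ell-1)$-cycle at an orientation where $a$ is not in position one, where it is simply inapplicable. Concretely, starting from $(a, v_2, \dots, v_\ell)$, the orientations of $A$ your two moves can reach are precisely the cyclic shifts of $(a, v_2, \dots, v_\ell)$, $(a, v_3, \dots, v_\ell, v_2)$, \dots, $(a, v_\ell, v_2, \dots, v_{\ell-1})$ — that is $\ell(\ell-1)$ of the $\ell!$ orientations. Already for $\ell = 4$ you cannot reach $(a, v_2, v_4, v_3)$ from $(a, v_2, v_3, v_4)$. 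The observation about $\iota$ does not close the gap: $\iota$ is an arc-reversing automorphism of $\tilde C$, so it permutes the strongly connected components on the orientations of $A$ rather than merging any two of them.

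Your closing remark does point in the right direction, but the fix is not ``more permutations'' in the abstract — they must be applicable when you are in a position to apply them. The paper's proof realises, for each $j$, the transposition of $a$ at position one with the vertex at position $j$, via a longer walk $\cA\cC\cB$ of $3\ell$ vertices built from the switcher vertex $c_j$ for $b = v_j$. After any one such move, a cyclic shift brings $a$ back to position one, so the moves compose. Writing $\rho_j$ for the resulting permutation of the tail $(v_2,\dots,v_\ell)$ (the transposition followed by the shift), one checks that $\rho_2$ equals your $(\ell-1)$-cycle $\tau$, while $\rho_2^{-1}\rho_3$ is an $(\ell-2)$-cycle fixing $v_2$; these two generate $S_{\ell-1}$, and since the group is finite, inverses are realised as positive powers and the digraph on the orientations of $A$ is then genuinely strongly connected. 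Your shorter $(2\ell+1)$-vertex walk realises only $\rho_2$, which is exactly why the argument does not close for $\ell \ge 4$.
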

\begin{proof}
	Let $A$ be a switcher in $C$ with central vertex $a \in A$.
	More precisely, $A$ is an edge in $C$ and, for every $b \in A$, the $(\ell-1)$-sets $A \setminus \{ a \}$ and $A \setminus \{ b \}$ share a neighbour in $C$.
	We show that any two directed edges with underlying edge $A$ are on a tight walk.
	To see how this implies that $C$ is strongly connected, consider any two directed edges $\cB_1$ and $\cB_2$ of $C$.
	Since $H$ is tightly connected, there are walks $W_1$ and $W_2$ starting in $\cB_1$ and $\cB_2$ respectively and ending with the edge $A$.
	By the above, there is a tight walk $W$ starting and ending in $A$ such that $W_1WW_2$ is a tight walk.
	
	It remains to prove that any two directed edges $\cA$ and $\cB$ with underlying edge $A$ are on on a tight walk.
	Note that it suffices to discuss the case where $\cA$ and $\cB$ differ in exactly two coordinates $i$ and $j$, with $i <j$.
	Moreover, we can assume that the $i$-th entry of $A$ is the central vertex $a$.
	Let $b$ denote  the $j$-th entry of $\cA$ and $\cB$, respectively.
	By definition of a switcher, there is a vertex $c \notin A$ such that $(A \cup \{c\})\sm \{a\}$ and $(A \cup \{c\})\sm \{b\}$ are edges of $H$.
	Let $\cC$ be the $k$-tuple obtained from $\cA$ by replacing the $i$-th entry with $c$ and the $j$-th entry with $a$.
	We also denote $\cA=(x_1,\dots,{a},\dots,{b},\dots x_k)$.
	It  follows that the concatenation
	\begin{align*}
	{\cA\cC\cB=(x_1,\dots,{a},\dots,{b},\dots x_k)(x_1,\dots,{c},\dots,{a},\dots x_k)(x_1,\dots,{b},\dots,{a},\dots ,x_k)}
	\end{align*}
	is a tight walk as desired.
\end{proof}

Recall that the length of a tight walk is the number of its vertices.
The next proposition tells us that a switcher can  also be used to control the divisibility properties of a tight walk.
\begin{proposition}\label{prop:switcher->closed-tight-walk-mod-k}
	Let $C$ be a tightly connected $\ell$-graph that contains a switcher.
	Then $C$ has a closed tight walk of length $-1 \bmod \ell$.
\end{proposition}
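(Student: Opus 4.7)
The plan is to construct an open tight walk from an ordering $\cA$ of the switcher edge $A$ to its one-step cyclic shift $\cA'$ whose length is divisible by $\ell$, and then close it up cyclically to obtain a closed tight walk one vertex shorter, of length $\equiv -1 \pmod \ell$. The observation driving everything is that the cyclic shift by one position can be written as the product of $\ell - 1$ adjacent transpositions, each of which moves the central vertex $a$, so each individual transposition can be realised by the length-$3\ell$ walk from the proof of Proposition~\ref{prop:switcher->tightly-connected}.

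Concretely, write $A = \{y_1, \ldots, y_\ell\}$ with central vertex $a = y_1$, and set $\cA = (y_1, \ldots, y_\ell)$ and $\cA' = (y_2, y_3, \ldots, y_\ell, y_1)$. Starting from $\cA_0 = \cA$, define $\cA_k$ inductively by swapping positions $k$ and $k+1$ of $\cA_{k-1}$ (which moves $a$ one step to the right), so that $\cA_{\ell - 1} = \cA'$. Since each transposition involves $a$, the construction $\cA_{k-1} \cC_{k-1} \cA_k$ from the proof of Proposition~\ref{prop:switcher->tightly-connected} yields a tight walk $W_k$ of length $3\ell$ from $\cA_{k-1}$ to $\cA_k$.

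Consecutive walks share the directed edge $\cA_k$, so they concatenate to form a tight walk $W = W_1 W_2 \cdots W_{\ell - 1}$ from $\cA$ to $\cA'$ of length $L = (\ell - 1) \cdot 3\ell - (\ell - 2) \cdot \ell = \ell (2\ell - 1)$, which is divisible by $\ell$. Writing $W = (v_1, \ldots, v_L)$, the first $\ell$ vertices form $\cA$ and the last $\ell$ form $\cA'$, so in particular $v_L = y_1 = v_1$. Dropping $v_L$ yields a cyclic sequence $W' = (v_1, \ldots, v_{L-1})$ of length $L - 1 \equiv -1 \pmod \ell$.

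To verify that $W'$ is a closed tight walk, note that the windows at positions $1, \ldots, L - \ell$ are inherited from $W$, while each of the $\ell - 1$ wrap-around windows at positions $L - \ell + 1, \ldots, L - 1$ is, as a set, a cyclic rotation of $(y_1, \ldots, y_\ell)$ and hence equals the edge $A$. The main obstacle in this plan is the initial observation that the cyclic shift must be decomposed into $\ell - 1$ transpositions each involving the central vertex $a$; this is precisely what makes the length of $W$ divisible by $\ell$ and yields the desired residue after the one-vertex closing.
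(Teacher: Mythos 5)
Your proof is correct, and it takes a genuinely different route from the paper. You factor the one-step cyclic shift of $\cA = (y_1,\dots,y_\ell)$ into $\ell - 1$ adjacent transpositions, each involving the central vertex $a = y_1$, and realise each transposition by the length-$3\ell$ building block $\cA_{k-1}\cC_{k-1}\cA_k$ from the proof of Proposition~\ref{prop:switcher->tightly-connected}; then you close up the resulting walk by deleting the duplicated final vertex. The paper instead writes down a bespoke closed walk $\cA_0\cA_1\dotsm\cA_{\ell-1}$ of length $\ell^2 - 1$, inserting one ``switch'' vertex $b_{i+1}$ per block, and verifies the tight-walk property by hand. The trade-off is clear: your proof is more modular and reuses the already-verified building block, so the only new verification is the routine observation that the $\ell-1$ wrap-around windows all equal $A$; the price is a longer walk of length $\ell(2\ell - 1) - 1$ and the small amount of bookkeeping for the overlap in the concatenation. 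The paper's construction is shorter and more explicit but requires a fresh combinatorial check of the windows. Both correctly achieve the residue $-1 \bmod \ell$.

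Two minor points worth noting. First, the boundary case $\ell = 1$ (where the permutation decomposition degenerates) should be set aside explicitly, as the paper does; the statement is trivially true there. Second, your appeal to ``the construction $\cA_{k-1}\cC_{k-1}\cA_k$'' relies on the fact that the transposed positions $k, k+1$ satisfy $k < k+1$ and that position $k$ of $\cA_{k-1}$ holds $a$ --- both of which do hold by your choice of decomposition, but it is worth making the check visible since that is exactly the hypothesis under which the building block was derived.
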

\begin{proof}
	The statement is trivial for $\ell =1$.
	Similarly, for $\ell =2$, the switcher generates a triangle, which covers this case.
	This allows us to assume that $\ell \geq 3$.
	Let $A=\{a_1,\dots,a_\ell\}$ be a switcher in $C$ with central vertex $a_1$.
	Hence, $A \sm \{a_1\}$ and $A \sm \{a_i\}$ share a neighbour $b_i$ for each $2 \leq i \leq   \ell$.
	We define $\cA_0=(a_1,\dots,a_\ell)$, $\cA_1=(b_2,a_1)(a_3,a_4,\dots,a_\ell)$ and, for $2 \leq i \leq \ell-2$,
	\begin{align*}
	\cA_i=	(a_2,\dots,a_{i})(b_{i+1},a_1)(a_{i+2},\dots,a_{\ell}).
	\end{align*}
	Finally, let $\cA_{\ell-1}=(a_2,\dots,a_{\ell-1},b_{\ell-1})$.
	Note that, $\cA_0,\dots,\cA_{\ell-2}$ are each $\ell$-tuples, while $\cA_{\ell-1}$ is an $(\ell-1)$-tuple.
	We claim that the concatenation $W:=\cA_0\cA_1\dotsm\cA_{\ell-1}$ is a tight walk.
	This follows because the $\ell-1$ vertices to the left of each $b_{i+1}$ consist of $A\sm \{a_1\}$ and the $\ell-1$ vertices to the right of each $b_{i+1}$ consist of $A\sm \{a_{i}\}$.
	Since $W$ has length $-1 \bmod \ell$, the proposition follows.
\end{proof}

In the following, we will often use the names of directed edges to refer to their underlying edges.
For instance, for a directed edge $\cX$ with underlying edge $X$, we will write $C_\cX$ meaning $C_X$, where $C_X$ is an element of a vicinity.
Or we will choose a directed edge $\cY$ in a set of (undirected) edges $E$, meaning that $\cY$ is (some) directed edge with underlying edge in $E$.

We now turn to connecting up edges of graph $H$ generated by a vicinity $\cC$.
The next proposition states that we can navigate within $H$ by looping around an element of $\cC$.

\begin{proposition}\label{prop:inner-walk}
	For $1 \leq d \leq k-1$, let $R$ be $k$-graph with a subgraph $H$ that is generated by a $d$-vicinity $\cC$.
	Suppose that $\cC$ satisfies conditions~\ref{def:vicinity-inner-connectivity} and~\ref{def:vicinity-divisibility}  of Lemma~\ref{lem:connectivity}.
	Consider a directed edge $\cS \in \partial_d(R)$ and two directed edges $\cA,\cB \in C_S$.
	Then there is a tight walk $W$ in $H$ starting with $\cS\cA$ and ending with $\cS\cB$.
	Moreover, $W$ has length $0 \bmod k$.
\end{proposition}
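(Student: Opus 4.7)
The plan is to construct the desired walk $W$ in $H$ by interleaving the vertices of $\cS$ with those of a carefully chosen tight walk in the link graph $C_S$. Since $C_S$ is tightly connected by~\ref{def:vicinity-inner-connectivity} and contains a switcher by~\ref{def:vicinity-divisibility}, Proposition~\ref{prop:switcher->tightly-connected} shows that $C_S$ is in fact \emph{strongly} connected, and so there is a tight walk in $C_S$ whose starting directed edge is $\cA$ and whose ending directed edge is $\cB$.

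Setting $\ell := k - d$, I first adjust this walk so that its length becomes divisible by $\ell$. By Proposition~\ref{prop:switcher->closed-tight-walk-mod-k}, $C_S$ contains a closed tight walk $\cW'$ of length $\equiv -1 \bmod \ell$; traversing $\cW'$ a total of $i$ times yields a closed tight walk of length $\equiv -i \bmod \ell$, and letting $i$ range over $\{1,\dotsc,\ell\}$ covers every residue class modulo $\ell$. Using the strong connectivity of $C_S$ to route from $\cB$ to such a closed walk and back, I splice a closed walk of the required length into the initial walk from $\cA$ to $\cB$, producing a tight walk $\cW = v_1 v_2 \cdots v_m$ in $C_S$ from $\cA$ to $\cB$ with $\ell \mid m$. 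Writing $m = P\ell$, I define the $\ell$-tuples $\cX_p := (v_{(p-1)\ell + 1}, \dotsc, v_{p\ell})$ for $p = 1, \dotsc, P$; in particular $\cX_1 = \cA$ and $\cX_P = \cB$.

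Now set
\[ W := \cS \cX_1 \cS \cX_2 \cdots \cS \cX_P, \]
a sequence of $Pk \equiv 0 \bmod k$ vertices whose first $k$ entries form $\cS \cA$ and whose last $k$ entries form $\cS \cB$. It remains to verify that every length-$k$ window of $W$ is an edge of $H$. Any such window spans at most one boundary between consecutive blocks $\cS \cX_p$ and $\cS \cX_{p+1}$, and hence contains all $d$ vertices of $S$ together with precisely $\ell$ vertices drawn consecutively from $\cW$. A short case analysis on the starting position of the window within a period shows that these $\ell$ vertices always form a set of the shape $\{v_i, v_{i+1}, \dotsc, v_{i+\ell-1}\}$, which is an $\ell$-window of the tight walk $\cW$ and therefore an edge of $C_S$; its union with $S$ is then an edge of $H$ by the definition of the graph generated by $\cC$.

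The main obstacle is the divisibility step $\ell \mid m$: without it, the last block $\cX_P$ would not coincide with $\cB$ and $W$ would fail to end in $\cS \cB$ as required. This is precisely where the switcher hypothesis~\ref{def:vicinity-divisibility} enters via Proposition~\ref{prop:switcher->closed-tight-walk-mod-k}; strong connectivity alone from~\ref{def:vicinity-inner-connectivity} would produce a tight walk from $\cA$ to $\cB$ but would give no control over its length modulo $\ell$.
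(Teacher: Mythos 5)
Your proof is correct and takes essentially the same approach as the paper's: build a tight walk in $C_S$ from $\cA$ to $\cB$ whose length is divisible by $\ell=k-d$ (using the switcher to adjust the length modulo $\ell$ via Proposition~\ref{prop:switcher->closed-tight-walk-mod-k} and strong connectivity via Proposition~\ref{prop:switcher->tightly-connected}), then interleave copies of $\cS$ to lift it to a tight walk in $H$. The only cosmetic difference is that you splice the length-adjusting closed walk at $\cB$ as a detour, whereas the paper routes the walk through the closed walk first and then pumps copies of it; and your verification that every $k$-window of the interleaved sequence is an edge of $H$ is slightly more explicit than the paper's.
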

\begin{proof}
	We denote $\cC= (C_S)_{S \in \partial_d(R)}$.
	By~\ref{def:vicinity-divisibility} and Proposition~\ref{prop:switcher->closed-tight-walk-mod-k}, there is a tight closed walk $W_1$ of length $-1 \bmod (k-d)$ in $C_S$.
	By~\ref{def:vicinity-inner-connectivity} and Proposition~\ref{prop:switcher->tightly-connected}, there is a tight walk $W_2$ in $C_S$ that starts with $\cA$, ends with $\cB$ and contains $W_1$ as a subwalk.
	Let $q$ denote the length of $W_2$.
	We obtain another tight walk $W_3$ from $W_2$ by replacing $W_1$ with the concatenation of $q+1 \bmod (k-d)$ copies of $W_1$.
	Hence $W_3$ is a tight walk in $C_S$, which starts with $\cA$, ends with $\cB$ and has length $0 \bmod (k-d)$.
	
	Suppose that $W_3$ has length $m(k-d)$ and write $W_3=(a_1,\dots,a_{m(k-d)})$.
	For each $1 \leq i \leq m-1$, let $W'_i = \cS (a_{m\cdot i+1},\dots,a_{m\cdot i+d-k})$.
	Note that each $W'_i$ (as an unordered tuple) is an edge of $H$, and also note that $W'_0 = \cS \cA$ and $W'_{m-1} = \cS \cB$.
	We define $W$ by	
	\begin{align*}
	W = W'_0 W'_1 \dotsm W'_{m-1}.
	\end{align*}
	Note that $W$ is a tight walk in $H$, starting with $\cS\cA$ and ending with $\cS\cB$.
	Moreover, $W$ has length $0 \bmod k$.
\end{proof}

The next two propositions allow us two navigate between different elements of a vicinity $\cC$.

\begin{proposition}\label{prop:outer-walk-one-step}
	For $1 \leq d \leq k-1$, let $R$ be $k$-graph with a subgraph $H$ that is generated by a $d$-vicinity $\cC= (C_S)_{S \in \partial_d(R)}$.
	Suppose that $\cC$ satisfies conditions~\ref{def:vicinity-inner-connectivity}--\ref{def:vicinity-divisibility} of of Lemma~\ref{lem:connectivity}.
	Consider directed edges $\cS,\cT \in \partial_d(R)$ and $\cA \in C_\cS$, $\cB \in C_\cT$.
	Suppose that $\cS$ and $\cT$ differ in exactly one coordinate.
	Then there is a tight walk $W$ in $H$ starting with $\cS\cA$ and ending with $\cT\cB$.
	Moreover, $W$ has length $0 \bmod k$.
\end{proposition}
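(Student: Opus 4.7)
The plan is to stitch together two invocations of Proposition~\ref{prop:inner-walk} via an explicit bridge of $k$ new vertices. First, because $\cS$ and $\cT$ differ in exactly one coordinate, say the $j$-th, the underlying unordered sets satisfy $|S\cap T|=d-1$, so the hypothesis (V2) of Lemma~\ref{lem:connectivity} supplies a $(k-d)$-edge $D\in C_S\cap C_T$; I fix an arbitrary orientation $\mathcal{D}=(d_1,\dots,d_{k-d})$ of $D$.

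Next I apply Proposition~\ref{prop:inner-walk} inside $C_S$ to obtain a tight walk $W_1$ in $H$ from $\cS\cA$ to $\cS\mathcal{D}$ of length $0\bmod k$, and apply it inside $C_T$ to obtain a tight walk $W_2$ in $H$ from $\cT\mathcal{D}$ to $\cT\cB$, also of length $0\bmod k$. Writing $\cT=(t_1,\dots,t_d)$, I bridge $W_1$ and $W_2$ by appending, in order, the $k$ vertices $t_1,t_2,\dots,t_d,d_1,d_2,\dots,d_{k-d}$ to the end of $W_1$. This choice is engineered so that the final sliding $k$-window after the extension is precisely $\cT\mathcal{D}$, matching the first window of $W_2$.

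The step I expect to need the most care is verifying that every sliding window along the bridge belongs to $E(H)$. Using $t_i=s_i$ for all $i\neq j$, one checks that each such window has underlying set either $S\cup D$ (this covers the windows in which only bridge vertices with index $i<j$ have been appended) or $T\cup D$ (this covers every window from the moment $t_j$ first enters, and in particular all windows generated after the last $t$-vertex $t_d$ has been placed). Because $D\in C_S\cap C_T$, both $S\cup D$ and $T\cup D$ lie in $E(H)$ by the definition of the generated graph, so the bridge is a valid tight extension of $W_1$.

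Finally, I concatenate $W_1$, the bridge, and $W_2$ with its first $k$ vertices removed (they coincide with the last window of the bridge). The result is a tight walk $W$ in $H$ from $\cS\cA$ to $\cT\cB$, and its total length is $\ell(W_1)+k+(\ell(W_2)-k)=\ell(W_1)+\ell(W_2)\equiv 0\bmod k$, as required.
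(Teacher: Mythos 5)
Your proof is correct and follows essentially the same route as the paper's: both obtain a common edge $\cQ\in C_S\cap C_T$ from the outer-connectivity hypothesis, invoke Proposition~\ref{prop:inner-walk} for $W_1$ (from $\cS\cA$ to $\cS\cQ$) and $W_2$ (from $\cT\cQ$ to $\cT\cB$), and concatenate; your ``bridge'' of $k$ vertices is precisely the first $k$ vertices of $W_2$, so your construction coincides with the paper's $W_1W_2$, and your window check reproduces the paper's verification that $\cS\cQ\cT$ is a tight walk.
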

\begin{proof}
	Let $\cS= (s_1,\dots,s_i,\dots,s_d)$ and $\cT= (s_1,\dots,t_i,\dots,s_d)$, where $s_i \neq t_i$.
	By property~\ref{def:vicinity-outer-connectivity} of Definition~\ref{def:vicinity-Hamilton}, there is a directed edge  $\cQ$ in $C_{\cS} \cap C_{\cT}$.
	It follows that 
	\begin{align}\label{equ:outer-walk-one-step}
	\cS\cQ\cT=(s_1,\dots,s_i,\dots,s_d)\cQ(s_1,\dots,t_i,\dots,s_d)
	\end{align}
	is a tight walk in $H$.
	Since $\cQ$ is in $C_{\cS} \cap C_{\cT}$ and by Proposition~\ref{prop:inner-walk}, there is a tight walk $W_1$ that starts with $\cS\cA$ and ends with $\cS\cQ$.
	Similarly, there is a tight walk $W_2$ starting with $\cT\cQ$ and ending with $\cT\cB$.
	Moreover, both $W_1$ and $W_2$ have length $0 \bmod k$.
	It follows by equation~\eqref{equ:outer-walk-one-step} that $W_1W_2$ is tight walk, as desired.
\end{proof}

\begin{proposition}\label{prop:outer-walk-multi-step}
	Let $R$ be a $k$-graph with a subgraph $H$ that is generated by a $d$-vicinity $\cC= (C_S)_{S \in \partial_d(R)}$.
	Suppose that $\cC$ satisfies conditions~\ref{def:vicinity-inner-connectivity}--\ref{def:vicinity-divisibility}  of  Lemma~\ref{lem:connectivity}.
	Consider directed edges $\cS,\cT \in \partial_d(R)$ and $\cA \in C_\cS$, $\cB \in C_\cT$.
	Then there is a tight walk $W$ starting with $\cS\cA$ and ending with $\cT\cB$.
	Moreover, $W$ has length $0 \bmod k$.
\end{proposition}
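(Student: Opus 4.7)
My plan is to reduce Proposition~\ref{prop:outer-walk-multi-step} to iterated applications of Proposition~\ref{prop:outer-walk-one-step}. Given $\cS, \cT \in \partial_d(R)$ and $\cA \in C_\cS$, $\cB \in C_\cT$, I aim to construct a sequence $\cS = \cS_0, \cS_1, \dotsc, \cS_m = \cT$ of directed $d$-tuples in $\partial_d(R)$ with consecutive terms $\cS_{i-1}, \cS_i$ differing in exactly one coordinate. For each intermediate $i$, I pick an arbitrary $\cA_i \in C_{\cS_i}$, which is available because each $C_{\cS_i}$ is non-empty (it contains a switcher by hypothesis~(V3)), and set $\cA_0 = \cA$, $\cA_m = \cB$. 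Applying Proposition~\ref{prop:outer-walk-one-step} to each consecutive pair $(\cS_{i-1}, \cA_{i-1})$, $(\cS_i, \cA_i)$ produces a tight walk $W_i$ in $H$ of length $0 \bmod k$, starting with $\cS_{i-1}\cA_{i-1}$ and ending with $\cS_i\cA_i$. The concatenation $W := W_1 W_2 \dotsm W_m$ is then a tight walk in $H$ from $\cS\cA$ to $\cT\cB$ whose total length is $0 \bmod k$.

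The principal obstacle is establishing the chain's existence. My plan is to argue that the auxiliary graph $\Gamma$ on directed $d$-tuples in $\partial_d(R)$ whose edges correspond to single-coordinate changes is connected, by combining two observations. First, within any single $k$-edge $X \in E(R)$, all directed $d$-tuples on $d$-subsets of $X$ lie in a common component of $\Gamma$, since one can substitute one coordinate at a time using the vertices of $X$, with every intermediate $d$-set being a subset of $X$ and hence automatically in $\partial_d(R)$. Second, property~(V2) of Lemma~\ref{lem:connectivity} bridges between different $k$-edges: whenever $\cS, \cS' \in \partial_d(R)$ satisfy $|S \cap S'| = d-1$, the edge $Q \in C_\cS \cap C_{\cS'}$ exhibits overlapping $k$-edges $S \cup Q$ and $S' \cup Q$ of $R$, whose mutual $d$-subsets allow us to pass from a directed tuple anchored in the first edge to one anchored in the second through single-coordinate changes.

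Iterating these two kinds of moves---local navigation within a single $k$-edge, combined with (V2)-bridges between adjacent $k$-edges---should transform $\cS$ into $\cT$ in $\Gamma$ for arbitrary pairs in $\partial_d(R)$. Any residual issue, such as handling transpositions within a fixed underlying $d$-set, can be absorbed into the single-edge navigation inside $X = S \cup A$ by passing through a vertex of $A$, or alternatively by re-orienting via the intra-vicinity walks supplied by Proposition~\ref{prop:inner-walk}. Once the chain is secured, the iteration described in the first paragraph concludes the proof.
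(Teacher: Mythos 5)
Your reduction to chaining Proposition~\ref{prop:outer-walk-one-step} along a sequence of single-coordinate moves is sound as far as it goes: given a chain $\cS = \cS_0, \dotsc, \cS_m = \cT$ of directed $d$-tuples in $\partial_d(R)$ with consecutive tuples differing in one coordinate, each intermediate walk $W_i$ has length $0 \bmod k$, consecutive walks splice because they share the full $k$-window $\cS_i\cA_i$, and the total is a tight walk of length $0 \bmod k$ from $\cS\cA$ to $\cT\cB$. The genuine gap is the existence of the chain for arbitrary $\cS, \cT$. Your observations establish $\Gamma$-connectivity only between tuples whose underlying $d$-sets either lie in a common $k$-edge of $R$ or share exactly $d-1$ vertices; when $|S \cap T| < d-1$ neither observation supplies the next step, and nothing in the hypotheses guarantees on its own that $\partial_d(R)$ contains a sequence of intermediate $d$-sets, each sharing $d-1$ vertices with its neighbours, joining $S$ to $T$. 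The phrase ``should transform $\cS$ into $\cT$'' is where your proof stops; that sentence is precisely the content that must be established.

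The paper closes the gap by induction on the number $r$ of coordinates on which $\cS$ and $\cT$ disagree. It picks an edge $Q \in C_\cS \cap C_\cT$ (here the paper invokes the outer connectivity of Definition~\ref{def:vicinity-Hamilton}, where $C_S$ and $C_{S'}$ intersect for \emph{all} pairs, rather than the restricted form $|S\cap S'| = d-1$ appearing in the statement of Lemma~\ref{lem:connectivity} --- a discrepancy worth being aware of), chooses $q \in Q$ and an index $i$ where $\cS$ and $\cT$ differ, and forms $\cS', \cT'$ by replacing the $i$-th coordinate of each with $q$. Because $q \in Q$ and $Q$ lies in the link $L(S)$, the set underlying $\cS'$ is contained in the edge $S \cup Q$ of $R$, so $\cS' \in \partial_d(R)$, and likewise for $\cT'$; moreover $\cS'$ and $\cT'$ now disagree on only $r-1$ coordinates. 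Two applications of Proposition~\ref{prop:outer-walk-one-step} plus the induction hypothesis finish the proof. This is exactly the explicit construction of the chain your argument requires; substituting it for the hand-wave completes your proposal.
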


\begin{proof}
	Let $0 \leq r \leq d$ be the number of indices in which $\cS$ and $\cT$ differ (as ordered tuples).
	We prove the proposition by induction on $r$.
	If $r = 1$ then the result follows from Proposition~\ref{prop:outer-walk-one-step}.
	Hence, we assume $r \ge 2$ and that the result is known for $r-1$.
	
	By property~\ref{def:vicinity-outer-connectivity} of Definition~\ref{def:vicinity-Hamilton}, there exists an edge $Q$ in $C_\cS \cap C_\cT$.
	Let $q \in Q$ and note that $q \notin \cS,\cT$.
	Suppose $\cS$ and $\cT$ differ in an index $i$.
	We obtain $\cS'$ from $\cS$ and $\cT'$ from $\cT$ by replacing in both cases the $i$-th coordinate with $q$.
	
	Observe that $\cS' \in \partial_d(R)$ since the vertices of $\cS'$ are contained in $\cS\cC$ and $\cS \cup \cC$ is an edge of $R$.
	This allows us to pick a directed edge $\cA'$ in $C_{{\cS'}}$.
	By Proposition~\ref{prop:outer-walk-one-step}, there is a tight walk $W_1$ beginning with $\cS \cA$ and ending with $\cS' \cA'$.
	Analogously, we obtain a directed edge $\cB' \in C_{\cT'}$ and a walk $W_3$ that starts with $\cT' \cB'$ and ends in $\cT \cB$.
	Moreover, we can assume that the lengths of $W_1$ and $W_3$ are each $0 \bmod k$.
	
	By construction, $\cS'$ and $\cT'$ differ in $r-1$ coordinates.
	Hence, the induction hypothesis implies that there exists a walk $W_2$ of length $0 \bmod k$ that begins with $\cS' \cA'$ and ends with $\cT' \cA'$.
	It follows that the concatenation $W_1 W_2 W_3$ presents the desired walk.
\end{proof}

Let us now derive Lemma~\ref{lem:connectivity} from the above observations.
For the second part of the proof, we recall the definition of an arc.
\defarc*
\begin{proof}[Proof of Lemma~\ref{lem:connectivity}]
	We start by showing that $H$ is tightly connected.
	Denote the vicinity by $\cC= (C_S)_{S \in \partial_d(R)}$ and consider two arbitrary edges $X$ and $Y$ of $H$.
	Since $\cC$ generates $H$, we can partition $X= S \cup A$ and $Y = T \cup B$ such that $A \in C_S$ and $B \in C_T$.
	It follows by Proposition~\ref{prop:outer-walk-multi-step} that there is a tight walk starting with $X$ and ending with $Y$.
	
	Next, we show that $H$ contains a closed walk of length $1\bmod k$.
	By assumption, $\cC$ admits an arc $(v_1,\dots,v_{k+1})$.
	By Proposition~\ref{prop:outer-walk-multi-step}, there is a tight walk $W$ of length $0 \bmod k$ starting with $(v_2,\dots,v_{k+1})$ and ending with $(v_1,\dots,v_{k})$.
	By Definition~\ref{def:arc}, $W(v_{k+1})$ is a closed tight walk of length $1 \bmod k$.
\end{proof}

We conclude our analysis of connectivity and divisibility with two remarks.
Firstly, it is worth noting that to obtain the connectivity property~\ref{def:framework-connectivity} for $H$, it suffices that the (generating) vicinity $\cC$ has the inner and outer connectivity property~\ref{def:vicinity-inner-connectivity} and~\ref{def:vicinity-outer-connectivity}, the divisibility property~\ref{def:vicinity-divisibility} is not needed for this.
Moreover, the outer connectivity property~\ref{def:vicinity-outer-connectivity} can be relaxed further as follows.
Suppose that there exists a tight walk $W$ in the $d$-th shadow $\partial_d(R)$ that visits all $d$-edges of $\partial_d(R)$ such that $C_S$ and $C_{S'}$ intersect for every two consecutive $d$-tuples $S$ and $S'$ in $W$.
Under this assumption, it can be shown that $H$ is tightly connected.
Secondly, the divisibility property~\ref{def:vicinity-divisibility} can also be somewhat relaxed.
To find a tight closed walk of length $1 \bmod k$, it suffices that only the elements $C_S$ of the vicinity which correspond to the arc (and the walk from the arc's last $k$ vertices to the arc's first $k$ vertices) contain switchers.
We will not need these strenghtened versions of the lemmas, so we omit the proofs.

\subsection{Space}\label{sec:fractional-matching}
In this section, we show how the space property of Hamilton vicinities (Definition~\ref{def:vicinity-Hamilton}) implies the respective property of Hamilton frameworks (Definition~\ref{def:framework}).
More precisely, we prove Lemma~\ref{lem:space}.
To this end, let us recall the definition of robustly matchable hypergraphs.

\defrobustperfectmatching*


To prove Lemma~\ref{lem:space}, we will use techniques from linear programming.
In the following, $\vnorm{\cdot}$ denotes the $L_1$-norm.

\begin{definition}[$\bvec b$-fractional matching]\label{def:b-fractional-matching}
	Let $H$ be a $k$-graph with vertex weighting $\bvec b\colon V(H) \to [0, 1]$.
	A \emph{$\bvec b$-fractional matching} is a function $\bvec w\colon E(H) \to [0, 1]$ such that $\sum_{e \ni v} \bvec w(e) \leq \bvec b(v)$ for every vertex $v \in V(H)$.
	We say that $\bvec w$ is \emph{perfect}, if these inequalities are met with equality.
\end{definition}

The \emph{size} of  a fractional matching $\bvec w$ is $\vnorm{\bvec w } = \sum_{e \in E(H)} \bvec w(e)$.
To put this definition into context, we note that a $\gamma$-robustly matchable graph $H$ admits a $\bvec b$-fractional matching of size $\vnorm{\bvec b}/k$ for every  vertex weighting $\bvec b\colon V(H) \rightarrow [1-\gamma, 1]$.

The problem of finding a $\bvec b$-fractional matching of maximum size in a $k$-graph $H$ can be expressed as a linear program:
\begin{align*}
\text{maximise} \qquad& \vnorm{\bvec w}=\sum_{e \in {E(H)}} \mathbf{w}(e)   \nonumber \\
\text{subject to}  \qquad&   \text{$\mathbf{w}(e) \geq 0$ for all $e \in E(H)$,} \\
&  \text{$\sum_{e \ni v} \mathbf{w}(e) \leq \bvec{b}(v)$ for all $v \in V(H)$.}
\end{align*}
where $\bvec{w} \in \mathbb{R}^{E(H)}$.
Let $\nu(H, \bvec{b})$ be the optimal value of this problem, that is, the maximum of $\vnorm{\bvec w}$ attained by a $\bvec b$-fractional matching $\bvec w$ of $H$.
The dual linear program is
\begin{align*}
\text{minimise} \qquad& \bvec c \cdot \bvec b =\sum_{v \in {V(H)}} \mathbf{c}(v) \bvec{b}(v)   \nonumber \\
\text{subject to}  \qquad&   \text{$\mathbf{c}(v) \geq 0$ for all $v \in V(H)$,} \\
&  \text{$\sum_{v \in e} \mathbf{c}(v) \geq 1$ for all $e \in E(H)$,}
\end{align*}
where $\bvec c \in \mathbb{R}^{V(H)}$.
The feasible vectors $\mathbf{c} \in \mathbb{R}^{V(H)}$ of the dual program above are called \emph{fractional vertex covers of $H$}.
Let $\tau(H, \bvec{b})$ be the optimal value of the dual program, namely, the minimum $\bvec c \cdot \bvec b$ attained over all fractional vertex covers $\bvec c$ of $H$.
Note that by strong duality, we have $\nu(H, \bvec{b}) = \tau(H, \bvec{b})$.

The proof of the next proposition uses ideas of Alon, Frankl, Huang, Rödl, Ruci\'{n}ski and Sudakov~\cite[Proposition 1.1]{AFH+12}.

\begin{proposition} \label{proposition:usingduality}
	Let $H$ be a $k$-graph and $\bvec b\colon V(H) \rightarrow [0, 1]$.
	Suppose there exists ${m} \leq \vnorm{\bvec b}/k$ such that, for every $v \in V(H)$, the link graph $L_H(\{v\})$ has a $\bvec b$-fractional matching of size ${m}$.
	Then $H$ has a $\bvec b$-fractional matching of size ${m}$.
\end{proposition}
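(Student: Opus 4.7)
The plan is to apply linear programming duality and distinguish two complementary cases based on the support of an optimal fractional vertex cover. By strong duality $\nu(H,\bvec b) = \tau(H,\bvec b)$, where $\tau(H,\bvec b)$ is the minimum of $\bvec c \cdot \bvec b$ over all fractional vertex covers $\bvec c$ of $H$; it thus suffices to show $\tau(H,\bvec b) \geq m$. I would fix an optimal cover $\bvec c^\ast$ of $H$ and split according to whether $\bvec c^\ast$ vanishes at some vertex or is strictly positive everywhere.

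Suppose first that some $v_0 \in V(H)$ satisfies $\bvec c^\ast(v_0) = 0$. For every edge $f$ of $L_H(\{v_0\})$, the set $f \cup \{v_0\}$ is an edge of $H$, and since $\bvec c^\ast$ covers $H$, we obtain $\sum_{u \in f} \bvec c^\ast(u) \geq 1$. Therefore the restriction $\bvec c^\ast|_{V(H) \setminus \{v_0\}}$ is a fractional vertex cover of $L_H(\{v_0\})$. Applying LP duality on the link graph with vertex weights $\bvec b$ and using the hypothesis $\nu(L_H(\{v_0\}), \bvec b) \geq m$ then gives
\[
\bvec c^\ast \cdot \bvec b \;=\; \sum_{u \neq v_0} \bvec c^\ast(u)\,\bvec b(u) \;\geq\; \nu(L_H(\{v_0\}), \bvec b) \;\geq\; m.
\]

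Otherwise $\bvec c^\ast(v) > 0$ for every $v$. Let $\bvec w^\ast$ be any optimal $\bvec b$-fractional matching of $H$ (which exists by LP duality). Complementary slackness forces the primal capacity constraint to be tight wherever $\bvec c^\ast(v) > 0$, hence at every vertex: $\sum_{e \ni v} \bvec w^\ast(e) = \bvec b(v)$ for all $v \in V(H)$. Summing over $v$ and exchanging the order of summation, each edge $e$ being counted exactly $|e| = k$ times, yields $k \vnorm{\bvec w^\ast} = \vnorm{\bvec b}$. The normalization hypothesis $m \leq \vnorm{\bvec b}/k$ then forces $\nu(H,\bvec b) = \vnorm{\bvec w^\ast} = \vnorm{\bvec b}/k \geq m$.

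I expect the main obstacle is simply to identify the right organising principle: the two cases use genuinely different parts of the hypothesis, namely the assumption on link graphs for the first case and the normalization $m \leq \vnorm{\bvec b}/k$ for the second. The subtle point is to work with an \emph{optimal} fractional vertex cover, since only then may one invoke complementary slackness to handle the fully-supported case.
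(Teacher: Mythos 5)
Your proof is correct, but it follows a genuinely different route from the paper's. The paper argues by contradiction: it supposes $\nu(H,\bvec b) < m$, takes an optimal cover $\bvec x$, lets $u$ be a vertex of minimal weight, and then applies the shift-and-scale transformation $\bvec x'(v) = (\bvec x(v) - \bvec x(u))/(1 - k\bvec x(u))$ to manufacture a new cover that vanishes at $u$ while still having value $<m$; this $\bvec x'$ then covers $L_H(\{u\})$ cheaply and contradicts the link-graph hypothesis. Your argument instead works directly: rather than \emph{constructing} a cover with a zero, you split on whether the optimal cover $\bvec c^\ast$ \emph{already} has a zero. If it does, the link-graph hypothesis immediately gives the lower bound; if it is fully supported, complementary slackness (which the paper never needs) forces every primal capacity constraint to be tight, and then the normalization $m \le \vnorm{\bvec b}/k$ finishes the proof. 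The paper's approach is more self-contained and elementary — it only uses strong duality, and the shift-and-scale manipulation is explicit and checkable by hand — while yours is slightly cleaner conceptually: the shift-and-scale trick in the paper is precisely engineered to produce a zero in the cover, and you observe that one can avoid this construction entirely by letting complementary slackness handle the fully-supported case. One small thing worth stating for precision: $L_H(\{v_0\})$ has vertex set $V(H)$ in this paper's convention, so in your Case 1 the object being invoked is really $\bvec c^\ast$ itself (with $\bvec c^\ast(v_0)=0$) viewed as a cover of the link graph, which is valid since $v_0$ is isolated there; your phrasing ``the restriction $\bvec c^\ast|_{V(H)\setminus\{v_0\}}$'' is harmless but not literally a function on the right domain.
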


\begin{proof}
	To find a contradiction, suppose $H$ has no $\bvec b$-fractional matching of size ${m}$.
	This is equivalent to $\nu(H, \bvec{b}) < {m}$.
	Let ${m}^\ast = \nu(H, \bvec{b})$, by duality we have $\tau(H, \bvec{b}) = {m}^\ast$.
	Thus there exists a fractional vertex cover $\bvec{x} \in \mathbb{R}^{V(H)}$ with ${m}^\ast = \bvec x \cdot \bvec b = \sum_{v \in V(H)} \bvec{x}(v) \bvec{b}(v)$.
	
	Consider $u \in V(H)$ for which $\bvec{x}(u)$ is minimal.
	Since
	\[\bvec{x}(u) \vnorm{\bvec b}   \leq \sum_{v \in V(H)} \bvec{x}(v) \bvec{b}(v) = {m}^\ast < {m} \leq \frac{\vnorm{\bvec b}}{k},\]
	it follows that $\bvec{x}(u) < 1/k$.
	Define $\bvec{x'} \in \mathbb{R}^{V(H)}$ by
	\begin{align*}
	\bvec{x'}(v) = \frac{\bvec{x}(v) - \bvec{x}(u)}{1 - k\bvec{x}(u)}
	\end{align*}
	for all $v \in V(H)$.
	
	We show that $\bvec{x'}$ is a fractional vertex cover of $H$ with $\bvec{x}' \cdot \bvec b < {m}$.
	We use ${m}^\ast < {m} \leq \vnorm{\bvec b}/k$ to obtain that
	\begin{align*}
	\bvec{x}' \cdot \bvec b &= \sum_{v \in V(H)} \bvec{x'}(v) \bvec{b}(v)\\
	& = \frac{1}{1 - k \bvec{x}(u)} \left( \sum_{v \in V(H)} \bvec{x}(v) \bvec{b}(v) - \bvec{x}(u) \sum_{v \in V(H)} \bvec{b}(v) \right) \\
	& = \frac{1}{1 - k \bvec{x}(u)} \left( {m}^\ast - \bvec{x}(u) \vnorm{\bvec b} \right)
	< \frac{1}{1 - k \bvec{x}(u)} \left( {m} - \bvec{x}(u) {m} k \right)
	= {m}.
	\end{align*}
	From $\bvec{x}(u) < 1/k$ we get $\bvec{x}'(v) \ge 0$ for all $v \in V(H)$.
	Using that $\bvec{x}$ is a fractional vertex cover of $H$ and also that $H$ is a $k$-graph we see, for every $e \in E(H)$,
	\begin{align*}
	\sum_{v \in e} \bvec{x'}(v)
	& = \frac{1}{1 - k \bvec{x}(u)} \left[ \sum_{v \in e} \bvec{x}(v) - k \bvec{x}(u) \right]
	\ge 1
	\end{align*}
	Thus $\bvec{x'}$ is indeed a fractional vertex cover of $H$ with $\bvec{x}' \cdot \bvec{b} < {m}$.
	
	Crucially, also note that $\bvec{x'}(u) = 0$.
	Since the vertex $u$ is isolated in the link graph $L_H(\{u\})$,
	we infer that $\bvec{x'}$ is also a fractional vertex cover of $L_H(\{u\})$,
	and $\bvec{x}' \cdot \bvec{b} < m$. 	
	This implies that $\tau(L_H(\{u\}), \bvec{b}) < {m}$.
	By strong duality, we also have $\nu(L_H(\{u\}), \bvec{b}) < {m}$.
	But this contradicts the assumption that $L_H(\{u\})$ has a $\bvec b$-fractional matching of size ${m}$.
\end{proof}

If $\bvec{b}$ is the all-ones function, then $\bvec{b}$-fractional matchings correspond to the usual fractional matchings in hypergraphs.
Thus the following corollary is immediate.

\begin{corollary} \label{corollary:liftingfractionalmatchings}
	Let $H$ be a $k$-graph and ${m} \leq v(H)/k$.
	Suppose that, for every $v \in V(H)$, the link graph $L_H(\{v\})$ has a fractional matching of size ${m}$.
	Then $H$ has a fractional matching of size $m$.
\end{corollary}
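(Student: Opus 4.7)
The plan is to derive this as a direct specialisation of Proposition~\ref{proposition:usingduality}. I would take $\bvec{b}\colon V(H) \to [0,1]$ to be the constant function $\bvec{b} \equiv 1$. Under this choice, the definition of a $\bvec b$-fractional matching (Definition~\ref{def:b-fractional-matching}) reduces exactly to the standard notion of a fractional matching in a $k$-graph, since the constraint $\sum_{e \ni v} \bvec{w}(e) \le \bvec{b}(v) = 1$ is the usual fractional matching inequality at each vertex.

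Next I would verify that the hypotheses of Proposition~\ref{proposition:usingduality} are met. The norm $\vnorm{\bvec b}$ equals $v(H)$, so the numerical hypothesis $m \le \vnorm{\bvec b}/k$ of the proposition becomes $m \le v(H)/k$, which is precisely the assumption of the corollary. The link graph condition is equally clean: for every $v \in V(H)$, the assumption that $L_H(\{v\})$ has a fractional matching of size $m$ is exactly the statement that $L_H(\{v\})$ admits a $\bvec b$-fractional matching of size $m$ with respect to the all-ones weighting.

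Applying Proposition~\ref{proposition:usingduality} then yields a $\bvec b$-fractional matching of $H$ of size $m$, which by the identification above is a fractional matching of $H$ of size $m$, as required. There is no genuine obstacle here; the entire content of the corollary is to record the special case $\bvec b \equiv 1$ of the more general proposition, and the proof fits in two or three lines.
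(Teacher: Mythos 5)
Your proposal is correct and takes exactly the same route as the paper: the paper also observes that for the all-ones weighting $\bvec b\equiv 1$, a $\bvec b$-fractional matching is just an ordinary fractional matching, and then invokes Proposition~\ref{proposition:usingduality} directly.
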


Now we show that in the setting of graphs with $\alpha$-perturbed minimum $d$-degree $\delta$, large fractional matchings in the link graphs of $d$-sets can be lifted to large fractional matchings in the link graphs of vertices.

\begin{proposition} \label{proposition:fromdlinksto1links}
	Let $1 \leq d \leq k-1$ and $ \alpha,\gamma, \delta > 0$ such that $\alpha, \gamma \ll 1/k$.
	Let $H$ be a $k$-graph on $t$ vertices with $\alpha$-perturbed minimum $d$-degree $\delta$.
	Suppose that for every $S \in \partial_d(S)$, the link graph $L(S)$ contains a fractional matching of size at least $(1/k + \gamma)t$.
	Then, for every (singleton) edge $S'\in \partial_1(H)$, the link graph $L(S')$ contains a fractional matching of size at least $(1/k + \gamma)t$.
\end{proposition}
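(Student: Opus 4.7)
The plan is to prove the proposition by downward induction on $j \in \{1,\dots,d\}$, establishing that for every $S \in \partial_j(H)$ the link graph $L(S)$ contains a fractional matching of size at least $(1/k+\gamma)t$. The base case $j=d$ is exactly the hypothesis, and the case $j=1$ is the desired conclusion.

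For the inductive step, I would fix $2 \leq j \leq d$, assume the bound at level $j$, and pick $S' \in \partial_{j-1}(H)$. Define
\[ V_{\mathrm{bad}} = \{v \in V(H) : S' \cup \{v\} \notin \partial_j(H)\}. \]
Property~\ref{itm:perturbed-degree-lower-levels} of Definition~\ref{def:perturbed-degree}, applied at level $j$, yields $|V_{\mathrm{bad}}| \leq \alpha t$. Let $G = L(S') - V_{\mathrm{bad}}$; this is a $(k-j+1)$-graph on at least $(1-\alpha)t$ vertices.

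The crucial observation is that for every $v \in V(G)$, no edge of $L(S' \cup \{v\})$ contains any vertex of $V_{\mathrm{bad}}$. Indeed, if $A \in L(S' \cup \{v\})$ and $v' \in A$, then $S' \cup \{v'\} \subseteq S' \cup \{v\} \cup A \in E(H)$, so $S' \cup \{v'\} \in \partial_j(H)$ and thus $v' \notin V_{\mathrm{bad}}$. Consequently, the link of $v$ in $G$ coincides with $L(S' \cup \{v\})$, which by the inductive hypothesis admits a fractional matching of size at least $(1/k + \gamma)t$. Applying Corollary~\ref{corollary:liftingfractionalmatchings} to $G$ with $m = (1/k + \gamma)t$ -- the applicability condition $m \leq v(G)/(k-j+1)$ follows from $\alpha,\gamma \ll 1/k$ together with $j \geq 2$ -- then produces the required fractional matching of $G \subseteq L(S')$, closing the induction.

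The subtle point, and the main would-be obstacle, is precisely this observation that the bad vertices do not appear in any edge of the deeper links $L(S' \cup \{v\})$. Without it, one would naively truncate each deeper link as well and fear a loss of $\alpha t$ in the fractional matching at every inductive step, accumulating $(d-1)\alpha t$ of loss across the iteration and potentially swamping the margin $\gamma$ (which, under the hierarchy $\alpha,\gamma \ll 1/k$, need not be large relative to $\alpha$). The observation shows that this loss never actually incurs, since the bad vertices are automatically isolated inside $L(S' \cup \{v\})$, so the bound $(1/k + \gamma)t$ is preserved intact from level $d$ all the way down to level $1$.
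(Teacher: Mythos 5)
Your proposal is correct and follows essentially the same route as the paper's own proof: both reduce from $\partial_j$ to $\partial_{j-1}$ by passing to the subgraph of $L(S')$ on non-isolated vertices (your $G = L(S') \setminus V_{\mathrm{bad}}$), observing that the bad vertices never appear in the deeper links $L(S'\cup\{v\})$, and then invoking Corollary~\ref{corollary:liftingfractionalmatchings}. The only cosmetic difference is that the paper phrases the argument as forward induction on $d$ rather than downward induction on $j$; your version makes the crucial isolated-vertex observation more explicit, but the underlying idea is identical.
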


\begin{proof}
	Given $t$, $\gamma$, $\alpha$, $\delta$ and $k \ge 2$, we proceed by induction on $d$.
	Note that the base case $d = 1$ is immediate.
	
	Suppose now that we are given $1 < d \leq k-1$ and that the result is known for all $d' < d$.
	Let $S \subseteq V(H)$ be a $(d-1)$-set in $\partial_{d-1}(H)$.
	Consider an edge $S'  \in \partial_1(L_H(S))$.
	Note that this implies in particular that $S \cup S'$ is an edge in $\partial_d(H)$.
	By assumption, $L_H(S \cup S')$ has a fractional matching of size at least $(1/k + \gamma) t$.
	Let $H'$ be the subgraph of $L_H(S)$ induced on the non-isolated vertices of $L_H(S)$.
	We deduce that the link graph $L_{H'}(\{v\})$ contains a fractional matching of size at least $(1/k + \gamma)t$ for every $v \in V(H')$.

	Recall that $H$ has $\alpha$-perturbed minimum $d$-degree $\delta$.
	Hence by Definition~\ref{def:perturbed-degree}~\ref{itm:perturbed-degree-lower-levels}, $S$ has at most $\alpha t$ neighbours in $\overline{\partial_{d}(R)}$.
	It follows that  $v(H') = |V(H')| = |\partial_1(L_H(S))| \ge (1 - \alpha) t$.	
	Since $\alpha, \gamma \ll 1/k$, we have $(1/k + \gamma)t \leq |V(H')|/(k - d + 1)$.
	Thus we can invoke Corollary~\ref{corollary:liftingfractionalmatchings} with
	\begin{center}
		\begin{tabular}{ c  *{4}{|c} }
			object/parameter & $H'$ & $k-d+1$ & $v(H')$ & $(1/k + \gamma)t$ \\
			\hline
			playing the role of & $H$ & $k$ & $t$ & $m$
		\end{tabular}
	\end{center}
	to deduce $H'$ (and thus $L_H(S)$) has a fractional matching of size $(1/k + \gamma)t$.
	Since $S$ was arbitrary, we have deduced that for any $S \in \partial_{d-1}(H)$, the link graph $L_H(S)$ contains a fractional matching of size $(1/k + \gamma)t$.
	Hence, we are done by the induction hypothesis in the case $d' = d - 1$. 
\end{proof}	

\begin{proof}[Proof of Lemma~\ref{lem:space}]
	We start by showing that the link graphs of vertices of $H$ contain large matchings.
	Suppose $H$ has $t$ vertices.
	By assumption, $C_S$ contains a fractional matching of size $(1/k + \gamma)t$ for every $S \in \partial_d(H)$.
	Moreover, by definition of the reduced graph (Definition~\ref{def:vicinity-vanilla}), $C_S$ is a subgraph of the link graph of $S$ in $H$.
	By Proposition~\ref{proposition:fromdlinksto1links}, it follows that the link graph $L_H(\{v\})$ contains a fractional matching of size $(1/k + \gamma)t$ for every $v \in \partial_1(H)$.
	Moreover, from $\partial_1(R)=V(R)$ and the definition of the induced graph, we derive  that $\partial_1(H)=V(H)=V(R)$.
	
	Now we show that $H$ is $\gamma$-robustly matchable (Definition~\ref{def:robust-perfect-matching}).
	Consider an arbitrary vertex weighting $\bvec b: V(H) \rightarrow   [1 - \gamma, 1]$.
	We have to find a $\bvec b$-matching $\bvec w \in  \REALS^{E(H)}$ such that $\sum_{e \ni v} \bvec w(e) = \bvec b(v)$ for every vertex $v \in V(H)$.
	This is equivalent to find a $\bvec b$-matching with size $\vnorm{\bvec b}/k$, where  $\vnorm{\bvec b} = \sum_{v \in V(H)} \bvec b(v)$.
	
	We will use Proposition~\ref{proposition:usingduality} to find $\bvec w$.
	To this end, consider an arbitrary vertex $v$ in $V(H)$.
	Let $\bvec{x}$ be a fractional matching in $L_H(\{v\})$ of size at least $(1/k + \gamma)t$ (which we have by assumption) and let $\bvec{w}' = (1 - \gamma) \bvec{x}$.
	Since $1 - \gamma \leq \bvec{b}(u)$ for all $u \in V(H)$, it follows that $\bvec{w}'$ is a $\bvec{b}$-fractional matching in $L_H(\{v\})$.
	Moreover, $\bvec w'$ has size at least $(1 - \gamma)(1/k + \gamma)t \ge t/k \ge \vnorm{\bvec b}/k$.
	By scaling, we can assume that $\bvec w'$ has size exactly $\vnorm{\bvec b}/k$.
	Hence, the conditions of Proposition~\ref{proposition:usingduality} are satisfied with $m = \vnorm{\bvec b}/k$.
	It follows that $H$ has a $\bvec{b}$-fractional matching of size $\vnorm{\bvec b}/k$;
	which is, as discussed previously, enough.
\end{proof}

\subsection{Perturbed degree}\label{sec:degree-cleaning}

In the following, we show that a hypergraph $H$ with a small number of perturbed edges contains a subgraph $H'$, which avoids all perturbed edges while retaining essentially the minimum degree of $H$.
This is formalised in Lemma~\ref{lem:degree-cleaning}, which we prove at the end of this section.
We remark that Lemma~\ref{lem:degree-cleaning} is a strengthened version of a lemma of Han, Lo and the second author~\cite[Lemma 8.8]{HLS17}.

%

We start by introducing further notations.
For a hypergraph $G$, it will be convenient to refer to its edge size by $e(G)$ and its vertex size (as done before)  by $v(G)$.
To distinguish between degrees of different subgraphs, we will denote the degree of a set $S$ in $G$ by $\deg_G(S)$  and its relative degree by $\reldeg_G(S)$.
As before, we refer to sets of $j$ elements (usually vertices) as $j$-sets.

Consider a graph $R$ with a subgraph $I$ as in Lemma~\ref{lem:degree-cleaning}.
The following definition identifies the edges of each shadow graph level of $I$ that have large degree in higher levels.

\begin{definition}[Gradation]
	Given a $k$-graph $I$ and $\beta > 0$, the \emph{$\beta$-gradation of I} is a sequence of uniform hypergraphs $(I_1, \dotsc, I_k)$ which are defined recursively as follows:
	\begin{enumerate}[(i)]
		\item Initially $I_k = I$ and
		\item given $I_{j+1}$ we obtain $I_j$ from $\partial_j(I_{j+1})$ by deleting all edges with relative degree less than $\beta$ in $I_{j+1}$.
	\end{enumerate}
\end{definition}

In order to obtain a subgraph $R'$ of $R$ with well-behaved degree properties, we need to avoid the elements of the gradation of $I$.
The next definition identifies the edges of $R$ that are affected by a gradation.

\begin{definition}[Degree perturbation]
	Suppose a $k$-graph $R$ has a subgraph $I$ with $\beta$-gradation $(I_1, \dotsc, I_k)$ for a $\beta >0$.
	For $1 \leq j \leq d$, let $F_j$ be the subgraph of $R$ consisting of those edges which contain at least one edge of $I_j$.
	We define the \emph{$d$-degree $\beta$-perturbation of $I$ in $R$} as the $k$-graph $F = \bigcup_{j=1}^d F_j$.
\end{definition}

The proof of Lemma~\ref{lem:degree-cleaning} can be sketched as follows.
Given the setup of the statement, we will choose $\beta$ such that $\eps \ll \beta \ll \alpha$.
Let $F$ be the $d$-degree $\beta$-perturbation of $I$  in $R$.
We define $R' = R - I - F$, that is, $R'$ is the $k$-graph obtained from $R$ by deleting both $I$ and its degree perturbation $F$.
We prove Lemma~\ref{lem:degree-cleaning} by showing that $R'$ has the desired perturbed minimum degree.
To analyse the degree properties of $R'$, we consider a suitable gradation $(C_1, \dotsc, C_k)$ of $F$ and make the following two observations.
Firstly, the $j$-th shadow graph of $R'$ and ${C_j}$ have no edge in common (Proposition~\ref{claim:cleaninglemma-largedegree-notspoiled}).
Secondly, for $\delta>0$ as in  Lemma~\ref{lem:degree-cleaning}, every edge $Y$ outside of ${C_j}$ must have relative degree at least $\delta - \alpha$ in $R'$ (Proposition~\ref{claim:cleaninglemma-notspoiled-largedegree}).
Together, we obtain the following equivalence for every set $Y$ of $j$ vertices:
\begin{align*}
Y \notin C_j \quad  \Leftrightarrow \quad \reldeg_{R'}(Y) \ge \delta - \alpha \quad \Leftrightarrow \quad Y \in \partial_j(R').
\end{align*}
It then follows rather quickly that $R'$ has the desired perturbed relative degree.

We will approach the formal proof of Lemma~\ref{lem:degree-cleaning} with a series of short observations about gradations and degree perturbations.
The next proposition provides conditions under which the edge density of a hypergraph bounds the edge density of its gradation.

\begin{proposition} \label{proposition:cleaninglemma-sizes}
	Let $\eps,\beta > 0$ with $\beta \ge \eps^{1/k}$.
	Let $I$ be a $k$-graph with edge density at most $\eps$, and let $(I_1, \dotsc, I_k)$ be the $\beta$-gradation of $I$.
	Then, for each $1 \leq j \leq k$, the $j$-graph $I_j$ has edge density at most $\beta^j$.
\end{proposition}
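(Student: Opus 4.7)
The plan is to prove this by downward induction on $j$, starting from $j=k$ and descending to $j=1$.

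For the base case $j=k$, we have $I_k = I$, and by hypothesis the edge density of $I$ is at most $\eps$. Since $\beta \geq \eps^{1/k}$, this gives $e(I_k)/\binom{n}{k} \leq \eps \leq \beta^k$, where $n = v(I)$.

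For the inductive step, suppose $e(I_{j+1}) \leq \beta^{j+1}\binom{n}{j+1}$ and we want to bound $e(I_j)$. By definition of the gradation, every edge $e$ of $I_j$ lies in $\partial_j(I_{j+1})$ and has relative degree at least $\beta$ in $I_{j+1}$; that is, $\deg_{I_{j+1}}(e) \geq \beta(n-j)$. On the other hand, each edge of $I_{j+1}$ contains exactly $j+1$ subsets of size $j$, so a standard double-counting argument yields
\[
\beta(n-j) \cdot e(I_j) \;\leq\; \sum_{e \in I_j} \deg_{I_{j+1}}(e) \;\leq\; (j+1) \, e(I_{j+1}).
\]
Combining this with the inductive hypothesis and the identity $\binom{n}{j+1} = \binom{n}{j} \cdot \frac{n-j}{j+1}$, I would then compute
\[
e(I_j) \;\leq\; \frac{(j+1) \, e(I_{j+1})}{\beta(n-j)} \;\leq\; \frac{(j+1) \beta^{j+1} \binom{n}{j+1}}{\beta(n-j)} \;=\; \beta^j \binom{n}{j},
\]
which is exactly the desired bound.

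Since every step reduces to elementary double counting and arithmetic with binomial coefficients, I do not anticipate a genuine obstacle; the only thing to keep an eye on is making sure the relative degree in a $(j+1)$-graph is interpreted correctly (namely, $\deg(e)/(n-j)$ for a $j$-set $e$), so that the factor $n-j$ appears correctly on the left-hand side of the double-counting inequality. The hypothesis $\beta \geq \eps^{1/k}$ is used only in the base case; the inductive step works for any positive $\beta$.
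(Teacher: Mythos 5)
Your proof is correct and follows exactly the same route as the paper: downward induction starting from $I_k = I$, with the double-counting inequality $\beta(n-j)\,e(I_j) \le (j+1)\,e(I_{j+1})$ and the binomial identity closing the inductive step. The only difference is presentational — you spell out the binomial arithmetic that the paper leaves implicit.
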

\begin{proof}
	We use induction on $k-j$.
	If $k-j = 0$, then $I_k = I$ has edge density at most $\eps \leq \beta^k$.
	Now consider $k - j > 0$.
	Suppose that $I$  has $t$ vertices.
	Double counting and the definition of $I_j$ gives
	\begin{align*}
	e(I_{j})\beta(t - j) \leq \sum_{X \in E(I_j)} \deg_{I_{j+1}}(X) = (j+1) e(I_{j+1}).
	\end{align*}
	Hence, the inductive hypothesis for $k-(j+1)$ implies
	\begin{align*}
	e(I_{j}) \leq \frac{j+1}{\beta(t - j)} e(I_{j+1}) \leq \beta^{j} \binom{t}{j},
	\end{align*}
	as desired.
\end{proof}

Given a gradation of $I$, the $j$-sets (of vertices of $I$) outside of level $I_j$ have, by definition, low degree in the following level $I_{j+1}$.
The next proposition allows us to bound their degree in the higher levels.

\begin{proposition} \label{proposition:cleaninglemma-degrees}
	Let $1 \leq i \leq j \leq k-1$ and $\beta >0$.
	Let $I$ be a $k$-graph on $t$ vertices and $(I_1, \dotsc, I_k)$ be the $\beta$-gradation of $I$.
	Then each $i$-set $X$ outside of ${I_{i}}$ satisfies $\deg_{I_{j}}(X) \leq (j - i) \beta t^{j - i}$.
\end{proposition}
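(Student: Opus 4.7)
The plan is to prove the statement by induction on $j - i \geq 0$. The base case $j = i$ is immediate: if $X \notin I_i$ is an $i$-set, then $\deg_{I_i}(X) = 0$ (since $I_i$ is $i$-uniform and $X$ is not an edge), which matches the claimed bound $(j-i)\beta t^{j-i} = 0$.

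For the inductive step with $j > i$, my main tool would be the double-counting identity
\[
(j-i)\,\deg_{I_j}(X) = \sum_{e'} \deg_{I_j}(e'),
\]
where the sum runs over all $(j-1)$-subsets $e' \subseteq V$ with $X \subseteq e'$. This holds because each edge $e \in I_j$ containing $X$ has exactly $j - i$ many $(j-1)$-subsets that contain $X$ (one for each vertex to remove from $e \setminus X$), and $\deg_{I_j}(e') = 0$ unless $e' \in \partial_{j-1}(I_j)$.

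Next I would split the sum based on whether $e' \in I_{j-1}$. For $e' \in \partial_{j-1}(I_j) \setminus I_{j-1}$, the definition of the $\beta$-gradation yields $\deg_{I_j}(e') < \beta(t - j + 1) \leq \beta t$; since at most $\binom{t-i}{j-1-i} \leq t^{j-1-i}/(j-1-i)!$ such sets contain $X$, this part contributes at most $\beta t^{j-i}/(j-1-i)! \leq \beta t^{j-i}$, using $(j-1-i)! \geq 1$. For $e' \in I_{j-1}$, the number of such $e'$ containing $X$ is precisely $\deg_{I_{j-1}}(X) \leq (j-1-i)\beta t^{j-1-i}$ by the inductive hypothesis (applied with the same $i$ and $j$ replaced by $j-1$), and we use the trivial bound $\deg_{I_j}(e') \leq t$; this part contributes at most $(j-1-i)\beta t^{j-i}$.

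Adding both contributions gives
\[
(j-i)\,\deg_{I_j}(X) \leq (j-1-i)\beta t^{j-i} + \beta t^{j-i} = (j-i)\beta t^{j-i},
\]
which after dividing by $j-i$ actually yields the slightly stronger inequality $\deg_{I_j}(X) \leq \beta t^{j-i}$, and in particular the claimed bound. The main potential obstacle is just making sure the double counting is set up properly and verifying the boundary case $j - 1 - i = 0$ (where the inductive bound forces $\deg_{I_{j-1}}(X) = 0$ and $(j-1-i)! = 1$); both work out without incident.
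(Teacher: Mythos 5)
Your proof is correct and mirrors the paper's argument: both induct on $j$, split the $(j-1)$-supersets $Y \supseteq X$ according to whether $Y \in I_{j-1}$, bound the first class via the inductive hypothesis and the trivial degree bound $t$, and bound the second class via the $\beta$-gradation threshold $\deg_{I_j}(Y) < \beta t$. The one refinement is that you replace the paper's crude overcount $\deg_{I_j}(X) \leq \sum_{Y \supseteq X,\,|Y|=j-1} \deg_{I_j}(Y)$ with the exact double-counting identity $(j-i)\deg_{I_j}(X) = \sum_{Y} \deg_{I_j}(Y)$, which after dividing by $j-i$ yields the slightly stronger bound $\deg_{I_j}(X) \leq \beta t^{j-i}$; this saving is not needed in the downstream applications but is a legitimate tightening.
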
 

\begin{proof}
	We prove the statement by induction on $j$ starting from $i$.
	The base case $j = i$ holds trivially, so assume $j \ge i + 1$.
	
	Fix an $i$-set $X$ outside of ${I_{i}}$.
	Denote by $B$ the set of all supersets $Y \supseteq X$ of size $j - 1$.
	To estimate $\deg_{I_j}(X)$, we consider the degree of each $Y \in B$ in $I_j$.
	Clearly, we have
	\begin{align}
	\deg_{I_j}(X)
	& \leq \sum_{Y \in B} \deg_{I_j}(Y) \nonumber \\
	& = \sum_{Y \in B \cap E(I_{j - 1})} \deg_{I_{j}}(Y) + \sum_{Y \in B \setminus E(I_{j - 1})} \deg_{I_{j}}(Y). \label{equation:cleaninglemma-twosums}
	\end{align}
	
	We estimate the sums in the last term separately.		
	By the induction hypothesis, the proposition holds for $j - 1$.
	Hence, we have
	\begin{align*}
	|B \cap E(I_{j - 1})|
	& =
	\deg_{I_{j - 1}}(X)
	\leq (j - 1 - i) \beta t^{j - 1 - i}.
	\end{align*}
	Since each $Y \in B \cap E(I_{j - 1})$ has degree at most $t$ in $I_{j}$, we obtain
	\begin{align}
	\sum_{Y \in B \cap E(I_{j - 1})} \deg_{I_{j}}(Y)
	& \leq (j - 1 - i) \beta t^{j - i}. \label{equation:cleaninglemma-sum1}
	\end{align}
	
	On the other hand, by the definition of $I_{j - 1}$, each $(j-1)$-set $Y$ outside of ${I_{j - 1}}$ is contained in less than $\beta (t - j + 1) \leq \beta t$ edges of $I_{j}$.
	We can also (crudely) bound $|B \setminus E(I_{j - 1})| \leq |B| \leq \binom{t}{j  - 1 - i} \leq t^{j - 1 - i }$.
	Together, this implies
	\begin{align}
	\sum_{Y \in B \setminus E(I_{j - 1})} \deg_{I_{j}}(Y)
	& \leq |B \setminus E(I_{j - 1})| \beta t
	\leq \beta t^{j - i}. \label{equation:cleaninglemma-sum2}
	\end{align}
	Combining inequalities~\eqref{equation:cleaninglemma-sum1} and~\eqref{equation:cleaninglemma-sum2} with inequality~\eqref{equation:cleaninglemma-twosums}, we get
	\begin{align*}
	\deg_{I_{j}}(X)
	& \leq (j - i - 1) \beta t^{j - i} + \beta t^{j - i} = (j - i) \beta t^{j - i},
	\end{align*}
	as desired.
\end{proof}

The next proposition shows that we can bound the edge density of a degree perturbation, for appropriately chosen constants.

\begin{proposition}\label{prop:surface-damage}
	Let $1 \leq d \leq k-1$ and $0< \eps \ll 1/k$.
	Suppose $R$ is a $k$-graph  with a subgraph $I$ of edge density at most $\eps$.
	Then the $d$-degree $(\eps^{1/k})$-perturbation $F$ of $I$ in $R$ has edge density at most $2^k \eps^{1/k}$.
\end{proposition}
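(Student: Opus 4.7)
The plan is a direct counting argument. Set $\beta = \eps^{1/k}$ and let $t = v(R)$. Let $(I_1, \ldots, I_k)$ be the $\beta$-gradation of $I$, and recall that $F = \bigcup_{j=1}^d F_j$, where $F_j$ consists of those edges of $R$ containing at least one edge of $I_j$. The natural idea is to bound each $e(F_j)$ separately and then sum.

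First I would bound $e(F_j)$ in terms of $e(I_j)$. Since each $j$-edge of $I_j$ is contained in at most $\binom{t-j}{k-j}$ different $k$-sets of $V(R)$, we immediately get
\begin{equation*}
e(F_j) \leq e(I_j) \binom{t-j}{k-j}.
\end{equation*}
Next, Proposition~\ref{proposition:cleaninglemma-sizes} applies (as $\beta = \eps^{1/k}$), yielding $e(I_j) \leq \beta^j \binom{t}{j}$. Combining this with the standard identity $\binom{t}{j}\binom{t-j}{k-j} = \binom{k}{j}\binom{t}{k}$, we obtain
\begin{equation*}
e(F_j) \leq \binom{k}{j} \beta^j \binom{t}{k}.
\end{equation*}

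Summing over $1 \leq j \leq d$ and dividing by $\binom{t}{k}$ gives an edge density bound of $\sum_{j=1}^d \binom{k}{j} \beta^j$ for $F$. Since $\eps \ll 1/k$ we have $\beta \leq 1$, hence $\beta^j \leq \beta$ for every $j \geq 1$, and so
\begin{equation*}
\sum_{j=1}^d \binom{k}{j}\beta^j \leq \beta \sum_{j=1}^k \binom{k}{j} \leq 2^k \beta = 2^k \eps^{1/k},
\end{equation*}
which is exactly the claimed bound. There is no real obstacle here: the only mild subtlety is picking the right form of Proposition~\ref{proposition:cleaninglemma-sizes} and exploiting the identity $\binom{t}{j}\binom{t-j}{k-j} = \binom{k}{j}\binom{t}{k}$ so that the final estimate depends only on $k$ and $\beta$, not on $t$.
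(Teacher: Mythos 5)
Your proof is correct and follows the same approach as the paper's: bound $e(F_j) \leq e(I_j)\binom{t-j}{k-j}$, invoke Proposition~\ref{proposition:cleaninglemma-sizes} for $e(I_j)$, apply the identity $\binom{t}{j}\binom{t-j}{k-j} = \binom{k}{j}\binom{t}{k}$, and sum using $\eps^{j/k} \leq \eps^{1/k}$ and $\sum_j \binom{k}{j} \leq 2^k$.
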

\begin{proof}
	Let $(I_1, \dotsc, I_k)$ be the $(\eps^{1/k})$-gradation of $I$ and $F = \bigcup_{j=1}^d F_j$ be the {$d$-degree $(\eps^{1/k})$-perturbation} of $I$ in $R$.
	Suppose that $R$ has $t$ vertices.
	Trivially, each set of $j$ vertices is contained in at most $\binom{t-j}{k-j}$ edges of $R$.
	By Proposition~\ref{proposition:cleaninglemma-sizes}, each $I_j$ has edge density at most $\eps^{j/k}$.
	This implies that
	\begin{align*}
	e(F_j)
	\leq e(I_j) \binom{t-j}{k-j}
	\leq \eps^{j/k} \binom{t}{j} \binom{t-j}{k-j}
	= \eps^{j/k} \binom{k}{j} \binom{t}{k}.
	\end{align*}
	Hence, we obtain
	\begin{align*}
	e(F)
	\leq \sum_{j=1}^d e(F_j)
	\leq \sum_{j=1}^d \eps^{j/k} \binom{k}{j} \binom{t}{k}
	\leq 2^k \eps^{1/k} \binom{t}{k},
	\end{align*}
	as desired.
\end{proof}

Now we can prove the aforementioned proposition, which states that no edge of $\partial_j(R')$ is in ${C_j}$ as well.

\begin{proposition} \label{claim:cleaninglemma-largedegree-notspoiled}
	Let $1 \leq d \leq k-1$, $1/t \ll \eps \ll \alpha  \ll \delta,1/k$ and $\beta = \eps^{1/(2k)}$.
	Let $R$ be a $k$-graph on $t$ vertices with a vertex spanning subgraph $I$ of edge density at most $\eps$.
	Denote by $F$ the $d$-degree $(\eps^{1/k})$-perturbation of $I$ in $R$.
	Suppose that $F$ has edge density at most $\beta$.
	Define $R' = R - (F \cup I)$.
	Let $(C_1, \dotsc, C_k)$ be the $(\beta^{1/k})$-gradation of $F$.
	Then it follows that $\partial_j(R')$ and ${C_j}$ are disjoint for every $1 \leq j \leq d$.
\end{proposition}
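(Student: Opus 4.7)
The plan is to argue by contradiction: assume there exists a $j$-set $Y \in \partial_j(R') \cap C_j$ for some $1 \leq j \leq d$, and derive a contradiction by comparing two estimates on $\deg_F(Y)$.

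The crucial starting observation exploits $R' = R - (F \cup I)$. Since $Y \in \partial_j(R')$, there is an edge $e \in R'$ with $Y \subseteq e$. In particular $e \notin F = \bigcup_{i=1}^d F_i$, so $e$ contains no edge of any $I_i$ with $i \leq d$. Consequently, every non-empty $Y' \subseteq Y$ of size $\ell \leq j \leq d$ must satisfy $Y' \notin I_\ell$, since otherwise $e$ would belong to $F_\ell$. This observation is the engine that makes the upper bound clean.

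For the upper bound, expand $\deg_F(Y) \leq \sum_{i=1}^d \deg_{F_i}(Y)$ and, for each $i$, classify the elements $X \in I_i$ lying inside some $e' \supseteq Y$ according to $\ell = |X \cap Y|$. This yields
\[
\deg_{F_i}(Y) \leq \sum_{\ell = 0}^{\min(i,j)} \binom{j}{\ell} \cdot \max_{Y' \subseteq Y,\, |Y'|=\ell} \deg_{I_i}(Y') \cdot \binom{t-i-j+\ell}{k-i-j+\ell}.
\]
For $\ell = 0$, Proposition~\ref{proposition:cleaninglemma-sizes} applied to the $\eps^{1/k}$-gradation of $I$ bounds $|I_i|$ by $\eps^{i/k}\binom{t}{i}$; for $\ell \geq 1$, the observation above grants $Y' \notin I_\ell$, and Proposition~\ref{proposition:cleaninglemma-degrees} then yields $\deg_{I_i}(Y') \leq (i-\ell)\eps^{1/k} t^{i-\ell}$. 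Collecting powers of $t$ in each term, every contribution is of order $\eps^{1/k} t^{k-j}$, so $\deg_F(Y) \leq C \eps^{1/k} t^{k-j}$ for some constant $C = C(k)$.

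For the lower bound, iterate the recursive definition of the $\eps^{1/(2k^2)}$-gradation of $F$: starting from $\deg_{C_{j+1}}(Y) \geq \eps^{1/(2k^2)}(t-j)$ and adding one vertex per level up to $C_k = F$, correcting overcounting by a factor $(k-j)!$, one obtains $\deg_F(Y) \geq c\, \eps^{(k-j)/(2k^2)} t^{k-j}$ for some $c = c(k) > 0$. Combining the two estimates gives $\eps^{(k-j)/(2k^2)} \leq (C/c)\, \eps^{1/k}$. Since $(k-j)/(2k^2) < 1/k$ for every $1 \leq j \leq d \leq k-1$, the hierarchy $\eps \ll 1/k$ makes this impossible once $\eps$ is small enough, yielding the desired contradiction. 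The main subtlety lies in the upper bound: Proposition~\ref{proposition:cleaninglemma-degrees} only provides a useful bound on $\deg_{I_i}(Y')$ when $Y' \notin I_\ell$, and this is precisely what the observation about $e \in R'$ grants.
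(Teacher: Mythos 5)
Your proof is correct and follows essentially the same approach as the paper's: the same contradiction setup, the same two-sided estimate on $\deg_F(Y)$, the same iteration of the gradation definition for the lower bound, the same classification of $X \in I_i$ by $|X \cap Y|$ combined with Propositions~\ref{proposition:cleaninglemma-sizes} and~\ref{proposition:cleaninglemma-degrees} for the upper bound, and crucially the same key observation that $Y \in \partial_j(R')$ forces $Y' \notin I_\ell$ for all $Y' \subseteq Y$ of size $\ell \leq d$. The only cosmetic difference is that you keep the exponent $\eps^{(k-j)/(2k^2)}$ in the lower bound rather than simplifying to $\beta = \eps^{1/(2k)}$, and you leave the $k$-dependent constants unnamed; both variants close the argument equally well.
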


\begin{proof}
	Suppose the proposition is false and $Y$ an edge in both $\partial_j(R')$ and $C_j$.
	Since $(C_1, \dotsc, C_k)$ is the $(\beta^{1/k})$-gradation of $F$, there exist at least $\beta^{1/k} (t - j)$ edges in $C_{j+1}$ which contain $Y$.
	Iterating this, we obtain that $Y$ is contained in at least $\beta^{(k-j)/k} \binom{t-j}{k-j} \ge \beta \binom{n-j}{k-j}$ edges of $C_k = F$.
	Equivalently, $\deg_F(Y) \ge \beta \binom{n-j}{k-j}$.
	We will find a contradiction by showing that $\deg_F(Y) < \beta \binom{n-j}{k-j}$.
	Since $F = \bigcup_{\ell = 1}^d F_\ell$ and $\beta = \eps^{1/(2k)} \ll 1/k$, it is enough to prove that, 
	\begin{align}\label{equ:cleaning-lemma-large-degree-not-spoiled-1}
	\text{$\deg_{F_{\ell}}(Y) \leq k^{4k+1} \eps^{1/k} \binom{n-j}{k-j}$ for all $1 \leq \ell \leq  d$.}
	\end{align}
	
	Fix an arbitrary $1 \leq \ell \leq  d$.
	For $i \in \{0, \dotsc, \ell \}$, let 
	\begin{align}\label{equ:def-Iiell}
	I^i_\ell = \{ Z \in I_\ell \colon  |Z \cap Y| = i \}.
	\end{align}
	Then $\{ I^0_\ell, \dotsc, I^{\ell}_\ell\}$ is a partition of the edges of $I_\ell$.
	Recall the definition of the $d$-degree $(\eps^{1/k})$-perturbation $F$ of $I$ in $R$.
	From this, it follows that, for every $k$-set $X' \in F_\ell$, there exists $Z \in I_\ell$ such that $Z \subseteq X'$.
	We can therefore write $F_\ell = \bigcup_{i=0}^{\ell} F^i_\ell$, where for each $0 \leq i \leq \ell$ and each edge $X'$ of $F^i_\ell$, there exists $Z \in I^i_\ell$ such that $Z \subseteq X'$.
	Hence, to show inequality~\eqref{equ:cleaning-lemma-large-degree-not-spoiled-1}, it suffices to prove that, 
	\begin{align}\label{equ:cleaning-lemma-large-degree-not-spoiled-2}
	\text{$\deg_{F^i_\ell}(Y) \leq \eps^{1/k} k^{4k} \binom{t-j}{k-j}$ for all $0 \leq i \leq \ell$.}
	\end{align}
	
	Now fix an arbitrary $0 \leq i \leq \ell$.
	Each $X' \in F^i_{\ell}$ contributing to $\deg_{F^i_{\ell}}(Y)$ must contain both $Y$ and a set $Z \in I^i_{\ell}$, so $|Y \cup Z| = j + \ell - i$.
	If $j+\ell-i > k$, then $\deg_{F^i_{\ell}}(Y) = 0$, so assume $j+\ell-i \leq k$ from now on.
	We have
	\begin{align} \label{equation:cleaninglemma-boundYandFil}
	\deg_{F^i_{\ell}}(Y) \leq |I^i_{\ell}| \binom{t}{k - (j + \ell - i)}.
	\end{align}
	
	We will finish by bounding
	\begin{align} \label{equation:cleaninglemma-boundIiiel}
	|I^i_{\ell}|    \leq \eps^{1/k} k^{2k} t^{\ell - i}.
	\end{align}
	To see how this concludes the proof note that together with inequality~\eqref{equation:cleaninglemma-boundYandFil}, we obtain
	\begin{align*}
	\deg_{F^i_{\ell}}(Y)
	& \leq k^{2k} \eps^{1/k} t^{\ell - i} \binom{t}{k - (j + \ell - i)} \\
	& \leq \eps^{1/k} k^{2k} t^{\ell - i} t^{k - (j + \ell - i)}
	\leq \eps^{1/k} k^{4k} \binom{t - j}{k - j},
	\end{align*}
	where we used $1/t \ll 1/k$ and crudely bounded $i, j, \ell \leq k$.
	This gives inequality~\eqref{equ:cleaning-lemma-large-degree-not-spoiled-2} as desired.
	
	It remains to show inequality~\eqref{equation:cleaninglemma-boundIiiel}.
	Recall the definition of $I^i_{\ell}$ in equation~\eqref{equ:def-Iiell} and that $(I_1, \dotsc, I_k)$ is the $(\eps^{1/k})$-gradation of $I$.
	If $i = 0$, then Proposition~\ref{proposition:cleaninglemma-sizes} implies $$|I^0_{\ell}| \leq e(I_{\ell}) \leq \eps^{\ell/k} \binom{t}{\ell} \leq \eps^{1/k} k^{2k} t^{\ell}.$$
	If $i > 0$, then every $Z \in I^i_{\ell}$ intersects with $Y$ in a set $Y' = Z \cap Y$ of size $i$.
	Note that $Y'$ is not in $I_i$.
	(Otherwise, it follows from $Y' \subset Y$ and the definition of the $d$-degree perturbation that all edges containing $Y$ are in $F_i$.
	But this contradicts $Y \in \partial_j(R')$ as $R'$ contains no edges of $F$.)
	Thus by Proposition~\ref{proposition:cleaninglemma-degrees} with $\ell$ playing the role of $j$, we obtain $\deg_{I_\ell}(Y') \leq (\ell - i) \eps^{1/k} t^{\ell-i}$.
	Moreover, there are $\binom{j}{i}$ possible choices for $Y'$.
	This gives inequality~\eqref{equation:cleaninglemma-boundIiiel} as
	$$|I^i_{\ell}|   \leq \binom{j}{i} (\ell - i) \eps^{1/k} t^{\ell - i} \leq \eps^{1/k} k^{2k} t^{\ell - i},$$
	where we crudely bounded $i, j, \ell \leq k$.
\end{proof}

The following proposition states that every $j$-set $Y$ outside of ${C_j}$ has large relative degree in $R'$.

\begin{proposition} \label{claim:cleaninglemma-notspoiled-largedegree}
	Let $1 \leq d \leq k-1$ and $0 < \eps,\beta \ll \alpha  \ll \delta$.
	Let $R$ be a $k$-graph  with a vertex spanning subgraph $I$ of edge density at most $\eps$.
	Denote by $F$ the $d$-degree $(\eps^{1/k})$-perturbation of $I$ in $R$.
	Suppose that $F$ has edge density at most $\beta$.
	Define $R' = R - (F \cup I)$.
	Let $(C_1, \dotsc, C_k)$ be the $(\beta^{1/k})$-gradation of $F$.
	Then $\reldeg_{R'}(Y) \ge \delta - \alpha$ for any $j$-set $Y$ outside of ${C_j}$ and $1 \leq j \leq d$.
\end{proposition}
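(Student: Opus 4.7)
The approach is to bound $\deg_{R'}(Y)$ from below via the elementary decomposition
\[
\deg_{R'}(Y) \geq \deg_R(Y) - \deg_F(Y) - \deg_I(Y),
\]
and estimate each term separately using the hypothesis $Y \notin C_j$ together with the minimum-degree assumption $\delta^*_d(R) \geq \delta$ inherited from Lemma~\ref{lem:degree-cleaning}. For the first term, this hypothesis on $R$ together with the standard identity $\binom{t-j}{d-j}\binom{t-d}{k-d} = \binom{t-j}{k-j}\binom{k-j}{d-j}$ applied to a double counting over $d$-supersets of $Y$ yields $\deg_R(Y) \geq \delta \binom{t-j}{k-j}$ for every $j$-set $Y$ with $j \leq d$.

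For the second term, I would apply Proposition~\ref{proposition:cleaninglemma-degrees} (or, at the top level $\ell=k$, the same iteration already performed inside the proof of Proposition~\ref{claim:cleaninglemma-largedegree-notspoiled}) to $F$ and its $(\beta^{1/k})$-gradation $(C_1,\dotsc,C_k)$. Since $Y \notin C_j$, this gives $\deg_F(Y) = \deg_{C_k}(Y) \leq (k-j)\beta^{1/k} t^{k-j}$.

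The crucial step is bounding $\deg_I(Y)$, which I would do by first establishing the implication $Y \notin C_j \Rightarrow Y \notin I_j$. Suppose for contradiction that $Y \in I_j$. By the definition of $F_j$, every $k$-edge of $R$ containing $Y$ belongs to $F_j \subseteq F$, so $\deg_F(Y) \geq \deg_R(Y) \geq \delta \binom{t-j}{k-j}$. This contradicts the upper bound on $\deg_F(Y)$ from the previous paragraph, since $\beta \ll \delta, 1/k$. Hence $Y \notin I_j$, and a second application of Proposition~\ref{proposition:cleaninglemma-degrees} (now to $I$ and its $(\eps^{1/k})$-gradation) delivers $\deg_I(Y) \leq (k-j)\eps^{1/k} t^{k-j}$.

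Plugging the three estimates into the decomposition gives
\[
\deg_{R'}(Y) \geq \delta \binom{t-j}{k-j} - (k-j)\bigl(\beta^{1/k} + \eps^{1/k}\bigr) t^{k-j},
\]
and because $\eps, \beta \ll \alpha \ll \delta, 1/k$ and $1/t \ll \eps$, the subtracted term is at most $\alpha \binom{t-j}{k-j}$, yielding $\reldeg_{R'}(Y) \geq \delta - \alpha$. The main obstacle is the implication $Y \notin C_j \Rightarrow Y \notin I_j$; the key insight is that membership of $Y$ in $I_j$ automatically forces \emph{every} $k$-extension of $Y$ in $R$ into $F$, which then contradicts the low-degree behaviour forced by $Y \notin C_j$. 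Everything else is a routine quantitative rewrite of the definitions of gradation and degree perturbation.
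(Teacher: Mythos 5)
Your proposal is correct and takes essentially the same route as the paper's proof: the same decomposition of $\deg_{R'}(Y)$, the same two applications of Proposition~\ref{proposition:cleaninglemma-degrees} (to the gradations of $F$ and of $I$ respectively), and the same pivotal observation that $Y \notin C_j$ forces $Y \notin I_j$ via the contradiction $\deg_F(Y) = \deg_R(Y) \geq \delta\binom{t-j}{k-j}$, which is too large given $\beta \ll \delta$. You also correctly flag that the minimum-degree hypothesis $\delta_d^*(R) \geq \delta$ is used but only stated in the enclosing Lemma~\ref{lem:degree-cleaning}, not in the proposition itself — the paper's proof relies on this silently as well.
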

\begin{proof}
	Suppose that $R$ has $t$ vertices.
	Recall that $F =C_k$ by definition of the gradation.
	We apply Proposition~\ref{proposition:cleaninglemma-degrees} to a $j$-set $Y$ outside of ${C_j}$ with $(j,k,F,\beta^{1/k})$ playing the role of $(i,j,I,\beta)$.
	Hence
	\begin{align}
	\deg_F(Y) = \deg_{C_k}(Y) \leq (k - j) \beta^{1/k} t^{k - j} \leq \frac{\alpha}{2} \binom{t-j}{k-j}, \label{equation:cleaninglemma-YinF}
	\end{align}
	where the last inequality follows from $\beta \ll \alpha$.
	
	We claim that $Y \notin I_j$.
	For sake of contradiction, suppose that $Y \in I_j$.
	Since $F$ is the $d$-degree $(\eps^{1/k})$-perturbation of $I$ in $R$, it follows that every edge of $R$ containing $Y$ belongs to $F$.
	Thus we obtain $$\deg_{F}(Y) = \deg_{R}(Y) \ge \delta \binom{t-j}{k-j} > (\alpha/2) \binom{t-j}{k-j},$$ where the last inequality follows from $\alpha \ll \delta$.
	But this contradicts inequality~\eqref{equation:cleaninglemma-YinF}, showing the above claim.
	
	We apply Proposition~\ref{proposition:cleaninglemma-degrees} to $Y \notin I_j$ with $(j,k,\eps^{1/k})$ playing the role of $(i,j,\beta)$.
	This gives
	\[ \deg_I(Y) = \deg_{I_k}(Y) \leq (k-j) \eps^{1/k} t^{k-j} \leq \frac{\alpha}{2} \binom{t-j}{k-j}, \]
	again using $\eps \ll \alpha$.
	So, together with inequality~\eqref{equation:cleaninglemma-YinF}, we have
	\begin{align*}
	\deg_{R'}(Y)
	\ge \deg_{R}(Y) - \deg_{I}(Y) - \deg_{F}(Y)
	\ge (\delta - \alpha) \binom{t-d}{k-d},
	\end{align*}
	as needed.
\end{proof}

Finally, the proof of Lemma~\ref{lem:degree-cleaning}.

\begin{proof}[Proof of Lemma~\ref{lem:degree-cleaning}]
	Let $(I_1, \dotsc, I_k)$ be the $(\eps^{1/k})$-gradation of $I$ and $F = \bigcup_{j=1}^d F_j$ the {$d$-degree $(\eps^{1/k})$-perturbation} of $I$ in $R$.
	Define $R' = R - (F \cup I)$.
	Thus $R'$ is a vertex spanning subgraph of $R$ which avoids $I$.
	To finish, we prove that $R'$ has $\alpha$-perturbed minimum relative $d$-degree at least $\delta-\alpha$.
	
	Let $\beta = \eps^{1/(2k)}$ and note that $2^k \eps^{1/k} \leq \beta$ as $\eps \ll 1/k$.
	Thus the {$d$-degree perturbation} $F$ has edge density at most $\beta$ by Lemma~\ref{prop:surface-damage}.
	Let $(C_1, \dotsc, C_k)$ be the $(\beta^{1/k})$-gradation of $F$.
	By Proposition~\ref{claim:cleaninglemma-largedegree-notspoiled}, it follows that $\partial_j(R')$ and ${C_j}$ are disjoint for every $1 \leq j \leq d$.
	Moreover, by Proposition~\ref{claim:cleaninglemma-notspoiled-largedegree}, we have $\reldeg_{R'}(Y) \ge \delta - \alpha$ for any  $j$-set $Y$ outside of ${C_j}$ and $1 \leq j \leq d$.
	Together, this implies that, for any $1 \leq j \leq d$ and set of $j$ vertices $Y$, we have the equivalence
	\begin{align*}
	Y \notin C_j\quad  \Rightarrow \quad   \reldeg_{R'}(Y) \ge \delta - \alpha \quad \Rightarrow \quad Y \in \partial_j(R') \quad \Rightarrow \quad 	Y \notin C_j.
	\end{align*}
	
	In other words, each $C_j$ contains precisely the sets of $j$ vertices with relative degree less than $\delta - \alpha$ in $R'$.
	This yields~\ref{itm:perturbed-degree-density} of Definition~\ref{def:perturbed-degree}.
	By Proposition~\ref{proposition:cleaninglemma-sizes} and $\eps \ll \alpha$, it follows that $C_d$ has edge density at most $\eps^{d/(2k)} \leq \alpha $. 
	This gives~\ref{itm:perturbed-degree-shadow-almost-complete}.
	Finally, for $1 \leq j \leq d-1$, any edge outside of ${C_j}$ has relative degree less than $\eps^{1/(2k)} \leq \alpha$ in $C_{j+1}$	as $(C_1, \dotsc, C_k)$ is an $(\beta^{1/k})$-gradation.
	This implies~\ref{itm:perturbed-degree-lower-levels}.
	Hence, $R$ has $\alpha$-perturbed minimum relative $d$-degree at least $\delta-\alpha$.
\end{proof}

\section{How to obtain Hamilton vicinities} \label{sec:finding-vicinities}
In this section, we determine the $d$-vicinity threshold of $k$-graphs for $d=k-2$ (Lemma~\ref{lem:vicinity-threshold-general}) and provide a general upper bound (Lemma~\ref{lem:vicinity-threshold-d-k=2}).
Given a $k$-graph $R$ of large perturbed minimum degree, our goal is to find a Hamilton vicinity $\cC$ (Definition~\ref{def:vicinity-Hamilton}).
In other words, we have to select a subgraph $C_S$ of every link graph $L(S)$ with $S \in \partial_d(R)$, such that collectively, the $C_S$'s satisfy the vicinity properties~\ref{def:vicinity-inner-connectivity}--\ref{def:vicinity-density}.
In the following, we will show a series of propositions which prepare us for this task.

We begin with a few comments on notation.
Recall that a $j$-set is a set of $j$ elements, typically vertices. 
Moreover, for $1 \leq j \leq k$ and a $k$-graph $R$, we defined the $j$-th (level) shadow graph $\partial_j(R)$ to be the $j$-graph on $V(R)$ whose edges are the $j$-sets contained in the edges of $R$.
The edge size of the $j$-th shadow graph is denoted by  $e_j(H)=e(\partial_j(R))$.
To distinguish between degrees of different subgraphs, we will denote the degree of a vertex set $S$ in $R$ by $\deg_R(S)$.

We will use (a consequence of) the Kruskal--Katona theorem to bound the size of the different shadow graphs of a hypergraph.
Lovász's formulation of the Kruskal--Katona theorem states that, for any $x > 0$, if $G$ is a $k$-graph with $e(G)\geq\binom{x}{k}$ edges, then $e_j(G) \ge \binom{x}{j}$, for every $1 \leq j \leq k$ (see, for instance, Theorem~2.14 and Remark~2.17 in~\cite{FranklTokushige2018}).
By approximating the binomial coefficients, we deduce:

\begin{theorem}[Kruskal--Katona theorem]\label{thm:KK}
	For $1\leq d\leq k-1$ and $1/t \ll \eps \ll 1/k $, let $G$ be a $k$-graph on $t$ vertices and edge density $\delta$.
	Then $\partial_d(G)$ has edge density at least $\delta^{d/k}-\eps$.
\end{theorem}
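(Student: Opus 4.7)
The plan is to deduce the statement directly from Lovász's real-valued formulation of the Kruskal--Katona theorem, which is quoted immediately before the theorem, followed by a short estimate relating binomial coefficients to density.

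First I would set $x$ to be the real number in $[k-1,t]$ satisfying $\binom{x}{k}=e(G)=\delta\binom{t}{k}$, interpreting $\binom{x}{k}=x(x-1)\cdots(x-k+1)/k!$ in the usual way. Lovász's form of Kruskal--Katona then yields $e_d(G)\ge\binom{x}{d}$, and it remains to show $\binom{x}{d}/\binom{t}{d}\ge\delta^{d/k}-\eps$.

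Next I would estimate $x$ from below. The bounds $\binom{x}{k}\le x^k/k!$ and $\binom{t}{k}\ge (t-k+1)^k/k!$, combined with $\binom{x}{k}=\delta\binom{t}{k}$, give $x\ge\delta^{1/k}(t-k+1)$. Substituting this into $\binom{x}{d}\ge (x-d+1)^d/d!$ and $\binom{t}{d}\le t^d/d!$ produces
\[
\frac{\binom{x}{d}}{\binom{t}{d}}\ge\left(\frac{x-d+1}{t}\right)^d\ge\left(\delta^{1/k}-\frac{k+d-1}{t}\right)^d.
\]
To finish, I would invoke the Bernoulli-type inequality $(a-\eta)^d\ge a^d-d\eta$ valid for $a\in[0,1]$ and $0\le\eta\le a$, applied with $a=\delta^{1/k}$ and $\eta=(k+d-1)/t$. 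The hierarchy $1/t\ll\eps\ll 1/k$ then makes the right-hand side above at least $\delta^{d/k}-\eps$, as required.

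There is no substantial obstacle: the argument is a routine translation of the exact binomial-coefficient estimate of Kruskal--Katona into an asymptotic density bound. The only point requiring care is ensuring that the $O(1/t)$ slack coming from the approximations $\binom{x}{k}\sim x^k/k!$ and $\binom{t}{d}\sim t^d/d!$ is safely absorbed by the error term $\eps$, which is exactly what the hypothesis $1/t\ll\eps\ll 1/k$ guarantees. A minor point worth checking is the degenerate case $\delta=0$, in which case $x$ can be taken to be $k-1$ (so $\binom{x}{d}=0$) and the conclusion holds trivially.
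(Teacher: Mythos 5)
Your proposal is correct and takes exactly the route the paper itself indicates: the paper's ``proof'' of this statement is just the single remark ``By approximating the binomial coefficients, we deduce'' following the quotation of Lovász's form of Kruskal--Katona, so you have essentially written out the details the authors chose to omit. One small inaccuracy in your aside on the degenerate case: with $x=k-1$ one has $\binom{k-1}{d}>0$ for $d\le k-1$ (e.g.\ $\binom{k-1}{k-1}=1$), not $\binom{x}{d}=0$; but this does not matter since when $\delta=0$ the target bound $\delta^{d/k}-\eps$ is negative and the conclusion is vacuous anyway.
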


\subsection{Dense and spacious tight components}\label{sec:dense-and-spacious-components}

The following proposition was shown by Allen, Böttcher, Cooley and Mycroft~\cite[Proposition 10]{ABCM17}.
We include its short proof for the sake of completeness.

\begin{proposition} \label{prop:large-component}
	Any $\ell$-graph $L$ has a tight component $C$ such that $$\frac{e_\ell(C)}{e_{\ell-1}(C)}  \ge \frac{e_\ell(L)}{e_{\ell-1}(L)}.$$
	In particular, if $\delta, \nu, \nu'$ denote the edge densities of $L, C$ and $\partial(C)$ respectively, then $\nu /\nu'\ge \delta $.
\end{proposition}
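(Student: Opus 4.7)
\medskip

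The plan is to pass to the tight component decomposition and apply a standard mediant inequality. Let $C_1, \dots, C_m$ be the tight components of $L$; these partition the edge set of $L$, so $e_\ell(L) = \sum_{i=1}^m e_\ell(C_i)$. The key structural observation is that the $(\ell-1)$-shadows $\partial_{\ell-1}(C_i)$ are pairwise \emph{disjoint}. Indeed, suppose some $(\ell-1)$-set $T$ lay in both $\partial_{\ell-1}(C_i)$ and $\partial_{\ell-1}(C_j)$ with $i \neq j$. Then there would be edges $e_i \in C_i$, $e_j \in C_j$ with $e_i \supset T \subset e_j$. Writing $e_i = \{u\} \cup T$ and $e_j = \{v\} \cup T$ with $u \neq v$, the sequence $(u)(T)(v)$ of $\ell+1$ vertices would form a tight walk connecting $e_i$ to $e_j$ (both $\ell$-intervals are edges), contradicting the maximality of the tight components. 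Consequently $e_{\ell-1}(L) = \sum_{i=1}^m e_{\ell-1}(C_i)$.

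From here the first claim is the \emph{mediant inequality}: if $a_i, b_i > 0$ for $i = 1,\dots,m$, then $\max_i (a_i/b_i) \geq (\sum_i a_i)/(\sum_i b_i)$. Applying this with $a_i = e_\ell(C_i)$ and $b_i = e_{\ell-1}(C_i)$ (note $b_i > 0$ since every edge of a $\ell$-graph lies over $\ell$ many $(\ell-1)$-sets), we obtain a tight component $C = C_i$ with $e_\ell(C)/e_{\ell-1}(C) \geq e_\ell(L)/e_{\ell-1}(L)$.

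For the ``in particular'' clause, suppose $L$ has $t$ vertices. Then $e_\ell(L) = \delta \binom{t}{\ell}$, $e_\ell(C) = \nu \binom{t}{\ell}$, and $e_{\ell-1}(C) = \nu' \binom{t}{\ell-1}$, while trivially $e_{\ell-1}(L) \leq \binom{t}{\ell-1}$. Substituting into the ratio inequality yields
\[
\frac{\nu}{\nu'} \cdot \frac{\binom{t}{\ell}}{\binom{t}{\ell-1}} \;=\; \frac{e_\ell(C)}{e_{\ell-1}(C)} \;\geq\; \frac{e_\ell(L)}{e_{\ell-1}(L)} \;\geq\; \delta \cdot \frac{\binom{t}{\ell}}{\binom{t}{\ell-1}},
\]
and cancelling gives $\nu/\nu' \geq \delta$.

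There is essentially no obstacle: the only non-trivial point is the disjointness of the shadows of distinct tight components, and this follows directly from the definition of a tight walk of length $\ell+1$. The rest is the mediant inequality and bookkeeping with binomial coefficients.
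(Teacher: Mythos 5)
Your proof is correct and follows essentially the same route as the paper: decompose into tight components, observe that both the edges and the $(\ell-1)$-shadows are partitioned by the components, and apply the mediant inequality. The only difference is that you explicitly prove the disjointness of the shadows via the length-$(\ell+1)$ tight walk argument, whereas the paper asserts this directly from the definition of tight components; your version fills in a small detail but is not a different approach.
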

\begin{proof}
	Let $C_1,\dots,C_s$ be the tight components of $L$.
	Fix $1 \leq q \leq s$ which maximises $e_{\ell}(C_q) / e_{\ell-1}(C_q)$.
	By the definition of tight components, we have $\sum_{i=1}^s e_\ell(C_i) = e_\ell(L)$ and $\sum_{i=1}^s e_{\ell-1}(C_i) = e_{\ell-1}(L)$.
	It follows that
	\begin{align*}
	e_\ell(L) = \sum_{i=1}^s e_\ell(C_i) \leq \frac{e_\ell(C_q)}{e_{\ell-1}(C_q)} \sum_{i=1}^s e_{\ell-1}(C_i) = \frac{e_\ell(C_q)}{e_{\ell-1}(C_q)}  e_{\ell-1}(L).
	\end{align*}
	We obtain the second part by bounding the edge density of $\partial_{\ell-1}(L)$ {by~$1$}.
\end{proof}

\begin{proposition}\label{prop:dense-subgraph}
	For $1/t \ll \eps \ll \delta$, let $L$ be an $\ell$-graph on $t$ vertices with a subgraph $C$.
	Denote the edge densities of $L$, $C$ and $\partial (C)$ by $\delta$, $\nu$ and $\nu'$ respectively.
	If $\nu /\nu'\geq  \delta$, then $\nu \geq \delta^\ell - \eps$.
\end{proposition}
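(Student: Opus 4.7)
I would invoke the Kruskal--Katona theorem in its sharp (Lovász) form, which provides a multiplicative relationship between $\nu$ and $\nu'$ rather than the additive-error version of Theorem~\ref{thm:KK}. Provided $e(C) \geq 1$ (the case $e(C) = 0$ is degenerate: $\nu' = 0$ makes $\nu/\nu' \geq \delta$ undefined and the statement vacuous), there is a real $x \geq \ell$ with $e(C) = \binom{x}{\ell}$ and $e(\partial(C)) \geq \binom{x}{\ell - 1}$.

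Next, using the identities $\binom{x}{\ell}/\binom{x}{\ell-1} = (x - \ell + 1)/\ell$ and $\binom{t}{\ell-1}/\binom{t}{\ell} = \ell/(t - \ell + 1)$, the binomial ratios collapse to
\[
\frac{\nu}{\nu'} \leq \frac{\binom{x}{\ell}/\binom{t}{\ell}}{\binom{x}{\ell-1}/\binom{t}{\ell-1}} = \frac{x - \ell + 1}{t - \ell + 1}.
\]
Combining with the hypothesis $\nu/\nu' \geq \delta$ and $\delta \leq 1$, this yields $x \geq \delta(t - \ell + 1) + (\ell - 1) \geq \delta t$.

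Finally, I would lower-bound $\nu$ directly. Writing $\nu = \binom{x}{\ell}/\binom{t}{\ell} = \prod_{i=0}^{\ell-1}(x - i)/(t - i)$, each factor is at least $\delta - (1-\delta)i/(t - i) \geq \delta - \ell/(t - \ell)$. The tangent-line inequality for the convex function $z \mapsto z^\ell$ at $z = \delta$ then gives
\[
\nu \geq \left(\delta - \frac{\ell}{t - \ell}\right)^{\ell} \geq \delta^\ell - \frac{\ell^2 \delta^{\ell-1}}{t - \ell} \geq \delta^\ell - \frac{\ell^2}{t - \ell},
\]
and the assumed hierarchy $1/t \ll \eps$ absorbs the error, yielding $\nu \geq \delta^\ell - \eps$.

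No step is really delicate: the main idea is invoking the exact (rather than $\eps$-perturbed) form of Kruskal--Katona so that the ratio $\nu/\nu'$ simplifies to a clean expression in $x$ and $t$. The only nuance is the degenerate case $e(C) = 0$, handled by interpreting the hypothesis $\nu/\nu' \geq \delta$ as implicitly requiring $\nu' > 0$.
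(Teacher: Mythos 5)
Your proof is correct, but it takes a different route from the paper's. The paper simply applies its own density version of Kruskal--Katona (Theorem~\ref{thm:KK}) to $C$, obtaining $\nu' \geq \nu^{(\ell-1)/\ell} - \eps'$ for some auxiliary $\eps'$ with $1/t \ll \eps' \ll \eps$, then combines this with the hypothesis $\nu \geq \nu'\delta$ and solves the resulting inequality $\nu \geq (\nu^{(\ell-1)/\ell}-\eps')\delta$ for $\nu$ --- a three-line argument. You instead bypass the density reformulation and work directly with the Lov\'asz form, parametrizing $e(C) = \binom{x}{\ell}$, deriving the clean ratio bound $\nu/\nu' \leq (x-\ell+1)/(t-\ell+1)$, concluding $x \geq \delta t$, and then lower-bounding $\nu = \prod_{i<\ell}(x-i)/(t-i)$ term by term with a tangent-line estimate. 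What you lose in brevity you gain in transparency: the paper's ``solving for $\nu$'' step is left implicit, whereas your computation makes the error propagation completely explicit and avoids having to untangle a fractional-power inequality. Both approaches ultimately rest on the Lov\'asz--Kruskal--Katona bound; the paper's proof black-boxes it through Theorem~\ref{thm:KK}, yours unpacks it. Your handling of the degenerate case $e(C)=0$ and your check that $x\geq\ell$ are both fine, and the hierarchy $1/t \ll \eps$ does indeed absorb the $\ell^2/(t-\ell)$ error as you claim.
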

\begin{proof}
	Choose $\eps' > 0$ such that $1/t \ll \eps' \ll \eps$.
	We have $\nu \geq \nu' \delta \geq (\nu^{(\ell-1)/\ell}-\eps') \delta$, where the last inequality follows from the Kruskal--Katona theorem (Theorem~\ref{thm:KK}).
	Solving this for $\nu$ (and using $\eps' \ll \eps$) gives the desired inequality.
\end{proof}
The next result is due to Frankl and bounds the size of a largest matching by the ratio of the number of edges and number of edges in the shadow graph.
\begin{lemma}[{\cite[Theorem 9.3]{FranklTokushige2018}}]\label{lem:large-matching}
	For $s \geq 1$, let $C$ be an $\ell$-graph with
	\begin{align*}
	e_\ell(C) \geq (s-1)e_{\ell-1}(C)+1.
	\end{align*}
	Then $C$ contains a matching with at least $s$ edges.
\end{lemma}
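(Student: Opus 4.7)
The plan is to argue the contrapositive: any $\ell$-graph $C$ with $\nu(C)\le s-1$ satisfies $e_\ell(C)\le (s-1)\,e_{\ell-1}(C)$. I would proceed by induction on $s$. The base case $s=1$ amounts to the trivial observation that $\nu(C)=0$ forces $C$ to have no edges.

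For the inductive step, fix a maximum matching $M=\{e_1,\dots,e_m\}$ of $C$ with $m\le s-1$. By maximality, every edge $f\in E(C)$ meets $V(M)=e_1\cup\dots\cup e_m$, since otherwise $M\cup\{f\}$ would be a larger matching. I would partition $E(C)=\bigsqcup_{i=1}^{m}\mathcal{F}_i$ by letting $\mathcal{F}_i$ consist of the edges $f$ for which $i$ is the smallest index with $f\cap e_i\neq\emptyset$. The core step is then producing, for each $i$, an injection $\mathcal{F}_i\hookrightarrow\partial_{\ell-1}(C)$ of the shape $f\mapsto f\setminus\{v_f\}$, with $v_f\in f\cap e_i$ chosen by a deterministic rule. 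Summing over $i$ yields
\[ e_\ell(C)=\sum_{i=1}^{m}|\mathcal{F}_i|\le m\cdot e_{\ell-1}(C)\le (s-1)\,e_{\ell-1}(C), \]
which is the desired inequality.

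The main obstacle is ensuring the injectivity of the shadow map within each $\mathcal{F}_i$. Two distinct edges $f,f'\in\mathcal{F}_i$ with $f\setminus\{v_f\}=f'\setminus\{v_{f'}\}$ can occur precisely when $f$ and $f'$ differ by interchanging two vertices $v_f\neq v_{f'}$ that both lie inside $e_i$. To rule this out, I would select $M$ extremally with respect to a fixed linear order on $V(C)$, e.g.\ lexicographically smallest among maximum matchings, and set $v_f$ to be the minimum of $f\cap e_i$ in that order. A collision would then allow one to swap $e_i$ out of $M$ for an edge with a strictly smaller footprint, contradicting extremality of $M$. An alternative route, which seems cleaner, is to reduce to shifted families by compression: the standard shifting operations preserve $e_\ell(C)$, cannot increase $\partial_{\ell-1}(C)$, and cannot raise $\nu(C)$, so it suffices to establish the bound when $C$ is shifted; for a shifted family a canonical matching formed by initial segments of the vertex order is available, and the induced shadow map is then straightforwardly injective. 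Either route reduces the proof to bookkeeping, but getting the injection right — and in particular avoiding the loss of a factor of $\ell$ from the crude estimate $e_\ell(C)\le m\ell\cdot e_{\ell-1}(C)$ — is the only delicate part of the argument.
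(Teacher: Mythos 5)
The restriction $v_f\in f\cap e_i$ is fatal, and no deterministic choice rule can repair it. Take $C=K_3$ on vertex set $\{1,2,3\}$ with $\ell=2$ and $s=2$: the matching number is $1=s-1$, while $e_\ell(C)=e_{\ell-1}(C)=3$, so the target inequality holds with equality and your map $\mathcal{F}_1\hookrightarrow\partial_{\ell-1}(C)$ would have to be a bijection. Every maximum matching is a single edge, say $e_1=\{1,2\}$, which is simultaneously the lexicographically least maximum matching and the canonical matching of the already shifted family $K_3$, so neither of your proposed fixes changes anything. Then $\mathcal{F}_1=E(C)$, and $\{1,3\}$ and $\{2,3\}$ meet $e_1$ in the singletons $\{1\}$ and $\{2\}$ respectively; $v_f$ is therefore forced, and both edges collapse to $\{3\}$. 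Any genuine system of distinct representatives here, for instance $\{1,2\}\mapsto\{1\}$, $\{1,3\}\mapsto\{3\}$, $\{2,3\}\mapsto\{2\}$, must at some point delete the vertex $3\notin e_1$, which your scheme forbids. The ``swap $e_i$ for a smaller edge'' claim also fails on this example, since $\{1,2\}$ is already lexicographically least; and in general the colliding edges $f,f'$ may meet $e_j$ with $j>i$, so they are not eligible replacements for $e_i$ in $M$ anyway.

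So the gap is structural rather than bookkeeping. Partitioning $E(C)$ by the first matching edge hit is fine, but the injection into $\partial_{\ell-1}(C)$ cannot be built class-by-class by always deleting a vertex of $f\cap e_i$; the deleted vertex must sometimes lie outside $V(M)$, and the reason each $(\ell-1)$-set is reused at most $s-1$ times has to be argued globally across all of $E(C)$ rather than within a single $\mathcal{F}_i$. That is exactly where the content of the cited theorem lies and it is still missing. Two further minor points: your outer ``induction on $s$'' is never invoked in the inductive step, so as written the argument is direct; and the paper only cites this lemma (as Theorem~9.3 of Frankl--Tokushige) without reproving it, so there is no in-paper proof against which to check your approach.
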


\subsection{Arcs and switchers}\label{sec:arcs-and-switchers}

Here we show how to obtain arcs~(Definition~\ref{def:switcher}) and switchers~(Definition~\ref{def:arc}) which are required for the divisibility condition~\ref{def:vicinity-divisibility} of Hamilton vicinities (Definition~\ref{def:vicinity-Hamilton}).


The proof of the following result uses ideas from Reiher, Rödl, Ruciński, Schacht and Szemerédi~\cite[Lemma 6.2]{RRR19}.

\begin{proposition}\label{prop:switcher}
	For $1/t \ll \mu$, let $L$ be an $\ell$-graph on $t$ vertices with edge density at least $\delta +\mu$, where $\delta = \ell/(\ell + 2 \sqrt{\ell-1})$.
	Consider a vertex spanning subgraph $C\subset L$ and denote the edge densities of $C$ and $\partial (C)$ by $\nu$ and $\nu'$ respectively.
	Suppose that $\nu/ \nu' \geq \delta +\mu $.
	Then $C$ has a switcher.
\end{proposition}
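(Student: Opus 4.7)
The plan is to argue by contradiction: assume $C$ contains no switcher, and derive an upper bound on $\nu/\nu'$ that contradicts the hypothesis $\nu/\nu' \geq \delta + \mu$. The key ingredient will be a double count of \emph{cherries} in $C$, meaning ordered pairs of distinct edges $(A, A')$ with $|A \cap A'| = \ell - 1$.

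First, I would translate the no-switcher condition into a codegree statement. For an edge $A \in C$ and $a \in A$, set $S_a = A \setminus \{a\}$ and let $N^\ast(S_a) = N_C(S_a) \setminus A$ be the external neighbours, so $|N^\ast(S_a)| = \deg_C(S_a) - 1$. The failure of $a$ being a valid central vertex for $A$ produces some $b = b_A(a) \in A$ with $N^\ast(S_a) \cap N^\ast(S_b) = \emptyset$ (a brief case check handles $b = a$), and in particular $|N^\ast(S_a)| + |N^\ast(S_b)| \leq t - \ell$. Summed appropriately over $A \in E(C)$, this per-edge disjointness constraint provides an upper bound on the total cherry count.

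On the other hand, Cauchy--Schwarz applied to the codegree sequence $(\deg_C(S))_{S \in \partial_{\ell-1}(C)}$ yields the lower bound
\begin{align*}
\sum_{S \in \partial_{\ell-1}(C)} \deg_C(S)\bigl(\deg_C(S) - 1\bigr) \;\geq\; e_{\ell-1}(C)\, \bar d(\bar d - 1),
\end{align*}
where $\bar d = \ell e(C)/e_{\ell-1}(C) = (\nu/\nu')(t - \ell + 1)$ is the average codegree. Combining this lower bound with the upper bound from the previous paragraph, and using the Kruskal--Katona theorem (Theorem~\ref{thm:KK}, as in Proposition~\ref{prop:dense-subgraph}) to link $e(C)$ to $e_{\ell-1}(C)$, reduces the situation to a quadratic-type inequality in $\nu/\nu'$ whose critical value is precisely $\ell/(\ell + 2\sqrt{\ell-1}) = \delta$.

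The main obstacle will be sharpening the per-edge no-switcher constraint so that the emerging quadratic inequality has exactly $\delta$ as its threshold; I expect the $\sqrt{\ell-1}$ term to appear via an AM--GM or convexity argument on the $\ell$-tuple $(|N^\ast(S_a)|)_{a \in A}$, where the pairwise disjointness forced by the no-switcher hypothesis controls how unevenly the external codegrees attached to a single edge $A$ can be distributed. If this step is executed carefully, the hypothesis $\nu/\nu' \geq \delta + \mu$ together with the density of $L$ (which guarantees $C$ is not trivially small) gives the desired contradiction.
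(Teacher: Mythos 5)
Your plan is a genuinely different route from the paper's (contradiction via a second-moment/cherry-count argument rather than the paper's direct first-moment averaging of a reciprocal sum), but I believe it has a gap that the anticipated ``sharpening'' cannot close. Your central lower bound, $\sum_{S}\deg_C(S)\bigl(\deg_C(S)-1\bigr)\geq e_{\ell-1}(C)\,\bar d(\bar d-1)$, is tight only when the codegree sequence of $C$ is essentially constant. But the extremal switcher-free configurations here are bimodal: take $V = X\sqcup Y$ with $|X|=\alpha t$, $\alpha=(\sqrt{\ell-1}-1)/(\ell-2)$, and let $C$ consist of all $\ell$-sets meeting $X$ in exactly one vertex. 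Then every edge $A$ has one $(\ell-1)$-subset of codegree $\approx\alpha t$ and $\ell-1$ subsets of codegree $\approx(1-\alpha)t$, $C$ has no switcher (the per-edge constraint $d_{\max}+d_{\min}\leq t$ holds with equality), and $\nu/\nu'\to\delta$. Because the codegrees are not regular, the global Cauchy--Schwarz step throws away a constant factor. Meanwhile, your per-edge upper bound forces only $d_{\max}(A)+d_{\min}(A)\leq t-\ell$, which at best yields $\sum_{a\in A}(\deg_C(A\setminus\{a\})-1)\leq(\ell-1)(t-\ell)$. Chaining these gives $\bar d\lesssim(\ell-1)t/\ell$, i.e.\ $\nu/\nu'\lesssim(\ell-1)/\ell$, which is strictly larger than $\delta=\ell/(\ell+2\sqrt{\ell-1})$ for every $\ell\geq 3$. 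The obstruction is not the per-edge upper bound, so AM--GM on $(|N^\ast(S_a)|)_{a\in A}$ cannot repair it; the loss is in the global second-moment lower bound.

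The paper's proof avoids this by working with $f(A)=\sum_{a\in A}t/\deg_C(A\setminus\{a\})$: the average of $f$ over edges equals $t\,e_{\ell-1}(C)/e_\ell(C)$, so the hypothesis immediately gives an edge with $f(A)<\ell+2\sqrt{\ell-1}$, and a Cauchy--Schwarz inequality applied to the \emph{two-component} split $(\ell-1)/d_{\max}+1/d_{\min}$ against $d_{\max}+d_{\min}$ (not to the full $\ell$-tuple, and not to the global codegree sequence) produces exactly the constant $(\sqrt{\ell-1}+1)^2=\ell+2\sqrt{\ell-1}$, forcing $d_{\max}+d_{\min}>t$ and hence a switcher. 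This reciprocal weighting is calibrated precisely to the bimodal extremal case, which the quadratic/second-moment scheme is not. Incidentally, the invocation of Kruskal--Katona in your sketch is superfluous: the ratio $e_\ell(C)/e_{\ell-1}(C)$ is supplied directly by the hypothesis $\nu/\nu'\geq\delta+\mu$, so no relation between $\nu$ and $\nu'$ beyond the given one is needed.
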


\begin{proof}
	We abbreviate $d(X)=\deg_C(X)$ for sets of vertices $X\subset V(L)$.
	For any edge $A = \{a_1, \dotsc, a_\ell \}$ in $C$, define \[ f(A) = \sum_{i = 1}^\ell \frac{t}{d(A \setminus \{ a_i \})}. \]
	By double counting, the assumption $\nu \geq \nu' (\delta +\mu)$ and $1/t \ll \mu$, it follows that
	\begin{align}
	\sum_{e \in C} f(e) =  t e_{\ell-1}(C) =  \nu' t\binom{t}{\ell-1} \leq  \frac{\nu}{\delta +\mu}  t\binom{t}{\ell-1} <  \frac{\ell}{\delta}  e_\ell(C). \label{eq:shadow}
	\end{align}
	
	Say that an edge $A$ of $C$ is \emph{zentral}, if $f(A) < \ell + 2 \sqrt{\ell-1}$ and let $Z \subset E(C)$ be the set of zentral edges.
	Then inequality~\eqref{eq:shadow} implies $(\ell + 2 \sqrt{\ell-1}) e(C \setminus Z) < \ell \delta^{-1} e(C)$.
	Together with $\delta = \ell/(\ell + 2 \sqrt{\ell-1})$ this gives 
	\[ e(C \setminus Z) <   \frac{1}{\ell + 2 \sqrt{\ell-1}}  \frac{\ell}{\delta} e(C) = e(C). \]
	Hence there is at least one zentral edge $A = \{ a_1, \dotsc, a_\ell \}$.
	
	We can assume that $d(A \setminus \{ a_1 \}) \ge  \dotsb \ge d(A \setminus \{ a_\ell \}).$
	Since $A$ is zentral, the inequalities imply that
	\begin{align}
	\ell + 2 \sqrt{\ell-1} > \sum_{i=1}^\ell \frac{t}{d(A \setminus \{ a_i \})} \ge \left( \frac{\ell-1}{d(A \setminus \{ a_1 \})}+ \frac{1}{d(A \setminus \{ a_\ell \})}\right) t. \label{eq:usingcentral}
	\end{align}
	Then, by the Cauchy-Schwarz inequality,
	\begin{align*}
	\left( \frac{\ell-1}{d(A \setminus \{ a_1 \})}+ \frac{1}{d(A \setminus \{ a_\ell \})} \right)(d(A \setminus \{ a_1 \}) + d(A \setminus \{ a_\ell \}))  
	\ge (\sqrt{\ell-1} + 1)^2 = \ell + 2\sqrt{\ell-1}.
	\end{align*}
	Together with inequality~\eqref{eq:usingcentral}, we get \[ d(A \setminus \{ a_1 \}) + d(A \setminus \{ a_\ell \}) > t. \]
	This implies that $A$ is a switcher with central vertex $a_1$.
	Indeed, for every $1 \leq j  \leq \ell$,
	\[ d(A \setminus \{ a_1 \}) + d(A \setminus \{ a_j \}) \ge d(A \setminus \{ a_1 \}) + d(A \setminus \{ a_\ell \}) > t \] and therefore,
	\[ |N_C(A \setminus \{ a_1 \}) \cap N_C(A \setminus \{ a_j \}) | > 0. \qedhere \]
\end{proof}

Next, we show that a sufficiently dense hypergraph contains an arc (Definition~\ref{def:arc}).

\begin{proposition}\label{prop:arc}
	Let $1\leq d \leq k-1$, $t \in \NATS$ and $\gamma, \delta >0$ with $1/t \ll \eps \ll \delta$ and $\delta+\delta^{1-1/(k-d)} > 1+\eps$.
	Let $R$ be a $k$-graph on $t$ vertices with a subgraph $H$ that is generated by a $d$-vicinity $\cC$.
	Suppose that each $C_S \in \cC$ has edge density at least $\delta+\mu$. 
	Then $\cC$ admits an arc.
\end{proposition}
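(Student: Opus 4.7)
The approach is to fix arbitrary $S_1 \in \partial_d(R)$ and $v_1 \in S_1$, set $\ell := k - d$ and $T := S_1 \setminus \{v_1\}$, and search for $A_1 \in C_{S_1}$ and $v_{d+1} \in A_1$ such that $A_1 \setminus \{v_{d+1}\} \in \partial_{\ell-1}(C_{T \cup \{v_{d+1}\}})$. Once such a pair is found, $S_2 := T \cup \{v_{d+1}\}$ is automatically a $d$-subset of $S_1 \cup A_1 \in E(R)$, so $S_2 \in \partial_d(R)$ automatically; the shadow condition then supplies $v_{k+1}$ extending $A_1 \setminus \{v_{d+1}\}$ inside $C_{S_2}$, and any orderings of $T$ and $A_1 \setminus \{v_{d+1}\}$ assemble the arc tuple. (The case $\ell = 1$ is immediate since then $A_1\setminus\{v_{d+1}\}$ is empty and $C_{S_2}$ is non-empty; I focus below on the generic case $\ell \geq 2$.)

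The plan to find such a pair is a double-counting argument. For each $v \in V^* := \{v \in V(R) \setminus S_1 : T \cup \{v\} \in \partial_d(R)\}$, let $C^v := C_{T \cup \{v\}}$; Kruskal--Katona (Theorem~\ref{thm:KK}) gives $|\partial_{\ell-1}(C^v)| \geq ((\delta+\mu)^{1-1/\ell} - \eps_0) \binom{t}{\ell-1}$ for an auxiliary parameter $\eps_0$ with $1/t \ll \eps_0 \ll \eps$. Write $L_v$ for the link of $v$ in $C_{S_1}$, viewed as an $(\ell-1)$-graph on $V(R) \setminus (S_1 \cup \{v\})$. At most $\binom{t-d-1}{\ell-2}$ elements of $\partial_{\ell-1}(C^v)$ contain $v_1$ (the only extra vertex of its universe), so inclusion--exclusion inside $\binom{V(R)\setminus(S_1\cup\{v\})}{\ell-1}$ yields
\[
|L_v \cap \partial_{\ell-1}(C^v)| \;\ge\; |L_v| + |\partial_{\ell-1}(C^v)| - \binom{t-d-1}{\ell-1} - \binom{t-d-1}{\ell-2}.
\]

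Summing the inequality over $v \in V^*$ and using $\sum_v |L_v| = \ell \cdot e(C_{S_1}) \geq \ell(\delta+\mu)\binom{t}{\ell}$, the worst-case bound $|V^*| \leq t-d$, and the identity $\ell\binom{t}{\ell} = (t-\ell+1)\binom{t}{\ell-1}$, the total is at least
\[
t\binom{t}{\ell-1}\bigl[(\delta+\mu) + (\delta+\mu)^{1-1/\ell} - 1 - \eps_0 - O(1/t)\bigr].
\]
Using monotonicity of $x\mapsto x^{1-1/\ell}$ and the hypothesis $\delta + \delta^{1-1/\ell} > 1 + \eps$, the bracket is at least $\mu + \eps - \eps_0 - O(1/t)$, which is strictly positive for $t$ large. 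Hence some $v \in V^*$ admits $B \in L_v \cap \partial_{\ell-1}(C^v)$, and setting $A_1 := B \cup \{v\}$ and $v_{d+1} := v$ produces the arc.

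The main obstacle is the simultaneous bookkeeping of three error sources: the Kruskal--Katona deficit $\eps_0$, the $O(t^{\ell-2})$ loss from shadow elements containing $v_1$, and the $O(1/t)$ gap between $\binom{t}{\ell-1}$ and $\binom{t-d-1}{\ell-1}$. All three must be dominated by the $\eps$-slack in the hypothesis; this is ensured once the hierarchy $1/t \ll \eps_0 \ll \eps$ is fixed appropriately.
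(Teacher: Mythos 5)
Your proof is correct and takes essentially the same approach as the paper's: both use Kruskal--Katona on $C_{T\cup\{v\}}$ together with a density/pigeonhole argument to find a vertex $v=v_{d+1}$ for which the link of $v$ in $C_{S_1}$ meets $\partial_{\ell-1}(C_{T\cup\{v\}})$. The paper first singles out a high-degree $v_{d+1}$ in $C_{S_1}$ by a one-line averaging step and then pigeonholes for that fixed vertex; you instead sum the same pigeonhole bound over all $v\in V^*$, which is the identical idea repackaged as a double-count with more explicit bookkeeping of the error terms.
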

\begin{proof}
	Consider an arbitrary set $S=\{v_{1},\dots,v_{d}\}\in \partial_d(R)$.
	By averaging, there is a vertex $v_{d+1}$ with relative vertex degree at least $\delta$ in $C_S$.
	Set $S'=\{v_{2},\dots,v_{d+1}\}$ and note that $S' \in \partial_d(R)$.
	By assumption and the Kruskal--Katona theorem (Theorem~\ref{thm:KK}) that the $(k-d-1)$-graph $\partial{(C_{S'})}$ has edge density at least $\delta^{1-1/(k-d)}-\eps$.
	
	By choice of $v_{d+1}$ and the pigeonhole principle, the $(k-d-1)$-graphs  $\partial_{k-d-1}({C_{S'}})$ and $L(\{v_{1},\dots,v_{d+1}\})$  must share a set $\{v_{d+2},\dots,v_{k}\}$.
	Since $\{v_{d+2},\dots,v_{k}\}$ is in $\partial_{k-d-1}({C_{S'}})$ there is a another vertex $v_{k+1}$ such that $\{v_{d+2},\dots,v_{k+1}\}$ is in $C_{S'}$.
	It follows that $(v_1,\dots,v_{k+1})$ is an arc.
\end{proof}

\subsection{Upper bounds for the vicinity threshold}\label{sec:vicinity-threshold-general}
 In this subsection, we show that $\hv_d(k) \leq {2^{-1/(k-d)}}$ for all $1 \leq d \leq k-1$.
Recall the definitions of a Hamilton Vicinity (Definition~\ref{def:vicinity-Hamilton}) and the Hamilton Vicinity threshold (Definition~\ref{def:vicinity-Hamilton}).

\begin{proof}[Proof of Lemma~\ref{lem:vicinity-threshold-general}]
	For $1\leq d \leq k-1$, let  $\ell =k-d$ and $\delta={{2^{-1/\ell}}}$.
	Let $t \in \NATS$ and $\alpha,\gamma,\mu >0$ with
	$$\frac{1}{t} \ll \alpha \ll\gamma \ll \mu \ll \delta.$$
	Consider a $k$-graph $R$ on $t$ vertices with $\alpha$-perturbed minimum relative $d$-degree at least $\delta+\mu$.
	Proposition~\ref{prop:large-component} allows us to select, for every $S \in \partial_d(R)$, a tight component $C_S \subset L(S)$ such that  
	\begin{align}\label{equ:vicinity-threshold-general-large-component}
	\nu_S/\nu_S'\geq  \delta +\mu,
	\end{align}
	where $\nu_S$ and $\nu_S'$ denote the edge densities of $C_S$ and $\partial(C_S)$, respectively.
	We claim that the $d$-vicinity $\cC:=(C_S)_{S \in \partial_d(R)}$ is $(\gamma, \delta)$-Hamilton.
	Our task now is to verify that $\cC$ satisfies properties~\ref{def:vicinity-inner-connectivity}--\ref{def:vicinity-density}.
	
	We start with the connectivity and outreach conditions.
	Note that $\cC$ has property~\ref{def:vicinity-inner-connectivity} by definition.
	Proposition~\ref{prop:dense-subgraph} together with inequality~\eqref{equ:vicinity-threshold-general-large-component} imply that
	\begin{align}\label{equ:vicinity-threshold-general-dense-component}
	\nu_S \geq \delta^{\ell} + \frac{\mu}{2}.
	\end{align}
	{Note that $\delta^\ell = 1/2$.}
	Together with inequality~\eqref{equ:vicinity-threshold-general-dense-component}, this implies that each $C_S \in \cC$ has edge density greater than $1/2 + \mu/2 \ge 1/(2(k-d)) + \gamma$.
	Hence, the vicinity $\cC$ has property~\ref{def:vicinity-outer-connectivity} and~\ref{def:vicinity-density}.
	
	Next, we verify the divisibility condition.
	{We begin by showing that every element of $\cC$ contains a switcher.
	For $k-d=1$, this is trivially implied by $\delta>0$.
	Now assume that $k-d\geq 2$.
	Since $(1+\frac{1}{\ell})^\ell \geq 2$, we have $2^{-1/\ell} \geq \ell/(\ell + 1)$.
	Together, this gives
	$$\delta = 2^{-1/\ell} \geq \frac{\ell}{\ell + 1} \geq \frac{\ell}{\ell + 2 \sqrt{\ell-1}}.$$}
	Thus, by Proposition~\ref{prop:switcher} and inequality~\eqref{equ:vicinity-threshold-general-large-component}, every element of $\cC$ contains a switcher.
	Next, we show that $\cC$ has an arc.
	Moreover, as {$\delta \geq \delta^\ell \geq 1/2$}, we have ${\delta}+({\delta})^{1-1/\ell}>1+c_{\ell}$ for a constant $c_{\ell}$ depending only on $\ell$.
	Since $\ell =k-d$, it follows by Proposition~\ref{prop:arc} that $\cC$ has indeed an arc.
	All together, this yields that $\cC$ has property~\ref{def:vicinity-space}.
	
	Finally, we show that $\cC$ satisfies the space condition.
	It follows that, for every $C_S \in \cC$,
	\begin{align*}
	\frac{e_\ell(C_S)}{e_{\ell-1}(C_S)} &= \frac{\nu_S}{\nu_S'} \frac{\binom{t-d}{\ell}}{\binom{t-d+1}{\ell-1}}
	\geq (\delta+\mu) \frac{\binom{t-d}{\ell}}{\binom{t-d+1}{\ell-1}} 
	\geq {\left(\frac{1}{2^{1/\ell} \ell }+\frac{\gamma}{\ell}\right) t \geq \left(\frac{1}{\ell+1}+\gamma\right)t \geq \left(\frac{1}{k}+\gamma\right)t,}
	\end{align*}
	{where we used in the last inequality that $2^{1/\ell} \leq (\ell+1)/\ell$ and $\mu \ll 1/\ell$.}
	Therefore, Proposition~\ref{lem:large-matching} implies that, every $C_S \in \cC$ has a matching of size at least $(\frac{1}{k}+\gamma)t$.
	Recall that the density of a fractional matching in $C$ is the sum of its edge weights divided by $t$.
	Since in a matching, every edge has weight one, we obtain that $\cC$ has property~\ref{def:vicinity-space}.
	This concludes the proof.
\end{proof}

\subsection{Optimal bounds for the vicinity threshold}\label{sec:vicinity-threshold-specific}
Finally, we determine the minimum $d$-degree threshold of $k$-uniform Hamilton vicinities when $d=k-2$ (Lemma~\ref{lem:vicinity-threshold-d-k=2}).
The proof relies on the following lemma, which was shown by Cooley and Mycroft~\cite[Lemma 3.2]{CM17}.
We state it in a slightly stronger form and with the additional property of containing a switcher.
\begin{lemma}  \label{lem:cooley-mycroft}
	Let $1/t \ll \gamma \ll \mu$.
	Suppose that $L_1$ and $L_2$ are $2$-graphs on a common vertex set of size $t$ such that $L_1,L_2$ have edge density at least $5/9 + \mu$ each.
	For $i =1,2$, let $C_i$ be a component in $L_i$ with a maximum number of edges.
	Then the following statements hold for each $i=1,2$:
	\begin{enumerate}[label=\textnormal{(\roman*)}]
		\item \label{itm:cooleymycroft-edge-in-common} $C_1$ and $C_2$ have an edge in common,
		\item \label{itm:cooleymycroft-triangle} $C_i$ has a switcher,
		\item \label{itm:cooleymycroft-fractional-matching} $C_i$ has a fractional matching of density $1/3+\gamma$ and
		\item \label{itm:cooleymycroft-density} $C_i$ has edge density at least $4/9 + \gamma$.
	\end{enumerate}
\end{lemma}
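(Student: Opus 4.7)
The plan is to first derive the structural bound $s_i := |V(C_i)| \geq (2/3 + c\mu)t$ for some $c = c(\mu) > 0$ from the density hypothesis, and then to deduce the four conclusions. For the size bound, I will observe that the edges outside $C_i$ live on $t - s_i$ vertices, so by convexity of $\binom{\cdot}{2}$ applied to the component partition of $V$, $e(L_i) \leq \binom{s_i}{2} + \binom{t - s_i}{2}$. Combined with $e(L_i) \geq (5/9 + \mu)\binom{t}{2}$, this forces $s_i/t \leq 1/3$ or $s_i/t \geq 2/3 + c\mu$; the first case is excluded because components all of size at most $t/3$ carry fewer than $(1/3)\binom{t}{2}$ edges in total. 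Then (iv) follows immediately: $e(C_i) \geq e(L_i) - \binom{t - s_i}{2} \geq (4/9 + \Omega(\mu))\binom{t}{2} \geq (4/9 + \gamma)\binom{t}{2}$.

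For the switcher (ii)---which is the new content beyond Cooley--Mycroft's original lemma---I plan to argue by contradiction via Mantel's theorem. If $C_i$ were triangle-free then $e(C_i) \leq s_i^2/4$; substituting into the inequality from the previous step and dividing by $\binom{t}{2}$ gives $5/9 + \mu \leq (3/2)x^2 - 2x + 1 + o(1)$ where $x = s_i/t$. On $x \in [2/3, 1]$ the right-hand side is increasing and has maximum $1/2$ at $x = 1$, strictly below $5/9 + \mu$---a contradiction. Hence $C_i$ contains a triangle, which is immediately a switcher in a 2-graph (take any edge of the triangle as the switcher edge and either endpoint as central vertex).

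For the fractional matching (iii) I will invoke the classical fact that every connected non-bipartite graph admits a perfect fractional matching of size $|V|/2$, obtained by placing weight $1/2$ on an odd cycle and propagating along a BFS tree rooted at that cycle. Since $C_i$ is connected and non-bipartite by (ii), this gives a fractional matching of size $s_i/2 \geq (1/3 + \Omega(\mu))t \geq (1/3 + \gamma)t$.

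The hard part will be the common edge (i). Let $p = |V(C_1)|/t$, $q = |V(C_2)|/t$, $A = V(C_1) \cap V(C_2)$ and $a = |A|/t$. Since no edge of $L_i$ crosses the boundary of $V(C_i)$, I can decompose $V \setminus A = (V(C_i) \setminus A) \sqcup (V \setminus V(C_i))$ and bound $e(L_i[A]) \geq e(L_i) - \binom{|V(C_i) \setminus A|}{2} - |A| \cdot |V(C_i) \setminus A| - \binom{|V \setminus V(C_i)|}{2}$, which simplifies after collecting terms to $e(L_i[A])/\binom{t}{2} \geq 5/9 + \mu + a^2 - p_i^2 - (1 - p_i)^2$ (with $p_1 = p$, $p_2 = q$). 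Summing and applying inclusion--exclusion with $\binom{|A|}{2}$ then reduces (i) to proving $p^2 + q^2 + (1 - p)^2 + (1 - q)^2 - a^2 < 10/9 + 2\mu$. Over the feasible region $\{p, q \geq 2/3 + c\mu,\ a \geq \max(0, p + q - 1)\}$, the left-hand side is maximised at $a = p + q - 1$ (equivalently, $V(C_1) \cup V(C_2) = V$), where a short substitution reveals it equals $1 + (p - q)^2 \leq 1 + (1/3 - c\mu)^2 < 10/9 + 2\mu$. The $2\mu$ slack in the density hypothesis is precisely what delivers the strict inequality, and this tight budget is why (i) is the most delicate part of the argument.
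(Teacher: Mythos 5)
Your treatment of (iii) contains a genuine error. The ``classical fact'' you invoke --- that every connected non-bipartite graph has a fractional perfect matching --- is false: take a triangle $abc$ and attach two pendant vertices $d,e$ to the \emph{same} vertex $a$. This graph is connected and non-bipartite, but saturating $d$ and $e$ forces weight $1$ on each of $ad$ and $ae$, overloading $a$; equivalently, deleting $\{a\}$ leaves two isolated vertices, so the fractional Berge--Tutte deficiency is positive and $\nu_f < |V|/2$. The ``place weight $1/2$ on an odd cycle and propagate along a BFS tree'' construction only works when the graph has a spanning subgraph that is a disjoint union of single edges and odd cycles, which connectivity plus the existence of an odd cycle does not guarantee. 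So as written, (iii) is unproven. The repair --- and what the paper actually does --- is to apply the Erd\H{o}s--Gallai theorem directly to $C_i$: writing $v(C_i)=(1-x)t$ with $0\le x<1/3$, one has $e(C_i)>(5/9+\mu-x^2)\binom{t}{2}$, which (using $\gamma\ll\mu$) exceeds both $\binom{2s-1}{2}$ and $\binom{v(C_i)}{2}-\binom{v(C_i)-s+1}{2}$ for $s=\lceil(1/3+\gamma)t\rceil$, yielding an integral, hence fractional, matching of size $(1/3+\gamma)t$. A second, smaller, imprecision: your exclusion of the case $s_i/t\le 1/3$ assumes ``components all of size at most $t/3$,'' but $C_i$ is the component with the most \emph{edges}, so another component could have more vertices. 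One fixes this by bounding each component's edge count by $\min\{\binom{n_j}{2},\binom{s_i}{2}\}\le n_j t/6$, which sums to $t^2/6<(5/9)\binom{t}{2}$; alternatively, first show the max-vertex component has more than $t/2$ vertices and then that it is also the max-edge component.

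The remaining parts are sound. Your argument for (i) is essentially the paper's: both proofs run inclusion--exclusion on $V(C_1)\cap V(C_2)$ under the assumption of no common edge, and your identity that the objective equals $1+(p-q)^2$ at $a=p+q-1$ is exactly the paper's contradiction $(\alpha-\beta)^2>1/9$ against $|\alpha-\beta|<1/3$; you correctly identify that the $\mu$-slack is what makes the inequality strict. Your Mantel route to the switcher in (ii) is a genuinely simpler alternative to the paper's argument, which instead finds a max-ratio component, applies the Cauchy--Schwarz ``zentral edge'' computation of Proposition~\ref{prop:switcher}, and then must separately argue that this component coincides with $C_1$; your computation that $\tfrac{3}{2}x^2-2x+1\le 1/2<5/9$ on $x\in[2/3,1]$ is correct, and a triangle is indeed a switcher in a $2$-graph.
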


Before we prove Lemma~\ref{lem:cooley-mycroft}, let us show how this implies  $\hv_{k-2}(k) \leq  5/9$ for $k \geq 3$.

\begin{proof}[Proof of Lemma~\ref{lem:vicinity-threshold-d-k=2}]
	Let $k,t\geq 3$, $d=k-2$ and $\alpha,\gamma,\mu >0$ with
	\[\frac{1}{t} \ll \alpha \ll\gamma \ll \mu \ll \frac{5}{9}.\]
	Consider a $k$-graph $R$ on $t$ vertices with $\alpha$-perturbed minimum relative $d$-degree at least $5/9+\mu$.
	For every $S \in \partial_d(R)$, let $C_S$ be a tight component of $L(S)$ with  a maximum number of edges.
	We claim that the $d$-vicinity $\cC:=(C_S)_{S \in \partial_d(R)}$ is $\gamma$-Hamilton.
	Indeed, by definition $\cC$ has property~\ref{def:vicinity-inner-connectivity}.
	By Lemma~\ref{lem:cooley-mycroft}, $\cC$ also satisfies properties~\ref{def:vicinity-outer-connectivity},~\ref{def:vicinity-space} and~\ref{def:vicinity-density}.
	Moreover, every $C_S \in \cC$ contains a switcher.
	Since $4/9+(4/9)^{1-1/2}=1+1/9$, it follows by Proposition~\ref{prop:arc} that $\cC$ has an arc.
	Together, we obtain that $\cC$ has property~\ref{def:vicinity-space}.
\end{proof}

It remains to show Lemma~\ref{lem:cooley-mycroft}.
The proof follows along the lines of the original lemma by Cooley and Mycroft.
For sake of completeness, we give the details below.
We use the following theorem of Erd\H{o}s and Gallai~\cite{EG}, which gives a tight sharp bound on the smallest possible size of a maximum matching in a graph of given order and size.

\begin{theorem}[Erd\H{o}s and Gallai~\cite{EG}]\label{erdosgallai}
	Let $n,s \in \NATS$ with $s \leq n/2$.
	Suppose $G$ is a graph on $n$ vertices and 
	\[e(G) > \max \left\{\binom{2s-1}{2}, \binom{n}{2} - \binom{n-s+1}{2} \right\}.\]
	Then $G$ admits a matching with at least $s$ edges.
\end{theorem}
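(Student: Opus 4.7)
The plan is to prove the contrapositive by induction on $s$: if $\nu(G) \le s-1$, then
\[ e(G) \le \max\left\{\binom{2s-1}{2},\ \binom{n}{2} - \binom{n-s+1}{2}\right\}. \]
The base case $s=1$ is immediate since $\nu(G) \le 0$ forces $G$ to be edgeless and the right-hand side equals $0$. For the inductive step, let $M$ be a maximum matching of $G$. If $|M| \le s-2$, then applying the induction hypothesis with parameter $s-1$ already yields a strictly smaller bound, so I may assume $|M| = s-1$ exactly.

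Write $M = \{x_1 y_1, \dotsc, x_{s-1} y_{s-1}\}$ and let $U = V(G) \setminus V(M)$, a set of $n - 2s + 2$ vertices. By maximality of $M$, the set $U$ is independent. The key structural observation is a Berge-type augmenting-path argument: if distinct $u, v \in U$ satisfy $u \in N(x_i)$ and $v \in N(y_i)$, then $u\,x_i\,y_i\,v$ is a length-$3$ augmenting path, contradicting the maximality of $M$. Hence for each $i$, at most one of $x_i, y_i$ has neighbours in $U$, with the degenerate exception that both share a single common $U$-neighbour. After relabelling, I may assume every $y_i$ contributes at most one edge to $U$. Bounding crudely then gives
\[ e(G) \le \binom{2s-2}{2} + (s-1)|U| + (s-1), \]
accounting respectively for edges inside $V(M)$, edges from the $x_i$'s into $U$, and the at most one edge per $y_i$ into $U$.

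This naive count simplifies to $(s-1)n$, which overshoots the target by roughly $\binom{s}{2}$. The main obstacle is therefore to save these additional edges by exploiting longer augmenting paths: if $u \in N(x_i) \cap U$, $v \in N(y_j) \cap U$ with $u \neq v$ and $i \neq j$, and $y_i x_j$ is an edge of $G$, then $u\,x_i\,y_i\,x_j\,y_j\,v$ is a length-$5$ augmenting path. Iterating this principle forbids enough further configurations to tighten the count; the feasible region of the resulting linear system of constraints on edge counts is maximised at one of two extremal structures, namely $G \subseteq K_{2s-1}$ together with $n - 2s + 1$ isolated vertices, or $G \subseteq K_{s-1} + \overline{K_{n-s+1}}$ (the join of a clique of size $s-1$ with an independent set of size $n-s+1$). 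These correspond exactly to the two terms of the maximum on the right-hand side. A technically cleaner alternative route is via the Gallai--Edmonds decomposition: the deficiency $n - 2\nu(G) \ge n - 2(s-1)$ produces a Tutte--Berge witness set $A \subseteq V(G)$ for which $G - A$ has at least $|A| + n - 2(s-1)$ odd components, each factor-critical; bounding $e(G)$ component-by-component as a clique together with the complete bipartite edges to $A$, and then optimising over $|A|$, recovers the stated bound.
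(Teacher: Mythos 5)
The paper does not prove this statement at all: it is quoted as a classical theorem of Erd\H{o}s and Gallai with a citation to~\cite{EG}, so there is no in-paper argument to compare yours against; what follows is an assessment of your proposal on its own merits.

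Your primary route (induction on $s$ plus augmenting-path constraints) has a genuine gap at exactly the point where the work has to happen. The reduction to a maximum matching $M$ of size $s-1$, the independence of $U$, and the length-$3$ augmenting-path restriction are all fine, and your naive count $e(G)\le(s-1)n$ is correct --- but the step that is supposed to recover the missing $\binom{s}{2}$ edges is only asserted: ``iterating this principle forbids enough further configurations'' and ``the feasible region of the resulting linear system \dots is maximised at one of two extremal structures'' is not an argument, and no linear system is actually written down or optimised. Note also that the induction does no work in this branch: the case $|M|=s-1$ is never reduced to the inductive hypothesis, so what you have is a direct counting argument whose decisive inequality is missing. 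As written, this route does not constitute a proof.

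Your alternative route via the Tutte--Berge formula, however, is sound and can be completed in a few lines, so you should make it the proof rather than an afterthought. If $\nu(G)\le s-1$, choose $A$ with $o(G-A)\ge |A|+n-2(s-1)$ and set $a=|A|$; since $G-A$ has at most $n-a$ components, this forces $a\le s-1$. Counting edges inside $A$, between $A$ and the rest, and inside components (there are none between distinct components), and using convexity of $x\mapsto\binom{x}{2}$ to push all but one component down to singletons, you get
\[
e(G)\ \le\ \binom{a}{2}+a(n-a)+\binom{2s-2a-1}{2},
\]
because the largest component has at most $n-a-(a+n-2s+2)+1=2s-2a-1$ vertices. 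The right-hand side is a quadratic in $a$ with leading coefficient $\tfrac12-1+2=\tfrac32>0$, hence convex, so its maximum over $0\le a\le s-1$ is attained at an endpoint; at $a=0$ it equals $\binom{2s-1}{2}$ and at $a=s-1$ it equals $\binom{s-1}{2}+(s-1)(n-s+1)=\binom{n}{2}-\binom{n-s+1}{2}$. This is exactly the stated bound, and the contrapositive follows. (The factor-criticality from Gallai--Edmonds is not needed; the Berge--Tutte witness set suffices.)
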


\begin{proof}[Proof of Lemma~\ref{lem:cooley-mycroft}]	
	During the proof we make repeated use of the fact that for $0 < x < 1$ we have $\binom{xt}{2} < x^2\binom{t}{2}$.
	We will show the following series of claims about $C_1$ and $C_2$, which in particular easily imply all of the results stated in the lemma:
	\begin{enumerate}[(a)]
		\item \label{item:cooleymycroft-a} both $C_1$ and $C_2$ span at least $(2/3 + \sqrt{\mu/2})t$ vertices,
		\item \label{item:cooleymycroft-b} both $C_1$ and $C_2$ have at least $(4/9 + \mu) \binom{t}{2}$ edges,
		\item \label{item:cooleymycroft-c} both $C_1$ and $C_2$ have a matching of size at least $(2/3 + \mu)t$,
		\item \label{item:cooleymycroft-d} $C_1$ and $C_2$ have an edge in common and
		\item \label{item:cooleymycroft-e} both $C_1$ and $C_2$ have a switcher.
	\end{enumerate}
	
	Let us prove part~\ref{item:cooleymycroft-a}.
	Let $1 \leq i \leq 2$ and suppose for a contradiction that every connected component in $L_i$
	has at most $(2/3+\sqrt{\mu/2})t$ vertices.
	Then we may form disjoint sets $A$ and $B$ such
	that $V(L_i) = A \cup B$, such that $A$ and $B$ can each be written as a union of
	connected components of $L_i$, and such that $|A|, |B| \leq (2/3+\sqrt{\mu/2})t$. We then have
	\[ e(L_i) \leq \binom{|A|}{2} + \binom{|B|}{2} \leq \binom{\left(\frac{2}{3} + \sqrt{\frac{\mu}{2}} \right)t}{2} + \binom{\left(\frac{1}{3} - \sqrt{\frac{\mu}{2}} \right)t}{2} < \left(\frac{5}{9} + \mu \right) \binom{t}{2},\]
	giving a contradiction.
	
	Now we prove part~\ref{item:cooleymycroft-b}.
	Indeed, for any $1 \leq i \leq 2$, part~\ref{item:cooleymycroft-a} implies that the number of edges of $L_i$ which are not in $C_i$ is at most
	\[\binom{t-v(C_i)}{2} < \binom{\left(\frac{1}{3}-\sqrt{\frac{\mu}{2}}\right)t}{2} < \left(\frac{1}{9}+{\frac{\mu}{2}}\right)\binom{t}{2}.\]
	
	Now we check that part~\ref{item:cooleymycroft-c} holds.
	Let $1 \leq i \leq 2$ be arbitrary, and let $x$ be such that $v(C_i) = (1-x)t$, so $x$ is the proportion of vertices of $V$
	which are not in $C_i$. In particular $0 \leq x < 1/3-\sqrt{\mu/2}$ by part~\ref{item:cooleymycroft-a}.
	Observe that at most $\binom{xt}{2} \leq x^2\binom{t}{2}$ edges of $L_i$ are not in $C_i$, so $C_i$ has more than $\left(5/9 - x^2 \right) \binom{t}{2}$ edges.
	It is easily checked that the inequality 
	\[\left(\frac{5}{9} - x^2 \right) \binom{t}{2} > \binom{2s -1}{2} = \max \left\{\binom{2 s-1}{2},  \binom{t'}{2} - \binom{t'-s+1}{2} \right\}\]
	holds for $t' =\lceil(1-x)t\rceil$, $s = \lceil(1/3+\gamma)t\rceil$ any $0 \leq x < 1/3-\sqrt{\mu/2}$ as $1/t \ll \gamma \ll \mu$.
	So by Theorem~\ref{erdosgallai} the component $C_i$ admits a matching $M$ of size $(2/3+\mu)t$ as claimed.	
	
	To see part~\ref{item:cooleymycroft-d}, fix $\alpha$ and $\beta$ so that $|V(C_1)| = (1-\alpha)t$ and
	$|V(C_2)| = (1 - \beta) t$. By part~\ref{item:cooleymycroft-a}, $0 \leq \alpha, \beta < 1/3$ and
	$|V(C_1) \cap V(C_2)| \geq (1 - \alpha - \beta) t$. Similarly as before, at most
	$\binom{\alpha t}{2} \leq \alpha^2\binom{t}{2}$ edges of $L_1$ are not in $C_1$, and at most
	$\binom{\beta t}{2} \leq \beta^2\binom{t}{2}$ edges of $L_2$ are not in $C_2$. Now suppose for a
	contradiction that $C_1$ and $C_2$ have no edges in common. Then we have
	\begin{align*}
	\left(\frac{5}{9}-\alpha^2\right)\binom{t}{2} & + \left(\frac{5}{9} - \beta^2\right) \binom{t}{2}
	< e(C_1) + e(C_2) \\
	& \leq \binom{v(C_1)}{2} + \binom{v(C_2)}{2} - \binom{|V(C_1) \cap V(C_2)|}{2} \\ 
	& \leq \binom{(1-\alpha) t}{2} + \binom{(1-\beta) t}{2} - \binom{(1- \alpha - \beta)t}{2} \\
	& = \left((1-\alpha)^2 + (1-\beta)^2 - (1-\alpha-\beta)^2\right)\binom{t}{2}\\
	&\hspace{1cm} - \tfrac{t}{2}\left(\alpha(1-\alpha)+\beta(1-\beta)-(\alpha+\beta)(1-\alpha - \beta)\right)\\
	& \leq (1-2\alpha\beta) \binom{t}{2}.
	\end{align*}
	So $1/9 < \alpha^2 + \beta^2 - 2 \alpha \beta = (\alpha - \beta)^2$, which implies that $|\alpha - \beta| > 1/3$, contradicting the fact that $0 \leq \alpha, \beta < 1/3$. We deduce that $C_1$ and $C_2$ must have an edge in common, as required.
	
	We conclude the proof by showing part~\ref{item:cooleymycroft-e}.
	For reasons of symmetry, it suffices to  show that $C_1$ has a switcher.
	By Proposition~\ref{prop:large-component}, $L_1$ contains a component $C$ such that $\nu /\nu'\geq  5/9+\mu$, where $\nu$ denotes the edge density of $C$ and $\nu'$ denotes the edge density of $\partial_1(C)$.
	Since $5/9 \geq 2/(2+2\sqrt{2-1})$, it follows by Proposition~\ref{prop:switcher} that $C$ contains a switcher.
	Moreover, it follows by Proposition~\ref{prop:dense-subgraph}, that $\nu \geq (5/9+\mu/2)^2$.
	So $C$ has at least $(5/9)t$ vertices.
	Therefore we have $C=C_1$.
\end{proof}

\section{From framework to Hamilton cycles -- an overview}\label{sec:framework-threshold-overview}

In the second part of this paper, we show that the existence of Hamilton frameworks implies the existence of tight Hamilton cycles, namely Theorem~\ref{thm:framework}.
The proof is based on a combination of two well-known approaches, the \emph{absorption method} and the \emph{method of connected matchings}.
The modern absorption method was introduced by Rödl, Ruciński and Szemerédi~\cite{RRS06}. 
A precursor of this approach can be found in the works of Erdős, Pyber and Gyárfás~\cite{EGP91} and of Krivelevich~\cite{Krivelevich1997}.
The method of connected matchings is based on the work of Koml\'os, S\'ark\"ozy and Szemerédi~\cite{KSS97}, which itself relies on the regularity method of Szemerédi~\cite{Sze76}.
It was introduced by  \L uczak~\cite{Luc99} for graphs, generalised to $3$-graphs by Haxell, \L uczak, Peng, Rödl, Ruciński and Skokan~\cite{HLP+09} and later to all uniformities by Allen, Böttcher, Cooley und Mycroft~\cite{ABCM17}, using the hypergraph regularity lemma developed by Rödl and Skokan~\cite{RS04}, Rödl and Schacht~\cite{RS07} and Gowers~\cite{Gow07}.

In what follows, we outline the proof of  Theorem~\ref{thm:framework} for the case of 2-graphs.
More precisely, we will assume that the Hamilton framework threshold is bounded by $\hf_1(2) \leq 1/2$ and use this to prove that the (tight) Hamilton cycle threshold is at most $\hc_1(2) \leq 1/2$.
For convenience, let us recall the definition of Hamilton frameworks.

\defframework*


\begin{proof}[Proof sketch of $\hc_1(2) \leq 1/2$, assuming $\hf_1(2) \leq 1/2$]
	Let $1/n \ll 1/t \ll \eps \ll \alpha \ll \gamma \ll \mu$.
	Suppose that $G$ is a $2$-graph on $n$ vertices and with minimum relative degree at least $1/2 + \mu$.
	The goal is to show that $G$ has a Hamilton cycle.
	
	We start by setting up the regularity framework.
	Using a regularity lemma, we obtain an $\eps$-regular partition of $V(G)$ which yields a \emph{reduced graph} $R$ on $t$ vertices that approximately captures the edge structure of $G$.
	By our assumption, $\hf_1(2) = 1/2$.
	Therefore, by definition of the framework threshold (Definition~\ref{def:framework-threshold}) the reduced graph $R$ contains a $(\alpha,\gamma,\delta)$-Hamilton framework $H$ satisfying conditions~\ref{def:framework-spanning}--\ref{def:framework-degree}.
	
	In the following, we use the properties of the Hamilton framework $H$ to construct a Hamilton cycle in $G$.
	We begin by setting up a special path.
	
	\begin{claim}[{Absorption Claim}]\label{cla:absorption}
		There is a path $P$ in $G$ such that for every set $S$ of even size and size at most $\eps n$, there is a path $P'$ on vertex set $V(P) \cup S$ with the same ends as $P$.
		Moreover, $P$ has size at most $\alpha n$ and extends in an appropriate way into $H$.
	\end{claim}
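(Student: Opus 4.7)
The plan is to build $P$ via the standard absorption method, adapted to the regularity setup and the framework $H$ in the reduced graph $R$. For each ordered pair of distinct vertices $(u,v) \in V(G)^2$, I would define a \emph{pair absorber} to be a $4$-vertex path $A=(a_1,a_2,a_3,a_4)$ in $G$ with $ua_1, ua_2, va_3, va_4 \in E(G)$: replacing $A$ by the path $(a_1,u,a_2,a_3,v,a_4)$ yields a path on $V(A) \cup \{u,v\}$ with the same endpoints. The fact that each absorber swallows vertices two at a time is exactly what produces the ``even size'' condition in the statement of the claim.

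Since $\delta(G) \geq (1/2 + \mu)n$, any two vertices share at least $2 \mu n$ common neighbours, and a short double-counting shows that every pair $(u,v)$ admits at least $\Omega((\mu n)^4)$ pair absorbers. A standard probabilistic argument (include each candidate absorber independently with a suitable probability $p$, then prune collisions by deletion) yields a family $\mathcal{A}$ of pairwise vertex-disjoint pair absorbers with $|V(\mathcal{A})| \leq \alpha n / 10$ such that every pair $(u,v)$ is still covered by far more than $\eps n$ members of $\mathcal{A}$. Given any $S \subseteq V(G) \setminus V(\mathcal{A})$ of even size at most $\eps n$, one can then greedily partition $S$ into pairs and assign each pair to a distinct absorber in $\mathcal{A}$; simultaneous local replacement yields the desired path $P'$ with the same endpoints as $P$.

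It remains to concatenate the absorbers in $\mathcal{A}$ into a single path $P$, and this is where the framework $H \subseteq R$ enters. Each absorber lives in a bounded collection of clusters of the $\eps$-regular partition of $V(G)$. To join two consecutive absorbers I would first use the connectivity property~\ref{def:framework-connectivity} of $H$ to find a short walk in $H$ between the two relevant clusters, and then use regularity (together with the outreach property~\ref{def:framework-degree}, which provides enough typical vertices within each cluster to access the edges of $H$) to realise that walk as a short path in $G$. Each connector has length bounded in terms of $v(R)$ alone, contributing a negligible amount to $|V(P)|$, so the total stays below $\alpha n$. The endpoints of the resulting $P$ can be chosen inside clusters incident to an edge of $H$, which is the precise meaning of ``extends in an appropriate way into $H$''.

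The main obstacle is the chaining step: the connectors must avoid the vertices inside absorbers and must not spoil the absorbing action when $P$ is later extended by $S$. Standard tools — greedy allocation of connectors one at a time, regularity inheritance inside clusters, and the fact that connectors are much shorter than absorbers while pair absorbers remain abundant — handle this, but the bookkeeping is delicate. Note that the divisibility property~\ref{def:framework-divisibility} is not needed for the construction of $P$ itself in the $2$-uniform case; it only reappears later, when the absorbing path, the almost-spanning cover, and an odd closed walk inside $H$ must be glued into an actual Hamilton cycle whose length matches $n$.
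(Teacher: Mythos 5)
Your overall skeleton — simple pair absorbers that swallow two vertices at a time, abundance via common neighbourhoods, random sampling, then chaining via regularity and the framework $H$ — is the right shape and matches the paper's plan at a high level; you also correctly isolate parity (even $|S|$) as coming from $k=2$ and correctly observe that the divisibility property~\ref{def:framework-divisibility} is not needed for absorption itself. But there is a genuine gap, and it is precisely the one the authors warn about in Section~\ref{sec:framework-threshold-overview}: you count absorbers using only $\delta(G)\geq(1/2+\mu)n$, so they are scattered arbitrarily across $V(G)$, and only afterwards do you try to connect them through $H$. An absorber whose endpoints land in clusters outside $V(H)$ (up to $\alpha t$ clusters are missing), or outside $V(\cP)$ entirely (up to $\alpha n$ vertices), or which land on atypical vertices inside their clusters, cannot be joined by a regularity connector at all, and no amount of ``greedy allocation'' fixes this after the fact, because you have no control over which of the $\Omega((\mu n)^4)$ absorbers per pair actually sit in connectable positions.

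The paper resolves this by building the extensibility requirement into the definition of the absorbing object itself: an $\fS$-gadget (Definition~\ref{definition:absorberinslice}) is required to sit inside an oriented edge of $H$, to have its peripheral paths in cluster-tuples $W_i$ with $W_i\cup\{Y_i\}\in H$, and to have each of its $k+1$ constituent paths $(c,\nu)$-extensible to the correct oriented tuples. The abundance count is then redone \emph{subject to these constraints}: Lemma~\ref{lemma:countinggadgets-local} shows that every $k$-set $T$ still admits $\Omega(|\sF^\ext|)$ such gadgets, and the proof of that lemma is where the outreach property~\ref{def:framework-degree} is actually spent — together with representativity it shows that for every $v\in V(G)$ and every cluster $Z\in V(H)$ there are many $(k-1)$-tuples of clusters $W$ with $W\in N_H(Z)$ and $v$ having many $\cJ_W$-neighbours (Lemmas~\ref{lemma:neighboursincomplex} and~\ref{lemma:countinggadgets-local-reduced}). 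Your sketch uses outreach only loosely, as ``access to edges of $H$'' during connecting; in fact its essential role is to make the \emph{restricted} count of well-positioned gadgets go through. To repair your argument you would need to (i) restrict absorber vertices to $V(\cP)$ and to clusters of $V(H)$, (ii) demand the two endpoints of each absorber be extensible to appropriate oriented edges of $\ori H$, and (iii) redo the abundance count under these restrictions using representativity and outreach rather than $\delta(G)$ alone — which is essentially what Definitions~\ref{definition:absorberinslice}--\ref{definition:globalgadgets} and Lemmas~\ref{lemma:countinggadgets-global}--\ref{lemma:countinggadgets-local} do.
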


	The proof of the Absorption Claim relies on the outreach condition~\ref{def:framework-degree} of Hamilton frameworks.
	In the second step, we extend $P$ to a cycle $C$ that contains almost all vertices of $H$.
	
	\begin{claim}[Cover Claim]\label{cla:cover}
		There is a cycle $C$ which contains $P$ as a subpath and covers all but $\alpha n$ vertices of $G$.
		Moreover, the set $V(G) \sm V(C)$ of uncovered vertices has even size.
	\end{claim}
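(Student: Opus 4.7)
The plan is to combine a near-perfect matching in $H$ with regularity-based path constructions and short connecting segments to assemble the cycle. First, applying the robust matching property F4 with the all-ones weighting $\bvec{b}\equiv 1$, I obtain a perfect fractional matching in $H$, which can be rounded to an integer matching $M$ in $H$ covering all but $O(1)$ clusters. For each matching edge $\{U_i,U_j\}\in M$, standard arguments from the regularity method produce a path $Q_{ij}$ in $G[U_i\cup U_j]$ covering at least $(1-\eps)(|U_i|+|U_j|)$ vertices, with endpoints chosen in prescribed ``typical'' subsets of $U_i$ and $U_j$.

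Next, I would order the edges of $M$ cyclically and use the tight connectivity property F2 of $H$ to produce, for each consecutive pair of matching edges, a walk in $H$ linking them; this walk translates into a short connecting path in $G$ that uses only $O(1)$ vertices per intermediate cluster. Concatenating the $Q_{ij}$'s with these connecting paths yields a cycle $C_0$ in $G$ that covers all vertices of the clusters touched by $M$, except for a small reserve retained in each. The absorbing path $P$ from the previous claim is then spliced into $C_0$ by the same connecting procedure; this is compatible with $C_0$ because $P$ was built so that its endpoints extend into $H$, which in turn is enabled by the outreach property F5.

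Finally, the parity condition $|V(G)\setminus V(C)|\equiv 0\pmod 2$ is enforced using the divisibility property F3, which supplies a closed walk of odd length in $H$. If the cycle $C_0$ constructed above has the wrong parity, I would reroute a bounded subsegment of it through this odd walk, thereby changing $|V(C)|$ by an odd number. The main obstacle will be the parity correction together with the bookkeeping required so that the connecting segments, the splicing of $P$, and the parity-adjusting reroute can all coexist without reusing vertices. I would address this by reserving, from the start, a small fraction of each cluster as a dedicated pool for connecting and parity-adjustment segments, so that the regularity-based long paths $Q_{ij}$ always have enough ``fresh'' vertices while the auxiliary routings draw from the reserve.
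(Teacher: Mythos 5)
Your plan is in the right family of ideas (connected-matchings method, using F2 for linking, F3 for parity, and the matching coming from F4, with $P$ spliced in via its extensibility coming from F5). But there are two places where it departs from what actually works, and one of them is a real gap.

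\textbf{The rounding step.} You write that the perfect fractional matching obtained from F4 ``can be rounded to an integer matching $M$ in $H$ covering all but $O(1)$ clusters.'' This is not justified, and it is the wrong move. Even for $k=2$ a perfect fractional matching does not imply a near-perfect integer matching without further structure (half-integral extreme points can leave $\Omega(t)$ vertices uncovered when the support contains many odd cycles), and for $k\geq 3$ — which is what the formal Cover Lemma (Lemma~\ref{lem:cover}) actually treats — the fractional matching polytope of a hypergraph is very far from integral, so the claim fails badly. The paper never rounds. It feeds the \emph{fractional} matching $\bvec w$ directly into the regularity-based path-building tool (Lemma~\ref{lem:sliceconnect1}, the long-paths lemma of Allen--B\"ottcher--Cooley--Mycroft), which produces a tight path using roughly $\bvec w(Z)\,m$ vertices of each cluster $Z$. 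Working fractionally is not a convenience here; it is essential.

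\textbf{How F4 is actually used.} You apply $\gamma$-robust matchability only with the all-ones weighting $\bvec b\equiv 1$, and then try to handle the presence of $P$ by carving out uniform reserves. This misses the point of the definition. The reason the framework requires $\gamma$-\emph{robust} matchability, rather than merely a perfect fractional matching, is precisely to cope with the fact that $P$ (and later the flattening path) occupies a \emph{non-uniform} number of vertices in each cluster. The paper sets $\bvec b(Z)$ to the fraction of $Z$ left free after removing $P$, verifies $\bvec b\in[1-\gamma,1]^{V(H)}$ using that $P$ is $\lambda$-sparse and $\lambda\ll\gamma$, and invokes F4 with \emph{this} weighting to get a fractional matching whose per-cluster mass exactly matches the available space. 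It does this twice: once for a ``flattening'' path $Q_1$ that equalises the leftover in every cluster, and once more for the long covering path $Q_2$. A uniform reserve cannot play the same role, because the deficit created by $P$ varies from cluster to cluster, and your $Q_{ij}$'s built to cover a fixed $(1-\eps)$ fraction will then either collide with $P$ or leave an uneven remainder that the parity fix cannot absorb.

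\textbf{The parity fix.} Your idea of rerouting a bounded subsegment through the odd closed walk is in the same spirit as the paper's use of F3, but the paper bakes it in rather than applying it post hoc: Proposition~\ref{prop:closed-walk-compatible} uses that the compatible walk has length $1\bmod k$ (coprime to $k$) to produce, from the outset, a connecting walk $W'$ of any prescribed residue mod $k$, and the final covering path $Q_2$ is built along $W'$. A post-hoc reroute would have to be realised as an actual tight path in $G$ disjoint from everything already placed, which is exactly the kind of bookkeeping the reserve is not set up to guarantee. So this part is fixable, but the current phrasing leaves an unaddressed feasibility question.

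In summary, the missing idea is to work with the fractional matching directly (never rounding) and to exploit the robustness of F4 against non-uniform weightings to ``subtract out'' $P$'s footprint; the reserve-based bookkeeping is not a substitute for either.
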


	The proof of the Cover Claim uses the method of connected matchings, which allows us to find an almost spanning cycle in $V(G)$ provided that $H$ is connected and contains a perfect fractional matching.
	Note that $H$ satisfies these properties by the connectivity condition~\ref{def:framework-connectivity} and the space condition~\ref{def:framework-space}.
	However, note that we also need to circumvent up to $\alpha n$ vertices occupied by $P$.
	At this point, it is crucial that $H$ is $\gamma$-robustly matchable and $\alpha \ll \gamma$.
	Moreover, in order to include $P$ has a subpath of $C$, we rely on $P$ extending appropriately into $H$.
	Finally, the divisibility condition~\ref{def:framework-divisibility} is used, to guarantee that $V(G) \sm V(C)$ has even size.
	
	We finish the construction of $C$ by absorbing the remaining (even and at most $\alpha n$) vertices into $C$.
	This results in the desired Hamilton cycle.
\end{proof}
Let us remark that our proof uses the regularity tools of Allen, Böttcher, Cooley and Mycroft~\cite{ABCM17}, which were developed for this type of application.
We  note furthermore that our approach benefits of two ideas adapted from our joint work with Garbe, Lo and Mycroft~\cite{GLL+21}.
Firstly, we separate the action of adjusting the parity of the number of leftover vertices from the construction of the absorbing path.
This way we can treat the two problems separately, which significantly simplifies the analysis.
Secondly, the notion of $\gamma$-robust matchability allows us to circumvent the absorbing path in the regularity setting.
(An alternative approach would be to select the absorbing paths \emph{before} applying a regularity lemma.
However, this comes with a lot of technical issues with regards to the connectivity properties of $P$, some of which we were ultimately not able to overcome.)

\section{Regularity}\label{sec:regularity}

The proof of the Framework Theorem (Theorem~\ref{thm:framework}) relies on the Regular Slice Lemma of Allen et al.~\cite{ABCM17}, which itself was derived from the Strong Hypergraph Regularity Lemma of R\"odl and Schacht~\cite{RS07}.
The setting of the Regular Slice Lemma allows for a notionally lighter application of hypergraph regularity and permits us to work with a reduced graph in (mostly) the same way as in the case of graph regularity.
Our exposition follows loosely that of Allen et al.~\cite{ABCM17}, with a few modifications  tailored to our uses.

In what remains, we will use the following usual notation.
Given reals $a, b, c$ with $c > 0$, we write $a = b \pm c$ to mean $b - c \leq a \leq b + c$.

\subsection{Hypergraph regularity} \label{subsec:reg}
A {hypergraph} $\Hy=(V,E)$ is a \emph{complex}, if its edge set is down-closed, meaning that whenever $e\in E$ and $e'\subseteq e$ we have $e'\in E$. 
All complexes considered here have the property that $\{v\} \in E$ for every $v \in V$.
A \emph{$k$-complex} is a complex in which all edges have cardinality at most $k$.
Given a complex $\Hy$, we use $\Hy^{(i)}$ to denote the $i$-graph obtained by taking all vertices of $\Hy$ and those edges of cardinality exactly $i$.
We write $e_i(\Hy)$ to mean $|E(\Hy^{(i)})|$, that is the number of edges of size $i$ in $\Hy$.

Let $\Part$ be a partition of a vertex set $V$.
We refer to the underlying set of $\cP$ as $V(\Part)=V$.
We say that a subset $S \subseteq V$ is \emph{$\Part$-partite}, 
if $|S \cap Z| \leq 1$ for every $Z \in \Part$.
Similarly, we say that a hypergraph $\Hy$ is \emph{$\Part$-partite}, if all of its edges are $\Part$-partite.
In this case we refer to the parts of $\Part$ as the \emph{vertex classes} of $\Hy$.
A hypergraph $\Hy$ is \emph{$s$-partite}, if there is a partition $\Part$ of $V(\Hy)$ into $s$ parts for which $\Hy$ is $\Part$-partite. 

Let $\Hy$ be a $\Part$-partite hypergraph.
For any $X \subseteq \Part$, we write $V_X$ for $\bigcup_{Z \in X} Z$.
Note that $X$ is a partition of $V_X$.
The \emph{index} of a $\Part$-partite set $S \subseteq V$ is $i(S) := \{Z \in \Part\colon |S \cap Z| = 1\}$.
We write $\Hy_X$ to denote the collection of edges in $\Hy$ with index $X$.
So $\Hy_X$ can be regarded as an $|X|$-partite $|X|$-graph on the vertex set $V_X$,
whose vertex classes are exactly those in $X$.
Equivalently, $\Hy_X$ is the induced subgraph of $\Hy^{(|X|)}$ in~$V_X$.
In a similar manner we write $\Hy_{X^<}$ for the $X$-partite hypergraph on vertex set $V_X$ whose edge set is $\bigcup_{X' \subsetneq X} \Hy_X$.
Observe that if $\Hy$ is a $\Part$-partite $k$-complex and $X \subseteq \Part$ is a $k$-set,
then $\Hy_{X^<}$ is an $X$-partite $(k-1)$-complex.

For $i \geq 2$, let $\Part_i$ be a partition of a vertex set $V$ into $i$ parts,
let $H_i$ be any $\Part_i$-partite $i$-graph
and let $H_{i-1}$ be any $\Part_i$-partite $(i-1)$-graph, on the common vertex set $V$.
We denote by $K_i(H_{i-1})$ the $\Part_i$-partite $i$-graph on $V$ whose edges are all $\Part_i$-partite $i$-sets in $V$ which are supported on $H_{i-1}$
(that is induce a copy of the complete $(i-1)$-graph~$K_i^{i-1}$ on~$i$ vertices in~$H_{i-1}$).
The \emph{density of $H_i$ with respect to $H_{i-1}$} is then defined to be
\[ d(H_i|H_{i-1}) = \frac{e(K_i(H_{i-1})\cap	H_i)}{e(K_i(H_{i-1}))}\] 
if $e(K_i(H_{i-1}))>0$.
For convenience we take $d(H_i|H_{i-1}) =0$, if ${e(K_i(H_{i-1})) = 0}$.
So $d(H_i|H_{i-1})$ is the proportion of $\Part_i$-partite copies of $K^{i-1}_i$ in $H_{i-1}$ which are also edges of $H_i$.
When $H_{i-1}$ is clear from the context, we simply refer to $d(H_i | H_{i-1})$ as the \emph{relative density of $H_i$}.

More generally, if $\Qb =(Q_1,Q_2,\ldots,Q_r)$ is a collection of~$r$ not necessarily disjoint subgraphs of~$H_{i-1}$,
we define $K_i(\Qb) =\bigcup_{j=1}^r K_i(Q_j)$ and
\[d(H_i |\Qb) = \frac{e(K_i(\Qb)\cap H_i)}{e(K_i(\Qb))},\]
if
$e(K_i(\Qb))>0$.
Similarly as before we take $d(H_i |\Qb) =0$, if $e(K_i(\Qb)) =0$.

We say that $H_i$ is \emph{$(d_i,\eps,r)$-regular with respect to~$H_{i-1}$}, if we have $d(H_i|\Qb) = d_i \pm \eps $ for every $r$-set $\Qb$ of subgraphs of $H_{i-1}$ such that $e(K_i(\Qb)) > \eps e(K_i(H_{i-1}))$.
We often refer to $(d_i,\eps,1)$-regularity simply as \emph{$(d_i,\eps)$-regularity};
also, we say simply that $H_i$ is \emph{$(\eps,r)$-regular with respect to~$H_{i-1}$} to mean that there exists a $d_i$ for which $H_i$ is \emph{$(d_i,\eps,r)$-regular with respect to~$H_{i-1}$}.
Given a (non necessarily $\Part$-partite) $i$-graph $G$ whose vertex set contains that of $H_{i-1}$,
we say that $G$ is \emph{$(d_i,\eps,r)$-regular with respect to~$H_{i-1}$},
if the $i$-partite subgraph of $G$ induced by the vertex classes of $H_{i-1}$ is $(d_i,\eps,r)$-regular with respect to $H_{i-1}$.
Similarly as before, when $H_{i-1}$ is clear from the context, we refer to the density of this $i$-partite subgraph of $G$ with respect to $H_{i-1}$ as the \emph{relative density of~$G$}.

\subsection{Equitable complexes and regular slices}\label{subsec:rhgraph} 
The Regular Slice Lemma says that any $k$-graph $G$ admits a regular slice (Definition~\ref{def:regular-slice}).
Informally speaking, a regular slice is a partite $(k-1)$-complex $\cJ$ whose vertex classes have equal size,
whose subgraphs $\cJ^{(2)}, \dotsc, \cJ^{(k-1)}$ satisfy certain regularity properties,
and which moreover has the property that $G$ is regular with respect to $\cJ^{(k-1)}$.
The first two of these conditions are formalised in the following definition:
we say that a $(k-1)$-complex $\cJ$ is \emph{$(t_0,t_1,\eps)$-equitable}, if it has the following properties.
\begin{enumerate}[(i)]
	\item $\cJ$ is $\Part$-partite for a $\Part$ which partitions $V(\cJ)$ into $t$ parts of equal size, where $t_0\le t\le t_1$.
	We refer to $\Part$ as the \emph{ground partition} of $\cJ$,
	and to the parts of $\Part$ as the	\emph{clusters} of $\cJ$.
	\item There exists a \emph{density vector} $\mathbf{d}=(d_2, \dotsc, d_{k-1})$
	such that for each $2\le i\le k-1$ we have $d_i\ge 1/t_1$ and $1/d_i\in\NATS$,
	and for each $A \subseteq \cP$ of size $i$, the $i$-graph $\cJ^{(i)}[V_A]$ induced on $V_A$ is $(d_i, \eps)$-regular with respect to $\cJ^{(i-1)}[V_A]$.
\end{enumerate}
If $\cJ$ has density vector $\mathbf{d} = (d_2, \dotsc, d_{k-1})$, then we will say that $\cJ$ is \emph{$(d_2, \dotsc, d_{k-1}, \eps)$}-regular, or \emph{$(\mathbf{d},\eps)$-regular}, for short.

For any $k$-set $X$ of clusters of $\cJ$,
we write $\hat{\cJ}_X$ for the $k$-partite $(k-1)$-graph $\cJ^{(k-1)}_{X^<}$.  
Given a $(t_0,t_1,\eps)$-equitable $(k-1)$-complex $\cJ$, a $k$-set $X$ of clusters of $\cJ$ and a $k$-graph $G$ on $V(\cJ)$,
we say that \emph{$G$ is $(d,\eps_k,r)$-regular with respect to $X$},
if $G$ is $(d,\eps_k,r)$-regular with respect to $\hat{\cJ_X}$.  
We will also say that \emph{$G$ is $(\eps_k,r)$-regular with respect to $X$}, if there exists a $d$ such that $G$ is $(d,\eps_k,r)$-regular with respect to $X$.
We write $\reld_\cJ(X)$ for the relative density of $G$ with respect to $\hat{\cJ_X}$,
or simply $\reld(X)$ if $\cJ$ is clear from the context, which will always be the case in applications. 

We can now present the definition of a regular slice.

\begin{definition}[Regular slice]\label{def:regular-slice}
	Given $\eps,\eps_k>0$, $r,t_0,t_1\in\NATS$, a $k$-graph $G$ and a $(k-1)$-complex $\cJ$ with $V(\cJ) \subset V(G)$,
	we call $\cJ$ a \emph{$(t_0,t_1,\eps,\eps_k,r)$-regular slice for $G$},
	if $\cJ$ is $(t_0,t_1,\eps)$-equitable
	and $G$ is $(\eps_k,r)$-regular with respect to all but at most	$\eps_k\binom{t}{k}$ of the $k$-sets of clusters of $\cJ$, where $t$ is the number of clusters of~$\cJ$.
\end{definition}

If we specify the density vector $\mathbf{d}$ and the number of clusters $t$ of an equitable complex or a regular slice, then it is not necessary to specify $t_0$ and $t_1$ (since the only role of these parameters is to bound $\mathbf{d}$ and $t$). In this situation we write that $\cJ$ is $(\cdot, \cdot, \eps)$-equitable, or is a $(\cdot,\cdot,\eps,\eps_k,r)$-regular slice for $G$.

\subsection{Representative degrees and rooted degrees}

A regular slice $\cJ$ for a $k$-graph $G$ does not necessarily resemble $G$ in the way a regular partition for graphs does (see~\cite[Section 4.3]{ABCM17}).
Because of this reason we will require that our regular slices have to satisfy additional properties with respect to~$\cJ$,
ensuring that $\cJ$ approximates $G$ faithfully and provides a suitable environment to apply the tools of the regularity method.
We will summarise these properties in a structure called regular setup (Definition~\ref{def:setup}).
In the following, we introduce some definitions to express those extra properties.

Given a regular slice $\cJ$ for a $k$-graph $G$, we use a weighted reduced graph to record the relative densities $\reld(X)$ for $k$-sets $X$ of clusters of $\cJ$ and (another) reduced graph to record $k$-sets which are regular as well as dense.

\begin{definition}[Weighted reduced $k$-graph] \label{definition:weightedreduced}
	Let $G$ be a $k$-graph and let $\cJ$ be a $(t_0,t_1,\eps,\eps_k,r)$-regular slice for $G$.
	We define the \emph{weighted reduced $k$-graph of $G$}, denoted $R(G)$, to be the complete weighted $k$-graph whose
	vertices are the clusters of $\cJ$, and where each edge $X$ is given weight $\reld(X)$
	(so in particular, the weight is in $[0,1]$).
	
	Similarly, for $d_k>0$, we define the \emph{$d_k$-reduced $k$-graph of $G$}, denoted $R_{d_k}(G)$, to be the subgraph of $R(G)$ containing only the edges $X$ for which $G$ is $(\eps_k, r)$-regular with respect to $X$ and $\reld(X) \geq d_k$. 
	Note that $R(G)$ and  $R_{d_k}(G)$ depend on $\cJ$, but this will always be clear from the context.
\end{definition}

Let us formalise what it means for a vertex to be well-represented by a regular slice.
Given a $k$-graph $G$ on $n$ vertices and a vertex $v \in V(G)$,
recall that $\deg_G(v)$ is equal to the number of edges of $G$ which contain $v$
and $\reldeg_G(v) = \deg_G(v)/\binom{n-1}{k-1}$ is the relative degree of $v$ in $G$.
This number has a probabilistic interpretation: if we select a $(k-1)$-set of vertices $T \subseteq V(G) \setminus \{v\}$ uniformly at random, then $\reldeg_G(v)$ is exactly the probability that $T \cup \{v\}$ is an edge of $G$.

Given a $(t_0, t_1, \eps)$-equitable $(k-1)$-complex $\cJ$ with $V(\cJ) \subset V(G)$,
the \emph{rooted degree of $v$ supported by~$\cJ$}, written $\deg_G(v;\cJ)$,
is defined as the number of $(k-1)$-sets $T$ in $\cJ^{(k-1)}$ such that $T \cup \{v\}$ form an edge in $G$
(note that we do not require the edge $T \cup \{v\}$ to be contained in $\cJ$).
Then the \emph{relative degree $\reldeg_G(v;\cJ)$ of $v$ in $G$ supported by $\cJ$} is defined as $\reldeg_G(v;\cJ) = \deg_G(v;\cJ) / e(\cJ^{(k-1)})$.
Again we have a natural probabilistic interpretation: if we select a $(k-1)$-set of vertices $T \subseteq V(G)$ uniformly at random, then $\reldeg_G(v;\cJ)$ is the probability that $T \cup \{v\}$ is an edge of $G$ after conditioning on the event that $T$ is a $(k-1)$-edge in $\cJ^{(k-1)}$.

\begin{definition}[Representative rooted degree] \label{definition:representativerooted}
	Let $\eta > 0$, let $G$ be a $k$-graph and let $\cJ$ be a $(t_0,t_1,\eps,\eps_k,r)$-regular slice for $G$.
	We say that $\cJ$ is \emph{$\eta$-rooted-degree-representative} if, for 
	any vertex $v \in V(G)$, we have
	\[\big|\reldeg_G(v;\cJ)-\reldeg_G(v) \big|<\eta. \]
\end{definition}

\subsection{Regular setups and the Regular Slice Lemma}

Now we are ready to present the regular setup, which we will use throughout the remainder of the paper.

\begin{definition}[Regular setup] \label{def:setup}
	Let $k,m,t, r \in \mathbb{N}$ and $\eps,\eps_k, d_2,\dots,d_k >0$.
	We say that $(G,G_{\cJ},\cJ,\cP,R)$ is a \emph{$(k,m,t, \eps , \eps_k, r , d_2,\dots,d_k)$-regular setup}, if 
	\begin{enumerate}[label=\textnormal{(S\arabic*)}]
		\item $G$ is a $k$-graph and $G_{\cJ} \subseteq G$,
		\item $\cJ$ is a $(\cdot,\cdot,\eps,\eps_k,r)$-regular slice for $G$ with density vector $\mathbf{d}=(d_2, \dotsc, d_{k-1})$,
		\item $\cP$ is the ground partition of $\cJ$ with $t$ clusters each of size $m$,
		\item $R$ is a subgraph of $R_{d_k}(G)$, the $d_k$-reduced graph of~$G$ and
		\item for each $X \in E(R)$, $G_{\cJ}$ is $(d_k,\eps_k, r)$-regular with respect to $X$.
	\end{enumerate}
	We further say that $(G,G_{\cJ},\cJ,\cP,R)$ is \emph{representative} if
	\begin{enumerate}[resume,label=\textnormal{(S\arabic*)}]
		\item \label{item:regsetup-rootedrepresent} $\cJ$ is $\eps_k$-rooted-degree-representative (Definition~\ref{definition:representativerooted}).
	\end{enumerate}
	Given $\mathbf{d} = (d_2, \dotsc, d_{k-1}, d_{k})$, we will simply write \emph{$(k,m,t,\eps, \eps_k, r, \mathbf{d})$-regular setup} to mean $(k,m,t, \eps , \eps_k, r , d_2,\dots,d_k)$-regular setup.
	If we wish to refer to a regular setup but a parameter is irrelevant for the given situation, then we will omit it from the notation.
	For instance, we will write $(k, m, \cdot, \eps, \eps_k, r, \mathbf{d})$-regular setups in situations where the total number of clusters is not important.
\end{definition}

The Regular Slice Lemma of Allen, Böttcher, Cooley and Mycroft~\cite{ABCM17} ensures that every sufficiently large $k$-graph has a representative regular slice.
Given the existence of a regular slice, it is simple to derive the existence of a regular setup.
The following lemma is therefore stated directly in terms of regular setups.
We explain in Appendix~\ref{appendix:slicesandsetup} how this can be derived from the original lemma.

\begin{lemma}[{Regular Setup Lemma~\cite{ABCM17}}] \label{lem:regular-setup}
	Let $k, d, t_0$ be positive integers,
	$\delta, \mu, \alpha, \eps_k, d_k$ be positive,
	and $r: \NATS \rightarrow \NATS$ and $\eps: \NATS \rightarrow (0,1]$ be functions.
	Suppose that
	\begin{align*}
		1 \leq d \leq k-1,\quad k\geq 3, \quad \text{and} \quad
		\eps_k \ll \alpha, d_k \ll \mu.
	\end{align*}
	Then there exist integers~$t_1$ and~$m_0$ such that the following holds for all $n \ge 2 t_1 m_0$.
	Let $G$ be a graph on $n$ vertices and suppose that $G$ has minimum relative $d$-degree $\overline{\delta}_d(G) \geq \delta +\mu$.
	Then there exists $\mathbf{d} = (d_2, \dotsc, d_{k-1}, d_k)$
	and a representative $(k, m, t, \eps(t_1), \eps_k, r(t_1), \mathbf{d})$-regular setup $(G, G_{\cJ}, \cJ, \cP, R_{d_k})$ with $t_0 \leq t \leq t_1$, $m_0 \leq m$ and $n \leq (1+\alpha)mt$.
	Moreover, there is a $k$-graph $I$ on $\cP$ of edge density at most $\eps_k$ such that $R = R_{d_k} \cup I$ has minimum relative $d$-degree at least $\delta +\mu/2$.
\end{lemma}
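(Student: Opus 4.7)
The plan is to apply the Regular Slice Lemma of Allen, Böttcher, Cooley and Mycroft~\cite{ABCM17} and then repackage its output into the regular-setup format of Definition~\ref{def:setup}. Choosing auxiliary parameters so that $\eps_k \ll d_k \ll \mu$ and invoking the slice lemma, I obtain a density vector $\mathbf{d}=(d_2,\dots,d_{k-1})$, integers $t$ with $t_0\le t\le t_1$ and $m\ge m_0$ with $n\le(1+\alpha)mt$, and an $\eps_k$-rooted-degree-representative $(t_0,t_1,\eps(t_1),\eps_k,r(t_1))$-regular slice $\cJ$ for $G$ whose ground partition $\cP$ consists of $t$ clusters of common size $m$. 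I then define $R_{d_k}$ as in Definition~\ref{definition:weightedreduced} and let $G_{\cJ}$ be the union, over $X\in E(R_{d_k})$, of the $k$-partite subgraph of $G$ induced on $V_X$. With these choices, items~(S1)--(S6) of Definition~\ref{def:setup} follow directly from the properties of the slice.

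It remains to produce a subgraph $I$ on $\cP$ of edge density at most $\eps_k$ for which $R=R_{d_k}\cup I$ has minimum relative $d$-degree at least $\delta+\mu/2$. The natural candidate is to let $I$ collect all $k$-sets of clusters with respect to which $G$ is not $(\eps_k,r)$-regular; Definition~\ref{def:regular-slice} immediately yields $e(I)\le\eps_k\binom{t}{k}$. Fix a $d$-set $S\in\binom{\cP}{d}$ and denote by $\alpha_S,\beta_S,\gamma_S$ the proportion of $k$-supersets $Y$ of $S$ that lie in $R_{d_k}$, are regular with $\reld(Y)<d_k$, or lie in $I$, respectively. The target is $\alpha_S+\gamma_S\ge\delta+\mu/2$, which I would derive by double counting the quantity $\Sigma(S):=\sum_{Y\supset S,\,|Y|=k}\reld(Y)$. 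The upper estimate $\Sigma(S)\le(\alpha_S+\gamma_S+d_k\beta_S)\binom{t-d}{k-d}$ follows from $\reld(Y)\le 1$ and the defining threshold of $R_{d_k}$.

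For the matching lower bound, I would rewrite $\Sigma(S)$ as the number of $k$-partite edges of $G$ with one vertex in each cluster of a $k$-tuple extending $S$, normalised by the typical count of $(k-1)$-complete $\cJ$-configurations inside. The case $d=1$ follows by averaging the rooted-degree-representative inequality $|\reldeg_G(v;\cJ)-\reldeg_G(v)|<\eps_k$ over a vertex $v$ of some cluster in $S$ and combining with the hypothesis $\overline{\delta}_d(G)\ge\delta+\mu$. For general $d$, I iterate: fix one vertex per cluster of $S$ in turn, at each step swapping the true link of the partial selection for its $\cJ$-supported proxy at cost $O(\eps_k)$; after at most $d\le k-1$ swaps one reaches $\Sigma(S)\ge(\delta+\mu-O(\eps_k))\binom{t-d}{k-d}$. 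Combining with the upper bound and using $\eps_k,d_k\ll\mu$ yields $\alpha_S+\gamma_S\ge\delta+\mu-O(\eps_k)-d_k\ge\delta+\mu/2$, as required.

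The main technical obstacle is precisely this last bootstrap, since Definition~\ref{definition:representativerooted} only supplies representativeness at the vertex level, not for $d$-sets of clusters. Keeping the accumulated error $O(d\eps_k)$ well below $\mu/2$ is routine once the parameter hierarchy $\eps_k\ll d_k\ll\mu$ is fixed, since there are at most $d\le k-1$ swap steps; the remainder of the argument is a matter of carefully recording the error term at each stage.
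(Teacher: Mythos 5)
Your proof has a genuine gap in the crucial step where you transfer the $d$-degree condition from $G$ to the reduced graph. You only invoke the vertex-level rooted-degree-representativity property (what the paper calls (R2), encapsulated in Definition~\ref{definition:representativerooted}), and then propose an iterative ``swap'' argument to bootstrap from single vertices to $d$-sets of clusters. This is not a routine matter of bookkeeping error terms, and as sketched it does not work. The quantity $\reldeg_G(v;\cJ)$ only controls, globally, how many $(k-1)$-sets $T\in\cJ^{(k-1)}$ satisfy $T\cup\{v\}\in E(G)$; it gives no handle on the link graph of a partial vertex selection inside the slice, nor on the weighted degrees of $d$-sets of \emph{clusters} in $R(G)$ (which also require the full clique in $\cJ$, not just a single $(k-1)$-edge). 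After fixing one vertex, the ``$\cJ$-supported proxy'' you would want to swap in for the true link is not provided by any representativity hypothesis, so the induction has nothing to carry it forward. The paper circumvents this entirely: the Regular Slice Lemma~\cite{ABCM17} as stated (Lemma~\ref{lem:regular-slice} in this paper, property (R1)) \emph{directly} guarantees $|\reldeg(S;R(G))-\reldeg(\cJ_S;G)|<\eps_k$ for every set $S$ of $j$ clusters with $1\le j\le k-1$. Combined with $\overline{\delta}_d(G)\ge\delta+\mu$ and Lemma~\ref{lem:reduced-graph-degree} (which converts from the weighted reduced graph to the $d_k$-reduced graph at cost $d_k+\zeta(S)$), this gives $\reldeg(S;R)\ge\delta+\mu/2$ in one line. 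Your argument misses this cluster-level hypothesis of the slice lemma, and the bootstrap you propose instead of it cannot be completed.

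Two secondary issues. First, the slice lemma requires $n$ divisible by $t_1!$, so one must first pass to an auxiliary $k$-graph $G'$ with added isolated vertices and afterwards discard at most $t_1!$ vertices from each cluster via the Slice Restriction Lemma; you apply the slice lemma directly to $G$, so this needs repair. Second, taking $G_{\cJ}$ to be the union of the $k$-partite parts of $G$ over $X\in E(R_{d_k})$ gives relative density \emph{at least} $d_k$ with respect to each such $X$, whereas Definition~\ref{def:setup}~(S5) requires the density to be $d_k\pm\eps_k$; a slicing lemma (e.g.~\cite[Lemma~8]{CFKO09}) is needed to randomly thin $G$ down to exactly the target density while preserving regularity.
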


\subsection{Oriented setups}\label{sec:oriented-setups}
Finally, in our particular setting of embedding tight paths and cycles, it will be necessary to endow each edge of the reduced graph $R$ (from the definition of a regular setup) with an ordering of its elements.
This ordering will guide in which order we construct tight paths inside a given edge, and how we can join these paths in a coherent way to form longer paths and cycles.

More precisely, an \emph{orientation} of a $k$-graph $H$ is a family of ordered $k$-tuples $\{ \ori{e} \in V(H)^k\colon e \in H \}$, one for each edge in $H$, such that $\ori{e}$ consists of an ordering of the vertices of $e$.
We say that a family of ordered $k$-tuples $\ori{H}$ is an \emph{oriented $k$-graph} if there exists a $k$-graph $H$ such that $\ori{H} = \{ \ori{e} \in V(H)^k \colon e \in H \}$.
Given an oriented $k$-graph $\ori{R}$, we will say that $(G,G_{\cJ},\cJ,\cP,\ori{R})$ is an \emph{oriented $(k,m,t, \eps , \eps_k, r , \mathbf{d})$-regular setup} if $\ori{R}$ is an orientation of $R$ and $(G,G_{\cJ},\cJ,\cP,R)$ is a $(k,m,t, \eps , \eps_k, r , \mathbf{d})$-regular setup.
We will call an oriented setup representative, if the underlying (ordinary) setup is representative.

\section{Bounding the framework threshold -- the proof} \label{sec:framework-threshold-proof}

The goal of this section is to give a formal proof of the Framework Theorem (Theorem~\ref{thm:framework}).
In the next subsection, we introduce some further notations regarding extensible and absorbing paths.
In Subsection~\ref{sec:absorption-cover-lemma-statement}, we state the Absorption Lemma and the Cover Lemma (Lemma~\ref{lem:absorption} and~\ref{lem:cover}.)
Finally, in Subsection~\ref{sec:actual-proof-of-framework-theorem}, we prove Theorem~\ref{thm:framework}.

\subsection{Extensible and absorbing paths}
In the following, we formalise the ideas of absorption and extensibility outlined in Section~\ref{sec:framework-threshold-overview}.

The definition of an absorbing path is fairly straightforward.
Let $G$ be a $k$-graph on $n$ vertices, $S \subseteq V(G)$ and let $P \subseteq H$ be a tight path.
We say that $P$ is \emph{$S$-absorbing in $G$} if there exists a path $P'$ in $G$ with the same $k-1$ starting vertices of $P$, the same $k-1$ ending vertices of $P$, and $V(P') = V(P) \cup S$.
We say $P$ is \emph{$\eta$-absorbing in $G$}, if it is $S$-absorbing in $G$ for every $S$ of size at most $\eta n$ whose size is divisible by $k$ and with $S \cap V(P) \neq \es$.

As explained in the proof sketch, the absorption path $P$ should furthermore extend well into $H$.
In the following, we define an interface appropriate for this requirement.
Recall that given a $\cP$-partite complex $\cJ$ and a $j$-set $X \subseteq \cP$, the $j$-graph $\cJ_X$ corresponds to the $j$-sized edges in $\cJ^{(j)}$ which have index precisely $X$, meaning that each edge in $\cJ_X$ intersects each cluster of $X$ exactly once.
Given an ordering $(X_1, \dotsc, X_j)$ of the elements of $X$, let $\cJ_{(X_1, \dotsc, X_j)}$ correspond to the set of ordered $j$-tuples of vertices of $V(\cJ)$, one for each $j$-set in $\cJ_X$, such that $(x_1, \dotsc, x_j) \in \cJ_{(X_1, \dotsc, X_j)}$ means that $\{ x_1, \dotsc, x_j \} \in \cJ_X$ and $x_i \in X_i$ for each $1 \leq i \leq j$.

\begin{definition}[Extensible paths]\label{def:extensible}
	Let $(G, G_{\cJ},\cJ, \cP, R)$ be a $(k,m,t, \eps, \eps_k, r, \mathbf{d})$-regular setup and $c, \nu > 0$.
	A $(k-1)$-tuple $P$ in $V(G)^{k-1}$ is said to $(c,\nu)$-\emph{extend} \emph{rightwards} to an ordered edge $X=(X_1,\dots,X_k)$ in $R$, if there is a \emph{connection set} $C \subseteq V(G)$ and a \emph{target set} $T \subseteq \cJ_{(X_2,\dots,X_k)}$  with the following properties.
	\begin{enumerate}[(i)]
		\item We have $|T| \ge \nu |\cJ_{(X_2,\dots,X_k)}|$ and
		\item for every $(v_2,\dots,v_k) \in T$, there are at least $c m^{k+1}$ many $(k+1)$-tuples $(w_1,\dots,w_{k},v_1)$ with $v_1\in C\cap X_1$ and $w_i \in C\cap X_i$ for $1 \le i \le k$ so that $P (w_1,\dots ,w_k,v_1,\dots ,v_k) $ is a tight path in $G$.
	\end{enumerate}
\end{definition}

For an illustration of extensible paths, see Figure~\ref{fig:extensible-path}.
Given a tight path $P$ in $V(G)$ and an ordered edge $X$ in $R$,
we say that \emph{$P$ $(c, \nu)$-extends rightwards to $X$} if the $(k-1)$-tuple corresponding $P$'s last $k-1$ vertices $(c, \nu)$-extends rightwards to $X$.
In this context, we will refer to $X$ as the \emph{right extension}.
We define leftward path extensions for $(k-1)$-tuples and for tight paths in an analogous way (this time corresponding to the first $k-1$ vertices of $P$).
Finally, when considering multiple paths, we refer to the union of their connection sets as their \emph{joint connection set}.

Using hypergraph regularity, we can connect extensible paths in $G$.
For this, it is required that the ends of the paths are on a common tight walk in $H$.
For instance, suppose that a tight path $P$ has right extension $X$ and a tight path $Q$ has left extension $Y$.
Then we can connect the two provided that there is a tight walk $W$ starting with the tuple $X$ ending with the tuple $Y$.
Note that it is not enough that $X$ and $Y$ share the same vertices.
For instance, if $X=(a,b,c)$ and $Y=(a,c,b)$, then there is no guarantee that $H$ contains a walk from $X$ to $Y$.
Such a walk does however exist, when $Y$ is a cyclic shift of $X$, that is $(a,b,c)$, $(b,c,a)$ or $(c,a,b)$.
Let us formalise this discussion with a few further definitions.

A \emph{cyclic shift} of a tuple $(v_1, \dots, v_k)$ is any $k$-tuple of the form $(v_i, v_{i+1}, \dots, v_k, v_1, v_2, \dots, v_{i-1})$, for a  $1 \le i \le k$.
Recall the definition of an orientation in Subsection~\ref{sec:oriented-setups}.
Consider a $k$-graph $H$ with orientation $\ori{H}$, and an ordered $k$-tuple $X$ of distinct vertices in $V(H)$.
We say that $X$ is \emph{consistent with $\ori{H}$} if there exists an oriented edge $\ori{e} \in \ori{H}$ which is a cyclic shift of $X$. 
We say that an extensible path is \emph{consistent with $\ori H$}, if its left and right extensions are consistent with an element of $\ori H$.

Let $\ori{H}$ be an orientation of a $k$-graph $H$. 
A closed tight walk $\mathcal{W}$ in $H$ is said to be \emph{compatible} with $\ori{H}$, if $W$ visits every edge of $H$ at least once, and each oriented edge of $\ori{H}$ appears at least once in ${W}$ as a sequence of $k$ consecutive vertices.

As a final piece of notation, we define a set to be sparse with regards to a partition, if its intersection with every cluster is bounded.
This will be useful to ensure that no cluster of a regular slice contains too many vertices of an absorbing path.

\begin{definition}[Sparse]\label{def:sparse-in-cP}
	Consider a partition $\cP$ of a set $V$ such that every part $X \in \cP$ has size $m$.
	For $\alpha >0$, we say that $S \subset V$ is \emph{$\alpha$-sparse in $\cP$}, if $S$ shares at most $\alpha m$ vertices in each part $X \in \cP.$
\end{definition}

\subsection{Absorption and Cover Lemmas}\label{sec:absorption-cover-lemma-statement}
Now we are ready to state the absorption and cover lemma on which the proof of the Framework Theorem (Theorem~\ref{thm:framework}) is based.
We begin by formalising the Absorption Claim (Claim~\ref{cla:absorption}) of the proof sketch in Section~\ref{sec:framework-threshold-overview}.

\begin{restatable}[Absorption Lemma]{lemma}{lemabsorption} \label{lem:absorption}
	Let $k,r,m,t \in \mathbb{N}$, and $d_2,\dots,d_k$, $\eps$, $\eps_k$, $\eta$, $\mu$, $\delta$, $\alpha$, $c$, $\nu$, $\lambda$ be such that
	\begin{align*}
	1/m & \ll 1/r,\eps \ll 1/t, c, \eps_k, d_2,\dots ,d_{k-1}, \\
	c & \ll d_2,\dots ,d_{k-1}, \\
	1/t & \ll \eps_k \ll d_k, \nu \leq 1/k, \text{ and} \\
	c &\ll \eps_k \ll \alpha \ll \eta \ll \lambda \ll \nu  \ll \mu \ll \delta, 1/k.
	\end{align*}
	Let $\mathbf{d} = (d_2, \dotsc, d_k)$ and
	let $\fS = (G,G_{\cJ}, \cJ, \cP, \ori{H})$ be an oriented representative $(k, m, t, \eps, \eps_k, r, \mathbf{d})$-regular setup.
	Suppose that $G$ has  $n \leq (1 + \alpha)mt$ vertices and minimum relative $1$-degree at least $\delta + \mu$.
	Suppose that there exists a closed tight walk which is compatible with the orientation $\ori{H}$ of $H$.
	Moreover, suppose that 
	\begin{enumerate}
		\item [\upshape(F1)] $v(H) \geq (1-\alpha)t$,
		\item [\upshape(F2)] \label{def:framework-connectivity-abs} $H$ is tightly connected, and
		\item [\upshape(F5)] \label{def:framework-outreach-abs} $H$ has minimum relative vertex degree at least $1-\delta$.
	\end{enumerate}
	
	Then there exists a tight path $P$ in $G$ such that
	\begin{enumerate}[\textnormal{(\roman*)}]
		\item $P$ is $(c, \nu)$-extensible and consistent with $\ori{H}$, 
		\item $V(P)$ is $\lambda$-sparse in $\cP$ and $V(P) \cap C =\es$, where $C$ denotes the connection set of $P$ and
		\item $P$ is $\eta$-absorbing in $G$.
	\end{enumerate}
\end{restatable}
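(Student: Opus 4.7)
The plan is to carry out the standard absorption argument adapted to the oriented regularity setup. For each ordered $k$-tuple of distinct vertices $\vec v = (v_1,\dots,v_k)\in V(G)^k$, I would define a $\vec v$-absorber to be a tight path $A$ in $G$ of some fixed constant length whose internal vertices follow a short tight walk of $\ori{H}$, such that both $A$ and the path $A'$ obtained by inserting $\vec v$ at a designated internal position are tight paths in $G$ with the same $(k-1)$-tuple endpoints. The first step is to show that each $\vec v$ admits $\Omega(n^s)$ such absorbers for an absolute constant $s=s(k)$. This count uses the outreach property (F5) of $H$ (which ensures each $v_i$ is consistent with many short walks of $\ori H$ via rooted-degree-representativity~\ref{item:regsetup-rootedrepresent}), together with a standard dense-counting lemma for regular complexes in the setup $\fS$ to count completions to an absorber inside the prescribed clusters.

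Next, I would select a random subfamily $\mathcal A$ of absorbers of total cardinality $\Theta(\eta n)$, each one picked independently at its prescribed location. A Chernoff plus union-bound argument over the $O(n^k)$ choices of $\vec v$ shows that with positive probability $\mathcal A$ simultaneously has the following three features: its vertex set is $\lambda$-sparse in $\cP$; every ordered tuple $\vec v$ has at least $2\eta n/k$ absorbers in $\mathcal A$ (more than enough to absorb any $S$ with $|S|\leq\eta n$ divisible by $k$, after partitioning $S$ into $k$-tuples and greedily assigning each one to an unused absorber); and each absorber's extension data respects a prescribed edge of $\ori H$ chosen from the compatible closed walk.

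I would then thread the absorbers of $\mathcal A$ along a closed tight walk $\mathcal W$ in $H$ that is compatible with $\ori H$, using the Connecting Lemma developed later in the paper (Section~\ref{sec:connecting}) to join consecutive absorbers by short, sparse, orientation-consistent tight paths in $G$. The tight connectivity of $H$ (F2), combined with compatibility of $\mathcal W$, guarantees the existence of the required connecting walks in the reduced graph; regularity then yields the corresponding tight paths in $G$. At the two extremes of the resulting path $P$, I would ensure by construction that the first and last absorbers are chosen so that their free ends $(c,\nu)$-extend leftwards and rightwards to edges of $\ori H$, delivering the extensibility clause. Sparseness and disjointness from the connection set follow since $\eta \ll \lambda$ and all relevant vertex-quotas are controlled by $\lambda m$.

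The main obstacle I expect is reconciling the local and global requirements on the absorbers: locally each absorber must have a rigid combinatorial type that permits counting by regularity, yet globally the family must be flexible enough to be chained together along the compatible walk in $\ori H$ and to furnish extensibility at the two ends. Orientation consistency is the key subtlety, because cyclic shifts of a $k$-tuple must be respected when concatenating; this is exactly why the closed-walk-compatibility hypothesis on $\ori H$ is built into the statement. Once absorbers are defined using walks in $\ori H$ rather than abstract $k$-edges, the count, the random selection, and the stitching all align and the verification of properties (i)--(iii) becomes routine.
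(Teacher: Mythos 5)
There is a genuine gap in your proposal: the one-shot connection step cannot be carried out with the constant hierarchy of the lemma.

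You select a family $\mathcal A$ of roughly $\eta n$ (or at least $\Omega(\eta n/k)$) absorbers and propose to thread them all together in a single application of a connecting lemma. But the connecting machinery in Section~\ref{sec:connecting} (Lemma~\ref{lem:connecting-many-paths}) can only join a family of size at most $\zeta m$ paths in one pass, where $\zeta \ll c$: each connection consumes $2(k+1)$ vertices of the connection set $C$, which is a random set of expected size about $cm$ in each cluster, so the batch size is bounded by $O(cm)$. Since $c \ll \varepsilon_k \ll \alpha \ll \eta$ and also $c \ll 1/t_1 \leq 1/t$, while $\eta \gg 1/t$, the required capacity $\Omega(\eta n/k) = \Omega(\eta m t /k)$ exceeds $O(cm)$ by at least a factor of $t$; there is no choice of the parameters that repairs this, since $c$ must be tiny for the Extension Lemma to yield $(c,\nu)$-extensibility. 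The paper avoids this by constructing the absorbing path \emph{iteratively}: Lemma~\ref{lem:sampling-absorbers} produces only $O(\zeta m)$ new gadgets per round, which Lemma~\ref{lem:connecting-many-paths} can absorb; the connection set is refreshed at the start of each round by restricting the regular slice to the vertices not yet used (via the Slice Restriction Lemma), and the Absorption Lemma is deduced after $N = \lceil \eta t/(\theta\zeta)\rceil$ rounds, with Lemma~\ref{lemma:resilientneighboursincomplex} guaranteeing that the neighbourhood representativity survives the repeated deletions. Your sketch has no analogue of this iteration or of connection-set refreshing, and without it the argument does not close.

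A secondary, smaller deviation: the paper's gadget (Definition~\ref{def:absorbing-gadget}) is not a single contiguous tight path that absorbs $\vec v$, but a union of $k+2$ short disjoint paths ($A,B,C,P_1,Q_1,\dots,P_k,Q_k$), with the $k$-set inserted by swapping $k+1$ disjoint segments of the long path simultaneously. This design, together with Definition~\ref{definition:absorberinslice}, is what makes each of the constituent subpaths individually $(c,\nu)$-extensible to a prescribed oriented edge of $\ori H$, so that they can be handed to the connecting lemma independently. Your single-path absorber would need a separate argument that both its endpoints and all of the interface between the absorbed $k$-tuple and the surrounding path are simultaneously extensible and consistent with $\ori H$; that is not a trivial adaptation. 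This concern is fixable, but the missing iteration is not, and must be addressed before the proof can go through.
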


The next lemma formalises the Cover Claim (Claim~\ref{cla:cover}) of the proof sketch in Section~\ref{sec:framework-threshold-overview}.

\begin{restatable}[Cover Lemma]{lemma}{lemcover} \label{lem:cover}
	Let $k,r,m,t \in \mathbb{N}$, and $d_2,\dots,d_k$, $\eps$, $\eps_k$, $\alpha$, $\gamma$, $c$, $\nu$, $\lambda$ be such that
	\begin{align*}
	1/m & \ll 1/r,\eps \ll 1/t, c, \eps_k, d_2,\dots ,d_{k-1}, \\
	c & \ll d_2,\dots ,d_{k-1}, \\
	1/t & \ll \eps_k \ll d_k,\nu, \alpha \leq 1/k, \text{and} \\
	\alpha & \ll \lambda \ll \nu \ll \gamma.
	\end{align*}
	Let $\mathbf{d} = (d_2, \dotsc, d_k)$ and let $\fS = (G,G_{\cJ}, \cJ, \mathcal{P}, \ori{H})$ be an oriented $(k, m, t, \eps, \eps_k, r, \mathbf{d})$-regular setup.
	Suppose that $G$ has  $n \leq (1 + \alpha)mt$ vertices.
	Moreover, suppose that 
	\begin{enumerate}
		\item [\upshape(F1)] $v(H) \geq (1-\alpha)t$,
		\item [\upshape(F2)] $H$ is tightly connected,
		\item [\upshape(F3)] $H$ contains a tight closed walk $W$ compatible with $\ori{H}$ whose length is $1 \bmod k$ and
		\item [\upshape(F4)] $H$ is $\gamma$-robustly matchable.
	\end{enumerate}
	Suppose that $P$ is a tight path in $G$ such that
	\begin{enumerate}[\textnormal{(\roman*)}]
		\item $P$ is $(c, \nu)$-extensible and consistent with $\ori{H}$ and
		\item $V(P)$ is $\lambda$-sparse in $\cP$ and $V(P) \cap C =\es$, where $C$ denotes the connection set of $P$.
	\end{enumerate}
	
	Then there is a tight cycle $C$ of length at least $(1-3\alpha)n$ which contains $P$ as a subpath.
	Moreover, the number of uncovered vertices $|V(G) \sm V(C)|$ is divisible by $k$.  
\end{restatable}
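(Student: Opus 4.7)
My plan is to adapt the method of connected matchings to the Hamilton framework setting. I will first build long tight paths inside each edge of $H$ whose collective lengths are dictated by a fractional matching, then splice them together with the absorbing path $P$ along a compatible closed tight walk in $H$, and finally apply a length adjustment using the closed walk of length $1 \bmod k$.

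\textbf{Fractional matching avoiding $P$.} Since $V(P) \cup C$ is $\lambda$-sparse in $\cP$, each cluster retains at least $(1-\lambda)m$ unused vertices. I would define a weighting $\bvec b : V(H) \to [1-\lambda, 1]$ recording, for each cluster, the fraction of its vertices that remain available. Because $\lambda \ll \gamma$ and $H$ is $\gamma$-robustly matchable by~(F4), there exists an edge weighting $\bvec w : E(H) \to [0,1]$ with $\sum_{e \ni X} \bvec w(e) = \bvec b(X)$ for every cluster $X \in V(H)$. Rounding $\bvec w$ carefully yields integer target lengths $n_e$ for each $e$ with $\sum_e n_e \approx n$.

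\textbf{Tight paths within edges and their concatenation.} Next, for each oriented edge $e \in \ori{H}$, I would invoke a tight-path embedding lemma derived from hypergraph regularity (to be developed in Section~\ref{sec:tools}): this supplies a tight path $Q_e$ of length roughly $n_e$ that traverses the clusters of $e$ in the cyclic order determined by $\ori{H}$, uses only available vertices, and has both endpoints $(c,\nu)$-extensible and consistent with $\ori{H}$. Summed over $e \in \ori{H}$, the $Q_e$ together use all but $o(n)$ of the available vertices in clusters of $V(H)$; combined with the at most $\alpha t$ clusters outside $V(H)$ guaranteed by~(F1), this accounts for at most $2\alpha n$ uncovered vertices so far. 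The compatible closed walk of~(F3) visits every oriented edge of $\ori{H}$ and hence dictates an ordering in which the paths $P$ and $\{Q_e\}$ can be concatenated: at each junction, the extensibility of neighbouring endpoints together with the tight connectivity of~(F2) allows the Connecting Lemma (to be established in Section~\ref{sec:connecting}) to furnish a short connecting tight path in $G$ whose internal vertices come from the connection set.

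\textbf{Parity correction and main obstacle.} To ensure that $|V(G) \setminus V(C)|$ is divisible by $k$, I would insert into a chosen connector an appropriate number of copies of the length-$1 \bmod k$ closed walk compatible with $\ori{H}$ from~(F3); this shifts the total length of $C$ by any desired residue modulo $k$, while consuming only $o(n)$ additional vertices and thus keeping the total uncovered count below $3\alpha n$. The principal obstacle is the joint bookkeeping: the endpoints of each $Q_e$ must remain extensible and consistent with $\ori{H}$ after every connection, the connector vertices together with rounding errors from $\bvec w$ must accumulate to at most $\alpha n$, and the parity correction must not break any of the previous guarantees. Each individual step is a routine application of regularity once the right tools are in place, but carrying them out simultaneously under the delicate parameter hierarchy $\alpha \ll \lambda \ll \nu \ll \gamma$ is the intricate part.
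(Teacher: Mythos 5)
Your overall strategy is sound and follows the method of connected matchings, but the decomposition you propose is the classical \emph{one path per edge of $H$} scheme, whereas the paper's actual proof builds only \emph{two} long tight paths $Q_1$ and $Q_2$ and concatenates them with $P$ as $P_1Q_1P_2Q_2$. The key difference is the tool used: the paper relies on Lemma~\ref{lem:sliceconnect1}, which, given a tight walk in $H$ and a fractional matching $\bvec w$, directly produces a \emph{single} long tight path that winds through all the clusters of $V(H)$ along the walk, using from each cluster $Z$ exactly about $\mu(Z)m$ vertices as dictated by $\bvec w$. This packages the entire matching-guided coverage into one shot, so the paper only ever needs four connections in total, handled by Lemma~\ref{lem:connecting-two-paths}. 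Your scheme instead builds $\Theta(t^k)$ short paths $Q_e$ (one for each edge $e$ of $H$) and connects them all, which multiplies the connection bookkeeping considerably; it would also need a separate per-edge embedding lemma that the paper does not actually develop in the form you assume, though it could in principle be derived from the same regularity machinery.

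A second notable structural difference: the paper splits the cover into two stages. The first path $Q_1$ is built inside a small $\kappa$-fraction of each cluster using the $\gamma$-robust matchability against a weighting $\bvec b$ that records the per-cluster gaps left by $V(P_1)\cup V(P_2)$; its sole purpose is to \emph{flatten} the cluster occupancy, so that afterwards $V(P_1Q_1P_2)$ intersects every cluster to within a uniform amount. Only then is the bulk of the coverage done by $Q_2$, using a second, essentially uniform weighting $\bvec c$. Your single-stage approach that folds everything into one weighting $\bvec b$ could likely be made to work (the per-cluster counts do sum correctly), but the paper's separation is what keeps the error analysis manageable and lets it reuse Lemma~\ref{lem:connecting-two-paths}\ref{item:sliceconnect-longpath} cleanly twice. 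Your treatment of the parity correction agrees with the paper's: both exploit Proposition~\ref{prop:closed-walk-compatible}, which uses the length-$1 \bmod k$ compatible walk to pick a connecting walk of any prescribed residue. You should also make the cluster-restriction step explicit — the paper invokes Lemma~\ref{lem:regular-slice-restriction} to exclude $V(P)\cup C$ before building either $Q_i$, which is needed to ensure the regularity of the restricted complex and that the constructed paths do not collide with the absorber.
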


We will prove Lemma~\ref{lem:absorption} and~\ref{lem:cover} in Section~\ref{sec:absorbing} and~\ref{sec:connecting}, respectively.

\subsection{Proof of the Framework Theorem}\label{sec:actual-proof-of-framework-theorem}
Now we shall prove our second main result.

\begin{proof}[Proof of Theorem~\ref{thm:framework}]
	Suppose we are given $1 \leq d \leq k$ with $k\geq 3$.
	(The case of $k=2$ is trivially implied by Dirac's theorem.)
	Define $\delta=\hf_d(k)$, and let $\mu > 0$ be arbitrary.
	
	We start by fixing all of the necessary constants and hierarchies.
	We may assume that $\mu$ is sufficiently small compared to $\delta$.
	Start by choosing $\gamma, \nu, \lambda, \eta, \alpha, \eps_k, d_k > 0$ such that
	\begin{align*}
		\eps_k \ll \alpha \ll \eta \ll \lambda \ll \nu \ll \gamma \ll \mu,
		\quad \text{and} \quad
		1/t_0 \ll \eps_k \ll d_k \ll \mu.
	\end{align*}
	We further choose functions $r \colon \NATS \rightarrow \NATS$ and $\eps \colon \NATS \rightarrow [0,1)$ such that for any choice of $t$ respecting the hierarchies of the Absorption Lemma (Lemma~\ref{lem:absorption}) as well as the Cover Lemma (Lemma~\ref{lem:cover}) are satisfied with $r(t)$ and $\eps(t)$ playing the role of $r, \eps$.
	We apply the Regular Setup Lemma (Lemma~\ref{lem:regular-setup})  with input $\eps_k, 1/t_0, r, \eps$ to obtain $t_1, m_0$.
	Choose $c \ll 1/t_1$ and let $r = r(t_1)$ and $\eps = \eps(t_1)$ (note that by choice, $1/r, \eps \ll c$ as well).
	Finally, choose $n_0$ such that $1/n_0 \ll 1/t_1, 1/m_0, c, 1/r, \eps$.
	This concludes the selection of constants.

	Now let $G$ be an arbitrary $k$-graph on $n \ge n_0$ vertices with $\overline{\delta}_d(G) \geq \delta + \mu$.
	Our goal is to show that $G$ contains a tight Hamilton cycle.
	We start by obtaining a regular setup of $G$ which has an $(\alpha, \gamma, \delta)$-Hamilton framework.
	By Lemma~\ref{lem:regular-setup}, there exists $\mathbf{d} = (d_2, \dotsc, d_{k-1})$ and a representative $(k, m , t, \eps, \eps_k, r, d_2, \dotsc , d_k)$-regular setup $(G, G_{\cJ}, \cJ, \cP, R_{d_k})$ with $t_0 \leq t \leq t_1$, $m_0 \leq m$ and $n \leq (1+\alpha)mt$.
	Moreover, there is a $k$-graph $I$ of edge density at most $\eps_k$ such that $R = R_{d_k} \cup I$ has minimum relative $d$-degree at least $\delta +\mu/2$.
	Since $\delta=\hf_d(k)$, the constant hierarchy and the definition of the Hamilton framework threshold (Definition~\ref{def:framework-threshold}), together imply that $R$ contains an $(\alpha, \gamma, \delta)$-Hamilton framework $H$ that avoids all edges of $I$.
	So in particular $H \subset R_{d_k}$.
	
	To navigate between the edges of $H$, we fix an orientation $\ori{H}$ and a compatible walk $W$.
	Since $H$ is an $(\alpha, \gamma, \delta)$-Hamilton framework, $H$ is tightly connected and has a tight closed walk of length $1 \bmod k$.
	Combining this, we obtain a closed tight walk $W$ of length $1 \bmod k$ that visits all edges of $H$.
	We define an orientation $\ori{H}=\{ \ori{e} \in V(H)^k\colon e \in H \}$ of $H$ by choosing for every edge $e$ of $H$ a $k$-tuple (or subpath) $\ori{e}$ in $W$ which contains  the vertices of $e$.
	Note that $W$ is compatible with $\ori{H}$.
	
	Next, we select a tight absorbing path $P$.
	First, note that $1/t_1 \leq d_2, \dotsc, d_{k-1}$, which follows from definition since $\cJ$ is a $(t_0, t_1)$-equitable complex (which in turn follows from the definition of regular setup).
	So together with $t \leq t_1$ and the choice of $r=r(t_1), \eps=\eps(t_1)$ we get $1/r, \eps \ll 1/t, c, d_2, \dotsc, d_{k-1}$.
	Since $H$ is an $(\alpha, \gamma, \delta)$-Hamilton framework the conditions of the Absorption Lemma (Lemma~\ref{lem:absorption}) are satisfied.
	It follows that there exists a tight path $P$ in $G$ such that
	\begin{enumerate}[\textnormal{(\roman*)}]
		\item $P$ is $(c, \nu)$-extensible and consistent with $\ori{H}$, 
		\item $V(P)$ is $\lambda$-sparse in $\cP$ and $V(P) \cap C =\es$, where $C$ denotes the connection set of $P$, and
		\item \label{itm:P-absorbing} $P$ is $\eta$-absorbing in $G$.
	\end{enumerate}
	
	Now we cover most of the graph with a tight cycle $C$ that contains $P$ as a subpath.
	Since $H$ is an $(\alpha, \gamma, \delta)$-Hamilton framework, $W$ has length $1 \bmod k$ and by the choice of $P$, the conditions of the Cover Lemma (Lemma~\ref{lem:cover}) are satisfied.
	It follows that there is a tight cycle $C$ of length at least $(1-3\alpha)n$ which contains $P$ as a subpath.
	Moreover, the number of uncovered vertices $|V(G) \sm V(C)|$ is divisible by $k$. 
	
	We finish by absorbing the remaining vertices into $C$.
	Note that $|V(G) \sm V(C)| \leq 3 \alpha n \leq \eta n$.
	By~\ref{itm:P-absorbing} there is a tight path $P'$ on the vertex set $V(P) \cup (V(G) \sm V(C))$, which has the same $k-1$ starting vertices and the same $k-1$ ending vertices as $P$.
	It follows that $C \cup P' \subset G$ contains a tight Hamilton cycle.
	This finishes the proof.
\end{proof} 

\section{Tools for working with regularity} \label{sec:tools}

In the following, we introduce a few standard results which will help us to work with hypergraph regularity and regular slices.

\subsection{Probabilistic tools}
We will need the following concentration inequalities for random variables.

\begin{lemma}[Chernoff's inequality {\cite[Theorem 2.1]{JLR01}}]\label{lem:che}
	Let $0<\alpha<3/2$ and $X \sim \emph{\text{Bin}}(n,p)$ be a binomial random variable. 
	Then $\Pr\left(|X - np | > \alpha n p \right) < 2e^{-\alpha^2np/3}$.
\end{lemma}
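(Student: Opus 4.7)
The plan is to prove the two-sided concentration bound via the standard Chernoff--Cramér exponential moment technique, handling the upper and lower tails separately and then combining them by a union bound.

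For the upper tail, I would start from Markov's inequality applied to $e^{tX}$: for any $t>0$,
\[ \Pr(X \geq (1+\alpha)np) = \Pr(e^{tX} \geq e^{t(1+\alpha)np}) \leq e^{-t(1+\alpha)np}\, \mathbb{E}[e^{tX}]. \]
Since $X$ is a sum of $n$ independent Bernoulli$(p)$ variables, the moment generating function factorises and is bounded as
\[ \mathbb{E}[e^{tX}] = (1-p+p e^{t})^{n} \leq \exp\bigl(np(e^{t}-1)\bigr), \]
using $1+x \leq e^{x}$. Optimising the free parameter $t$ by setting $t = \ln(1+\alpha)$ yields
\[ \Pr(X \geq (1+\alpha)np) \leq \exp\bigl(-np\bigl[(1+\alpha)\ln(1+\alpha)-\alpha\bigr]\bigr). \]
The main analytic step is then the elementary inequality $(1+\alpha)\ln(1+\alpha)-\alpha \geq \alpha^{2}/3$ valid for $0<\alpha<3/2$, which can be checked by a Taylor expansion around $\alpha=0$ (both sides vanish at $0$, and comparing derivatives shows the claimed bound holds throughout the interval). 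This gives the upper tail $\Pr(X \geq (1+\alpha)np) \leq e^{-\alpha^{2}np/3}$.

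For the lower tail, I would apply the same Markov-based exponential bound but to $e^{-tX}$ for $t>0$: this gives $\Pr(X\leq(1-\alpha)np) \leq \exp(-np[(1-\alpha)\ln(1-\alpha)+\alpha])$, and again an elementary calculus lemma yields $(1-\alpha)\ln(1-\alpha)+\alpha \geq \alpha^{2}/2 \geq \alpha^{2}/3$ for $0<\alpha<1$; note the lower tail is automatically trivial for $\alpha \geq 1$ since then $(1-\alpha)np \leq 0$. A union bound on the two events then produces the factor of $2$ in the final estimate, completing the proof. I do not expect any serious obstacle here: the whole argument is a textbook calculation and the only point deserving slight care is verifying the elementary inequalities on $(1+\alpha)\ln(1+\alpha)-\alpha$ and $(1-\alpha)\ln(1-\alpha)+\alpha$ uniformly on the stated range of $\alpha$. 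Since the statement is cited from~\cite{JLR01}, one could alternatively simply invoke that reference, but the above is the shortest self-contained derivation.
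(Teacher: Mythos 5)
The paper does not prove this lemma at all; it is quoted verbatim from Janson, \L{}uczak and Ruci\'nski~\cite{JLR01}. Your self-contained derivation via the exponential-moment (Chernoff--Cram\'er) method is the standard textbook argument and is correct: Markov applied to $e^{\pm tX}$, the product bound $\mathbb{E}[e^{tX}]\le\exp(np(e^t-1))$, optimisation at $t=\ln(1+\alpha)$, and the two elementary inequalities $(1+\alpha)\ln(1+\alpha)-\alpha\ge\alpha^2/3$ on $(0,3/2)$ and $(1-\alpha)\ln(1-\alpha)+\alpha\ge\alpha^2/2$ on $(0,1)$. Both calculus facts check out: for the first, with $f(\alpha)=(1+\alpha)\ln(1+\alpha)-\alpha-\alpha^2/3$ one has $f(0)=f'(0)=0$, $f''(\alpha)=1/(1+\alpha)-2/3$ changes sign once, so $f$ is unimodal with $f(3/2)=\tfrac52\ln\tfrac52-\tfrac94>0$, hence nonnegative throughout; for the second, $g'(\alpha)=-\ln(1-\alpha)-\alpha\ge 0$. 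The one cosmetic discrepancy is that your argument naturally yields a non-strict inequality $\le 2e^{-\alpha^2np/3}$ whereas the cited statement reads $<$; this is of no consequence for the paper's applications, but if you want to match it exactly you should observe that one of the intermediate bounds (e.g.\ $1+x\le e^x$ applied with $x\ne 0$, or the union bound when both tail events have positive probability) is in fact strict. Overall your proof is a fine, self-contained substitute for the citation.
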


\begin{theorem}[McDiarmid's inequality~\cite{McDiarmid1989}] \label{theorem:mcdiarmid}
	Suppose $X_1, \dotsc, X_m$ are independent Bernoulli random variables and $b_1, \dotsc, b_m \in [0, B]$.
	Suppose $X$ is a real-valued random variable determined by $X_1, \dotsc, X_m$ such that changing the outcome of $X_i$ changes $X$ by at most $b_i$ for all $1 \leq i \leq m$.
	Then, for all $\lambda > 0$, we have
	\[ \Pr \left(|X - \expectation[X] | > \lambda \right) \leq 2 \exp \left(  -\frac{2 \lambda^2}{B \sum_{i=1}^m b_i} \right).  \]
\end{theorem}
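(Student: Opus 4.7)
The plan is to derive this from the classical Azuma--Hoeffding martingale concentration inequality applied to the Doob martingale of $X$ with respect to the natural filtration. Since $X$ is a deterministic function of the independent Bernoullis $X_1,\dotsc,X_m$, this is essentially the standard martingale proof of the bounded differences inequality.

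First I would set $Z_0 = \expectation[X]$ and, for $1 \leq i \leq m$, define $Z_i = \expectation[X \mid X_1,\dotsc,X_i]$, so that $Z_m = X$ and the increments $D_i = Z_i - Z_{i-1}$ form a martingale difference sequence relative to the filtration $\cF_i = \sigma(X_1,\dotsc,X_i)$. The key step is to establish $|D_i| \leq b_i$ almost surely. To see this, note that given $\cF_{i-1}$, the random variable $Z_i$ takes only two values corresponding to $X_i=0$ and $X_i=1$. Writing $f_j(x_1,\dotsc,x_{i-1}) = \expectation[X \mid X_1=x_1,\dotsc,X_{i-1}=x_{i-1}, X_i=j]$ for $j\in\{0,1\}$, and using the independence of $X_{i+1},\dotsc,X_m$ from $X_1,\dotsc,X_i$, a short coupling argument shows $|f_1 - f_0| \leq b_i$, because the bounded differences hypothesis guarantees that flipping the $i$-th coordinate shifts $X$ by at most $b_i$ for every fixed assignment of the remaining coordinates, and an expectation preserves this pointwise bound. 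Since $Z_i - Z_{i-1}$ is a convex combination of $f_1 - Z_{i-1}$ and $f_0 - Z_{i-1}$ on the one hand, and $Z_{i-1}$ is a convex combination of $f_0$ and $f_1$ on the other, this yields $|D_i| \leq b_i$.

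Next I would apply Azuma--Hoeffding to the martingale $(Z_i)$: for any $\lambda > 0$,
\[ \Pr(|Z_m - Z_0| > \lambda) \leq 2\exp\!\left(-\frac{2\lambda^2}{\sum_{i=1}^m b_i^2}\right). \]
Finally, since $b_i \in [0,B]$ we have $b_i^2 \leq B\,b_i$, so $\sum_{i=1}^m b_i^2 \leq B \sum_{i=1}^m b_i$, and substituting this (weaker) bound into the denominator gives exactly the inequality claimed in the statement.

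The main obstacle is the verification that $|D_i| \leq b_i$; this is the one place where the bounded differences hypothesis enters in a nontrivial way, and it requires handling the conditional expectations carefully. Everything else is either standard martingale machinery or a direct numerical weakening. An alternative route would bypass Azuma--Hoeffding entirely and instead bound $\expectation[e^{s D_i} \mid \cF_{i-1}]$ via Hoeffding's lemma applied to the two-point distribution of $Z_i$ conditional on $\cF_{i-1}$, then combine with the tower property and optimise over $s > 0$; this is essentially the same proof with the martingale inequality expanded inline.
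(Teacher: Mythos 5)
The paper does not prove this theorem; it states it with a citation to McDiarmid's 1989 survey, so there is no in-text argument to compare against. Your martingale derivation is the standard route and the overall plan is sound, including the final relaxation $\sum b_i^2 \leq B\sum b_i$ to obtain the paper's slightly weaker denominator from the usual McDiarmid form.

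One point deserves care. The step you flag as the key one—establishing $|D_i|\leq b_i$ almost surely—is by itself not enough to yield the constant $2\lambda^2/\sum b_i^2$ in the exponent. The plain Azuma--Hoeffding inequality for a martingale with $|D_i|\leq b_i$ only gives $\exp\bigl(-\lambda^2/(2\sum b_i^2)\bigr)$, which is off by a factor of $4$ in the exponent and would not weaken to the stated bound. What actually delivers the sharp constant is the stronger structural fact you already identify: conditionally on $\cF_{i-1}$, $D_i$ is supported on exactly two values that differ by at most $b_i$ (since $X_i$ is Bernoulli and flipping it shifts $X$ by at most $b_i$ for every fixed tail), hence $D_i$ lies in a (random, $\cF_{i-1}$-measurable) interval of length at most $b_i$. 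Applying Hoeffding's lemma to this two-point conditional law gives $\expectation[e^{sD_i}\mid\cF_{i-1}]\leq e^{s^2 b_i^2/8}$, and the tower property plus optimization over $s$ then gives the exponent $2\lambda^2/\sum b_i^2$. So the ``alternative route'' you describe in your last paragraph is not merely an alternative presentation but the version of the argument that actually produces the claimed constant; the way the main argument is phrased (bound $|D_i|$, invoke Azuma--Hoeffding) would need to be read as invoking the conditional-range form of Azuma--Hoeffding rather than the conditional-absolute-value form. With that clarification the proof is complete and correct.
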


\subsection{Slice restriction}
As one would expect, the restriction of a regular complex to a large subset of its vertex set is also a regular complex, with slightly altered regularity constants~\cite{KMO10}.
This is formalised in the following restriction lemma.

\begin{lemma}[Slice Restriction Lemma~{\cite[Lemma 28]{ABCM17}}]\label{lem:regular-slice-restriction}
	Let $k,r,n,t$ be positive integers, and
	$d_2,\ldots,d_k,\eps,\eps_k,\alpha$ be positive constants such that
	$1/d_i\in \mathbb{N}$ for each $2\le i\le k-1$, and such
	that $1/n \ll 1/t$,
	\[1/n \ll
	1/r,\eps\ll\eps_k,d_2,\ldots,d_{k-1}\quad\text{and}\quad\eps_k\ll \alpha.\]
	Let~$G$ be a $k$-graph on~$n$ vertices, and let $\cJ$ be a $(\cdot,\cdot,\eps,\eps_k,r)$-regular slice
	for $G$ with ground partition $\cP$ and density vector $\mathbf{d} =(d_2, \dotsc, d_{k-1})$.
	For each $Z \in \cP$, let $Z' \subseteq Z$ with $|Z'| = \lceil \alpha |Z| \rceil$.
	Let $G' = G[\bigcup_{Z \in \cP} Z']$ be the induced subgraph,
	and let $\cJ' = \cJ[\bigcup_{Z \in \cP} Z']$ be the induced subcomplex.
	Then $\cJ'$ is a $(\cdot,\cdot,\sqrt{\eps},\sqrt{\eps_k},r)$-regular slice for $G'$ with density vector $\mathbf{d}$.
\end{lemma}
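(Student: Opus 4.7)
The plan is to verify the three requirements in Definition~\ref{def:regular-slice} for the restricted complex~$\cJ'$: equitability with the same density vector (up to the relaxed parameter $\sqrt{\eps}$), and top-level regularity of $G'$ with respect to almost all $k$-sets of clusters of~$\cJ'$. Equitability is essentially free: by construction the ground partition $\cP'$ of $\cJ'$ still has exactly~$t$ clusters, each of size $\lceil \alpha|Z|\rceil$, so it suffices to control the regularity of each layer $\cJ'^{(i)}[V_{A'}]$ with respect to $\cJ'^{(i-1)}[V_{A'}]$ and then transfer the top-level regularity of~$G$ to~$G'$.

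For the intermediate levels, I would argue layer by layer $i = 2, \dots, k-1$. Fix an $i$-set of clusters $A \subset \cP$, let $A' \subset \cP'$ denote the restricted version of~$A$, and suppose for contradiction that some $Q' \subset \cJ'^{(i-1)}[V_{A'}]$ satisfies $e(K_i(Q')) > \sqrt{\eps}\, e(K_i(\cJ'^{(i-1)}[V_{A'}]))$ yet $|d(\cJ'^{(i)}[V_{A'}]\,|\,Q') - d_i| > \sqrt{\eps}$. Treating $Q'$ as a subgraph of $\cJ^{(i-1)}[V_A]$, the key inputs are a ``dense counting'' estimate and the original regularity. The counting estimate, which can be proved by induction on~$i$ using the $(d_j, \eps)$-regularity of each layer of~$\cJ$, gives $e(K_i(\cJ^{(i-1)}[V_{A'}])) = (1 \pm \eta)\, \alpha^i\, e(K_i(\cJ^{(i-1)}[V_A]))$ for some $\eta = \eta(\eps, d_2,\dots,d_{i-1})$ negligible against $\sqrt{\eps}$. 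Combined with $\eps \ll \alpha$, this makes $e(K_i(Q'))$ larger than $\eps \cdot e(K_i(\cJ^{(i-1)}[V_A]))$, so the original $(d_i, \eps)$-regularity forces $|d(\cJ^{(i)}[V_A]\,|\,Q') - d_i| \leq \eps$, contradicting the assumption once one observes that the density computed inside the restriction coincides with the density computed in the ambient complex (because $K_i(Q')$ already lies in $V_{A'}$).

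For the top level, I would handle the $k$-sets of clusters in two batches. For a $k$-set $X$ of clusters of~$\cP$ for which $G$ is $(\eps_k, r)$-regular with respect to~$X$, the same argument as above (applied with $G$, $\hat{\cJ}_X$ and the parameter $\eps_k$ in place of $\cJ^{(i)}$, $\cJ^{(i-1)}$ and $\eps$) shows that $G'$ is $(\sqrt{\eps_k}, r)$-regular with respect to the restricted $k$-set $X'$. The $k$-sets where~$G$ was not regular total at most $\eps_k \binom{t}{k} \leq \sqrt{\eps_k} \binom{t}{k}$, which is well within the tolerance allowed by Definition~\ref{def:regular-slice}.

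The main obstacle is the counting estimate $e(K_i(\cJ^{(i-1)}[V_{A'}])) \approx \alpha^i e(K_i(\cJ^{(i-1)}[V_A]))$; this is a standard but delicate consequence of the dense counting lemma for regular complexes, and it is what fixes the precise dependency $\eps_k \ll \alpha$ required in the hypothesis. Once that estimate is in hand, the rest of the argument is a clean chain of substitutions: the $\sqrt{\,\cdot\,}$ in the output parameters is the usual slack absorbing the lower-order error terms $\eta$ coming from the counting lemma.
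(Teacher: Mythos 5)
The paper does not prove this statement; it is cited verbatim as \cite[Lemma 28]{ABCM17}, so there is no in-paper proof to compare against. That said, your sketch tracks the standard proof of this restriction lemma faithfully. The layer-by-layer argument---lift a witness $Q'$ of irregularity from $\cJ'^{(i-1)}[V_{A'}]$ into $\cJ^{(i-1)}[V_A]$, use the dense counting estimate $e(K_i(\cJ^{(i-1)}[V_{A'}])) = (1\pm\eta)\,\alpha^i\, e(K_i(\cJ^{(i-1)}[V_A]))$ to show that $Q'$ is still large enough to invoke the original $(d_i,\eps)$-regularity, and then observe that the relative density is unchanged because $K_i(Q')$ already lies in $V_{A'}$---is exactly the mechanism in \cite{ABCM17}. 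Your handling of the top level, splitting $k$-sets into those that were $(\eps_k,r)$-regular (and survive restriction with parameter $\sqrt{\eps_k}$) and those that were not (at most $\eps_k\binom{t}{k}\leq\sqrt{\eps_k}\binom{t}{k}$), is also the right accounting. Two points worth making explicit when you flesh this out: at the top level the witness is an $r$-tuple $\mathbf{Q}'=(Q'_1,\dots,Q'_r)$ rather than a single subgraph, and the counting estimate must be applied to $K_k(\mathbf{Q}')$; and the needed inequality is really $\alpha^k\gg\sqrt{\eps_k}$, which the hierarchy $\eps_k\ll\alpha$ delivers because $\eps_k$ may be chosen small enough relative to any fixed power of $\alpha$. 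Neither is a gap---both are routine---but they are the spots where a careless write-up of the sketch could go wrong.
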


\subsection{Counting and extending}

Next, we state a hypergraph counting lemma, which  essentially  states that the number of copies of a given small $k$-graph inside a regular slice is roughly what we would expect if the edges inside a regular slice were chosen at random.
Different versions of this lemma are available in the literature~\cite{ABCM17,CFKO09,Gow07,RS07a}.
We use the version given by Cooley, Fountoulakis, Kühn and Osthus~\cite[Lemma 4]{CFKO09}, expressed in the language of setups.

Let $\mathcal{G}$ be a $\cP$-partite $k$-complex and $X_1, \dotsc, X_s \in \cP$ (possibly with repetition),
and let $\mathcal{H}$ be a $k$-complex on vertices $\{1, \dotsc, s \}$.
We say that an embedding of $\mathcal{H}$ in $\mathcal{G}$ is \emph{partition-respecting}, if $i$ is embedded in $X_i$, for all $1 \leq i \leq s$.
Thus the notion of partition-respecting depends on the labelling of $V(\cH)$ and the clusters $X_1, \dotsc, X_s$, but those will always be clear in the context.
We denote the set of labelled partition-respecting copies of $\mathcal{H}$ in $\mathcal{G}$ by $\mathcal{H}_\mathcal{G}$.
Recall that $e_i(\Hy)$ denotes the number of edges of size $i$ in $\Hy$.

\begin{lemma}[Counting Lemma] \label{lemma:counting}
	Let $k$, $s$, $r$, $m$ be positive integers and let $\beta$, $d_2$, $\dots$, $d_k$, $\eps$, $\eps_{k}$ be positive constants such that $1/d_i\in\mathbb N$ for $2 \leq i \leq k-1$, and such that
	\[1/m \ll 1/r,\eps \ll \eps_k,d_2,\dots,d_{k-1}\quad\textrm{and}\quad\eps_k\ll \beta,d_k,1/s.\]
	Let $H$ be a $k$-graph on $s$ vertices $\{1, \dotsc, s\}$ and let $\mathcal{H}$ be the $k$-complex generated by the down-closure of $H$.
	Let $\mathbf{d} = (d_2, \dotsc, d_k)$, let $(G, G_{\cJ}, \cJ, \cP, R)$ be a $(k, m, \cdot, \eps, \eps_k, r, \mathbf{d})$-regular setup and $\mathcal{G} = \cJ \cup G_{\cJ}$.
	Suppose $X_1, \dotsc, X_s \in \cP$ are such that $i \mapsto X_i$ is a homomorphism from $H$ into $R$.
	Then the number of labelled partition-respecting copies of $\mathcal{H}$ in $\mathcal{G}$ satisfies
	\begin{align*} \label{equation:counting}
	|\cH_\cG| = 
	\left( 1 \pm \beta \right) \left( \prod^{k}_{i=2} d^{e_i(\mathcal{H})}_i \right) m^s.
	\end{align*}
\end{lemma}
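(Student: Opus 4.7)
The plan is to prove this standard complex counting lemma by induction on the number $s$ of vertices of $H$, progressively extending partition-respecting partial embeddings of $\cH$ into $\cG = \cJ \cup G_{\cJ}$ one vertex at a time. At each extension step, the regularity of each intermediate level of $\cJ$ together with the top-level regularity of $G_{\cJ}$ provides approximate control over the number of extensions, and the total count accumulates to the product $\prod_{i=2}^k d_i^{e_i(\cH)} m^s$.

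First I would introduce a notion of a \emph{typical} partition-respecting embedding $\phi$ of $\cH[\{1, \dotsc, j\}]$ into $\cG$: such an embedding is typical, if for every remaining vertex $i > j$ and every subset $T \subseteq \{1, \dotsc, j\}$ forming an edge of $\cH$ together with $i$, the set of extensions $v \in X_i$ for which $\phi(T) \cup \{v\}$ forms an edge of $\cG$ at the appropriate level has density close to $d_{|T|+1}$. Using the $(d_i, \eps)$-regularity of $\cJ^{(i)}$ with respect to $\cJ^{(i-1)}$ for $2 \leq i \leq k-1$ and the $(d_k, \eps_k, r)$-regularity of $G_{\cJ}$ with respect to $\hat{\cJ}_X$ at the top, a standard averaging argument shows that all but a negligible fraction (polynomial in $\eps_k$, with exponent depending only on $s$ and $k$) of partial embeddings are typical.

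Given this, the induction step is immediate: for a typical embedding of $\cH[\{1, \dotsc, j\}]$, the number of valid images in $X_{j+1}$ for vertex $j+1$ equals $(1 \pm O(\eps_k^{1/C})) \prod_i d_i^{f_i(j+1)} m$, where $f_i(j+1)$ counts the edges of $\cH$ of size $i$ containing $j+1$ whose other vertices lie in $\{1, \dotsc, j\}$. Multiplying through $j = 0, \dotsc, s-1$, using $\sum_j f_i(j) = e_i(\cH)$ at each level, and absorbing the lower-order terms from non-typical embeddings into the $(1 \pm \beta)$ error yields the desired estimate.

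The main obstacle is the careful propagation of error terms through the induction, especially at the top level, where $(d_k, \eps_k, r)$-regularity is defined with respect to $r$-tuples of subgraphs rather than single subgraphs. This requires packaging the relevant sets of $(k-1)$-edges arising from partial embeddings into $r$-tuples whose union still forms a substantial fraction of $K_k(\cJ^{(k-1)})$, so that the $(\eps_k, r)$-regularity hypothesis is applicable. Since this is the standard complex counting lemma already proven in~\cite[Lemma 4]{CFKO09}, one could alternatively derive the statement by verifying that our regular setup formulation matches their hypotheses and invoking their lemma as a black box.
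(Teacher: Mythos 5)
The paper does not prove this lemma at all; it simply states it as a direct consequence of Cooley, Fountoulakis, K\"uhn and Osthus~\cite[Lemma 4]{CFKO09}, rephrased in the language of regular setups. Your closing remark --- that one can verify the hypotheses line up and invoke their result as a black box --- is therefore exactly what the paper does, and is the intended justification.

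Your sketch of a from-scratch proof by vertex-by-vertex extension through typical embeddings is the right shape of argument (it is essentially the strategy in~\cite{CFKO09} and Cooley, Fountoulakis, K\"uhn, Osthus' proof there), but as written it has a gap. You describe the $r$-tuple nature of $(\eps_k,r)$-regularity at the top level as an ``obstacle'' that must be circumvented by packaging queries into $r$-tuples. This gets the role of $r$ backwards: $(\eps_k,r)$-regularity is a \emph{strictly stronger} hypothesis than $(\eps_k,1)$-regularity, since it bounds the density with respect to every $r$-tuple of subgraphs and hence in particular every single subgraph. The genuine technical difficulty in making the induction rigorous is rather that the single subgraph $Q \subseteq \cJ^{(k-1)}$ arising from a fixed partial embedding (the $(k-1)$-sets that could complete an edge through the new vertex) may have $e(K_k(Q))$ too small relative to $e(K_k(\hat\cJ_X))$ for the regularity hypothesis to bite, and one needs the Dense Counting Lemma at the lower levels together with a careful argument that the ``atypical'' embeddings are rare enough. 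You wave at this with ``a standard averaging argument'' but do not spell out how the error at each level stays controlled by $\eps_k$ rather than blowing up by a factor of $1/(d_2 \dotsm d_{k-1})$ at each step --- which is precisely why the hierarchy requires $\eps \ll d_2, \dotsc, d_{k-1}$ separately from $\eps_k \ll d_k$. None of this is fatal, but it means your first paragraph is a proof outline rather than a proof. The black-box citation, which you offer as an afterthought, is the cleaner path and the one the paper actually takes.
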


The following tool will allow us to extend small subgraphs in a regular slice.
It was originally proved by Cooley, Fountoulakis, Kühn and Osthus~\cite[Lemma 5]{CFKO09}.
Rephrased in the setting of regular setups, the lemma takes the following form.

\begin{lemma}[Extension Lemma]\label{lem:extension}
	Let $k,s,r,m$ be positive integers, where $s'<s$, and let $\beta,d_2,\dots,d_k,\eps,\eps_{k}$ be positive constants such that $1/d_i\in\mathbb N$ for $2 \leq i \leq k-1$, and such that
	\[1/m \ll 1/r,\eps \ll \eps_k,d_2,\dots,d_{k-1}\quad\textrm{and}\quad\eps_k\ll \beta,d_k,1/s.\]
	Suppose $H$ is a $k$-graph on $s$ vertices $\{1, \dotsc, s\}$.
	Let $\mathcal{H}$ be the $k$-complex generated by the down-closure of $H$,
	and let $\mathcal{H'}$ be an induced subcomplex of $\mathcal{H}$ on $s'$ vertices.
	Let $\mathbf{d} = (d_2, \dotsc, d_k)$,
	let $(G, G_{\cJ}, \cJ, \cP, R)$ be a $(k, m, \cdot, \eps, \eps_k, r, \mathbf{d})$-regular setup
	and let $\mathcal{G} = \cJ \cup G_{\cJ}$.
	Suppose	$X_1, \dotsc, X_s \in \cP$ are such that $i \mapsto X_i$ is a homomorphism from $H$ into $R$.
	Then all but at most $\beta|\mathcal{H}_{\mathcal{G}}'|$ labelled partition-respecting copies of $\mathcal{H}'$ in $\mathcal{G}$ extend to \[ (1 \pm \beta) \left( \prod_{i=2}^k d_i^{e_i(\mathcal{H}) - e_i(\mathcal{H}')} \right) m^{s - s'} \] labelled partition-respecting copies of $\mathcal{H}$ in $\mathcal{G}$.
\end{lemma}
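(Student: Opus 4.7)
The plan is to prove the Extension Lemma by a second-moment (variance) argument driven by three applications of the Counting Lemma~\ref{lemma:counting}. First, I would introduce the ``expected extension count'' $\mu := \prod_{i=2}^k d_i^{e_i(\mathcal{H})-e_i(\mathcal{H}')} m^{s-s'}$ and, for each labelled partition-respecting copy $\phi$ of $\mathcal{H}'$ in $\mathcal{G}$, let $\ext(\phi)$ be the number of extensions of $\phi$ to a labelled partition-respecting copy of $\mathcal{H}$. Picking an auxiliary constant $\beta'$ with $\eps_k \ll \beta' \ll \beta$ (concretely $\beta' = \beta^3/100$), the Counting Lemma applied to $\mathcal{H}$ and $\mathcal{H}'$ with parameter $\beta'$ gives
\[
|\mathcal{H}_\mathcal{G}| = (1 \pm \beta') \prod_{i=2}^k d_i^{e_i(\mathcal{H})} m^s, \qquad |\mathcal{H}'_\mathcal{G}| = (1 \pm \beta') \prod_{i=2}^k d_i^{e_i(\mathcal{H}')} m^{s'}.
\]
Since $\sum_\phi \ext(\phi) = |\mathcal{H}_\mathcal{G}|$, this yields the first-moment identity $\sum_\phi \ext(\phi) = (1 \pm 3\beta')\, \mu\, |\mathcal{H}'_\mathcal{G}|$.

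Next, I would bound the second moment by constructing a ``doubled'' $k$-graph $H''$ on $2s - s'$ labelled vertices, obtained by gluing two copies of $H$ along the vertex set $\{1, \dotsc, s'\}$ of $\mathcal{H}'$, and letting $\mathcal{H}''$ be its down-closure. The homomorphism $i \mapsto X_i$ extends naturally to $H''$ by sending the duplicated labels $s+1, \dotsc, 2s-s'$ to $X_{s'+1}, \dotsc, X_s$. Because the top-level edges of $\mathcal{H}''$ are exactly the edges of the two copies of $H$, one has $e_i(\mathcal{H}'') = 2 e_i(\mathcal{H}) - e_i(\mathcal{H}')$ at every level. Crucially, a labelled partition-respecting copy of $\mathcal{H}''$ in $\mathcal{G}$ decomposes canonically into a copy $\phi$ of $\mathcal{H}'$ together with an \emph{ordered} pair of extensions to $\mathcal{H}$, so $|\mathcal{H}''_\mathcal{G}| = \sum_\phi \ext(\phi)^2$. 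Applying the Counting Lemma to $\mathcal{H}''$ (valid since the hierarchy $\eps_k \ll \beta', 1/(2s-s')$ still holds once one insists $\eps_k$ is small enough compared to $1/s$) then gives
\[
\sum_\phi \ext(\phi)^2 = (1 \pm \beta') \prod_{i=2}^k d_i^{2 e_i(\mathcal{H}) - e_i(\mathcal{H}')} m^{2s-s'} = (1 \pm 3\beta')\, \mu^2\, |\mathcal{H}'_\mathcal{G}|.
\]

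Combining these two moment estimates, I would compute
\[
\sum_\phi (\ext(\phi) - \mu)^2 = \sum_\phi \ext(\phi)^2 - 2\mu \sum_\phi \ext(\phi) + \mu^2 |\mathcal{H}'_\mathcal{G}| \leq 9 \beta'\, \mu^2\, |\mathcal{H}'_\mathcal{G}|,
\]
and then apply Chebyshev's inequality to the set $B$ of copies $\phi$ with $|\ext(\phi) - \mu| > \beta \mu$: each such $\phi$ contributes at least $(\beta \mu)^2$ to the sum, so $|B| \leq 9 \beta'/\beta^2 \cdot |\mathcal{H}'_\mathcal{G}| \leq \beta |\mathcal{H}'_\mathcal{G}|$, finishing the argument.

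The main obstacle is the bookkeeping around the doubled complex $\mathcal{H}''$: one has to verify carefully that $i \mapsto X_i$ really does define a homomorphism from $H''$ into $R$ (so that the Counting Lemma is applicable) even when the two copies of $\mathcal{H}$ land in identical clusters, and that the edge-count identity $e_i(\mathcal{H}'') = 2 e_i(\mathcal{H}) - e_i(\mathcal{H}')$ holds at \emph{every} level of the complex (including the implicitly doubled $j$-edges with $j < k$ induced on the non-shared vertices). Once this is set up cleanly, the remaining hierarchy chase and the Chebyshev computation are routine.
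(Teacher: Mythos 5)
The paper does not supply its own proof of this lemma; it is stated as a rephrasing of~\cite[Lemma~5]{CFKO09} into the language of regular setups, and the paper relies on that reference for the proof. So the task is just to check that your argument works, which it essentially does: the second-moment argument via the doubled complex is the standard route to such extension lemmas and is what that reference uses.

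Two remarks. First, the edge-count identity $e_i(\mathcal{H}'') = 2e_i(\mathcal{H}) - e_i(\mathcal{H}')$ does hold at every level: an $i$-edge of $\mathcal{H}''$ is an $i$-subset of an edge of $H''$, these are exactly the $i$-edges of the two copies of $\mathcal{H}$, and their overlap is precisely the set of $i$-edges of $\mathcal{H}$ supported on $\{1,\dotsc,s'\}$, of which there are $e_i(\mathcal{H}')$; inclusion--exclusion gives the identity. The homomorphism extension is also fine, since homomorphisms into $R$ need not be injective, and the Counting Lemma is stated for arbitrary homomorphisms.

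Second, the claim $|\mathcal{H}''_{\mathcal{G}}| = \sum_\phi \ext(\phi)^2$ is not quite right. A labelled partition-respecting copy of $\mathcal{H}''$ in $\mathcal{G}$ is injective, so it corresponds to an ordered pair $(\psi_1,\psi_2)$ of extensions of $\phi$ whose images are \emph{disjoint} off the $s'$ shared vertices. Thus $|\mathcal{H}''_{\mathcal{G}}| = \sum_\phi \ext(\phi)^2 - E$, where $E$ counts the colliding pairs. Crudely $E \leq (s-s')^2\, m^{2(s-s')-1}\, |\mathcal{H}'_{\mathcal{G}}|$, which is at most $\beta'\mu^2|\mathcal{H}'_{\mathcal{G}}|$ once $1/m \ll \eps \ll d_2, \dotsc, d_{k-1}$ and $1/m \ll \eps_k \ll d_k, 1/s$. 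Fold this error into the $(1 \pm 3\beta')$ margin rather than asserting an exact equality; after that, the Chebyshev computation finishes the proof as you wrote it.
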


We shall also look for structures whose edges lie entirely in the underlying $(k-1)$-complex $\cJ$ of a regular setup.
In this situation we cannot use Lemma~\ref{lemma:counting}, whose input is a regular setup as opposed to an equitable complex and requires $r$ to be large with respect to $\eps_k$.
In contrast, the edges at the $(k-1)$-th levels of $\cJ$ will only be $(d_{k-1},\eps, 1)$-regular with respect to the lower levels.
However, in the upcoming applications the parameter $\eps$, which governs the regularity, is smaller than the underlying densities $d_2, \dotsc, d_{k-1}$ up to the $(k-1)$-th level, in contrast with the situation in regular setups (where $\eps_k$ plays the role of $\eps$ and usually the opposite happens, namely $\eps_k$ could be much smaller than $d_2, \dotsc, d_{k})$.
In this setting we can use a Dense Counting Lemma, as proved by Kohayakawa, Rödl and Skokan~\cite[Corollary 6.11]{KRS2002}.
We state the following variation given by Cooley, Fountoulakis, Kühn and Osthus~\cite[Lemma 6]{CFKO09}.

\begin{lemma}[Dense Counting Lemma] \label{lemma:densecountinglemma}
	Let $k, s, m$ be positive integers and $\eps$, $d_2, \dotsc, d_{k-1}$, $\beta$ be positive constants such that
	\[ 1/m \ll \eps \ll \beta \ll d_2, \dotsc, d_{k-1}, 1/s. \]
	Suppose $H$ is a $(k-1)$-graph on $s$ vertices $\{1, \dotsc, s\}$,
	and let $\mathcal{H}$ be the $(k-1)$-complex generated by the down-closure of $H$.
	Let $\mathbf{d} = (d_2, \dotsc, d_{k-1})$ and let $\mathcal{J}$ be a $(\mathbf{d}, \eps)$-regular $(k-1)$-complex, with ground partition $\cP$ with classes of size $m$ each.
	If $X_1, \dotsc, X_s \in \cP$, then
	\[
		|\mathcal{H}_{\mathcal{J}}| = (1 \pm \beta) \prod_{i=2}^{k-1} d_i^{e_i(\mathcal{H})} m^s.
	\]
\end{lemma}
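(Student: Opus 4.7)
The plan is to proceed by induction on $N := \sum_{i=2}^{k-1} e_i(\cH)$, the total number of edges of $\cH$. The base case $N = 0$ is immediate: an embedding reduces to a choice of $v_j \in X_j$ for each $1 \le j \le s$, giving $|\cH_{\cJ}| = m^s$, which matches the right-hand side (empty product).

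For the inductive step, pick an edge $e$ of $\cH$ at the maximum nonempty level $i^* := \max\{i : e_i(\cH) \ge 1\}$, and let $\cH^-$ be $\cH$ with $e$ removed from level $i^*$. By maximality of $i^*$, the result is still a complex, with $N - 1$ edges. Applying the inductive hypothesis (with a suitably smaller $\beta' < \beta$) yields
\[
|\cH^-_{\cJ}| = (1 \pm \beta')\, d_{i^*}^{-1} \prod_{i=2}^{k-1} d_i^{e_i(\cH)}\, m^s.
\]
It then suffices to show that a $(1 \pm \beta')\, d_{i^*}$ fraction of these embeddings additionally satisfies the constraint that the $i^*$ vertices corresponding to $e$ form an edge of $\cJ^{(i^*)}$.

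To that end, classify each embedding of $\cH^-$ by its \emph{$e$-trace}, i.e., its restriction to the clusters indexed by $e$, which is a copy $T$ of $K^{(i^*-1)}_{i^*}$ in $\cJ^{(i^*-1)}$. Let $\mu(T)$ be the number of embeddings with trace $T$. The $(d_{i^*},\eps)$-regularity of $\cJ^{(i^*)}$ furnishes an \emph{unweighted} estimate for the number of $T$'s that extend to an edge of $\cJ^{(i^*)}$, whereas our count weights each such $T$ by $\mu(T)$. To reconcile the two, we apply the inductive hypothesis a second time to the complex obtained from $\cH^-$ by rooting the $e$-vertices; this shows $\mu(T) = (1 \pm \beta')\,\bar{\mu}$ for all but a $\beta'$-fraction of traces $T$, where $\bar{\mu}$ is the average multiplicity. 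A standard defect argument (the atypical traces contribute negligibly, and the remainder are near-uniformly weighted) then lets us invoke regularity to conclude $|\cH_{\cJ}| = (1 \pm \beta) \prod_{i=2}^{k-1} d_i^{e_i(\cH)}\, m^s$.

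The main obstacle is precisely this \emph{weighted-versus-unweighted} regularity gap: standard $(d_{i^*},\eps,1)$-regularity only controls densities among unit-weighted $K^{(i^*-1)}_{i^*}$-copies, while our count inherently weights each copy by its number of completions to a full embedding of $\cH^-$. Establishing near-uniformity of these weights (via the rooted inductive application, essentially a double induction in the style of Kohayakawa--R\"odl--Skokan) is the technical heart of the proof; the hierarchy $\eps \ll \beta \ll d_2, \dotsc, d_{k-1}, 1/s$ is chosen precisely so that the compounding regularity losses across the at most $\mathrm{poly}(s)$ inductive steps stay below $\beta$.
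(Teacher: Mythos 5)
The paper does not prove this lemma; it is quoted from Kohayakawa, R\"odl and Skokan~\cite[Corollary~6.11]{KRS2002} via the form stated by Cooley, Fountoulakis, K\"uhn and Osthus~\cite[Lemma~6]{CFKO09}, so there is no in-paper proof for comparison.

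Judged on its own, your sketch has a genuine gap at exactly the step you yourself flag as the technical heart, namely establishing that the trace multiplicities $\mu(T)$ are near-uniform. You propose to obtain this by ``applying the inductive hypothesis a second time to the complex obtained from $\cH^-$ by rooting the $e$-vertices,'' but the inductive hypothesis is an \emph{unrooted} Counting Lemma, whereas a count of embeddings of $\cH^-$ with the $e$-vertices pinned to a given trace $T$ is an \emph{extension} statement---precisely the content of the Dense Extension Lemma (Lemma~\ref{lemma:denseextensionlemma}), which this paper also only cites from~\cite{CFKO09}. An induction on $N=\sum_i e_i(\cH)$ alone does not give you access to such a rooted count: either the induction hypothesis must be strengthened to prove Counting and Extension simultaneously (a real change to the scheme, which you do not state), or you need a second-moment bound on $\sum_T \mu(T)^2$, which counts embeddings of the complex obtained by gluing two copies of $\cH^-$ along $e$---a complex with strictly more edges than $\cH$ (except in trivial cases), lying outside the reach of an edge-count induction. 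You correctly identify this as the ``double induction'' of Kohayakawa--R\"odl--Skokan, but the sketch never says how that double induction is organized so as to be well-founded; as written, a single induction on the number of edges cannot support the second, rooted application you rely on.
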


An important special case of the Dense Counting Lemma gives the number of edges in each layer of a regular slice~{\cite[Fact 7]{ABCM17}}.

\begin{lemma}\label{lemma:countinglevelsslices}
	Suppose $1/m \ll \eps \ll \beta \ll d_2, \dotsc, d_{k-1}, 1/k$ and that $\cJ$ is a $(\cdot, \cdot, \eps)$-equitable $(k-1)$-complex with density vector $(d_2, \dotsc, d_{k-1})$ and clusters of size $m$.
	Let $X$ be a set of at most $k-1$ clusters of $\cJ$.
	Then
	\begin{align*}
	|\cJ_{X}| = \left( 1 \pm \beta \right) \left( \prod^{|X|}_{i=2} d^{\binom{|X|}{i}}_i \right) m^{|X|}.
	\end{align*}
\end{lemma}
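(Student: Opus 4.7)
The plan is to derive this lemma directly from the Dense Counting Lemma (Lemma~\ref{lemma:densecountinglemma}), by interpreting $|\cJ_X|$ as a count of labelled partition-respecting copies of a small complete complex inside $\cJ$. Write $s = |X|$ and fix an arbitrary labelling $X_1, \dotsc, X_s$ of the clusters of $X$. Let $\cH$ be the complete complex on $s$ labelled vertices, i.e.\ the down-closure of the single $s$-edge $\{1, \dotsc, s\}$; this is a $(k-1)$-complex (since $s \leq k-1$) with exactly $\binom{s}{i}$ edges of size $i$ for each $1 \leq i \leq s$.

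Next, I would observe that $|\cJ_X| = |\cH_\cJ|$. Indeed, each $e \in \cJ_X$ meets every cluster $X_i$ in exactly one vertex $v_i$, and by the down-closedness of the complex $\cJ$ all sub-edges of $e$ also belong to $\cJ$; hence the map $i \mapsto v_i$ is a labelled partition-respecting copy of $\cH$ in $\cJ$. Conversely, every such copy of $\cH$ gives rise to an element of $\cJ_X$. With this bijection in hand, I would invoke Lemma~\ref{lemma:densecountinglemma} applied to $\cH$, using the given hierarchy $1/m \ll \eps \ll \beta \ll d_2, \dotsc, d_{k-1}, 1/k$ and the inequality $1/s \geq 1/k$ to meet the Dense Counting Lemma's requirement $\beta \ll 1/s$. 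This yields
\[ |\cJ_X| = |\cH_\cJ| = (1 \pm \beta) \prod_{i=2}^{k-1} d_i^{e_i(\cH)} m^s = (1 \pm \beta) \prod_{i=2}^{s} d_i^{\binom{s}{i}} m^s, \]
since $e_i(\cH) = \binom{s}{i}$ for $2 \leq i \leq s$ and $e_i(\cH) = 0$ for $i > s$. This is the claimed identity.

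The only delicate point is that the Dense Counting Lemma is stated for a top uniformity of $k-1$, whereas here the top uniformity of $\cH$ is $s$, which may be strictly smaller. This is a cosmetic obstacle: since the count depends only on regularity at levels $2, \dotsc, s$, one may either restate the Dense Counting Lemma with $s$ in place of $k-1$ (its proof proceeds level by level and is insensitive to this change), or alternatively proceed by a straightforward induction on $s$. The base case $s=1$ gives $|\cJ_X| = m$ trivially, and the inductive step from $s-1$ to $s$ uses the $(d_s, \eps)$-regularity of $\cJ^{(s)}$ relative to $\cJ^{(s-1)}_{X^<}$ together with the inductive estimate for $|\cJ_{X^<}|$ to produce the claimed factor $d_s^{\binom{s}{s}} = d_s$ at the top level and absorb the regularity error into $\beta$ using $\eps \ll \beta$. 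Either route closes the argument.
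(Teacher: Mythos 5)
Your proof is correct and takes exactly the route the paper intends: the paper states this lemma as ``an important special case of the Dense Counting Lemma'' and cites it as Fact~7 of~\cite{ABCM17} without supplying details, so your self-contained derivation matches the underlying argument. You are right to flag the issue that when $s = |X| < k-1$, the complete $s$-complex is not the down-closure of a $(k-1)$-graph and so Lemma~\ref{lemma:densecountinglemma} does not apply verbatim; your first fix (view $\cJ^{(\le s)}$ as an $s$-complex which is $(d_2,\dots,d_s,\eps)$-equitable and apply the DCL with $k-1$ replaced by $s$) is the cleanest. For the induction alternative, be aware that the quantity you want at the inductive step is not $|\cJ_{X^<}|$ (in the paper's notation this is a union of lower-level edge sets, not a count of cliques) but rather $|K_s(\cJ^{(s-1)}_{X^<})|$, the number of $X$-partite $s$-sets all of whose $(s-1)$-subsets lie in $\cJ^{(s-1)}$; combining this with the $(d_s,\eps)$-regularity of $\cJ^{(s)}$ relative to $\cJ^{(s-1)}$ and the fact that $\cJ^{(s)} \subseteq K_s(\cJ^{(s-1)})$ (down-closedness) gives $|\cJ_X| = (d_s \pm \eps)\,|K_s(\cJ^{(s-1)}_{X^<})|$, after which the count of cliques is again a DCL-type statement at uniformity $s-1$. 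With that correction the induction also closes.
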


Analogously, we also have a Dense Extension Lemma~\cite[Lemma 7]{CFKO09}.
\begin{lemma}[Dense Extension Lemma] \label{lemma:denseextensionlemma}
	Let $k, s, s', m$ be positive integers, where $s' < s$, and $\eps, d_2, \dotsc, d_{k-1}, \beta$ be positive constants such that
	\[ 1/m \ll \eps \ll \beta \ll d_2, \dotsc, d_{k-1}, 1/s. \]
	Let $H$ be a $(k-1)$-graph on $s$ vertices $\{1, \dotsc, s\}$.
	Let $\cH$ be the $(k-1)$-complex generated by the down-closure of $H$,
	and let $\mathcal{H'}$ be an induced subcomplex of $\cH$ on $s'$.
	Let $\mathbf{d} = (d_2, \dotsc, d_{k-1})$ and let $\mathcal{J}$ be a $(\mathbf{d}, \eps)$-regular $(k-1)$-complex, with ground partition $\cP$ with vertex classes of size $m$ each.
	If $X_1, \dotsc, X_s \in \cP$, then all but at most $\beta|\mathcal{H}_{\mathcal{J}}'|$ labelled partition-respecting copies of $\mathcal{H}'$ in $\mathcal{J}$ can be extended into
	\[ (1 \pm \beta) \prod_{i=2}^{k-1} d_i^{e_i(\mathcal{H}) - e_i(\mathcal{H}')} m^{s - s'} \]
	labelled partition-respecting copies of $\mathcal{H}$ in $\mathcal{J}$.
\end{lemma}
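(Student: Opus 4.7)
I would derive this from the Dense Counting Lemma (Lemma~\ref{lemma:densecountinglemma}) via a standard second-moment argument. First, introduce an auxiliary constant $\beta'$ with $\eps \ll \beta' \ll \beta^3$, and set $D := \prod_{i=2}^{k-1} d_i^{e_i(\cH)-e_i(\cH')} m^{s-s'}$ for the target extension count. Two direct applications of Lemma~\ref{lemma:densecountinglemma} with error $\beta'$---to $\cH$ with the given assignment $(X_1,\dots,X_s)$ and to $\cH'$ with its inherited assignment---yield
\[
N := |\cH_\cJ| = (1\pm\beta') \prod_{i=2}^{k-1} d_i^{e_i(\cH)} m^s, \qquad N' := |\cH'_\cJ| = (1\pm\beta') \prod_{i=2}^{k-1} d_i^{e_i(\cH')} m^{s'}.
\]

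Next I would access the second moment via a doubling construction. Form the \emph{doubled complex} $\cH^{\otimes 2}$ by taking two disjoint copies $V_1, V_2$ of $V(\cH)\sm V(\cH')$ glued along $V(\cH')$, and including a copy of the edges of $\cH$ on $V(\cH')\cup V_j$ for each $j\in\{1,2\}$, so that no edge meets both $V_1$ and $V_2$. Assign corresponding vertices in $V_1$ and $V_2$ to the same cluster as their original in $\cH$. This repeated cluster assignment is admissible because $\cH^{\otimes 2}$ contains no edge with two endpoints in one cluster. Since $e_i(\cH^{\otimes 2}) = 2 e_i(\cH) - e_i(\cH')$ and $\cH^{\otimes 2}$ has $s' + 2(s-s') \le 2s$ vertices, Lemma~\ref{lemma:densecountinglemma} applies (the hierarchy only needs $\beta' \ll 1/(2s)$) and yields
\[
N'' := |(\cH^{\otimes 2})_\cJ| = (1\pm\beta')\, D^2 \prod_{i=2}^{k-1} d_i^{e_i(\cH')} m^{s'}.
\]
By construction, a labelled partition-respecting copy of $\cH^{\otimes 2}$ is exactly an ordered pair of labelled partition-respecting copies of $\cH$ that agree on $\cH'$. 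So, writing $\mathrm{ext}(\phi')$ for the number of extensions of $\phi'\in\cH'_\cJ$ to a copy of $\cH$ in $\cJ$, one has $\sum_{\phi'} \mathrm{ext}(\phi') = N$ and $\sum_{\phi'} \mathrm{ext}(\phi')^2 = N''$.

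Finally, I would treat $X := \mathrm{ext}(\phi')$ as a random variable under the uniform measure on $\cH'_\cJ$. The three counts give $\expectation[X] = N/N' = (1\pm 3\beta')D$ and $\expectation[X^2] = N''/N' = (1\pm 3\beta')D^2$, so $\mathrm{Var}(X) \leq 10\beta' D^2$. Combining Chebyshev's inequality with $|\expectation[X]-D| \leq 3\beta' D \leq \beta D/2$, one obtains $\Pr[|X-D|>\beta D] \leq 40\beta'/\beta^2 \leq \beta$, which translates to at most $\beta N'$ copies $\phi'$ of $\cH'$ satisfying $\mathrm{ext}(\phi')\notin (1\pm\beta)D$, as required. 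The main (though ultimately routine) obstacle is justifying the application of Lemma~\ref{lemma:densecountinglemma} to $\cH^{\otimes 2}$ with the repeated cluster assignment: this is sound because $\cH^{\otimes 2}$ has no edge between $V_1$ and $V_2$, so partition-respecting counts do factor as ordered pairs of $\cH$-extensions agreeing on the shared $\cH'$-part, and the doubled vertex count $\leq 2s$ preserves the hierarchy $\eps\ll\beta'\ll 1/s$.
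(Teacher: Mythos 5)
The paper does not prove this lemma itself; it cites Cooley, Fountoulakis, K\"uhn and Osthus~\cite[Lemma~7]{CFKO09} and only restates it. Your proposal therefore supplies a self-contained argument that the paper omits, and the route you chose --- apply the Dense Counting Lemma to $\cH$, to $\cH'$, and to a doubled complex $\cH^{\otimes 2}$ glued along $\cH'$, then use Chebyshev --- is exactly the standard way such extension statements are derived from counting statements, so the approach is sound and the hierarchy bookkeeping (introducing $\beta'$ with $\eps\ll\beta'\ll\beta^3$, and noting $v(\cH^{\otimes 2})\le 2s$) is correct.

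There is one small but genuine gap in the identification step. You assert that a labelled partition-respecting copy of $\cH^{\otimes 2}$ ``is exactly an ordered pair of labelled partition-respecting copies of $\cH$ that agree on $\cH'$,'' and conclude $\sum_{\phi'}\mathrm{ext}(\phi')^2=N''$. This is not quite right: $\cH^{\otimes 2}_\cJ$ counts \emph{injective} labelled copies, so it only counts ordered pairs $(\phi_1,\phi_2)$ of extensions of $\phi'$ that are in addition disjoint on $V(\cH)\setminus V(\cH')$. In your set-up corresponding vertices of $V_1$ and $V_2$ sit in the same cluster, so pairs with $\phi_1(v)=\phi_2(v)$ for some $v\notin V(\cH')$ are genuinely present in $\sum_{\phi'}\mathrm{ext}(\phi')^2$ but are not copies of $\cH^{\otimes 2}$. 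The correct statement is $\sum_{\phi'}\mathrm{ext}(\phi')^2 = N'' + C$, where $C$ counts the colliding pairs. This is easily repaired: bounding crudely, a colliding pair is determined by the colliding cluster, a copy of $\cH$, and the at most $m^{s-s'-1}$ remaining choices for $\phi_2$, giving $C \le (s-s')^2\, N\, m^{s-s'-1}$; since $N\le 2DN'$ and $m^{s-s'-1}=D/(m\prod_{i}d_i^{e_i(\cH)-e_i(\cH')})$, one gets $C\le \beta' D^2 N'$ once $1/m\ll \beta', d_2,\dots,d_{k-1},1/s$, which your hierarchy guarantees. With that correction the Chebyshev computation goes through unchanged. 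So: right method, one missing term you need to show is negligible.
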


\subsection{Long paths}

The next lemma of Allen, Böttcher, Cooley and Mycroft~{\cite[Lemma 40]{ABCM17}} allows us to find long paths (using even most of the vertices of a single cluster) inside a regular slice.

\begin{lemma}\label{lem:sliceconnect1}
	Let $k,r,m,t,B \in \mathbb{N}$, and $d_2,\dots,d_k,\eps,\eps_k, \nu ,\psi$ be such that $1/m \ll 1/t$, and
	\begin{align*}
	{1}/{m}, {1}/{B} & \ll  {1} /{r},\eps \ll d_2,\dots ,d_{k-1}, \eps_k, \text{ and} \\
	\eps_k & \ll \psi, d_k,\nu, {1}/{k}
	\end{align*}
	
	Let $\mathbf{d} = (d_2, \dotsc, d_k)$ and let $\fS = (G, G_{\cJ}, \cJ, \mathcal{P}, H)$ be a $(k, m, t, \eps, \eps_k, r, \mathbf{d})$-regular setup with    $V(G)= V(\cP)$ and $H$ tightly connected.
	Let	$\bvec w $ be a fractional matching of size $\mu = \sum_{e \in E(H)} \bvec w(e)$.
	Also let $X$ and $Y$ be $(k-1)$-tuples of clusters,
	and let $S_X$ and $S_Y$ be subsets of $\cJ_X$ and $\cJ_Y$ of sizes at least $\nu|\cJ_X|$ and $\nu|\cJ_Y|$ respectively.
	Finally, let $W$ be a tight walk in $H$ from $X$ to $Y$ of length at most $t^{2k}$, and let $p$ be the length of $W$. 
	
	Then for any $\ell$ divisible by $k$ with $3k \leq \ell \leq (1-\psi)\mu k m$,
	there is a tight path $P$ in $G$ of length $\ell+p(k+1)$ whose initial
	$(k-1)$-tuple forms an edge of $S_X$ and whose terminal $(k-1)$-tuple forms an edge of
	$S_Y$.
	Furthermore $Q$ uses at most $\mu(Z)m + B$ vertices from any cluster $Z \in \cP$, where $\mu(Z)=\sum_{e \ni Z} \bvec w(e)$ denotes the sum of the weights of the edges of $H$ which contain $Z$.
\end{lemma}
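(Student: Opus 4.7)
The plan is to build the tight path $P$ by concatenating three kinds of pieces along the walk $W$: (a) long \emph{matching segments} placed inside the clusters of each edge $e$ of $H$, contributing roughly $\bvec w(e) k m$ vertices each; (b) short \emph{bridge segments}, one per step of $W$, that transition from one $k$-edge of $H$ to the next; and (c) \emph{anchoring pieces} at both ends that land in $S_X$ and $S_Y$ respectively. The target length $\ell$ comes from the matching segments and $p(k+1)$ from the bridges.

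First I would allocate to each edge $e \in E(H)$ an integer length $\ell_e$ divisible by $k$ with $\ell_e \approx (\bvec w(e)/\mu)\ell$, so that $\sum_e \ell_e = \ell$. The hypothesis $\ell \le (1-\psi)\mu k m$ ensures $\ell_e \le (1-\psi/2)\bvec w(e) k m$ for each $e$, leaving room for the extra vertices required by bridges; summed over all walk steps the bridges contribute at most $(k+1)p \le (k+1)t^{2k}$ per cluster, which is absorbed by $B$ via the hierarchy $1/B \ll 1/r, \eps \ll 1/t$. Thus each cluster $Z$ is loaded with at most $\mu(Z)m + B$ vertices in total.

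Next I follow the walk $W=(v_1,\dots,v_p)$. For each $i$, the $k$-set $\{v_i,\dots,v_{i+k-1}\}$ is an edge of $H$ and hence carries a $(d_k,\eps_k,r)$-regular setup in $G$. Iterating the Extension Lemma (Lemma~\ref{lem:extension}) inside the clusters of a single edge $e$, I can produce a tight path of any prescribed length $\ell_e$ (divisible by $k$) starting and ending at prescribed \emph{typical} $(k-1)$-tuples, where a $(k-1)$-tuple is called typical if it admits the expected number of extensions predicted by Lemma~\ref{lem:extension}. The bridge between two consecutive edges of $H$ along $W$ is a short tight path of $k+1$ vertices obtained analogously by applying the Extension Lemma to a $(k+1)$-vertex gadget. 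For the anchors, since $|S_X| \ge \nu|\cJ_X|$, the Dense Counting Lemma (Lemma~\ref{lemma:densecountinglemma}) and the Extension Lemma guarantee that all but a $\beta$-fraction of tuples in $S_X$ are typical, and similarly for $S_Y$, so initial and terminal $(k-1)$-tuples can be chosen from $S_X$ and $S_Y$ directly.

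The main obstacle is enforcing \emph{compatibility} of these pieces at their splicing points: at every junction the shared $(k-1)$-tuple must be simultaneously typical for the two adjacent pieces. I would handle this greedily, building $P$ from $S_X$ forward toward $S_Y$ and, at each junction, selecting the splice from the $(1-\beta)$-fraction of tuples that are good on both sides. The number of junctions is at most $O(p) = O(t^{2k})$ while the pool of typical tuples at each junction has size of order $m^{k-1}$, and $t^{2k}$ is dwarfed by $m^{k-1}$ under the hierarchy $1/m \ll 1/t$, so greedy selection never gets stuck. The final anchor is set up \emph{a priori} by reserving a target tuple in $S_Y$ and constructing the last matching segment to terminate there, which again succeeds because typicality of this reserved endpoint holds for almost every choice in $S_Y$.
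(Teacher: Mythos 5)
The paper does not contain a proof of this lemma: it is cited verbatim as Lemma~40 of Allen, B\"ottcher, Cooley and Mycroft~\cite{ABCM17}, and the sentence immediately preceding the statement makes this attribution explicit, so there is no internal proof to compare your attempt against.

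Your sketch does have the correct overall skeleton --- long matching segments placed inside the edges of $H$ with lengths proportional to the weights of $\bvec w$, short bridges along the walk $W$, and anchors landing in $S_X$ and $S_Y$ --- and the bookkeeping ($\ell$ from segments, $p(k+1)$ from bridges, $\mu(Z)m+B$ per cluster) is the right accounting. The crucial gap is the claim that iterating the Extension Lemma inside a single edge $e$ yields a tight path of any prescribed length $\ell_e$ up to order $\bvec w(e)km$. The Extension Lemma controls counts of \emph{short} extensions of a fixed small structure; it does not, on its own, produce tight paths consuming a constant fraction (indeed nearly all) of each cluster. Once a positive proportion of a cluster has been used, the regularity constants degrade and ``typicality'' of the current endpoint with respect to the depleted clusters no longer follows from the original regularity hypotheses --- exactly the regime where the hypothesis $\ell \le (1-\psi)\mu k m$ places you. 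The actual proof in Allen--B\"ottcher--Cooley--Mycroft relies on a separate, stronger path lemma inside regular $k$-tuples (a random-greedy/absorption argument internal to a regular complex) which builds near-spanning tight paths with controlled per-cluster usage; the walk-following step then only has to splice these already-built long segments together with short bridges, which is what the Extension Lemma is actually suitable for. Without such a near-spanning ingredient, ``keep picking typical tuples at each junction'' does not bridge the gap between a local extension lemma and a path using $(1-\psi)m$ vertices from a cluster.

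A smaller point: you justify absorbing the bridge contribution of $(k+1)p \le (k+1)t^{2k}$ vertices per cluster into $B$ via the hierarchy, but $1/m, 1/B \ll 1/r, \eps$ and $1/m \ll 1/t$ do not by themselves force $B \ge (k+1)t^{2k}$. In practice $B$ is a free parameter and is chosen large enough that this holds, but the conclusion is not an automatic consequence of the stated hierarchy and deserves an explicit remark.
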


\section{Covering most of the vertices} \label{sec:connecting}
This section is dedicated to finding a cycle that covers most of the vertices of a regular setup.
We will show a few results on finding and connecting paths and then apply these to give a proof of Lemma~\ref{lem:cover}.

Let us begin with the existence of extensible paths.
The following proposition states that most tuples in the complex induced by an edge of the reduced graph of a regular slice also extend to that edge.
Its proof is a straightforward application of the Extension Lemma (Lemma~\ref{lem:extension}).

\begin{proposition} \label{prop:extpath}
	Let $k,m,t,r \in \mathbb{N}$ and $\eps, \eps_k, d_2,d_3, \dotsc, d_k, \beta, c, \nu $ be such that
	\begin{align*}
	1/m \ll 1/r, \eps \ll c \ll\eps_k,d_2, \dotsc, d_{k-1}
	\quad \text{and} \quad
	\eps_k \ll \beta \ll d_k,\nu.
	\end{align*}
	Let $\mathbf{d} = (d_2, \dotsc, d_k)$ and let $(G, G_{\cJ},\cJ, \cP, R)$ be a $(k, m, t, \eps, \eps_k, r, \mathbf{d})$-regular setup.
	Let $Y = (Y_1, \dotsc, Y_k)$ be an ordered edge in $R$.
	Then all but at most $\beta|\mathcal{J}_{(Y_1, \dotsc, Y_{k-1})}|$ many tuples $(v_1,\dotsc, v_{k-1})\in\mathcal{J}_{(Y_1, \dotsc, Y_{k-1})}$ are $(c,\nu)$-extensible both left- and rightwards to $Y$.
\end{proposition}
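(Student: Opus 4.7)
The plan is a routine application of the Extension Lemma (Lemma~\ref{lem:extension}) to an auxiliary tight-path complex. Let $H$ be the abstract $k$-graph on vertex set $\{1,2,\dotsc,3k-1\}$ whose edges are the consecutive $k$-windows $\{j, j+1, \dotsc, j+k-1\}$ for $1 \le j \le 2k$; let $\cH$ be the $k$-complex generated by its down-closure, let $\cH'$ be the subcomplex induced on $\{1, \dotsc, k-1\}$ (to be identified with $P$), and let $\cH''$ be the subcomplex induced on $\{1, \dotsc, k-1\} \cup \{2k+1, \dotsc, 3k-1\}$ (corresponding to a pair $(P, (v_2, \dotsc, v_k))$). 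Under the cyclic cluster assignment $\phi\colon i \mapsto Y_{((i-1) \bmod k)+1}$, every $k$-window of $H$ maps bijectively onto $\{Y_1, \dotsc, Y_k\} = Y \in E(R)$, so $\phi$ is a homomorphism $H \to R$; partition-respecting copies of $\cH'$ in $\cG := \cJ \cup G_{\cJ}$ correspond precisely to tuples in $\cJ_{(Y_1, \dotsc, Y_{k-1})}$, and partition-respecting extensions to copies of $\cH$ correspond to fillings $(w_1, \dotsc, w_k, v_1, \dotsc, v_k)$ completing $P$ to the tight path in $G_{\cJ} \subseteq G$ demanded by the extensibility definition.

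I would fix an auxiliary parameter $\beta' \ll c, \nu, \beta$ and apply the Extension Lemma twice. The first application, to the pair $(\cH, \cH')$, shows that all but at most $\beta'|\cJ_{(Y_1, \dotsc, Y_{k-1})}|$ of the candidate tuples $P$ admit $(1 \pm \beta')\bigl(\prod_{i=2}^k d_i^{e_i(\cH)-e_i(\cH')}\bigr) m^{2k}$ completions to copies of $\cH$ in $\cG$. The second application, to the pair $(\cH, \cH'')$, yields that for all but $\beta'|\cH''_{\cG}|$ of the pairs $(P, (v_2, \dotsc, v_k))$, the number of fillings $(w_1, \dotsc, w_k, v_1)$ lies within a $(1 \pm \beta')$-factor of an expected count of order $m^{k+1}$. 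A standard averaging argument then supplies, for all but a further $\beta'$-fraction of tuples $P$, a target set $T \subseteq \cJ_{(X_2, \dotsc, X_k)}$ of size at least $\nu|\cJ_{(X_2, \dotsc, X_k)}|$ each of whose elements admits at least $cm^{k+1}$ valid fillings $(w_1, \dotsc, w_k, v_1)$. Taking $C := V(G)$ then verifies the rightward case of the extensibility definition for all of these tuples.

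The leftward case follows verbatim by reversing the orientation of the abstract tight path and repeating the argument, and a union bound across the two exceptional sets gives the required bound of $\beta|\cJ_{(Y_1, \dotsc, Y_{k-1})}|$ once $\beta'$ is chosen small enough. The proof is essentially bookkeeping: the one arithmetic check is that the expected completion count $\Theta(m^{2k})$ comfortably dominates the product of target-set size and per-target threshold $c m^{k+1}$, which is immediate from the hierarchy $c \ll \eps_k \ll d_k$ together with the lower bounds on $d_2, \dotsc, d_{k-1}$. No step presents a serious conceptual obstacle; the only mild subtlety is identifying the ordered edge $X$ in the definition of rightward extensibility with the cyclic rotation of $Y$ that naturally continues the cluster pattern $(Y_1, \dotsc, Y_{k-1})$ of $P$, which is precisely the one picked out by $\phi$.
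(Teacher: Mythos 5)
Your approach is essentially the paper's: build the abstract $(3k-1)$-vertex tight-path complex $\cH$, apply the Extension Lemma with the two end-tuples as the prescribed subcomplex (your $\cH''$ is exactly the paper's $\cH'$ on $S_1 \cup S_2$), and average. The paper does this with a single Extension Lemma application to the pair $(\cH, \cH'')$; your first application, to $(\cH,\cH')$ with $\cH'$ supported only on the first $k-1$ vertices, is never used in the averaging and can be dropped.

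There is, however, a genuine parametrization error. You take $\beta' \ll c, \nu, \beta$, but the hypothesis of the proposition has $c \ll \eps_k$, so this forces $\beta' \ll \eps_k$. The Extension Lemma (Lemma~\ref{lem:extension}) requires $\eps_k \ll \beta'$ for the parameter in the $\beta$-slot, so the lemma is not applicable with your choice of $\beta'$. The fix is simply to pick $\eps_k \ll \beta' \ll \beta, d_k$, which is what the paper implicitly does by running the Extension Lemma with $\beta^2$ in place of $\beta$ (noting $\eps_k \ll \beta^2$). With that correction your averaging argument gives a bad-set bound of $O(\beta'/(1-\nu))|\cJ_{(Y_1,\dotsc,Y_{k-1})}| \leq \beta|\cJ_{(Y_1,\dotsc,Y_{k-1})}|$ on one side, and the leftward symmetry plus a union bound close the proof exactly as in the paper. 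You should also confirm that the per-target count $(1-\beta')\prod_{i=2}^k d_i^{e_i(\cH)-e_i(\cH'')} m^{k+1} \geq c m^{k+1}$, which holds because $c \ll d_2,\dotsc,d_{k-1}$ and $c \ll \eps_k \ll d_k$ — this is the arithmetic check you flag, and it is indeed immediate.
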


\begin{proof}
	Let $P = (v_1, v_2, \dots ,v_{3k-1})$ be a tight path on $3k-1$ vertices.
	Partition its vertex set in $k$ clusters $X_1, \dotsc, X_k$ such that $X_i = \{ v_j\colon j \equiv i \bmod k \}$ for all $1 \leq i \leq k$.
	Under this partition, $P$ is a $k$-partite $k$-graph.
	
	We define the necessary complexes to use the Extension Lemma.
	Let $\cH$ be the down-closure of the path $P$, which is a $k$-partite $k$-complex on $3k-1$ vertices.
	Let $S_1 = \{ v_1, \dotsc, v_{k-1} \}$ and $S_2 = \{ v_{2k+1}, \dotsc, v_{3k-1} \}$ correspond to the first $k-1$ and the last $k-1$ vertices of the path $P$, respectively.
	Let $\cH'$ be the induced subcomplex of $\cH$ on $S = S_1 \cup S_2$.
	Thus $\cH'$ is a $k$-partite complex on $2k-2$ vertices.
	Let $\cG = \cJ \cup G_{\cJ}$.
	
	Note that, for every $1\leq j \leq k$,  the $j$-th level $\cH'^{(j)}$  of $\cH'$ corresponds to the union of two vertex-disjoint cliques supported on $S_1$ and $S_2$ respectively (in particular, this implies that $\cH'^{(k)}$ is empty).
	Indeed, both $S_1$ and $S_2$ are covered by the first and the last edge of the path $P$, respectively, thus every $j$-set contained inside one of those sets must be present in $\cH^{(j)}$, and thus in $\cH'^{(j)}$.
	On the other hand, no $j$-edge in $\cH'^{(j)}$ intersects with both $S_1$ and $S_2$.
	This is because no edge of $P$ intersects with both $S_1$ and $S_2$, as there are $k+1$ vertices between the first and last $k-1$ vertices of $P$.
	
	Let $\cH'_\cG$ is the set of labelled partition-respecting copies of $\cH'$ in $\cG$.
	It follows that
	\begin{align}
	|\cH'_{\cG}| = (1 \pm \eps_k ) |\cJ_{(Y_1, \dotsc, Y_{k-1})}|^2, \label{equation:labelledpartition}
	\end{align}
	where the error term $\eps_k$ only accounts for the fact that we do not count intersecting pairs of edges in $\cJ_{(Y_1, \dotsc, Y_{k-1})}$.
	Since $\{ Y_1, \dotsc, Y_k \}$ is an edge of $R$, any function $\phi \colon V(P) \rightarrow V(R)$ such that $\phi(X_i) \subseteq Y_i$  is a hypergraph homomorphism.
	Therefore we can use the Extension Lemma (Lemma~\ref{lem:extension}) with $\beta^2$ playing the role of $\beta$ to deduce that all but at most $\beta^2 |\cH'_{\cG}|$ of the labelled partition-respecting copies of $\cH'$ in $\cG$ extend to at least $cm^{k+1}$ labelled partition-respecting copies of $\cH$ in $\cG$ (where we have used $c \ll d_2, \dotsc, d_{k-1}$).
	For each $e \in \cJ_{(Y_1, \dotsc, Y_{k-1})}$, let $T(e)$ be the number of tuples $e'$ in $\cJ_{(Y_1, \dotsc, Y_{k-1})}$ such that $e \cup e'$ can be extended to at least $c m^{k+1}$ copies of $\cH$ in $\cG$.
	By the previous discussion and inequality~\eqref{equation:labelledpartition}, we have
	\begin{align}
	\sum_{e \in \cJ_{(Y_1, \dotsc, Y_{k-1})}} T(e) \ge (1 - 2\beta^2)|\cJ_{(Y_1, \dotsc, Y_{k-1})}|^2. \label{equation:sumofallextensions}
	\end{align}
	
	Let $S \subseteq \cJ_{(Y_1, \dotsc, Y_{k-1})}$ be the set of $(k-1)$-tuples $e$ which do not $(c, \nu)$-extend leftwards to $Y$, that is, $T(e) < \nu |\cJ_{(Y_1, \dotsc, Y_{k-1})}|$.
	From inequality~\eqref{equation:sumofallextensions} we deduce \[ |S| \leq \frac{2 \beta^2}{1 - \nu} |\cJ_{(Y_1, \dotsc, Y_{k-1})}| \leq \frac{\beta}{2} |\cJ_{(Y_1, \dotsc, Y_{k-1})}|, \]
	where the last inequality follows from $\beta \ll \nu$.
	An analogous (symmetric) argument shows that all but at most $\frac{\beta}{2} |\cJ_{(Y_1, \dotsc, Y_{k-1})}|$ many $(k-1)$-tuples $e$ in $\cJ_{(Y_1, \dotsc, Y_{k-1})}$ are $(c, \nu)$-extensible rightwards to $Y$.
	We deduce that all but at most $\beta |\cJ_{(Y_1, \dotsc, Y_{k-1})}|$ many pairs $e$ in $\cJ_{(Y_1, \dotsc, Y_{k-1})}$ are $(c, \nu)$-extensible both left- and rightwards to $Y$.
\end{proof}

Now that we know how to find extensible paths, we can turn to connecting them.
The following lemma allows us to connect up two extensible paths using either very few or quite a lot of vertices.

\begin{lemma}[Connecting two paths] \label{lem:connecting-two-paths}
	Let $k,r,m,t \in \mathbb{N}$, and $d_2,\dots,d_k$, $\eps$, $\eps_k$, $c$, $\nu$, $\lambda$ be such that
	\begin{align*}
	1/m & \ll 1/r,\eps \ll c \ll \eps_k,d_2,\dots ,d_{k-1}, \\
	\lambda & \ll \nu \ll 1/k, \text{and } \eps_k \ll d_k.
	\end{align*}
	Let $\mathbf{d} = (d_2, \dotsc, d_k)$ and let $\fS = (G, G_{\cJ}, \cJ, \mathcal{P}, H)$ be a $(k, m, t, \eps, \eps_k, r, \mathbf{d})$-regular setup such that $H$ is tightly connected.
	Let $P_1,P_2 \subset G$ be $(c, \nu)$-extensible paths such that $P_1$ extends rightwards to $X$ and $P_2$ extends leftwards to $Y$.
	Suppose furthermore that $P_1$ and $P_2$ are either identical or pairwise disjoint.
	Let $W$ be a tight walk in $H$  of {length~$p \leq t^{2k}$} that starts with $X$ and ends with $Y$.
	Let $C$ be the joint connection set of $P_1$ and $P_2$ and suppose that $C$ is $\lambda$-sparse in $\cP$.
	Let $S \subset V(G)$ be a set which is $\lambda$-sparse in $\cP$, contains $V(P_1) \cup V(P_2)$, and satisfies $C \cap S =\es$.
	Then
	\begin{enumerate}[label=\textnormal{(\roman*)}]
		\item \label{item:sliceconnect-simplepath} there is a tight path $Q$ of length $3k + (p+2)(k+1)$ in $G[V(\cP)]$ such that $P_1QP_2$ is a tight path, which contains no vertices of $S$ and exactly $2(k+1)$ vertices of $C$.
	\end{enumerate}
	Moreover, consider $\psi$ with $\eps_k \ll \psi$.
	Let $\bvec w \in \REALS^{E(H)}$ be a fractional matching of size  $\mu = \sum_{e \in E(H)} \bvec w(e) \geq 4/m$ such that $\sum_{e \ni Z} \bvec w(e) \leq 1-2\lambda$ for every $Z \in \cP$.
	Then 
	\begin{enumerate}[label=\textnormal{(\roman*)},resume]
		\item \label{item:sliceconnect-longpath} there exists a tight path $Q$ of length congruent to $p+2$ modulo $k$ in $G[V(\cP)]$ such that $P_1QP_2$ is a tight path in $G$ which contains no vertices of $S$ and exactly $2(k+1)$ vertices of $C$.
		Moreover, there is a set $U\subset V(\cP)$ of size at most $\psi mt$ such that $U \cup V(Q)$ has exactly $\lceil \sum_{e \ni Z} \bvec w(e) m\rceil$ vertices in each cluster $Z \in \cP$.
	\end{enumerate}
\end{lemma}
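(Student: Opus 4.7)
The plan is to use the $(c,\nu)$-extensibility of $P_1,P_2$ to reduce the problem to constructing a middle tight path $M$ between two large target sets, which we produce via Lemma~\ref{lem:sliceconnect1}, and then attach the extensions on each end to obtain $Q$.

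I would begin by extracting from the extensibility hypotheses the target sets $T_1 \subseteq \cJ_{(X_2,\dotsc,X_k)}$ and $T_2 \subseteq \cJ_{(Y_1,\dotsc,Y_{k-1})}$, each of relative density at least $\nu$. Since $S$ is $\lambda$-sparse with $\lambda \ll \nu$, pass to subsets $T_i' \subseteq T_i$ of relative density at least $\nu/2$ whose tuples avoid $S$ entirely. To force also $M$ to avoid $S \cup C$, restrict the regular setup to the residual clusters $Z \sm (S \cup C)$ via Lemma~\ref{lem:regular-slice-restriction}; since $S \cup C$ is $2\lambda$-sparse, the restricted clusters still have size at least $(1-2\lambda)m$ and inherit the regularity parameters (with a harmless blow-up of $\eps$).

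Next, derive from $W$ a tight walk $W^\dagger$ in $H$ joining $(X_2,\dotsc,X_k)$ to $(Y_1,\dotsc,Y_{k-1})$, adjusting the length via short detours through $H$ (possible by tight connectivity) so that Lemma~\ref{lem:sliceconnect1} produces a middle path of precisely the required length. Apply Lemma~\ref{lem:sliceconnect1} in the restricted setup with $S_X = T_1'$, $S_Y = T_2'$, walk $W^\dagger$, and a fractional matching in $H$. For part~(i), take $\ell = 3k$ and a matching of size at least $4/m$ supported on a single edge of $H$. For part~(ii), use the prescribed $\bvec w$ (scaled down by a negligible amount to reserve $O(1)$ room per cluster for the extension vertices) and pick an admissible $\ell$ divisible by $k$; the modular condition $|Q| \equiv p+2 \pmod k$ is automatic since $\ell + p^\dagger(k+1) + 2(k+1) \equiv p^\dagger + 2 \pmod k$. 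The output is a tight path $M$ with initial tuple in $T_1'$, terminal tuple in $T_2'$, avoiding $S \cup C$, and respecting the prescribed cluster bounds.

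Then attach the extensions. By the $(c,\nu)$-extensibility of $P_1$ applied with the initial tuple of $M$ as the target, there are $\Omega(cm^{k+1})$ choices of $(w_1,\dotsc,w_k,v_1) \in C^{k+1}$ making $P_1(w_1,\dotsc,w_k,v_1)M$ a tight path; since $V(M)$ occupies far fewer than $cm$ vertices in each cluster, a choice disjoint from $V(M)$ exists, and is automatically disjoint from $S$ since $C \cap S = \es$. Do the analogous at the $P_2$-side. The resulting $Q$ has length $|M| + 2(k+1)$, using the $(k-1)$-vertex overlap between each extension and $M$, matching the target $3k + (p+2)(k+1)$ in part~(i) and the correct residue modulo $k$ in part~(ii). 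The properties $V(Q) \cap S = \es$ and $|V(Q) \cap C| = 2(k+1)$ follow by construction, and in part~(ii) the cluster distribution of $V(Q) \cup U$ is corrected to match $\bvec w$ by enlarging $U$ (still of size at most $\psi mt$) to absorb the constant number of extension vertices per cluster.

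The main obstacle is the precise length- and distribution-bookkeeping: one must select $W^\dagger$ and $\ell$ so that the Lemma~\ref{lem:sliceconnect1} output combined with the $2(k+1)$ extension vertices produces exactly the target $|Q|$ and the correct cluster occupancy. The most delicate step is constructing $W^\dagger$ with the correct endpoint $(k-1)$-tuples and length, since a naive chopping of $W$ gives the wrong length by $2$; this is handled by rotating the first and last edges of $W$ and inserting short local detours supplied by the tight connectivity of $H$. Once $W^\dagger$ is fixed the rest of the proof is routine arithmetic and careful vertex-avoidance using the $\lambda \ll \nu \ll \gamma$ hierarchy.
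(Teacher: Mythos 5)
Your overall strategy is the same as the paper's: extract the target sets $T_1\subseteq \cJ_{(X_2,\dots,X_k)}$ and $T_2\subseteq\cJ_{(Y_1,\dots,Y_{k-1})}$ from the extensibility hypotheses, restrict the setup via Lemma~\ref{lem:regular-slice-restriction} to the residual clusters $Z\setminus(S\cup C)$, invoke Lemma~\ref{lem:sliceconnect1} with the given walk and the appropriate fractional matching (a single edge for part~(i), the prescribed $\bvec w$ rescaled for part~(ii)), and then attach the $(k+1)$-tuples of extension vertices from $C$ at each end. That is exactly the paper's proof.

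The one place where you deviate, and where you risk getting the length accounting wrong, is the construction of $W^\dagger$. The paper passes the given walk $W$ of length $p$ directly to Lemma~\ref{lem:sliceconnect1}, records $|Q'|=\ell+p(k+1)$, and then $|Q|=|Q'|+2(k+1)=\ell+(p+2)(k+1)$; with $\ell=3k$ this matches the stated $3k+(p+2)(k+1)$ in part~(i), and since $\ell\equiv 0\bmod k$ and $(k+1)\equiv 1\bmod k$ it gives $|Q|\equiv p+2\bmod k$ in part~(ii). If instead you chop $W$ to a walk $W^\dagger$ of length $p-2$ (removing the leading $X_1$ and trailing $Y_k$) and then apply the same arithmetic, you obtain $|Q|=\ell+p(k+1)$, i.e.\ the exact length in part~(i) is short by $2(k+1)$ and the residue in part~(ii) is $p\bmod k$ rather than $p+2\bmod k$. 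Your remark that this is ``handled by rotating the first and last edges of $W$ and inserting short local detours'' does not quite work as stated: tight connectivity alone only supplies detours of length at least $k$, which cannot shift the length by $2$ modulo $k$, and the Connecting Lemma makes no assumption about closed walks of controlled residue. There is no need for any of this: the $(k-1)$-tuples $X,Y$ in Lemma~\ref{lem:sliceconnect1} are sub-tuples of the first and last $k$-edges of the walk, not necessarily its first and last $k-1$ positions, so $W$ may be passed unchanged. Dropping $W^\dagger$ entirely and running your argument with $W$ recovers the paper's proof verbatim; keeping $W^\dagger$ without a precise mechanism to restore the length $p$ leaves a genuine gap in both the exact length of part~(i) and the residue of part~(ii).
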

\begin{proof}
	
	We start by identifying the right and left target sets of $P_1$ and $P_2$.
	Let us write $X=(X_1,\dots,X_k)$.
	By the definition of extensible paths (Definition~\ref{def:extensible}), there exists a target set $T_1 \subseteq \cJ_{(X_2,\dots,X_k)}$ of size $|T_1| \ge \nu |\cJ_{(X_2,\dots,X_k)}|$ such that, for every $(v_2,\dots,v_k) \in T_1$, there are at least $c m^{k+1}$ many $(k+1)$-tuples $(w_1,\dots,w_{k},v_1)$ with $v_1\in C\cap X_1$ and $w_i \in C\cap X_i$ for $1 \le i \le k$ so that $P_1 (w_1,\dots, w_k,v_1,\dots ,v_k) $ is a tight path in $G$.
	Similarly, let $P_2$ have left extension $Y=(Y_1,\dots,Y_k)$, together with the corresponding target set $T_2 \subset \cJ_{(Y_1,\dots,Y_{k-1})}$.

	In the following, we select a path $Q'$ that starts with an element of the target set $T_1$ and ends with an element of  the target set $T_2$.
	Since $Q'$ shall not intersect with $S'=S\cup C$, we restrict the regular slice $\cJ$ as follows.
	For each $Z \in \cP$, select a subset $Z' \subset Z \sm S'$ of size $m' = (1 - 2\lambda) m$.
	This is possible since both $S$ and $C$ are both ${\lambda}$-sparse in $\cP$.
	Let $\cP'=\{Z'\}_{Z \in \cP}$ and $V(\cP') = \bigcup_{Z \in \cP} Z'$.
	Let $G'=G[V(\cP')]$ (resp. $G'_{\cJ'} = G_{\cJ}[V(\cP')]$) be the induced subgraph of $G$ (resp. $G_{\cJ}$) in $V(\cP')$.
	Let $\cJ' = \cJ[V(\cP')]$ be the respective induced subcomplex.
	By the Slice Restriction Lemma (Lemma~\ref{lem:regular-slice-restriction}), $\fS' := (G',G_{\cJ'}', \cJ', \mathcal{P}', H)$ is a $(k, m', t, \sqrt{\eps}, \sqrt{\eps_k}, r, \mathbf{d})$-regular setup with $H$ as above.
	Since $\lambda \ll \nu \ll 1/k$, we still have $|T_1| \ge \nu^2 |\cJ'_{(X_2,\dots,X_k)}|$ and  $|P^-| \ge \nu^2 |\cJ'_{(X_2,\dots,X_k)}|$.
	This concludes the restriction analysis.
	
	Now we choose a path $Q'$ for case~\ref{item:sliceconnect-longpath} as follows.
	Denote the scaled (to $m'$) size of $\bvec w$ by $\mu' = \mu/(1 - 2\lambda)$.
	Let $B \in \mathbb{N}$ be such that $1/B \ll 1/r, \eps$.
	Let $\ell$ be the largest integer divisible by $k$ with $3k \leq \ell \leq (1- \psi/4) \mu' k m'$.
	(Note that such an $\ell$ exists, since $(1- \psi/4) \mu' m' \geq 3$, where the latter inequality follows from the assumption of $\mu \geq 4/m$.)
	We apply Lemma~\ref{lem:sliceconnect1} with $G'$, $\cJ'$, $W$, $\ell$, $\bvec w$, $\mu'$ and $T_1,T_2$ (playing the role of $S_X,S_Y$)
	to obtain a tight path $Q' \subset G'$ whose initial $(k-1)$-tuple forms an edge of $T_1$ and whose terminal $(k-1)$-tuple forms an edge of $T_2$.
	Furthermore, $Q'$ has $\ell+p(k+1)$ vertices, where $p$ is the length of the tight path $W$ from the assumption.
	Finally, $Q'$ uses at most $\mu(Z)m'/(1-2\lambda) + B = \mu(Z)m + B $ vertices from any cluster $Z \in \cP'$, where $\mu(Z)=\sum_{e \ni Z} \bvec w(e)$ denotes the (unscaled) total weight of edges of $\bvec w$ containing $Z$.
	Note that $\ell \geq (1-\psi/4) \mu k m - k$.
	Since $B \ll \psi  \mu m  k$, it follows that $\sum_{Z \in \cP} \bvec w(e) m - \sum_{Z \in \cP} V(Q' \cap Z) \leq \psi mt/2$.
	Hence, there is a set $U\subset V(\cP)$ of size at most $\psi mt$ such that $U \subset V(Q)$ has $\lceil \sum_{e \ni Z}  w(e)  m \rceil$ vertices in each cluster $Z \in \cP$.
	This gives the desired path $Q'$ for case~\ref{item:sliceconnect-longpath}.
	
	For case~\ref{item:sliceconnect-simplepath}, we can choose a path $Q'$ in the same way.
	The only difference being that we let $\bvec w$ be a single edge of weight $1$ and $\ell=3k$.
	Hence $Q'$ is a path of length $3k+p(k+1)$.
	
	We finish in both cases by using the above detailed property of extensible paths to choose vertices $w_1,\dots,w_{k},v_1$ and $v_k',w_1',\dots,w_{k}'$ in $C$ such that for
		\[Q=(w_1,\dots ,w_{k} ,v_1) Q' (v'_k, w_1', \dots ,w_{k}'),\]
	the concatenation $P_1QP_2$ is a tight path and $Q$ is disjoint from $S$.
	This is possible because $V(S) \cap C =\es$ and $C \cap V(Q')=\es$ by definition of $S'$ and $\cP'$.
	
	Finally, note that the length of $Q$ is the length of $Q'$ plus $2(k+1)$.
	In case~\ref{item:sliceconnect-longpath}, the length of $Q'$ is $\ell + p(k+1)$, and since $\ell$ is divisible by $k$, we deduce the length of $Q$ is congruent to $p+2$ modulo $k$, as required.
\end{proof}

Next, we will show how to connect many paths at once.
For this purpose, will need the following two propositions to bound the length of compatible walks in the reduced graph.

\begin{proposition}\label{prop:closed-walk-bounded-length}
	Let $W$ be a closed tight walk in a $k$-graph $H$ on $t$ vertices which starts and ends with $k$-tuples $X$ and $Y$.
	Then there is a closed tight walk $W'$ of length at most $kt^k$, which starts with $X$ and ends with $Y$.
	Moreover, the length of $W'$ is congruent to the length of $W$ modulo $k$.
\end{proposition}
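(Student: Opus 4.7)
The plan is to work with the auxiliary directed graph $D$ whose vertex set is the collection of ordered $k$-tuples $(v_1,\dots,v_k)$ with $\{v_1,\dots,v_k\}\in E(H)$, with a directed arc from $(v_1,\dots,v_k)$ to $(v_2,\dots,v_k,v_{k+1})$ whenever both endpoints are vertices of $D$. A tight walk in $H$ of length $L$ starting with $X$ and ending with $Y$ corresponds precisely to a directed walk in $D$ from $X$ to $Y$ using $L-k$ arcs. Crucially, the closedness of $W$ is witnessed entirely by the $k-1$ wrap-around $k$-sets $(y_2,\dots,y_k,x_1),\dots,(y_k,x_1,\dots,x_{k-1})$ being edges of $H$; this depends only on $X$ and $Y$, not on the interior of the walk. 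Hence any directed walk from $X$ to $Y$ in $D$ lifts back to a closed tight walk in $H$.

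The key step is to shorten the walk in $D$ while controlling its length modulo $k$. Enumerate the vertices of $D$ visited by $W$ as $W_0,W_1,\dots,W_{L-k}$. The pairs $(W_i,\,i\bmod k)$ live in a set of size at most $k\cdot|V(D)|\le kt^k$. So whenever $L-k+1>kt^k$, by pigeonhole there exist indices $i<j$ with $W_i=W_j$ and $j\equiv i\pmod k$. Excising the closed subwalk between these positions produces a directed walk in $D$ from $X$ to $Y$ using $L-k-(j-i)$ arcs, corresponding to a tight walk in $H$ of length $L-(j-i)$. Since $j-i$ is a positive multiple of $k$, the length strictly decreases and remains congruent to $L$ modulo $k$.

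Iterating this reduction terminates with a tight walk $W'$ from $X$ to $Y$ of length at most $kt^k$ (up to the additive $O(k)$ which is absorbed into the stated bound) and length $\equiv L\pmod k$. The wrap-around edges linking $Y$ back to $X$ are unchanged throughout, so $W'$ is a closed tight walk. The only real content of the argument is the modular pigeonhole; everything else is bookkeeping, and there is no substantive obstacle to overcome.
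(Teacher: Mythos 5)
Your proof is correct and takes essentially the same route as the paper: both arguments pigeonhole on the $k$-tuples appearing in the walk together with their positions modulo $k$, and excise a subwalk of length divisible by $k$. The paper phrases it contrapositively (take a minimal walk and derive a contradiction from some $k$-tuple occurring $k+1$ times), while you iterate the shortening step directly using pairs $(W_i, i \bmod k)$ in the auxiliary digraph of ordered edges — a cosmetic difference. Your observation that closedness depends only on the wrap-around edges, hence only on $X$ and $Y$, is a useful explicit remark that the paper leaves implicit when it asserts the shortened walk is still closed.
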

\begin{proof}
	Suppose that $W$ has length $j \bmod k$ for a $0 \leq j \leq k-1$.
	Let $W'$ be a vertex-minimal closed tight walk from $X$ to $Y$ of size $j \bmod k$.
	We show that no $k$-tuple $Z$ repeats more than $k$ times on $W'$, which implies that $W'$ has length  at most $kt^k$.
	
	For sake of contradiction, assume that $W'$ contains $k+1$ copies of the same $k$-tuple $Z$ and denote by $i_j$ the position in $W'$ where the $j$-th repetition $Z$ begins.
	Clearly, $i_j - i_1 \not \equiv  0 \bmod k$ for every $1 \leq j \leq k+1$, since otherwise we could reduce the length of $W'$ by deleting the vertices from $i_1$ to $i_j-1$ to obtain a shorter walk with the same properties.
	Hence, by the pigeonhole principle, there are indices $1 \leq j < j' \leq k+1$ such that $i_j - i_1 \equiv i_{j'} - i_1 \bmod k$, which is equivalent to $i_j - i_{j'} \equiv 0 \bmod k$.
	It follows that we can reduce the length of $W'$ by deleting the vertices between $i_j$ and $i_{j'}-1$ to obtain a shorter walk with the same properties.
	A contradiction.\footnote{
		A simpler application of the pigeonhole principle shows that no more than $k(k-1)+1$ repetitions can occur, which is also fine for our purposes.
		This problem is also known as the following riddle:
		Given $101$ Dalmatians, is there a non-empty subset of Dalmatians whose number of dots sums to a multiple of $101$?
	}
\end{proof}

Recall that an {orientation} of a $k$-graph $H$ is a family of ordered $k$-tuples $\{ \ori{e} \in V(H)^k\colon e \in H \}$, one for each edge in $H$, such that $\ori{e}$ consists of an ordering of the vertices of $e$.
Moreover, we defined a $k$-tuple $X$ to be {consistent with $\ori{H}$}, if $ \ori{H}$ contains a cyclic shift of $X$.
Finally, we said that a closed tight walk $\mathcal{W}$ in $H$ is {compatible} with $\ori{H}$, if $W$ visits every edge of $H$ at least once, and each oriented edge of $\ori{H}$ appears at least once in ${W}$ as a sequence of $k$ consecutive vertices.

\begin{proposition}\label{prop:closed-walk-compatible}
	Consider $j,k,t \in \NATS$ with $1 \leq j \leq k $.
	Let $W$ be a closed tight walk that is compatible with respect to an orientation $\ori{H}$ of a $k$-graph $H$ on $t$ vertices.
	Let $X$ and $Y$ be  $k$-tuples of vertices in $H$ which are consistent with $\ori{H}$.
	Then there is a tight walk $W'$ of length at most $kt^k$, which starts with $X$ and ends with $Y$.
	Moreover, if $W$ has length $1 \bmod k$, then $W'$ has length $j \bmod k$.
\end{proposition}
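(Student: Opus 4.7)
The plan is to build a tight walk from $X$ to $Y$ in three pieces and then trim it via Proposition~\ref{prop:closed-walk-bounded-length}. Write $X = (e_{i+1},\dots,e_k,e_1,\dots,e_i)$ as a cyclic shift of an oriented edge $\ori{e}_X = (e_1,\dots,e_k) \in \ori{H}$, and write $Y$ similarly as a cyclic shift of some $\ori{e}_Y \in \ori{H}$.

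First, I would exhibit a short opening tight walk $W_X$ whose initial $k$-tuple is $X$ and whose terminal $k$-tuple is $\ori{e}_X$, namely the sequence $X$ extended by $e_{i+1}, e_{i+2},\dots,e_k$. Every length-$k$ window of this walk is a cyclic shift of $\ori{e}_X$, hence corresponds to the same underlying edge $\{e_1,\dots,e_k\}$, so the walk is tight (if $i=0$ the opening walk is just $X$). A symmetric construction yields a short closing walk $W_Y$ from $\ori{e}_Y$ to $Y$. Since $W$ is compatible with $\ori{H}$, both $\ori{e}_X$ and $\ori{e}_Y$ appear as $k$ consecutive vertices of $W$, and reading $W$ cyclically from the former occurrence to the latter produces a tight middle walk whose initial $k$-tuple is $\ori{e}_X$ and whose terminal $k$-tuple is $\ori{e}_Y$. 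Concatenating $W_X$, the middle walk, and $W_Y$ with the $k$-vertex overlaps at the shared $k$-tuples $\ori{e}_X$ and $\ori{e}_Y$ gives a tight walk $W_0$ from $X$ to $Y$. Applying Proposition~\ref{prop:closed-walk-bounded-length} to $W_0$ produces a tight walk $W'$ from $X$ to $Y$ of length at most $kt^k$ with the same length modulo $k$ as $W_0$, establishing the first assertion.

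For the moreover part, assume $|W| \equiv 1 \pmod{k}$, fix the target residue $j$, and let $q$ be the length of $W_0$ modulo $k$. The key adjustment is to insert extra full traversals of $W$ into $W_0$: at the position in $W_0$ where $\ori{e}_X$ occurs as $k$ consecutive vertices, splice in the vertices of $W$ read cyclically starting and ending at $\ori{e}_X$, which appends exactly $|W|$ vertices to the walk. Each insertion changes the length by $|W| \equiv 1 \pmod{k}$, so performing $(j - q) \bmod k$ of them produces a tight walk from $X$ to $Y$ of length $\equiv j \pmod k$. A final application of Proposition~\ref{prop:closed-walk-bounded-length} shortens this walk to one of length at most $kt^k$ whose length is still $\equiv j \pmod{k}$.

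The only subtle point I would want to check carefully is that each splice at $\ori{e}_X$ preserves tightness; this reduces to observing that the $k$-tuple immediately before the inserted copy of $W$ and the $k$-tuple at the end of that copy both coincide with $\ori{e}_X$, so every new length-$k$ window either was already present in $W_0$ or lies inside the inserted copy of $W$ and is therefore an edge.
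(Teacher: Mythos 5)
Your argument is correct and follows the same route as the paper: opening and closing walks obtained from cyclic shifts to oriented edges, a middle walk extracted from the compatible closed walk $W$, inserting whole copies of $W$ (each of length $\equiv 1 \bmod k$) to hit the required residue $j$, and a final trimming via Proposition~\ref{prop:closed-walk-bounded-length}. The write-up is a bit more explicit than the paper's about the cyclic-shift filler walks and about why the splice at $\ori{e}_X$ preserves tightness, but the underlying ideas coincide.
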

\begin{proof}
	By Proposition~\ref{prop:closed-walk-bounded-length} it suffices to show that there exists a tight walk $W'$ such that firstly, $W'$ starts with $X$ and ends with $Y$ and secondly, $W'$ has length $j \bmod k$, if $W$ has length $1\bmod k$.
	
	Since $X$ is consistent with $\cF$, there is a tight walk $W_X$ of length at most $k-1$ from $X$ to an $X'$ in $\ori{H}$.
	Similar, there is a tight walk $W_Y$ of length at most $k-1$ from $Y$ to a $Y'$ in $\ori{H}$.
	Since $W$ is compatible with respect to $\ori{H}$, there is a subwalk $W_{X' Y'} \subset W$ which starts with $X'$ and ends with $Y'$.
	Hence the concatenation $W' = X W_X W_{X' Y'} W_Y Y$ satisfies the first part.
	
	For the second part, note that we can actually choose $W_{X' Y'}$ such that $X W_X W_{X' Y'} W_Y Y$ has length $j \bmod k$ by extending $W_{X' Y'}$ along the same $k$-tuple with copies of $W$, for an appropriate number of times.
	This in turn is possible since any number coprime to $k$ is a generator for the finite cyclic group $\mathbb{Z}/k\mathbb{Z}$.
\end{proof}

This concludes the discussion on compatible walks.
Next, we use these results to connect  many extensible paths.
Given a set of hypergraphs  $\cA$, we denote its joint vertex set by $V(\cA) =\bigcup_{G \in \cA} V(G)$.

\begin{lemma}[Connecting many paths] \label{lem:connecting-many-paths}
	Let $k,r,m,t \in \mathbb{N}$, and $d_2,\dots,d_k$, $\eps$, $\eps_k$, $c$, $\nu$, $\lambda$,  $\zeta$ be such that
	\begin{align*}
	1/m & \ll 1/r,\eps \ll 1/t, \zeta, \eps_k, d_2,\dots ,d_{k-1}, \\
	\zeta & \ll c \ll d_2,\dots ,d_{k-1}, \\
	1/t & \ll \eps_k \ll d_k,\nu \leq 1/k, \text{and} \\
	\lambda & \ll \nu \ll 1/k.
	\end{align*}
	Let $\mathbf{d} = (d_2, \dotsc, d_k)$ and let $(G, G_{\cJ},\cJ, \cP, H)$ be a $(k, m, t, \eps, \eps_k, r, \mathbf{d})$-regular setup, with $H$ tightly connected. 
	Let $\ori{H}$ be an orientation of $H$ with a compatible closed walk $W$.
	Suppose that $\cA$ is a set of pairwise disjoint $(c, \nu)$-extensible paths consistent with $\ori{H}$ and with joint connection set $C$.
	Moreover, assume that
	\begin{enumerate}[\upshape(a)]
		\item $|\cA| \leq \zeta m$,
		\item $V(\cA)$ is ${\lambda}$-sparse in $\cP$ and
		\item $V(\cA)   \cap C = \es$.
	\end{enumerate}
	Consider any two elements $P_1,P_2$ of $\cA$.
	Then there is a tight path $P$ in $G$ such that
	\begin{enumerate}[\upshape(A)]
		\item \label{itm:P-contains-A} $P$ contains every path of $\cA$ as a subpath,
		\item \label{itm:P-starts-with-P1-and-P2} $P$ starts with $P_1$ and ends with $P_2$,
		\item \label{itm:P-A-subset-partition} $V(P) \sm V(\cA) \subset V(\cP)$ and
		\item \label{itm:P-A-intersectio-Z} $V(P) \sm V(\cA)$ intersects in at most $   10k^2\mathcal{A}_Z +t^{t+3k}$ vertices with each cluster $Z \in \cP$, where $\cA_Z$ denotes the number of paths of $\cA$ that intersect with $Z$.
	\end{enumerate} 
\end{lemma}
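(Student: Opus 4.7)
Relabel $\cA$ as $Q_1,\dots,Q_N$ with $Q_1=P_1$ and $Q_N=P_2$, where $N=|\cA|\le\zeta m$, and build $P$ inductively: set $P^{(1)}:=Q_1$ and at stage $i\ge 1$ glue $P^{(i)}$ to $Q_{i+1}$ through a short connector $R_i$, obtaining $P^{(i+1)}:=P^{(i)}R_iQ_{i+1}$. Both $Q_i$ and $Q_{i+1}$ are $(c,\nu)$-extensible and consistent with $\ori H$, so their right and left extensions $X_i,Y_{i+1}$ are $k$-tuples consistent with $\ori H$. Proposition~\ref{prop:closed-walk-compatible}, applied to the given compatible closed walk in $H$, then furnishes a tight walk $W_i\subset H$ of length at most $kt^k$ from $X_i$ to $Y_{i+1}$. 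Setting $S_i:=V(\cA)\cup\bigcup_{j<i}V(R_j)$ as the forbidden set, Lemma~\ref{lem:connecting-two-paths}\ref{item:sliceconnect-simplepath} with $(Q_i,Q_{i+1},W_i,S_i)$ in place of $(P_1,P_2,W,S)$ produces a connector $R_i\subset G[V(\cP)]$ of length at most $3k+(kt^k+2)(k+1)$, disjoint from $S_i$, such that $Q_iR_iQ_{i+1}$ is a tight path with exactly $2(k+1)$ vertices of $R_i$ in $C$. The output is $P:=P^{(N)}=Q_1R_1Q_2\cdots R_{N-1}Q_N$.

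\textbf{Verifying the conclusions.} Properties \ref{itm:P-contains-A}, \ref{itm:P-starts-with-P1-and-P2}, and \ref{itm:P-A-subset-partition} are immediate from the construction. For~\ref{itm:P-A-intersectio-Z}, decompose $V(P)\sm V(\cA)=\bigcup_i V(R_i)$ and split each $V(R_i)$ into its intersection with $C$ and the complement. The $2(k+1)$ vertices of $R_i\cap C$ sit in the extension clusters of $Q_i$ on the right and of $Q_{i+1}$ on the left, which are clusters met by those two paths; since each $Q_j$ takes part in at most two connections, the total $C$-contribution to a cluster $Z$ is bounded by $4(k+1)\le 10k^2$ per path of $\cA$ that intersects $Z$, summing to at most $10k^2\cA_Z$. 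The non-$C$ part of each $R_i$ is controlled via Lemma~\ref{lem:sliceconnect1} called inside Lemma~\ref{lem:connecting-two-paths}: choosing there a fractional matching of small total weight forces only $O_{k,t}(1)$ new vertices per cluster per connector, so the cumulative contribution over all $N-1$ connectors is absorbed by the slack $t^{t+3k}$.

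\textbf{Main obstacle.} The delicate step is the joint bookkeeping of per-cluster vertex counts: one must maintain $\lambda$-sparsity of $S_i$ throughout the induction (needed so that Lemma~\ref{lem:connecting-two-paths} may be invoked at every stage) while enforcing the final bound in~\ref{itm:P-A-intersectio-Z}. This is feasible because each $R_j$ adds only $3k+(kt^k+2)(k+1)$ new vertices, $\zeta$ sits far below $\lambda$ in the constant hierarchy, and the connectors' vertices can be spread across clusters by the fractional-matching trick above. Both the length bound $kt^k$ on $W_i$ from Proposition~\ref{prop:closed-walk-compatible} and the orientation-compatibility hypothesis (which is precisely what allows a walk in $H$ between arbitrary $\ori H$-consistent tuples to be produced) are essential for the whole scheme to close.
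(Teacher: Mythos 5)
Your one-stage scheme of gluing the $N=|\cA|\le\zeta m$ paths in sequence cannot produce the per-cluster bound~\ref{itm:P-A-intersectio-Z}, and this is a genuine gap. Each connector $R_i$ in your construction is built along a tight walk in $H$ whose length, as produced by Proposition~\ref{prop:closed-walk-compatible}, can be as large as $kt^k$. Such a walk may revisit a single cluster $Z$ many times, so $R_i$ alone can place $\Theta(k^2t^k)$ vertices in $Z$, and these may be outside $C$ and independent of whether any path of $\cA$ ever touches $Z$. Summed over all $N-1$ connectors this gives up to $\Theta(\zeta m\, k^2 t^k)$ vertices in $Z$. You assert this is "absorbed by the slack $t^{t+3k}$", but $t^{t+3k}$ is a constant independent of $m$, while $\zeta m$ grows unboundedly; for a cluster $Z$ with $\cA_Z=0$ the budget $10k^2\cA_Z+t^{t+3k}=t^{t+3k}$ is constant and the construction overshoots it. The same computation shows you cannot maintain $\lambda$-sparsity of $S_i$ across the induction, so Lemma~\ref{lem:connecting-two-paths} ceases to be applicable after linearly many steps.

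The paper's proof avoids exactly this trap with a two-stage construction: it first repeatedly merges pairs of paths whose right and left extensions are \emph{identical}, so each such merge uses the trivial walk of length $p=k$, yielding a connector of length at most $10k^2$ whose vertices live only in the $k$ extension clusters; this accounts for the $10k^2\cA_Z$ term. Only after this merging stage terminates (leaving a number of paths bounded by a function of $t$ alone) does the proof invoke Proposition~\ref{prop:closed-walk-compatible} to connect the remaining few paths with long walks, and the cumulative cost of these long connectors is what the additive constant $t^{t+3k}$ is meant to absorb. Your proof never distinguishes consecutive pairs with matching extensions from arbitrary pairs, so the number of "expensive" long-walk connections is $\Theta(\zeta m)$ rather than $O_t(1)$, and the bookkeeping fails. (A secondary, smaller omission: the paper also re-samples a fresh connection set $C'$ at random before the procedure so that all paths are simultaneously extensible into a well-distributed set; your deterministic tracking of $S_i$ and reliance on $\zeta\ll c$ is defensible here, but the main obstacle is the one above.)
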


\begin{proof}
	We start by choosing a set $C'$ of random vertices in $G$ by including every vertex of $V(\cP)$ independently with probability $c$.
	By Chernoff's inequality (Lemma~\ref{lem:che}) and a union bound, it follows that with probability at least $1 - 2 t \exp(-\Omega(m))$ the set $C'$ is $(2c)$-sparse.
	Also by McDiarmid's inequality (Theorem~\ref{theorem:mcdiarmid}), with probability at least $1 - 2 m^{k-1} \exp(-\Omega(m))$ a fixed path of $\cA$ is $(c^{k+2}/2, \nu)$-extensible with connection set $C'$.
	Since $|\cA| \leq \zeta m$ and $1/m \ll 1/t \ll 1/k$, all of these events happen simultaneously with positive probability, and thus we can 
	fix a set $C'$ with these properties.
	
	Now we use the following procedure to merge paths in $\cA$.
	Initiate $S=V(\cA)$.
	While there are two paths $Q_1,Q_2 \in \cA$ such that the extension to the right of $Q_1$ equals the extension to the left of $Q_2$, apply Lemma~\ref{lem:connecting-two-paths}\ref{item:sliceconnect-simplepath} with $p=k$  $c^{k+4}/2$ playing the role of $c$ to obtain a path $Q$ of length $10k^2$ which avoids $S$ and has exactly $2(k+1)$ vertices in $C'$.
	Then add $V(Q)$ to $S$, replace $Q_1,Q_2$ with $Q$ in $\cA$ and delete the $2(k+1)$ vertices used by $Q$ from $C'$.	
	For convenience, let us denote the set of paths after the procedure by $\cA'$ and keep the name $\cA$ for the initial set of paths.
	
	Note that the size of $S$ grows by at most $10k^2 |\cA| \leq 10 k^2 \zeta m \leq \lambda m$
	and we delete at most $2(k+1)|\cA| \leq 2(k+1)\zeta m \leq c^{k+2}m/4$ vertices from $C$ throughout this process, where the last inequality follows from $\zeta \ll c, 1/k$.
	This implies that every path of $\cA$ remains $(c^{k+2}/4, \nu)$-extensible with connecting set $C'$ (updated in each step).
	Hence the conditions of Lemma~\ref{lem:connecting-two-paths}\ref{item:sliceconnect-simplepath} are satisfied in every step and therefore $\cA'$ is well-defined.
	
	We remark that, when the procedure ends, $\cA'$ has size at most $t^t$.
	Moreover, the paths of $\cA'$ inherit from $\cA$ the property of being {consistent with $\ori{H}$} (since each extension of a path in $\cA'$ must be an extension of a path in $\cA$).
	
	To finish, we continue by connecting up the paths of $\cA'$ to the desired path $P$ along the orientation $\ori{H}$.
	As the paths of $\cA'$ are consistent with $\ori{H}$, the right and left extensions of each path in $\cA'$ are contained in the walk $W$.
	Since $W$ is compatible with $\ori{H}$, we can apply Proposition~\ref{prop:closed-walk-compatible} to obtain a tight walk in $H$ of length at most $t^{2k}$ between the right and left end of each path in $\cA'$.
	Using an appropriate restriction (Lemma~\ref{lem:regular-slice-restriction}) and again Lemma~\ref{lem:connecting-two-paths}\ref{item:sliceconnect-simplepath}, we can connect up the paths of $\cA'$ to a single tight path $P$ using at most $t^t \cdot t^{3k}$ further vertices of $V(\cP)$.
	
	By construction, $P$ contains every path in $\cA$ as a subpath and $V(P) \sm V(\cA) \subset V(\cP)$, giving~\ref{itm:P-contains-A} and~\ref{itm:P-A-subset-partition}.
	Moreover, note that $V(\cA') \sm \cA$ intersects in at most $10k^2\mathcal{A}_Z$ vertices with each cluster $Z \in \cP$, where $\cA_Z$ denotes the number of paths of $\cA$ that intersect with $Z$.
	Hence, $P$ also satisfies~\ref{itm:P-A-intersectio-Z}.
	Finally note that, we can easily guarantee that $P$ starts and ends with any two particular paths of $\cA$ by connecting those paths in the very end, which yields~\ref{itm:P-starts-with-P1-and-P2}.
\end{proof}

We conclude this section with the proof of the Cover Lemma.

\begin{proof}[Proof of Lemma~\ref{lem:cover}]
	Set $P_1 = P$.
	We start by selecting a short path $P_2$, disjoint from $P_1$.
	Suppose that $P_1$ extends to the right with $X$ and to the left with $Y$.
	By Proposition~\ref{prop:extpath} there exists a path $P_2$ of length $k-1$ which $(c, \nu)$-extends both right- and leftwards to $Y$.
	Moreover, by the Slice Restriction Lemma (Lemma~\ref{lem:regular-slice-restriction}), we can assume that $V(P_1)$ is disjoint of $V(P_2)$ and $C_2$, where $C_2$ is the connection set of $P_2$ (see the proof of Lemma~\ref{lem:connecting-two-paths} for details).
	Next, we use McDiarmid's inequality (Theorem~\ref{theorem:mcdiarmid}) to  pick a $\lambda$-sparse vertex set $C'$ such $P_1$, $P_2$ are $(c^{k+2}/2, \nu)$-extensible paths with joint connection set $C'$ (see the proof of Lemma~\ref{lem:connecting-many-paths} for details).
	 
	In the following, we will find two tight paths $Q_1$ and $Q_2$ such that $P_1Q_1P_2Q_2$ forms the desired tight cycle.
	The purpose of the first path is to flatten out the intersection of $S$ with the clusters of $\cP$, such that $V(Q_1) \cup S$ has approximately the same number of vertices in every cluster.
	The second path will cover most of the remaining vertices and adjust the length modulo $k$.
	
	To obtain the first path, we restrict our scope, similarly as it was done in the proof of Lemma~\ref{lem:connecting-two-paths}.
	Let $S_1 = V(P_1) \cup V(P_2)$.
	Choose $\kappa > 0$ such that $\lambda \ll \kappa \ll \gamma$.
	For each $Z \in \cP$, select a subset $Z' \subset Z$ of size $m' = \kappa m$, such that $Z'$ contains $Z \cap S_1$.
	This is possible since $S_1$ is ${(2\lambda)}$-sparse in $\cP$ and $2\lambda \ll \kappa$.
	Let $\cP'=\{Z'\}_{Z \in \cP}$ and $V(\cP') = \bigcup_{Z \in \cP} Z'$.
	Let $G'=G[V(\cP')]$ and $G'_{\cJ'} = G_{\cJ}[V(\cP')]$ be the corresponding induced subgraphs and $\cJ' = \cJ[V(\cP')]$ be the induced subcomplex.
	By the Slice Restriction Lemma (Lemma~\ref{lem:regular-slice-restriction}), $\fS' = (G', G'_{\cJ'},\cJ', \mathcal{P}', H)$ is a $(k, m', t, \sqrt{\eps}, \sqrt{\eps_k}, r, d_2,\dots,d_k)$-regular setup, as desired.
	
	Now we define a fractional matching that complements the discrepancy of $S$ in the clusters $\cP$.
	Consider $\bvec b \in \REALS^{V(H)}$ by setting 
	$\bvec b({Z'}) =  |{Z'} \sm S_1|/|Z'|$
	for every $Z \in V(H)$.
	Recall that $|S_1 \cap Z|\leq 2 \lambda m$, $|Z'| = \kappa m$ and $\lambda \ll \kappa, \gamma$.
	It follows that
	\begin{align*}
	1-\gamma \leq 1-\frac{2\lambda}{\kappa} \leq 1-  \frac{|S_1|}{|Z'|} \leq \bvec b \leq 1.
	\end{align*}
	Since $H$ is $\gamma$-robustly matchable, there is a fractional matching $\bvec w$ such that $\sum_{e \ni Z'} \bvec w (e)=\bvec b(Z)$ for every cluster $Z \in \cP'$.
	Consider $\psi > 0$ with  $\eps_3 \ll \psi \ll \alpha$.
	By Lemma~\ref{lem:connecting-two-paths}~\ref{item:sliceconnect-longpath} there exists a tight path $Q_1$ in $G'$ such that $P_1Q_1P_2$ is a tight path in $G$ which contains no vertices of $S_1$ and exactly $2(k+1)$ vertices of $C'$.
	Moreover, there is a set $U\subset V(\cP)$ of size at most $\psi mt$ such that $U \cup V(Q_1)$ has $\lceil \sum_{e \ni Z}  \bvec w(e) m\rceil$ vertices in each cluster $Z \in \cP'$.
	By definition of $\bvec b$, this (crudely) translates to $(1\pm\psi)(\kappa m - |Z \cap S'|)$ vertices in each $Z \in V(H)$.
	In other words, 
	\begin{equation}\label{equ:PQP-balanced}
	\text{the set $V(P_1Q_1P_2) \cup U$ has $(1\pm\psi) \kappa m $ vertices in each cluster $Z \in V(H)$,}
	\end{equation} 
	which suffices for our purposes.
	
	We now turn to the second path $Q_2$.
	Note that $P_1Q_1P_2$ has left right extensions $X$ and $Y$, which in particular are consistent with $\ori{H}$.
	Since $W$ is compatible with $\ori{H}$, we can apply Proposition~\ref{prop:closed-walk-compatible} to obtain a tight walk $W'$ in $H$ of length $p \leq t^{2k}$ starting with $X$ and ending with $Y$.
	Moreover, since $W$ has length coprime to $k$, we can choose $W'$ such that
	\begin{align}\label{equ:cover-almost-everything-divisibility}
		p+2 = |V(G) \sm V(P_1Q_1P_2)| \mod k.
	\end{align}
	
	Next, let $S_2 = V(P_1Q_1P_2)$ and $C'' = C' \sm S_2$.
	We define $\bvec c \in \REALS^{V(H)}$ by setting $\bvec c(Z) = (m - |Z\cap S_2|)/m$ for all $Z \in V(H)$.
	Note that $1-\gamma \leq 1-\kappa-\psi \leq \bvec c \leq 1$.
	Since $H$ is {$\gamma$-robustly} matchable, there is a fractional matching $\bvec w$ such that $\sum_{e \ni Z} \bvec w(e)=\bvec b(Z)$ for every $Z \in \cP$.
	By Lemma~\ref{lem:connecting-two-paths}~\ref{item:sliceconnect-longpath}  (this time applied with $\lambda =\psi/2$) there exists a tight path $Q_2$ in $G$ of length congruent with $p+2$ modulo $k$ which contains no vertices of $S_2$ and exactly $2(k+1)$ vertices of $C''$ such that $F=P_1Q_1P_2Q_2$ is a tight cycle.
	Moreover, there is a set $U'\subset V(\cP)$ of size at most $\psi mt$ such that $U' \cup V(Q_2)$ has $\lceil \sum_{e \ni Z}  (1-\kappa) m\rceil$ vertices in each cluster $Z \in \cP$.
	
	Note that the cycle $F$ contains all vertices of $G$, except for those in $V(G) \sm V(\cP)$, $\bigcup_{Z \in V(\cP) \sm V(H)}Z$, $U$ and $U'$.
	By assumption we have $|V(G) \sm V(\cP)| \leq \alpha n$, $|\bigcup_{Z \in V(\cP) \sm V(H)}Z| \leq \alpha mt$, and $|U|,|U'| \leq \psi mt$.
	Together with assertion~\eqref{equ:PQP-balanced} this yields that the cycle $F$ covers all but $\alpha n+ \alpha mt +2\psi mt    \leq 3\alpha n$ vertices.
	Moreover, as $Q_2$ has length congruent to $p+2 \bmod k$ and by equation~\eqref{equ:cover-almost-everything-divisibility}, it follows that $|V(G) \sm V(F)|$ is divisible by $k$.
	This concludes the proof.
\end{proof}

\section{Attaching vertices to a regular complex} \label{sec:preabsorbing}

In this section we show a few technical results, which will help us to attach vertices to regular complexes.
This will be helpful to construct the absorbing paths later on.
Recall that in a representative regular setup (Definition~\ref{def:setup}) the neighbourhoods of every vertex are well-represented in the regular complexes of the slice.
More precisely, given a $k$-graph $G$ with large minimum $1$-degree, and (a dense subgraph of) the reduced graph of a regular slice of $G$, one can show that each vertex of $G$ is joined to a large number of edges which are supported in the corresponding $(k-1)$-tuples of cluster of the regular complex.
In the following, we formalise this observation.

Recall that, given a subset $X \subseteq \cP$ of at most $k-1$ clusters of $\cJ$, $\cJ_X$ is an $|X|$-partite $|X|$-graph containing all the edges of the $|X|$-th level of $\cJ$ which intersect each cluster of $X$ exactly once.
Let $G$ be a $k$-graph, $\cJ$ a regular slice with cluster set $\cP$.
Given $v \in V(G)$ and $\delta > 0$, we define
\begin{align}
N_{\cJ}(v, \delta) = \{ X \subseteq \cP\colon |X| = k-1,~ |N_G(v; \cJ_X)| > \delta |\cJ_X| \}, \label{equation:tupleneighbours}
\end{align}
so $N_{\cJ}(v, \delta)$ corresponds to the sets $X$ of $k-1$ clusters in $\cP$ such that $v$ has at least a  proportion of $\delta$ of the $(k-1)$-edges of $\cJ_X$ as neighbours.

\begin{lemma} \label{lemma:neighboursincomplex}
	Let $k,r,m,t \in \mathbb{N}$, and $d_2,\dots,d_k,\eps,\eps_k,\mu,\delta$ be such that
	\begin{align*}
	{1}/{m} \ll {1}/{r},\eps \ll \eps_k,d_2,\dots ,d_{k-1}, \quad  \eps_k \ll d_k \leq 1/k, \quad \text{and} \quad  \eps_k \ll \mu \ll \delta,
	\end{align*}
	Let $\mathbf{d} = (d_2, \dotsc, d_k)$ and let $(G, G_{\cJ},\cJ, \cP, R)$ be a representative $(k, m, t, \eps, \eps_k, r, \mathbf{d})$-regular setup.
	Suppose that $G$ has minimum relative $1$-degree at least $\delta+\mu$.
	Then, for all $v \in V(G)$,
	\begin{align*}
		\left|N_{\cJ}\left(v, \frac{\mu}{3}\right)\right| \ge \left( \delta + \frac{\mu}{4} \right) \binom{t}{k-1}.
	\end{align*}
\end{lemma}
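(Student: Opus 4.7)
My plan is to use the representative rooted degree property (S6) of the setup together with the Dense Counting Lemma for the top level of the slice (Lemma~\ref{lemma:countinglevelsslices}) to compare the total number of edges of $G$ rooted at $v$ and supported by $\cJ^{(k-1)}$ with its decomposition over $(k-1)$-sets of clusters. A double-counting estimate then forces many of these sets to lie in $N_\cJ(v,\mu/3)$.

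More concretely, first I would use~\ref{item:regsetup-rootedrepresent} to deduce
\[\reldeg_G(v;\cJ)\ge\reldeg_G(v)-\eps_k\ge\delta+\mu-\eps_k,\]
so that $\deg_G(v;\cJ)\ge(\delta+\mu-\eps_k)\,e(\cJ^{(k-1)})$. Next, I would apply Lemma~\ref{lemma:countinglevelsslices} with a fixed auxiliary constant $\beta$ satisfying $\eps_k\ll\beta\ll\mu$ (one can choose for instance $\beta=\mu/100$) to obtain $|\cJ_X|=(1\pm\beta)D m^{k-1}$ uniformly for every $(k-1)$-subset $X$ of $\cP$, where $D=\prod_{i=2}^{k-1}d_i^{\binom{k-1}{i}}$. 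Summing over all $M=\binom{t}{k-1}$ such $X$ gives $e(\cJ^{(k-1)})=(1\pm\beta)M Dm^{k-1}$, and analogously $\deg_G(v;\cJ)=\sum_X|N_G(v;\cJ_X)|$.

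Now I would split this sum according to the partition $\{N_\cJ(v,\mu/3),\,\text{complement}\}$. Writing $N=|N_\cJ(v,\mu/3)|$, the trivial bound $|N_G(v;\cJ_X)|\le|\cJ_X|$ on the first part and the defining bound $|N_G(v;\cJ_X)|\le(\mu/3)|\cJ_X|$ on the complement yields
\[\deg_G(v;\cJ)\le\bigl(N+(M-N)\tfrac{\mu}{3}\bigr)(1+\beta)Dm^{k-1}.\]
Combining this with the lower bound $\deg_G(v;\cJ)\ge(\delta+\mu-\eps_k)(1-\beta)M Dm^{k-1}$ and cancelling the factor $Dm^{k-1}$ leads to
\[(\delta+\mu-\eps_k)\tfrac{1-\beta}{1+\beta}\,M\;\le\;N\bigl(1-\tfrac{\mu}{3}\bigr)+\tfrac{\mu}{3}M.\]
Rearranging and using $\eps_k,\beta\ll\mu$ together with $1/(1-\mu/3)\ge 1$ gives $N\ge M(\delta+\mu-\mu/8-\mu/3)\ge M(\delta+\mu/4)$, as desired.

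I do not anticipate any real obstacle; the content is essentially a combination of the representativeness axiom with the fact that the top level of $\cJ$ decomposes into cluster-indexed pieces of nearly equal size, and the remaining work is a routine averaging argument with the error terms chosen so that $\eps_k$ is absorbed well below $\mu/4$.
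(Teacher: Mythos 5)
Your proposal is correct and follows essentially the same route as the paper: invoke representativeness to lower-bound the rooted degree, use Lemma~\ref{lemma:countinglevelsslices} to show all $|\cJ_X|$ are within a $(1\pm\text{small})$ factor of one another, split $\sum_X |N_G(v;\cJ_X)|$ over $N_\cJ(v,\mu/3)$ and its complement, and rearrange. The only cosmetic differences are that you introduce an intermediate constant $\beta$ with $\eps_k\ll\beta\ll\mu$ where the paper uses $\eps_k$ directly in the counting-lemma error, and you keep the factor $(1-\mu/3)$ on $N$ where the paper crudely drops it; neither affects the conclusion.
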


\begin{proof}
	Let $v \in V(G)$ be arbitrary.
	The minimum degree condition implies that $\reldeg_G(v) \ge \delta + \mu$.
	Since $\mathfrak{S}$ is representative (part \ref{item:regsetup-rootedrepresent} of Definition~\ref{def:setup}),  we have $|\reldeg_G(v) - \reldeg_G(v; \cJ)| < \eps_k$ and therefore
	\begin{align}
	\deg_{G}(v;\cJ^{(k-1)}) \ge  \left(\delta + \mu - \eps_k\right) |\cJ^{(k-1)}| \ge \left(\delta + \frac{2}{3} \mu \right) |\cJ^{(k-1)}|, \label{equation:dgvJk-1}
	\end{align} where we have used that $\eps_k \ll \mu$ for the last inequality.
	
	Recall that, for any set $X \subseteq \cP$ of $k-1$ clusters, $\cJ_X$ corresponds to those $(k-1)$-edges of $\cJ^{(k-1)}$ which are $X$-partite.
	Define $d_X = \prod_{i=2}^{k-1} d_i^{\binom{k-1}{i}}$.
	By the corollary (Lemma~\ref{lemma:countinglevelsslices}) of the Dense Counting Lemma and  $\eps \ll \eps_k$, we get
	\[ |\cJ_X| = (1 \pm \eps_k) d_X m^k. \]
	Using this we can further deduce, by summing over all the $\binom{t}{k-1}$ sets of $k-1$ clusters of $\cP$, that
	\[ |\cJ^{(k-1)}| = (1 \pm \eps_k) \binom{t}{k-1} d_X m^k. \]
	Moreover, letting $X$ range over all the $(k-1)$-sets of clusters of $\cP$, we get from inequality~\eqref{equation:dgvJk-1}
	\begin{align*}
	\sum_{X} |N_{G}(v;\cJ_X) | = \deg_{G}(v;\cJ^{(k-1)}) \ge \left(\delta + \frac{2}{3}\mu \right) |\cJ^{(k-1)}|.
	\end{align*}
	Now we use $N_{\cJ}(v, \mu/3)$ to estimate
		\begin{align*}
	& \left(\delta + \frac{2}{3}\mu \right) |\cJ^{(k-1)}| \\
	& \leq \sum_{X} |N_{G}(v;\cJ_X) |
	\leq \sum_{X \in N_{\cJ}(v, \mu/3)} |\cJ_X| + \sum_{X \notin N_{\cJ}(v, \mu/3)} \frac{\mu}{3} |\cJ_X| \\
	& \leq \left(\left|N_{\cJ}\left(v, \frac{\mu}{3} \right)\right| + \frac{\mu}{3} \left( \binom{t}{k-1} - \left|N_{\cJ}\left(v, \frac{\mu}{3} \right)\right| \right) \right) (1 + \eps_k) d_X m^k \\
	& \leq \left( \left(1 - \frac{\mu}{3}\right) \left|N_{\cJ}\left(v, \frac{\mu}{3} \right)\right| + \frac{\mu}{3} \binom{t}{k-1} \right) \frac{1 + \eps_k}{1 - \eps_k}\frac{|\cJ^{(k-1)}|}{\binom{t}{k-1}} \\
	& \leq \left( \left|N_{\cJ}\left(v, \frac{\mu}{3} \right)\right| + \frac{\mu}{3} \binom{t}{k-1} \right)(1 +2 \eps_k) \frac{|\cJ^{(k-1)}|}{\binom{t}{k-1}}.
	\end{align*}
	Since $\eps_k \ll \mu$, this can be rearranged to get
	\begin{align*}
		\left|N_{\cJ}\left(v, \frac{\mu}{3} \right)\right| \ge \left( \delta + \frac{\mu}{4} \right) \binom{t}{k-1},
	\end{align*}
	as desired.
\end{proof}

The following lemma states that the outcome of Lemma~\ref{lemma:neighboursincomplex} is robust under the deletion of a small amount of vertices.

\begin{lemma} \label{lemma:resilientneighboursincomplex}
	Let $k,r,m,t \in \mathbb{N}$, and $d_2,\dots,d_k,\eps,\eps_k,\mu,\lambda$ be such that
	\begin{align*}
		1/m \ll 1/r,\eps \ll \eps_k,d_2,\dots ,d_{k-1}, \quad \eps_k \ll d_k \leq 1/k, \quad \text{and} \quad  \eps_k \ll \lambda \ll \mu,
	\end{align*}
	Let $\mathbf{d} = (d_2, \dotsc, d_k)$ and let $(G, G_{\cJ},\cJ, \cP, R)$ be a $(k, m, t, \eps, \eps_k, r, \mathbf{d})$-regular setup.
	Let $T \subseteq V(G)$ such that $|Z_1 \cap T| = |Z_2 \cap T| \leq \lambda m$ for each $Z_1, Z_2 \in \cP$.
	Let $Z' = Z \setminus T$ for each $Z \in \cP$, and let $\cJ' = \cJ[\bigcup Z']$ be the induced subcomplex.
	Then, for every $v \in V(G)$,
	\begin{align*}
	|N_{\cJ}(v, 2 \mu)| \leq |N_{\cJ'}(v, \mu)|. 
	\end{align*}
\end{lemma}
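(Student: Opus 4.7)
\smallskip

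The plan is to fix an arbitrary $X \in N_\cJ(v,2\mu)$ and show that $X \in N_{\cJ'}(v,\mu)$. By definition this amounts to proving
\[ |N_G(v;\cJ'_X)| > \mu \, |\cJ'_X|, \qquad \text{given that} \qquad |N_G(v;\cJ_X)| > 2\mu \, |\cJ_X|. \]
Since $\cJ'_X \subseteq \cJ_X$, the quantity $|N_G(v;\cJ_X)| - |N_G(v;\cJ'_X)|$ is bounded above by the number of edges of $\cJ_X$ that meet the deleted set $T$. So the strategy is: (i) compare the sizes of $\cJ_X$ and $\cJ'_X$, and (ii) control the number of edges of $\cJ_X$ incident to $T$. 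Both steps are short applications of the dense counting tools.

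For (i), pick $\beta$ with $\eps \ll \beta \ll \eps_k$ and apply the corollary of the Dense Counting Lemma (Lemma~\ref{lemma:countinglevelsslices}) to $\cJ$, obtaining $|\cJ_X| = (1 \pm \beta) D \, m^{k-1}$, where $D = \prod_{i=2}^{k-1} d_i^{\binom{k-1}{i}}$. For $\cJ'_X$, I first invoke the Slice Restriction Lemma (Lemma~\ref{lem:regular-slice-restriction}) to conclude that $\cJ'$ is $(\cdot,\cdot,\sqrt{\eps},\sqrt{\eps_k},r)$-regular with the same density vector, and then re-apply Lemma~\ref{lemma:countinglevelsslices} to get $|\cJ'_X| = (1 \pm \beta)D\,(m')^{k-1}$ with $m' = (1-\lambda)m$; in particular $|\cJ'_X| \leq (1+3\beta)|\cJ_X|$.

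For (ii), I use the Dense Extension Lemma (Lemma~\ref{lemma:denseextensionlemma}) applied to the complete $(k-1)$-partite $(k-1)$-graph with its one-vertex induced subcomplex $\cH'$: for each cluster $X_i \in X$ and each vertex $t \in X_i$ outside an exceptional set of size $\beta m$, the number of edges of $\cJ_X$ containing $t$ is at most $(1+\beta)D\,m^{k-2}$. Crudely bounding the exceptional vertices' degrees by $m^{k-2}$ and summing over the at most $(k-1)\lambda m$ vertices of $T$ yields
\[ |\{e \in \cJ_X : e \cap T \neq \emptyset\}| \leq (k-1)\lambda m \cdot (1+2\beta) D \, m^{k-2} \leq k\lambda \, |\cJ_X|, \]
where the last step uses $\beta \ll 1$.

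Combining the two estimates,
\[ |N_G(v;\cJ'_X)| \;\geq\; |N_G(v;\cJ_X)| - k\lambda\,|\cJ_X| \;>\; (2\mu - k\lambda)\,|\cJ_X| \;\geq\; \mu\,|\cJ'_X|, \]
where the final inequality uses $\lambda \ll \mu$ together with $|\cJ'_X| \leq (1+3\beta)|\cJ_X|$. This gives $X \in N_{\cJ'}(v,\mu)$, proving the lemma. The main technical point, though essentially routine, is matching the hierarchies so that the Dense Counting/Extension Lemmas can be applied both to $\cJ$ and to the restriction $\cJ'$; the margin $\lambda \ll \mu$ is what makes the loss incurred by deleting $T$ negligible relative to the slack between $2\mu$ and $\mu$.
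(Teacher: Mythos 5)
Your proof is correct and follows essentially the same approach as the paper: fix $X \in N_\cJ(v,2\mu)$, invoke the Slice Restriction Lemma together with the Dense Counting Lemma to compare $|\cJ_X|$ and $|\cJ'_X|$, and observe that the loss from deleting $T$ is negligible against the slack between $2\mu$ and $\mu$. The only cosmetic difference is in how you control $|\cJ_X \setminus \cJ'_X|$: you sum per-vertex degree bounds over $T$ via the Dense Extension Lemma, whereas the paper simply lower-bounds $|\cJ'_{X'}|$ by a second application of the Dense Counting Lemma and subtracts; both estimate the same quantity, and the hierarchy (in particular $\eps \ll \eps_k, d_i$ and $\eps_k \ll \lambda \ll \mu$) absorbs the error terms either way.
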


\begin{proof}
	Fix $v \in V(G)$ and a $(k-1)$-set $X \in N_{\cJ}(v, 2 \mu)$ and recall that $|N_G(v, \cJ_X)| > 2 \mu |\cJ_X|$ as defined in inequality~\eqref{equation:tupleneighbours}.
	Write $X = \{ X_1, \dotsc, X_{k-1} \}$ and let $X' = \{ X'_1, \dotsc, X'_{k-1} \}$ be the corresponding clusters of $X$ in the complex $\cJ'$.
	To prove the lemma, it is enough to show that $X' \in N_{\cJ'}(v, \mu)$.
	
	Let $\eps \ll \beta \ll \eps_k$ and $d_X = \prod_{i=2}^{k-1} d_i^{\binom{k-1}{i}}$.
	By the corollary (Lemma~\ref{lemma:countinglevelsslices}) of the Dense Counting Lemma, we have
	\[|\cJ_X| = (1 \pm \beta) d_X m^{k-1}   \]
	and therefore
	\begin{align}\label{equ:NvJX-geq-dXmk}
	|N_G(v, \cJ_X)| > 2 \mu |\cJ_X| \ge 2 \mu (1 - \beta) d_X m^{k-1}. 
	\end{align}
	Now let $m' = |X_1 \setminus T|$, so that we  have $|Z'| = m'$ for each $Z \in \cP$, and note that $m' \ge (1 - \lambda) m$.
	By the Slice Restriction Lemma (Lemma~\ref{lem:regular-slice-restriction}), $\cJ'$ is a $(\cdot, \cdot, \sqrt{\eps}, \sqrt{\eps_k}, r)$-regular slice.
	In particular, by applying Lemma~\ref{lemma:countinglevelsslices} in this restricted complex (and as $\sqrt{\eps} \ll \beta$), we obtain that
	\begin{align*}
	(1 + \beta) d_X (m')^{k-1}   &\ge|\cJ'_{X'}|  
	\geq (1 - \beta) d_X (m')^{k-1}  \\
	& \ge (1 - \beta) (1 - \lambda)^{k-1} d_X m^{k-1}.
	\end{align*}
	Combining this with inequality~\eqref{equ:NvJX-geq-dXmk}, results in
	\begin{align*}
	|N_G(v, \cJ'_{X'})|
	& \ge |N_G(v, \cJ_X)| - (|\cJ_X| - |\cJ'_{X'}|) \\
	& \ge (1 - \beta) \left(  2 \mu - ( 1 - (1 - \lambda)^{k-1}) \right) d_X m^{k-1} \\
	& \ge \mu (1 + \beta) d_X m^{k-1} \ge \mu |\cJ'_{X'}|,
	\end{align*}
	where in the second to last inequality we have used that $\beta \ll \eps_k \ll \lambda \ll \mu$.
	Thus $X' \in N_{\cJ'}(v, \mu)$, as required.
\end{proof}

In a (sufficiently long) $k$-uniform tight cycle, the link graph of every vertex corresponds to a $(k-1)$-uniform tight path.
Because of this, we will look for tight paths in the neighbourhoods of vertices inside of a regular complex.
The next lemma states that by looking at a $\mu$-fraction of the $(k-1)$-edges of the regular complex we will find lots of tight paths, even a positive proportion of the paths that the complex already has.
Its proof follows from various applications of the dense counting and extension lemmas.

\begin{lemma} \label{lemma:sidorenkito}
	Let $1/m \ll \eps \ll d_2, \dotsc, d_{k-1}, 1/k, \mu$ and $k \ge 3$.
	Suppose $\cJ$ is a $(\cdot, \cdot, \eps)$-equitable complex with density vector $\mathbf{d} = (d_2, \dotsc, d_{k-1})$ and ground partition $\cP$, with vertex classes of size $m$ each.
	Let $W = \{ W_1, \dotsc, W_{k-1} \} \subseteq \cP$.
	Let $S \subseteq \cJ_{W}$ have size at least $\mu |\cJ_W|$.
	Let $Q$ be a $(k-1)$-uniform tight path on $2k-2$ vertices $v_1 \dotsc v_{{2k-2}}$ whose vertices are partitioned in clusters $\{X_1, \dotsc, X_{k-1} \}$ such that $v_i, v_{i+k-1} \in X_i$ for all $1 \leq i \leq k-1$, and let $\mathcal{Q}$ be the downward-closed \mbox{$(k-1)$-complex} generated from $Q$.
	Recall that $\mathcal{Q}_\mathcal{J}$ is the set of labelled partition-respecting copies of $\mathcal{Q}$ in $\cJ$,
	and let $\mathcal{Q}_\mathcal{S} \subseteq \mathcal{Q}_\mathcal{J}$ correspond to those copies whose edges in the $(k-1)$-th level are in $S$.
	Then \[ |\mathcal{Q}_S| \ge \frac{1}{2} \left( \frac{\mu}{8k} \right)^{k+1} |\mathcal{Q}_\mathcal{J}|. \]
\end{lemma}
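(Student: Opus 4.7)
The plan is to establish the claim via a Sidorenko-type counting argument, combining the Dense Counting Lemma (Lemma~\ref{lemma:densecountinglemma}) and the Dense Extension Lemma (Lemma~\ref{lemma:denseextensionlemma}) with $k+1$ rounds of averaging. Concretely, I would first fix $\beta$ with $\eps \ll \beta \ll \mu/k$ and use Lemma~\ref{lemma:densecountinglemma} applied to the appropriate subcomplexes of $\cJ$ to obtain the two estimates $|\cJ_W| = (1\pm\beta) d_W m^{k-1}$ and $|\cQ_\cJ| = (1\pm\beta) D m^{2k-2}$, where $d_W = \prod_{j=2}^{k-1} d_j^{\binom{k-1}{j}}$ and $D = \prod_{j=2}^{k-1} d_j^{e_j(\cQ)}$. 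After these two counts are pinned down, the whole problem is reduced to giving a matching lower bound on $|\cQ_S|$.

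I would then build partition-respecting copies of $\cQ$ in $\cJ$ sequentially, at each step requiring every top edge that has just been completed to lie in $S$. The natural order is: first select a ``heavy'' $(k-2)$-edge $f \in \cJ_{\{W_2,\ldots,W_{k-1}\}}$ whose extensions into $W_1$ meet $S$ in at least a $(\mu/(8k))$-fraction (one preliminary averaging step), then choose $v_1 \in W_1$ completing $f$ to an edge $e_1 \in S$, and finally add the vertices $v_k, v_{k+1}, \ldots, v_{2k-2}$ one at a time. Each new vertex $v_{i+k-2}$ (for $i = 2, \ldots, k$) closes a fresh top edge $e_i = \{v_i,\dots,v_{i+k-2}\}$, and the already-placed $(k-2)$-shadow $\{v_i,\dots,v_{i+k-3}\}$ has to extend into $S$. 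A simple averaging argument (paralleling the one used in the proof of Lemma~\ref{lemma:neighboursincomplex}) shows that, outside an exceptional $o(1)$-fraction of partial embeddings, the shadow in question extends into at least a $(\mu/(8k))$-fraction of its $\cJ_W$-extensions inside $S$; the Dense Extension Lemma then ensures that nearly all such partial embeddings continue to extend to approximately the expected number of full copies of $\cQ$ in $\cJ$.

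Accumulating $k+1$ multiplicative factors of $\mu/(8k)$---one from the preliminary averaging, one from completing $e_1$ inside $S$, and $k-1$ from the extensions that close $e_2,\ldots,e_k$---yields the claimed lower bound $\tfrac{1}{2}(\mu/(8k))^{k+1}|\cQ_\cJ|$. The main obstacle will be bookkeeping: each vertex $v_j$ belongs to several overlapping top edges of $\cQ$ and to many lower-level edges, so the averaging steps for successive top edges are correlated with the vertices already chosen. I would address this by performing each averaging step within the restricted universe of partial embeddings produced up to that point (rather than globally in $\cJ$), and by absorbing the various $\beta$-errors from Lemmas~\ref{lemma:densecountinglemma} and~\ref{lemma:denseextensionlemma} into the factor $\tfrac12$ appearing in the final bound.
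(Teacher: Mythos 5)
Your outline reaches for the right counting lemmas and the right sequential embedding order, but it is missing the one idea the paper's proof actually hinges on, and I do not think the gap can be bridged by per-step averaging as you describe. The trouble appears from the third top edge onwards: when you want to place $v_{k+1}$ you need the $(k-2)$-shadow $\{v_3,\dotsc,v_k\}$ — which contains the freshly-placed vertex $v_k$ — to have many $S$-extensions into $W_2$, and nothing you have established up to that point guarantees this. A global averaging shows only that a bounded fraction (roughly $1/(4k)$, not $o(1)$ — your "exceptional $o(1)$-fraction" claim is already too strong) of $(k-2)$-sets, weighted by $\deg_S$, have few $S$-extensions; it says nothing about the conditional distribution of shadows that actually arise after you have constrained the first two top edges to lie in $S$. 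In principle the construction could steer you, step after step, into the bad $1/(4k)$ of shadows, and "averaging within the restricted universe of partial embeddings" is not a mechanism that prevents this — it is the thing you would need to prove.

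The paper avoids this dependency entirely by a preprocessing step you have omitted: before the greedy construction, it iteratively deletes from $S$ every edge containing a partite $(k-2)$-set with fewer than $\mu d_b m/(4k)$ extensions in the current set (also removing edges whose $(k-2)$-shadows lie in the small exceptional set $B_{W'}$ from the Dense Extension Lemma). A short computation shows at most $2|S|/3$ edges are lost, and the surviving set $S'$ has the crucial all-or-nothing property: every $(k-2)$-shadow of an $S'$-edge automatically has at least $\mu d_b m/(4k)$ extensions in $S'$. With this in hand, the greedy construction never needs a fresh averaging argument at the inductive steps — once $\{v_{i-k+1},\dotsc,v_{i-1}\}\in S'$, its $(k-2)$-subset $\{v_{i-k+2},\dotsc,v_{i-1}\}$ is guaranteed to have many $S'$-extensions, essentially because it is already contained in one $S'$-edge. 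You would either need to add this cleaning step to your proof, or give a genuine quantitative argument (not just an appeal to restricted averaging) for why the conditional bad-shadow fraction stays bounded after the constrained choices; as written, that step is asserted rather than proved.
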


\begin{proof}
	The proof consists of three steps, which we sketch now.
	First, we begin by using the dense versions of the counting and extension lemmas to count the number of various hypergraphs inside $\cJ$, including partite edges and tight paths on $2k-2$ vertices.
	In the second step, we remove a few edges from $S$ which have undesirable properties.
	This gives a subgraph $S' \subseteq S$ whose behaviour is somewhat better controlled.
	For instance we know that every partite $(k-1)$-set is contained in either none or many edges of $S'$.  
	Finally, we give an iterative procedure which returns a tight path using the edges of $S'$.
	Using the codegree properties of $S'$ we obtain that there are a lot of choices in every step of the procedure, and each distinct choice yields a different tight path which uses only edges of $S' \subseteq S$.
	Combining this with the counting done in the first step, the lemma follows.
	
	\medskip
	\emph{Step 1: Counting subgraphs.}
	We begin by using the dense versions of the counting and extension lemma to count different hypergraphs inside $\cJ$.
	Let $\beta > 0$ such that $\eps \ll \beta \ll d_2, \dotsc, d_{k-1}, 1/k, \mu$.
	Define
	\begin{align*}
	d_a = \prod_{i=2}^{k-2} d_i^{\binom{k-2}{i}}
	\quad \text{and} \qquad
	d_b = \prod_{i=2}^{k-1} d_i^{\binom{k-2}{i-1}}.
	\end{align*}
	We will express the densities of the subgraphs we will count in terms of $d_a$ and $d_b$.
	Recall that $|\cJ_W|$ denotes the number of edges in the $(k-1)$-th level of $\cJ$.
	Let $W' = W \setminus \{W_{k-1}\}$ and let $\cJ_{W'}$ be the set of edges in the $(k-2)$-th level of $\cJ$ and contained in $\{W_1, \dotsc, W_{k-2}\}$.
	Let $\mathcal{Q}_\mathcal{J}$ be as in the statement.
	Then the Dense Counting Lemma (Lemma~\ref{lemma:densecountinglemma}) implies that
	\begin{align}
	|\cJ_W| & = (1 \pm \beta) d_a d_b m^{k-1}, \label{equation:sidorenkito-jw} \\
	|\cJ_{W'}| & = (1 \pm \beta) d_a m^{k-2} \text{ and} \label{equation:sidorenkito-jww} \\
	|\mathcal{Q}_\mathcal{J}| & = (1 \pm \beta) d_a d_b^{k} m^{2k - 2}. \label{equation:sidorenkito-pj}
	\end{align}
	Since $S \subseteq \cJ_W$ satisfies $|S| \ge \mu |\cJ_W|$, we deduce from inequality~\eqref{equation:sidorenkito-jw} and $\beta \ll 1$ that
	\begin{align}
	|S| & \ge (1 - \beta) \mu d_a d_b m^{k-1}. \label{equation:sidorenkito-jws}
	\end{align}
	Let $B_{W'} \subseteq \cJ_{W '}$ be the set of $(k-2)$-edges which \emph{do not} extend to $(1 \pm \beta) d_b m$ copies of a $(k-1)$-edge in $\cJ_W$.
	The Dense Extension Lemma (Lemma~\ref{lemma:densecountinglemma}) implies that
	\begin{align}
	|B_{W'}| & \leq \beta |\cJ_{W'}|. \label{equation:sidorenkito-smallbad}
	\end{align}
	
	\medskip
	\emph{Step 2: Removing edges of $S$.}
	In the following, we select a subset $S' \subseteq S$ with well-behaved properties.
	We begin by deleting from $S$ all those edges which contain a $(k-2)$-set from $B_{W'}$, to obtain a set $S''$.
	The number of edges deleted is at most
	\begin{align*}
	|B_{W'}|m
	& \leq \beta |\cJ_{W'}| m
	\leq \beta (1 + \beta) d_a m^{k-1}
	\leq |S|/3,
	\end{align*}
	where we have used inequality~\eqref{equation:sidorenkito-jww},~\eqref{equation:sidorenkito-jws},~\eqref{equation:sidorenkito-smallbad} and $\beta \ll d_b, \mu$.
	Thus we deduce that $|S''| \ge 2|S|/3$.
	
	Next, we delete edges from $S''$ using the following procedure:
	If there is any partite $(k-2)$-set $T$ in $\cJ$ which lies in less than $\mu d_b m / (4k)$ edges of $S''$, then delete all edges of $S''$ which contain $T$.
	Iterate until no further deletions are possible.
	We let $S'$ be the resulting set of edges at the end of this procedure.
	Similar to the counting in inequality~\eqref{equation:sidorenkito-jws}, we see that the number of partite $(k-2)$-sets supported in the clusters of $W$ is $(k-1)(1\pm\beta) d_a m^{k-2}$.
	Thus the number of edges deleted in the procedure is at most
	\begin{align*}
	(k-1)(1+\beta) d_a m^{k-2}\frac{\mu d_b m}{4k} &
	\leq (1 + \beta) \frac{1}{4} \mu d_a d_b m^{k-1}
	\leq |S|/3,
	\end{align*}
	where the last inequality follows from inequality~\eqref{equation:sidorenkito-jws} and $\beta \ll 1$.
	This yields $|S'| \ge |S''| - |S|/3 \ge |S|/3$.
	By construction, the set $S'$ has the following crucial property: each partite $(k-2)$-set in $W_1, \dotsc, W_{k-1}$, is either contained in zero edges of $S'$ or in at least $\mu d_b m / (4k)$ edges of $S'$.
	
	\medskip
	\emph{Step 3: Finding many tight paths in $\mathcal{P}_S$.}
	Finally, we use the properties of $S'$ to construct many labelled partition-respecting paths in $\mathcal{Q}_S$.
	We construct paths in the following way:
	\begin{enumerate}[(i)]
		\item Select $T = \{ x_1, \dotsc, x_{k-2} \} \in \cJ_{W'}$ which is contained in at least $(\mu/4) d_b m$ edges in $S'$.
		\item Then, choose $x_{k-1}$ such that $\{ x_{2}, \dotsc, x_{k-1} \} \in S'$ and $x_{k-1}$ is not in $T$.
		\item Finally, for each $i = k, \dotsc, 2k-2$, choose $x_{i}$ such that $\{ x_{i-k+3}, \dotsc, x_{i} \} \in S'$ and $x_i$ is not in $\{ x_1, \dotsc, x_{i-1} \}$.
	\end{enumerate}
	This will construct a tight path on $2k-2$ vertices such that each edge in the $(k-1)$-th level is in $S'$, and thus in $S$.
	Thus it constructs a path in $\mathcal{Q}_S$.
	By counting how many choices we have at every step we will get a lower bound on the size of $|\mathcal{Q}_S|$.
	
	\begin{enumerate}[(i)]
		\item We have at least $(\mu/13) d_a m^{k-2}$ options for $T$ in the first step.
		To see this, let $G \subseteq \cJ_{W'}$ be the set of bad choices, namely the $(k-2)$-sets which are contained in less than $(\mu/4) d_b m$ edges in $S'$.
		Note that by construction every edge in $S'$ contains exactly one set of $\cJ_{W'}$ that is not in $B_{W'}$.
		Thus we have \[ \frac{|S|}{3} \leq |S'| = \sum_{T \in \cJ_{W'}} \deg_{S'}(T) \leq |G| \frac{\mu}{4} d_b m + (|\cJ_{W'}| - |G|) d_b m (1 + \beta), \]
		then rearranging and using inequalities~\eqref{equation:sidorenkito-jww},~\eqref{equation:sidorenkito-jws} and $\beta \ll \mu$ gives that $|G| \leq (1-\beta) (1 - \mu/12) d_a m^{k-2}$, and therefore the number of good choices is at least $|\cJ_{W'}| - |G| \ge (\mu/13) d_a m^{k-2}$, as required.
		
		\item We have at least $(\mu/4) d_b m$ choices for $x_{k-1}$, by the choice of $T$.
		
		\item We have at least $\mu d_b m / (8k)$ choices for $x_i$ in each step $i = k, \dotsc, 2k-2$.
		Indeed, by construction $\{ x_{i - k + 1}, \dotsc, x_{i-1} \}$ is a $(k-2)$-set contained in an edge of $S'$.
		By the crucial property of $S'$, there exist at least $\mu d_b m / (4k)$ vertices $x'_i$ which joined to the $(k-2)$-set form an edge in $S'$.
		Among those possibilities, surely at least $\mu d_b m / (8k)$ are different from all of our previous choices and thus are valid.
	\end{enumerate}
	
	Since each different choice in any step gives a different path, we deduce that the number of paths in $\mathcal{Q}_S$ must be at least \[ \left( \frac{\mu}{13} d_a m^{k-2} \right) \left( \frac{\mu}{4} d_b m \right) \left( \frac{\mu}{8k} d_b m \right)^{k-1} \ge \left( \frac{\mu}{8k} \right)^{k+1} d_a d_b^{k} m^{2k-2} \ge \frac{1}{2} \left( \frac{\mu}{8k} \right)^{k+1} |\mathcal{Q}_\cJ|, \]
	where the last inequality follows from inequality~\eqref{equation:sidorenkito-pj} and $\beta \ll \mu, 1/k$.
\end{proof}

\section{Building the absorbing path} \label{sec:absorbing}

The aim of this section is to show the Absorption Lemma (Lemma~\ref{lem:absorption}).
The standard approach to prove such a lemma for a graph $G$ can be sketched as follows.
We begin by defining absorbing gadgets, which are small subgraphs of $G$ suitable to absorb a particular set $T$ of $k$ vertices.
Next, we show that, for every possible $k$-set $T$, the absorber gadgets which are suitable to absorb $T$ are numerous.
Because of this, we can extract a small family of vertex-disjoint paths which, for every $k$-set of vertices $T$, contains many absorbing gadgets which are suitable for $T$.
Such a family is obtained with high probability by including each gadget in the family independently at random.
Connecting all these gadgets up yields the desired absorbing path $P$.

Our proof follows the above outlined steps together with a few modifications owing to the setting of regular setups.
For instance, we have to ensure that the gadgets are well-integrated in the regular complexes and extend along the reduced graph.
Moreover, due to constrains in the constant hierarchy, we will only by able to find and connect $\Omega(n/t)$ absorbing gadgets at a time, where $n$ is the number of vertices of $G$ and $t$ is the number of clusters.
We therefore have to iterate the process of selecting and connecting gadgets about $1/t$ times to obtain an absorbing path that can absorb $\Omega(n)$ vertices.

The rest of this section is organised as follows.
In the following, subsection, we introduce the definition of absorbing gadgets in the setting of regular setups and give some bounds on their number.
In Subsection~\ref{sec:locally-counting-absorbers}, we count how many of these gadgets are suitable to absorb a particular set of $k$ vertices.
In Subsection~\ref{sec:short-absorbing-paths}, it is shown that one can select a set of well-distributed gadgets, which is able to absorb a small number of arbitrary sets of $k$ vertices.
Finally, in Subsection~\ref{sec:proof-absorption-lemma}, we give a proof of the Absorption Lemma.

\subsection{Globally counting absorbing gadgets}

In this subsection we define absorbing gadgets, explain how to integrate them into a regular setup and estimate their number under suitable degree conditions.

We begin by defining the basic absorbing gadget which will be used to build a full absorbing path for $k$-sets of vertices.
In the following, when concatenating two paths $P$ and $Q$ with a vertex $b$, we omit the parenthesis in $P(b)Q$ and write $PbQ$ instead.
\begin{definition}[Absorbing gadget]\label{def:absorbing-gadget}
	Let $T = \{ t_1, \dotsc, t_k \}$ be a $k$-set of vertices of $G$.
	We say that a subgraph $F \subseteq G$ is an \emph{absorbing gadget for $T$} if $F = A \cup B \cup C \cup \bigcup_{i=1}^k (P_i \cup Q_i)$, where
	\begin{enumerate}[(i)]
		\item $A, B, C, P_1, Q_1,\dots,P_k,Q_k$ are pairwise disjoint tight paths and also disjoint of $T$,
		\item $A, B, C$ have $k$ vertices each, and $AC$ and $ABC$ are tight paths and
		\item for $B = (b_1, \dotsc, b_k)$ and every  $1 \leq i \leq k$, the paths $P_i$ and $Q_i$ have $k-1$ vertices each, and both $P_i b_i Q_i$ and $P_i t_i Q_i$ form tight paths on $2k-1$ vertices.
	\end{enumerate}
\end{definition}
Note that an absorbing gadget $F$ spans $2k+k(2k-1) = k(2k+1)$ vertices.
See Figure~\ref{figure:absorber-full} for a drawing of this structure for $3$-uniform graphs.

To absorb $T$ using an absorbing gadget for $X$, suppose we can find a tight path $P$ which contains as subpaths $AC$ and $P_i b_i Q_i$ for all $1 \leq i \leq k$.
If the need to absorb $T$ arises, then we substitute the segment $AC$ in $P$ by $ABC$ and for all $1 \leq i \leq k$, we substitute the segment $P_i b_i Q_i$ by $P_i t_i Q_i$, to obtain a path $P'$.
Thus $P'$ will be a path with the same ends as $P$, but $V(P') = V(P) \cup T$.
This absorption process is visualised in Figure~\ref{figure:absorber-before} and~\ref{figure:absorber-after}.


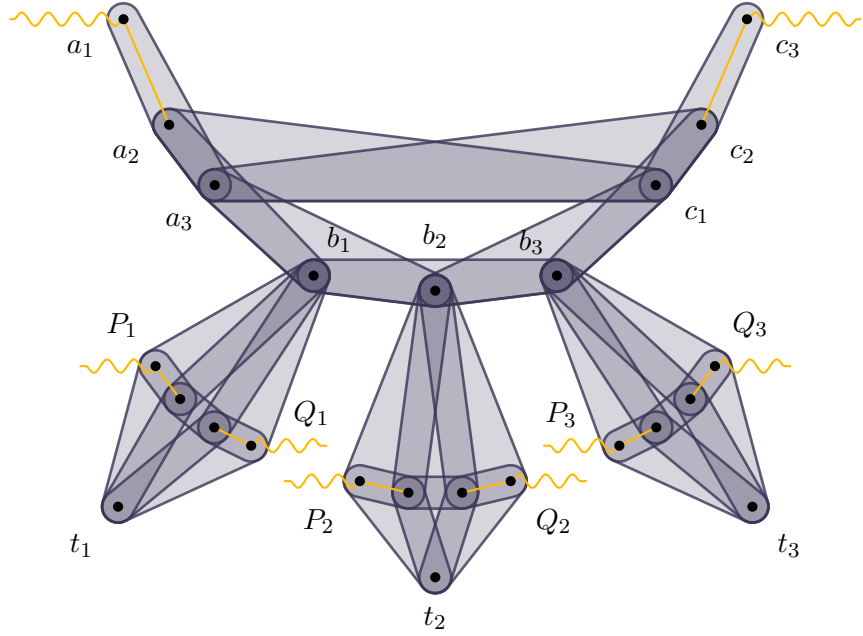
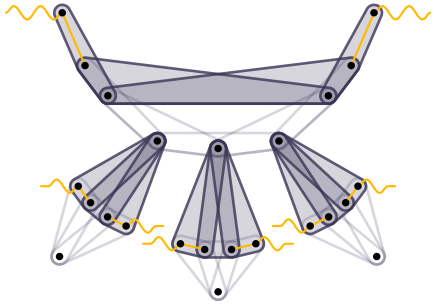
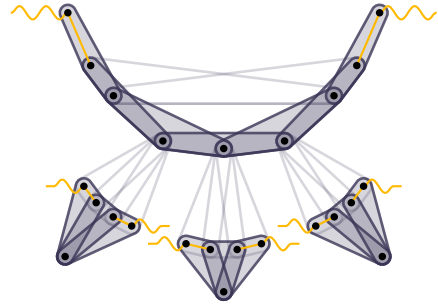
\begin{figure}
	\begin{subfigure}[b]{\textwidth}
		\centering
		\begin{tikzpicture}[thick,scale=1]
		\coordinate (b1) at (-1.6,0);
		\coordinate (b2) at (0,-0.2);
		\coordinate (b3) at (1.6,0);
		
		\coordinate (a1) at (-3.5-0.6,4-0.6);
		\coordinate (a2) at (-3.5,2.6-0.6);
		\coordinate (a3) at (-3.5+0.6,1.8-0.6);
		
		\coordinate (c1) at (3.5-0.6,1.8-0.6);
		\coordinate (c2) at (3.5,2.6-0.6);
		\coordinate (c3) at (3.5+0.6,4-0.6);
		
		\coordinate (x2) at (0,-4);
		\coordinate (p21) at (-110:2.9);
		\coordinate (p22) at (-97:2.9);
		\coordinate (p23) at (-83:2.9);
		\coordinate (p24) at (-70:2.9);
		
		\coordinate (x1) at ($(b1)+(-90-40:4)$);
		\coordinate (p11) at ($(b1)+(-110-40:2.4)$);
		\coordinate (p12) at ($(b1)+(-97-40:2.4)$);
		\coordinate (p13) at ($(b1)+(-83-40:2.4)$);
		\coordinate (p14) at ($(b1)+(-70-40:2.4)$);
		
		\coordinate (x3) at ($(b3)+(-90+40:4)$);
		\coordinate (p31) at ($(b3)+(-110+40:2.4)$);
		\coordinate (p32) at ($(b3)+(-97+40:2.4)$);
		\coordinate (p33) at ($(b3)+(-83+40:2.4)$);
		\coordinate (p34) at ($(b3)+(-70+40:2.4)$);
		
		\coordinate (a1') at ($(a1)-(1.5,0)$);
		\coordinate (c3') at ($(c3)+(1.5,0)$);
		\coordinate (p11') at ($(p11)-(1,0)$);
		\coordinate (p14') at ($(p14)+(1,0)$);
		\coordinate (p21') at ($(p21)-(1,0)$);
		\coordinate (p24') at ($(p24)+(1,0)$);
		\coordinate (p31') at ($(p31)-(1,0)$);
		\coordinate (p34') at ($(p34)+(1,0)$);
		
		\triple{(a3)}{(a2)}{(a1)}{6pt}{1pt}{DarkDesaturatedBlue,opacity=0.8}{DarkDesaturatedBlue,opacity=0.2};
		\triple{(a3)}{(a2)}{(b1)}{6pt}{1pt}{DarkDesaturatedBlue,opacity=0.8}{DarkDesaturatedBlue,opacity=0.2};
		\triple{(b2)}{(b1)}{(a3)}{6pt}{1pt}{DarkDesaturatedBlue,opacity=0.8}{DarkDesaturatedBlue,opacity=0.2};
		\triple{(b3)}{(b2)}{(b1)}{6pt}{1pt}{DarkDesaturatedBlue,opacity=0.8}{DarkDesaturatedBlue,opacity=0.2};
		\triple{(b3)}{(b2)}{(c1)}{6pt}{1pt}{DarkDesaturatedBlue,opacity=0.8}{DarkDesaturatedBlue,opacity=0.2};
		\triple{(b3)}{(c2)}{(c1)}{6pt}{1pt}{DarkDesaturatedBlue,opacity=0.8}{DarkDesaturatedBlue,opacity=0.2};
		\triple{(c3)}{(c2)}{(c1)}{6pt}{1pt}{DarkDesaturatedBlue,opacity=0.8}{DarkDesaturatedBlue,opacity=0.2};
		\triple{(a3)}{(a2)}{(c1)}{6pt}{1pt}{DarkDesaturatedBlue,opacity=0.8}{DarkDesaturatedBlue,opacity=0.2};
		\triple{(a3)}{(c2)}{(c1)}{6pt}{1pt}{DarkDesaturatedBlue,opacity=0.8}{DarkDesaturatedBlue,opacity=0.2};
		
		\triple{(b2)}{(p22)}{(p21)}{6pt}{1pt}{DarkDesaturatedBlue,opacity=0.8}{DarkDesaturatedBlue,opacity=0.2};
		\triple{(b2)}{(p23)}{(p22)}{6pt}{1pt}{DarkDesaturatedBlue,opacity=0.8}{DarkDesaturatedBlue,opacity=0.2};
		\triple{(b2)}{(p24)}{(p23)}{6pt}{1pt}{DarkDesaturatedBlue,opacity=0.8}{DarkDesaturatedBlue,opacity=0.2};
		\triple{(x2)}{(p21)}{(p22)}{6pt}{1pt}{DarkDesaturatedBlue,opacity=0.8}{DarkDesaturatedBlue,opacity=0.2};
		\triple{(x2)}{(p22)}{(p23)}{6pt}{1pt}{DarkDesaturatedBlue,opacity=0.8}{DarkDesaturatedBlue,opacity=0.2};
		\triple{(x2)}{(p23)}{(p24)}{6pt}{1pt}{DarkDesaturatedBlue,opacity=0.8}{DarkDesaturatedBlue,opacity=0.2};
		
		\triple{(b1)}{(p12)}{(p11)}{6pt}{1pt}{DarkDesaturatedBlue,opacity=0.8}{DarkDesaturatedBlue,opacity=0.2};
		\triple{(b1)}{(p13)}{(p12)}{6pt}{1pt}{DarkDesaturatedBlue,opacity=0.8}{DarkDesaturatedBlue,opacity=0.2};
		\triple{(b1)}{(p14)}{(p13)}{6pt}{1pt}{DarkDesaturatedBlue,opacity=0.8}{DarkDesaturatedBlue,opacity=0.2};
		\triple{(x1)}{(p11)}{(p12)}{6pt}{1pt}{DarkDesaturatedBlue,opacity=0.8}{DarkDesaturatedBlue,opacity=0.2};
		\triple{(x1)}{(p12)}{(p13)}{6pt}{1pt}{DarkDesaturatedBlue,opacity=0.8}{DarkDesaturatedBlue,opacity=0.2};
		\triple{(x1)}{(p13)}{(p14)}{6pt}{1pt}{DarkDesaturatedBlue,opacity=0.8}{DarkDesaturatedBlue,opacity=0.2};
		
		\triple{(b3)}{(p32)}{(p31)}{6pt}{1pt}{DarkDesaturatedBlue,opacity=0.8}{DarkDesaturatedBlue,opacity=0.2};			\triple{(b3)}{(p33)}{(p32)}{6pt}{1pt}{DarkDesaturatedBlue,opacity=0.8}{DarkDesaturatedBlue,opacity=0.2};
		\triple{(b3)}{(p34)}{(p33)}{6pt}{1pt}{DarkDesaturatedBlue,opacity=0.8}{DarkDesaturatedBlue,opacity=0.2};
		\triple{(x3)}{(p31)}{(p32)}{6pt}{1pt}{DarkDesaturatedBlue,opacity=0.8}{DarkDesaturatedBlue,opacity=0.2};
		\triple{(x3)}{(p32)}{(p33)}{6pt}{1pt}{DarkDesaturatedBlue,opacity=0.8}{DarkDesaturatedBlue,opacity=0.2};
		\triple{(x3)}{(p33)}{(p34)}{6pt}{1pt}{DarkDesaturatedBlue,opacity=0.8}{DarkDesaturatedBlue,opacity=0.2};
		
		\node at (a1) [label={[label distance=3pt]210:$a_1$}] {};
		\node at (a2) [label={[label distance=3pt]210:$a_2$}] {};
		\node at (a3) [label={[label distance=3pt]240:$a_3$}] {};
		\node at (b1) [label={[label distance=1pt]80:$b_1$}] {};
		\node at (b2) [label={[label distance=8pt]90:$b_2$}] {};
		\node at (b3) [label={[label distance=0pt]100:$b_3$}] {};
		\node at (c1) [label={[label distance=3pt]330:$c_1$}] {};
		\node at (c2) [label={[label distance=3pt]330:$c_2$}] {};
		\node at (c3) [label={[label distance=3pt]330:$c_3$}] {};
		\node at (x1) [label={[label distance=3pt]-130:$t_1$}] {};
		\node at (x2) [label={[label distance=3pt]-90:$t_2$}] {};
		\node at (x3) [label={[label distance=3pt]-50:$t_3$}] {};
		
		\node at (p11) [label={[label distance=3pt]110:$P_1$}] {};
		\node at (p14) [label={[label distance=8pt]15:$Q_1$}] {};
		\node at (p31) [label={[label distance=8pt]165:$P_3$}] {};
		\node at (p34) [label={[label distance=3pt]70:$Q_3$}] {};
		\node at (p21) [label={[label distance=3pt]230:$P_2$}] {};
		\node at (p24) [label={[label distance=3pt]-50:$Q_2$}] {};
		
		\path [draw=PureOrange,snake it] (a1) -- (a1');
		\path [draw=PureOrange] (a1) -- (a2);
		\path [draw=PureOrange,snake it] (c3) -- (c3');
		\path [draw=PureOrange] (c2) -- (c3);
		
		\path [draw=PureOrange,snake it] (p11) -- (p11');
		\path [draw=PureOrange] (p11) -- (p12);
		\path [draw=PureOrange,snake it] (p21) -- (p21');
		\path [draw=PureOrange] (p21) -- (p22);
		\path [draw=PureOrange,snake it] (p31) -- (p31');
		\path [draw=PureOrange] (p31) -- (p32);
		\path [draw=PureOrange,snake it] (p14) -- (p14');
		\path [draw=PureOrange] (p13) -- (p14);
		\path [draw=PureOrange,snake it] (p24) -- (p24');
		\path [draw=PureOrange] (p23) -- (p24);
		\path [draw=PureOrange,snake it] (p34) -- (p34');
		\path [draw=PureOrange] (p33) -- (p34);
		
		\draw (a1') node{};
		\draw (c3') node{};
		
		\tikzstyle{every node}=[circle,draw,fill,inner sep=0pt,minimum width=3pt]
		
		\draw (c1) node{};
		\draw (c2) node{};
		\draw (c3) node{};
		\draw (b1) node{};
		\draw (b2) node{};
		\draw (b3) node{};
		\draw (a1) node{};
		\draw (a2) node{};
		\draw (a3) node{};
		\draw (p21) node{};
		\draw (p22) node{};
		\draw (p23) node{};
		\draw (p24) node{};
		\draw (p11) node{};
		\draw (p12) node{};
		\draw (p13) node{};
		\draw (p14) node{};
		\draw (p31) node{};
		\draw (p32) node{};
		\draw (p33) node{};
		\draw (p34) node{};
		\draw (x1) node{};
		\draw (x2) node{};
		\draw (x3) node{};
		\end{tikzpicture}
		\caption{The full absorbing gadget.}
		\label{figure:absorber-full}
	\end{subfigure}
	\par\bigskip
	\begin{subfigure}[b]{0.45\textwidth}
		\centering
		\begin{tikzpicture}[thick,scale=0.5]
		\coordinate (b1) at (-1.6,0);
		\coordinate (b2) at (0,-0.2);
		\coordinate (b3) at (1.6,0);
		
		\coordinate (a1) at (-3.5-0.6,4-0.6);
		\coordinate (a2) at (-3.5,2.6-0.6);
		\coordinate (a3) at (-3.5+0.6,1.8-0.6);
		
		\coordinate (c1) at (3.5-0.6,1.8-0.6);
		\coordinate (c2) at (3.5,2.6-0.6);
		\coordinate (c3) at (3.5+0.6,4-0.6);
		
		\coordinate (x2) at (0,-4);
		\coordinate (p21) at (-110:2.9);
		\coordinate (p22) at (-97:2.9);
		\coordinate (p23) at (-83:2.9);
		\coordinate (p24) at (-70:2.9);
		
		\coordinate (x1) at ($(b1)+(-90-40:4)$);
		\coordinate (p11) at ($(b1)+(-110-40:2.4)$);
		\coordinate (p12) at ($(b1)+(-97-40:2.4)$);
		\coordinate (p13) at ($(b1)+(-83-40:2.4)$);
		\coordinate (p14) at ($(b1)+(-70-40:2.4)$);
		
		\coordinate (x3) at ($(b3)+(-90+40:4)$);
		\coordinate (p31) at ($(b3)+(-110+40:2.4)$);
		\coordinate (p32) at ($(b3)+(-97+40:2.4)$);
		\coordinate (p33) at ($(b3)+(-83+40:2.4)$);
		\coordinate (p34) at ($(b3)+(-70+40:2.4)$);
		
		\coordinate (a1') at ($(a1)-(1.5,0)$);
		\coordinate (c3') at ($(c3)+(1.5,0)$);
		\coordinate (p11') at ($(p11)-(1,0)$);
		\coordinate (p14') at ($(p14)+(1,0)$);
		\coordinate (p21') at ($(p21)-(1,0)$);
		\coordinate (p24') at ($(p24)+(1,0)$);
		\coordinate (p31') at ($(p31)-(1,0)$);
		\coordinate (p34') at ($(p34)+(1,0)$);
		
		\triple{(a3)}{(a2)}{(a1)}{6pt}{1pt}{DarkDesaturatedBlue,opacity=0.8}{DarkDesaturatedBlue,opacity=0.2};
		\triple{(a3)}{(a2)}{(b1)}{6pt}{1pt}{DarkDesaturatedBlue,opacity=0.2}{DarkDesaturatedBlue,opacity=0};
		\triple{(b2)}{(b1)}{(a3)}{6pt}{1pt}{DarkDesaturatedBlue,opacity=0.2}{DarkDesaturatedBlue,opacity=0};
		\triple{(b3)}{(b2)}{(b1)}{6pt}{1pt}{DarkDesaturatedBlue,opacity=0.2}{DarkDesaturatedBlue,opacity=0};
		\triple{(b3)}{(b2)}{(c1)}{6pt}{1pt}{DarkDesaturatedBlue,opacity=0.2}{DarkDesaturatedBlue,opacity=0};
		\triple{(b3)}{(c2)}{(c1)}{6pt}{1pt}{DarkDesaturatedBlue,opacity=0.2}{DarkDesaturatedBlue,opacity=0};
		\triple{(c3)}{(c2)}{(c1)}{6pt}{1pt}{DarkDesaturatedBlue,opacity=0.8}{DarkDesaturatedBlue,opacity=0.2};
		\triple{(a3)}{(a2)}{(c1)}{6pt}{1pt}{DarkDesaturatedBlue,opacity=0.8}{DarkDesaturatedBlue,opacity=0.2};
		\triple{(a3)}{(c2)}{(c1)}{6pt}{1pt}{DarkDesaturatedBlue,opacity=0.8}{DarkDesaturatedBlue,opacity=0.2};
		
		\triple{(b2)}{(p22)}{(p21)}{6pt}{1pt}{DarkDesaturatedBlue,opacity=0.8}{DarkDesaturatedBlue,opacity=0.2};
		\triple{(b2)}{(p23)}{(p22)}{6pt}{1pt}{DarkDesaturatedBlue,opacity=0.8}{DarkDesaturatedBlue,opacity=0.2};
		\triple{(b2)}{(p24)}{(p23)}{6pt}{1pt}{DarkDesaturatedBlue,opacity=0.8}{DarkDesaturatedBlue,opacity=0.2};
		\triple{(x2)}{(p21)}{(p22)}{6pt}{1pt}{DarkDesaturatedBlue,opacity=0.2}{DarkDesaturatedBlue,opacity=0};
		\triple{(x2)}{(p22)}{(p23)}{6pt}{1pt}{DarkDesaturatedBlue,opacity=0.2}{DarkDesaturatedBlue,opacity=0};
		\triple{(x2)}{(p23)}{(p24)}{6pt}{1pt}{DarkDesaturatedBlue,opacity=0.2}{DarkDesaturatedBlue,opacity=0};
		
		\triple{(b1)}{(p12)}{(p11)}{6pt}{1pt}{DarkDesaturatedBlue,opacity=0.8}{DarkDesaturatedBlue,opacity=0.2};
		\triple{(b1)}{(p13)}{(p12)}{6pt}{1pt}{DarkDesaturatedBlue,opacity=0.8}{DarkDesaturatedBlue,opacity=0.2};
		\triple{(b1)}{(p14)}{(p13)}{6pt}{1pt}{DarkDesaturatedBlue,opacity=0.8}{DarkDesaturatedBlue,opacity=0.2};
		\triple{(x1)}{(p11)}{(p12)}{6pt}{1pt}{DarkDesaturatedBlue,opacity=0.2}{DarkDesaturatedBlue,opacity=0};
		\triple{(x1)}{(p12)}{(p13)}{6pt}{1pt}{DarkDesaturatedBlue,opacity=0.2}{DarkDesaturatedBlue,opacity=0};
		\triple{(x1)}{(p13)}{(p14)}{6pt}{1pt}{DarkDesaturatedBlue,opacity=0.2}{DarkDesaturatedBlue,opacity=0};
		
		\triple{(b3)}{(p32)}{(p31)}{6pt}{1pt}{DarkDesaturatedBlue,opacity=0.8}{DarkDesaturatedBlue,opacity=0.2};
		\triple{(b3)}{(p33)}{(p32)}{6pt}{1pt}{DarkDesaturatedBlue,opacity=0.8}{DarkDesaturatedBlue,opacity=0.2};
		\triple{(b3)}{(p34)}{(p33)}{6pt}{1pt}{DarkDesaturatedBlue,opacity=0.8}{DarkDesaturatedBlue,opacity=0.2};
		\triple{(x3)}{(p31)}{(p32)}{6pt}{1pt}{DarkDesaturatedBlue,opacity=0.2}{DarkDesaturatedBlue,opacity=0};
		\triple{(x3)}{(p32)}{(p33)}{6pt}{1pt}{DarkDesaturatedBlue,opacity=0.2}{DarkDesaturatedBlue,opacity=0};
		\triple{(x3)}{(p33)}{(p34)}{6pt}{1pt}{DarkDesaturatedBlue,opacity=0.2}{DarkDesaturatedBlue,opacity=0};
		
		\path [draw=PureOrange,snake it] (a1) -- (a1');
		\path [draw=PureOrange] (a1) -- (a2);
		\path [draw=PureOrange,snake it] (c3) -- (c3');
		\path [draw=PureOrange] (c2) -- (c3);
		
		\path [draw=PureOrange,snake it] (p11) -- (p11');
		\path [draw=PureOrange] (p11) -- (p12);
		\path [draw=PureOrange,snake it] (p21) -- (p21');
		\path [draw=PureOrange] (p21) -- (p22);
		\path [draw=PureOrange,snake it] (p31) -- (p31');
		\path [draw=PureOrange] (p31) -- (p32);
		\path [draw=PureOrange,snake it] (p14) -- (p14');
		\path [draw=PureOrange] (p13) -- (p14);
		\path [draw=PureOrange,snake it] (p24) -- (p24');
		\path [draw=PureOrange] (p23) -- (p24);
		\path [draw=PureOrange,snake it] (p34) -- (p34');
		\path [draw=PureOrange] (p33) -- (p34);
		
		\draw (a1') node{};
		\draw (c3') node{};
		
		\tikzstyle{every node}=[circle,draw,fill,inner sep=0pt,minimum width=2pt]
		
		\draw (c1) node{};
		\draw (c2) node{};
		\draw (c3) node{};
		\draw (b1) node{};
		\draw (b2) node{};
		\draw (b3) node{};
		\draw (a1) node{};
		\draw (a2) node{};
		\draw (a3) node{};
		\draw (p21) node{};
		\draw (p22) node{};
		\draw (p23) node{};
		\draw (p24) node{};
		\draw (p11) node{};
		\draw (p12) node{};
		\draw (p13) node{};
		\draw (p14) node{};
		\draw (p31) node{};
		\draw (p32) node{};
		\draw (p33) node{};
		\draw (p34) node{};
		\draw (x1) node{};
		\draw (x2) node{};
		\draw (x3) node{};
		\end{tikzpicture}
		\caption{Before the absorption.}
		\label{figure:absorber-before}
	\end{subfigure}
	\begin{subfigure}[b]{0.45\textwidth}
		\centering
		\begin{tikzpicture}[thick,scale=0.5]
		\coordinate (b1) at (-1.6,0);
		\coordinate (b2) at (0,-0.2);
		\coordinate (b3) at (1.6,0);
		
		\coordinate (a1) at (-3.5-0.6,4-0.6);
		\coordinate (a2) at (-3.5,2.6-0.6);
		\coordinate (a3) at (-3.5+0.6,1.8-0.6);
		
		\coordinate (c1) at (3.5-0.6,1.8-0.6);
		\coordinate (c2) at (3.5,2.6-0.6);
		\coordinate (c3) at (3.5+0.6,4-0.6);
		
		\coordinate (x2) at (0,-4);
		\coordinate (p21) at (-110:2.9);
		\coordinate (p22) at (-97:2.9);
		\coordinate (p23) at (-83:2.9);
		\coordinate (p24) at (-70:2.9);
		
		\coordinate (x1) at ($(b1)+(-90-40:4)$);
		\coordinate (p11) at ($(b1)+(-110-40:2.4)$);
		\coordinate (p12) at ($(b1)+(-97-40:2.4)$);
		\coordinate (p13) at ($(b1)+(-83-40:2.4)$);
		\coordinate (p14) at ($(b1)+(-70-40:2.4)$);
		
		\coordinate (x3) at ($(b3)+(-90+40:4)$);
		\coordinate (p31) at ($(b3)+(-110+40:2.4)$);
		\coordinate (p32) at ($(b3)+(-97+40:2.4)$);
		\coordinate (p33) at ($(b3)+(-83+40:2.4)$);
		\coordinate (p34) at ($(b3)+(-70+40:2.4)$);
		
		\coordinate (a1') at ($(a1)-(1.5,0)$);
		\coordinate (c3') at ($(c3)+(1.5,0)$);
		\coordinate (p11') at ($(p11)-(1,0)$);
		\coordinate (p14') at ($(p14)+(1,0)$);
		\coordinate (p21') at ($(p21)-(1,0)$);
		\coordinate (p24') at ($(p24)+(1,0)$);
		\coordinate (p31') at ($(p31)-(1,0)$);
		\coordinate (p34') at ($(p34)+(1,0)$);
		
		\triple{(a3)}{(a2)}{(a1)}{6pt}{1pt}{DarkDesaturatedBlue,opacity=0.8}{DarkDesaturatedBlue,opacity=0.2};
		\triple{(a3)}{(a2)}{(b1)}{6pt}{1pt}{DarkDesaturatedBlue,opacity=0.8}{DarkDesaturatedBlue,opacity=0.2};
		\triple{(b2)}{(b1)}{(a3)}{6pt}{1pt}{DarkDesaturatedBlue,opacity=0.8}{DarkDesaturatedBlue,opacity=0.2};
		\triple{(b3)}{(b2)}{(b1)}{6pt}{1pt}{DarkDesaturatedBlue,opacity=0.8}{DarkDesaturatedBlue,opacity=0.2};
		\triple{(b3)}{(b2)}{(c1)}{6pt}{1pt}{DarkDesaturatedBlue,opacity=0.8}{DarkDesaturatedBlue,opacity=0.2};
		\triple{(b3)}{(c2)}{(c1)}{6pt}{1pt}{DarkDesaturatedBlue,opacity=0.8}{DarkDesaturatedBlue,opacity=0.2};
		\triple{(c3)}{(c2)}{(c1)}{6pt}{1pt}{DarkDesaturatedBlue,opacity=0.8}{DarkDesaturatedBlue,opacity=0.2};
		\triple{(a3)}{(a2)}{(c1)}{6pt}{1pt}{DarkDesaturatedBlue,opacity=0.2}{DarkDesaturatedBlue,opacity=0};
		\triple{(a3)}{(c2)}{(c1)}{6pt}{1pt}{DarkDesaturatedBlue,opacity=0.2}{DarkDesaturatedBlue,opacity=0};
		
		\triple{(b2)}{(p22)}{(p21)}{6pt}{1pt}{DarkDesaturatedBlue,opacity=0.2}{DarkDesaturatedBlue,opacity=0};
		\triple{(b2)}{(p23)}{(p22)}{6pt}{1pt}{DarkDesaturatedBlue,opacity=0.2}{DarkDesaturatedBlue,opacity=0};
		\triple{(b2)}{(p24)}{(p23)}{6pt}{1pt}{DarkDesaturatedBlue,opacity=0.2}{DarkDesaturatedBlue,opacity=0};
		\triple{(x2)}{(p21)}{(p22)}{6pt}{1pt}{DarkDesaturatedBlue,opacity=0.8}{DarkDesaturatedBlue,opacity=0.2};
		\triple{(x2)}{(p22)}{(p23)}{6pt}{1pt}{DarkDesaturatedBlue,opacity=0.8}{DarkDesaturatedBlue,opacity=0.2};
		\triple{(x2)}{(p23)}{(p24)}{6pt}{1pt}{DarkDesaturatedBlue,opacity=0.8}{DarkDesaturatedBlue,opacity=0.2};
		
		\triple{(b1)}{(p12)}{(p11)}{6pt}{1pt}{DarkDesaturatedBlue,opacity=0.2}{DarkDesaturatedBlue,opacity=0};
		\triple{(b1)}{(p13)}{(p12)}{6pt}{1pt}{DarkDesaturatedBlue,opacity=0.2}{DarkDesaturatedBlue,opacity=0};
		\triple{(b1)}{(p14)}{(p13)}{6pt}{1pt}{DarkDesaturatedBlue,opacity=0.2}{DarkDesaturatedBlue,opacity=0};
		\triple{(x1)}{(p11)}{(p12)}{6pt}{1pt}{DarkDesaturatedBlue,opacity=0.8}{DarkDesaturatedBlue,opacity=0.2};
		\triple{(x1)}{(p12)}{(p13)}{6pt}{1pt}{DarkDesaturatedBlue,opacity=0.8}{DarkDesaturatedBlue,opacity=0.2};
		\triple{(x1)}{(p13)}{(p14)}{6pt}{1pt}{DarkDesaturatedBlue,opacity=0.8}{DarkDesaturatedBlue,opacity=0.2};
		
		\triple{(b3)}{(p32)}{(p31)}{6pt}{1pt}{DarkDesaturatedBlue,opacity=0.2}{DarkDesaturatedBlue,opacity=0};
		\triple{(b3)}{(p33)}{(p32)}{6pt}{1pt}{DarkDesaturatedBlue,opacity=0.2}{DarkDesaturatedBlue,opacity=0};
		\triple{(b3)}{(p34)}{(p33)}{6pt}{1pt}{DarkDesaturatedBlue,opacity=0.2}{DarkDesaturatedBlue,opacity=0};
		\triple{(x3)}{(p31)}{(p32)}{6pt}{1pt}{DarkDesaturatedBlue,opacity=0.8}{DarkDesaturatedBlue,opacity=0.2};
		\triple{(x3)}{(p32)}{(p33)}{6pt}{1pt}{DarkDesaturatedBlue,opacity=0.8}{DarkDesaturatedBlue,opacity=0.2};
		\triple{(x3)}{(p33)}{(p34)}{6pt}{1pt}{DarkDesaturatedBlue,opacity=0.8}{DarkDesaturatedBlue,opacity=0.2};
		
		\path [draw=PureOrange,snake it] (a1) -- (a1');
		\path [draw=PureOrange] (a1) -- (a2);
		\path [draw=PureOrange,snake it] (c3) -- (c3');
		\path [draw=PureOrange] (c2) -- (c3);
		
		\path [draw=PureOrange,snake it] (p11) -- (p11');
		\path [draw=PureOrange] (p11) -- (p12);
		\path [draw=PureOrange,snake it] (p21) -- (p21');
		\path [draw=PureOrange] (p21) -- (p22);
		\path [draw=PureOrange,snake it] (p31) -- (p31');
		\path [draw=PureOrange] (p31) -- (p32);
		\path [draw=PureOrange,snake it] (p14) -- (p14');
		\path [draw=PureOrange] (p13) -- (p14);
		\path [draw=PureOrange,snake it] (p24) -- (p24');
		\path [draw=PureOrange] (p23) -- (p24);
		\path [draw=PureOrange,snake it] (p34) -- (p34');
		\path [draw=PureOrange] (p33) -- (p34);
		
		\draw (a1') node{};
		\draw (c3') node{};
		
		\tikzstyle{every node}=[circle,draw,fill,inner sep=0pt,minimum width=2pt]
		
		\draw (c1) node{};
		\draw (c2) node{};
		\draw (c3) node{};
		\draw (b1) node{};
		\draw (b2) node{};
		\draw (b3) node{};
		\draw (a1) node{};
		\draw (a2) node{};
		\draw (a3) node{};
		\draw (p21) node{};
		\draw (p22) node{};
		\draw (p23) node{};
		\draw (p24) node{};
		\draw (p11) node{};
		\draw (p12) node{};
		\draw (p13) node{};
		\draw (p14) node{};
		\draw (p31) node{};
		\draw (p32) node{};
		\draw (p33) node{};
		\draw (p34) node{};
		\draw (x1) node{};
		\draw (x2) node{};
		\draw (x3) node{};
		\end{tikzpicture}
		\caption{After the absorption.}
		\label{figure:absorber-after}
	\end{subfigure}
	\caption{An absorbing gadget for $T = \{t_1, t_2, t_3\}$.
		In this case, $k = 3$.
		The paths $AC = (a_1, a_2, a_3, c_1, c_2, c_3)$ and $ABC = (a_1, a_2, a_3, b_1, b_2, b_3, c_1, c_2, c_3)$ are shown, as well as the paths $P_i, Q_i$ for all $1 \leq i \leq 3$.
		The orange pairs with wiggly lines attached indicate extensible pairs.
		In practice, the absorbing gadget will be embedded in a single path, with all the different substructures connected via the extensible pairs.
		Initially, the paths $AC$ as well as $P_1 b_1 Q_1$, $P_2 b_2 Q_2$ and $P_3 b_3 Q_3$ form part of a single path, as seen in Figure~\ref{figure:absorber-before}.
		To absorb $T$, we substitute each $P_i b_i Q_i$ for $P_i t_i Q_i$, and $AC$ by $ABC$, as seen in Figure~\ref{figure:absorber-after}.
	}
	\label{figure:absorber}
\end{figure}


Next, we relate absorbing structure to the setting of regular slices.

\begin{definition}[$\fS$-gadget, extensible] \label{definition:absorberinslice}
	Suppose $F = A \cup B \cup C \cup \bigcup_{i=1}^k (P_i \cup Q_i)$ is an absorbing gadget with $A = (a_1 ,\dots, a_k)$, $B = (b_1,\dots, b_k)$, $C = (c_1,\dots, c_k)$ and $P_i = (p_{i,1},\dots, p_{i,k-1})$ and $Q_i = (q_{i,1} ,\dots, q_{i,k-1})$ for all $1 \leq i \leq k$.
	Suppose that $\eps, \eps_k, d_2, \dotsc, d_k, c, \nu >0$.
	Let $\mathbf{d} = (d_2, \dotsc, d_k)$ and suppose $\fS = (G, G_\cJ, \cJ, \cP, \ori{H})$ is an oriented $(k,m,t, \eps , \eps_k, r , \mathbf{d})$-regular setup.  
	We say that $F$ is an \emph{$\fS$-gadget} if
	\begin{enumerate}[(G1)]
		\item \label{item:gadget-bulk} there exists an oriented edge $Y = (Y_1, \dotsc, Y_k) \in \ori{H}$ such that $a_i, b_i, c_i \in Y_i$, for all $1 \leq i \leq k$,
		\item \label{item:gadget-order} for all $1 \leq i \leq k$, there exists an ordered $(k-1)$-tuple of clusters $W_i = (W_{i,1}, \dotsc, W_{i,k-1})$ which is consistent with $\ori{H}$ and such that $W_i \cup \{ Y_i \}$ is an (unordered) edge in $H$,
		\item \label{item:gadget-neighbours} $p_{i,j}, q_{i,j} \in W_{i,j}$, for all $1 \leq i \leq k$, $1 \leq j \in k-1$ and
		\item \label{item:gadget-subgraphs} $F \subseteq G_{\cJ}$.
	\end{enumerate}
	We will further say that $F$ is \emph{$(c, \nu)$-extensible} if the following also holds:
	\begin{enumerate}[(G1),resume]
		\item \label{item:gadget-extensible} The path $AC$ is $(c, \nu, V(G))$-extensible both left- and rightwards to the ordered tuple $(Y_1, \dotsc, Y_k)$
		and for each $1 \leq i \leq k$, the path $P_i b_i Q_i$ is $(c, \nu, V(G))$-extensible leftwards to $(W_{i,1}, \dotsc, W_{i,k-1}, Y_i)$ and rightwards to $(Y_i, W_{i,1}, \dotsc, W_{i,k-1})$.
	\end{enumerate}
\end{definition}

Note that under this definition, the absorbing gadget is not associated with any $k$-set: the whole point is that a single absorbing gadget will be useful for many $k$-sets at once.
So the absorbers in this definition should be understood as potential absorbers which moreover lie in the correct locations inside the regular slice.
The latter will allow us to join the gadgets into a single path.
We note that gadgets are required to lie inside the subgraph $G_{\cJ} \subseteq G$ which will be convenient whenever we use the regularity techniques to find them.
Finally, it is worth remarking that the sets to be absorbed by an absorbing gadget will not necessarily be contained in the union of the clusters of $\cJ$.

We now count how many extensible $\fS$-gadgets are inside a given regular setup.
It will be natural to count the gadgets in three steps: first we will count the ways to choose the clusters in $H$ which will house the absorbers (corresponding to the clusters $Y, W_i$ in Definition~\ref{definition:absorberinslice}).
Then we will count the actual gadgets in $G_\cJ$ (corresponding to the choice of $A, B, C, P_i, Q_i$) which can be found inside each those clusters.
Lastly, we show that the number of non-extensible gadgets only form a small proportion of the total of gadgets.

Recall that an {orientation} of a $k$-graph $H$ is a family of ordered $k$-tuples $\{ \ori{e} \in V(H)^k\colon e \in H \}$, one for each edge in $H$, such that $\ori{e}$ consists of an ordering of the vertices of $e$.
Moreover, we defined a $k$-tuple $X$ to be {consistent with $\ori{H}$}, if $ \ori{H}$ contains a cyclic shift of $X$.
Finally, we said that closed tight walk {$W$} in $H$ is {compatible} with $\ori{H}$, if $W$ visits every edge of $H$ at least once, and each oriented edge of $\ori{H}$ appears at least once in ${W}$ as a sequence of $k$ consecutive vertices.

The following definition corresponds to the structures that we look in the reduced graph.

\begin{definition}[Reduced gadget] \label{definition:reducedgadget}
	A \emph{reduced gadget} is a $k$-graph $L$ consisting on $k^2$ labelled vertices $Y \cup W_1 \cup \dotsb \cup W_k$, where $Y = \{ Y_1, \dotsc, Y_k \}$, and $W_i = \{ W_{i,1}, \dotsc, W_{i,{k-1}} \}$ for all $1 \leq i \leq k$, and $k+1$ edges given by $Y$ and $W_i \cup \{ Y_i \}$ for all $1 \leq i \leq k$.
	We will refer to $Y$ as the \emph{core} edge of the reduced gadget, and to $W_1, \dotsc, W_{k}$ as the \emph{peripheral sets} of the reduced gadget.
	
	Given an oriented $k$-graph $\ori{H}$, a \emph{reduced gadget in $\ori{H}$} is a copy of $L$ in $H$ such that the labelling of its core edge $Y$ coincides with the orientation of that edge in $\ori{H}$, and such that for each $1 \leq i \leq k$, the $k$-tuple $(W_{i,1}, \dotsc, W_{i,k-1}, Y_i)$ is consistent with $\ori{H}$.
	We let $\mathscr{L}_{\ori{H}}$ be the set of all the reduced gadgets in $\ori{H}$.
\end{definition}

The next definition capture the whole set of gadgets we can find in a regular setup by looking at all of the possible reduced gadgets.

\begin{definition} \label{definition:globalgadgets}
	Let $\fS = (G, G_\cJ, \cJ, \cP, \ori{H})$ be an oriented regular setup,
	let $c, \nu > 0$
	and let $L \in \mathscr{L}_{\ori{H}}$ be a reduced gadget in $\ori{H}$.
	We define the following sets:
	\begin{enumerate}
		\item $\sF_L$ is the set of $\fS$-gadgets which use precisely the clusters of $L$ as in Definition~\ref{definition:absorberinslice},
		\item $\sF^\ext_L \subseteq \sF_L$ is the set of $\fS$-gadgets which are $(c, \nu)$-extensible,
		\item $\sF$ is the set of all $\fS$-gadgets, and
		\item $\sF^\ext$ is the set of all $(c, \nu)$-extensible $\fS$-gadgets.
	\end{enumerate}
	We remark that the definitions of $\sF_L$, $\sF^\ext_L$, $\sF$ and $\sF^\ext$ also depend on $c, \nu, G, G_{\cJ}, \cJ, \cP$ and $\ori{H}$, but we suppress them from the notation since they will always be clear in the context.
\end{definition}

The following lemma estimates the sizes of $|\sF_L|$ and $|\sF^\ext_L|$ for a given choice of a reduced gadget $L$, and essentially says that those two values are always close to each other.

\begin{lemma} \label{lemma:countinggadgets-extensibleornot}
	Let $k,r,m,t \in \mathbb{N}$, and $d_2,\dots,d_k,\eps,\eps_k, c,\nu,\beta$ be such that
	\begin{align*}
	1/m & \ll 1/r,\eps \ll 1/t, c, \eps_k, d_2,\dots ,d_{k-1}, \\
	c & \ll d_2,\dots ,d_{k-1}, \\
	1/t & \ll \eps_k \ll \beta, d_k \leq 1/k, \quad \text{and} \quad \eps_k \ll \nu.
	\end{align*}
	Let $\mathbf{d} = (d_2, \dotsc, d_k)$,
	let $(G, G_\cJ,\cJ, \cP, \ori{H})$ be an oriented $(k,m,t, \eps , \eps_k, r , \mathbf{d})$-regular setup 
	and let $L \in \mathscr{L}_{\ori{H}}$ be a reduced gadget in $\ori{H}$.
	Let $\cF$ be the $k$-complex corresponding to the down-closure of $k$-graph $F$ as in Definition~\ref{definition:absorberinslice}.
	Then
	\begin{align}
	|\sF_L| = (1 \pm \beta) \left( \prod_{i=1}^{k} d_i^{e_i(\cF)} \right) m^{k(2k+1)}
	\label{equation:countinggadgets-extensibleornot-A}
	\end{align}
	and
	\[ |\sF_L \setminus \sF^\ext_L| \leq \beta |\sF_L|. \]
\end{lemma}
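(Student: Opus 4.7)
The plan is to compute $|\sF_L|$ by a direct application of the Counting Lemma (Lemma~\ref{lemma:counting}), and then bound $|\sF_L \setminus \sF^\ext_L|$ by a union bound over the extensibility requirements of Definition~\ref{definition:absorberinslice}~\ref{item:gadget-extensible}, using Proposition~\ref{prop:extpath} together with the Extension Lemma (Lemma~\ref{lem:extension}).

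First I would verify that the cluster-assignment prescribed by $L$, namely $a_i,b_i,c_i \mapsto Y_i$ and $p_{i,j},q_{i,j} \mapsto W_{i,j}$, defines a homomorphism from $F$, viewed as a $k$-graph on $k(2k+1)$ labelled vertices, into $R$. The key observation is that every $k$-edge along the tight paths $AC$, $ABC$ or $P_i b_i Q_i$ uses $k$ distinct clusters of $L$: a window in $AC$ or $ABC$ uses one vertex from each cluster of the core $Y$, while a window in $P_i b_i Q_i$ uses $b_i$ together with one vertex from each peripheral cluster $W_{i,1},\ldots,W_{i,k-1}$. The respective images are the edges $Y$ and $W_i \cup \{Y_i\}$ of $L \subseteq H \subseteq R$, so the map is indeed a homomorphism. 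An application of Lemma~\ref{lemma:counting} with $H = F$, $\cH = \cF$, $s = k(2k+1)$, and with $\beta/2$ in place of $\beta$, then yields equation~\eqref{equation:countinggadgets-extensibleornot-A}.

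For the second bound, note that an $\fS$-gadget fails to be $(c,\nu)$-extensible precisely when it violates one of $2+2k$ individual requirements: leftward and rightward extensions of $AC$ to the ordered core $(Y_1,\dots,Y_k)$, and analogous extensions of each $P_i b_i Q_i$ to the corresponding ordered peripheral tuple. I would select an auxiliary constant $\beta'$ with $\eps_k \ll \beta' \ll \beta,\nu,d_k,1/k$ and argue as follows for each fixed requirement. By Proposition~\ref{prop:extpath} (applied with $\beta'$), only a $\beta'$-fraction of the $(k-1)$-tuples in the relevant $(k-1)$-partite slice $\cJ_{(\cdot)}$ fail that single extension. For each such bad $(k-1)$-tuple, the induced subcomplex $\cF'$ of $\cF$ on its $k-1$ vertices is a complete $(k-1)$-complex, because those vertices all lie in a single $k$-edge of $F$ and hence, by down-closure, in $\cF$. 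The Extension Lemma then bounds the number of $\sF_L$-gadgets extending that tuple by $(1+\beta')\prod_{i=2}^{k} d_i^{e_i(\cF)-e_i(\cF')} m^{s-(k-1)}$. Combining these two estimates, using Lemma~\ref{lemma:countinglevelsslices} to estimate $|\cJ_{(\cdot)}|$, and comparing with~\eqref{equation:countinggadgets-extensibleornot-A}, one finds that at most $3\beta'|\sF_L|$ gadgets violate any fixed requirement, and a union bound over the $2+2k$ requirements gives $|\sF_L \setminus \sF^\ext_L| \leq 3(2+2k)\beta'|\sF_L| \leq \beta|\sF_L|$.

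The argument is mostly bookkeeping, and I expect the only (mild) obstacles to be twofold: first, carefully verifying the homomorphism property in the initial step for all $k$-edges internal to $F$, especially those crossing between $A$ and $C$ in $AC$ and the middle edges of each $P_i b_i Q_i$ where the ``rotation'' of clusters must be tracked; and second, threading the auxiliary constant $\beta'$ through the nested hierarchies of Proposition~\ref{prop:extpath} and the Extension Lemma so that it is simultaneously much larger than $\eps_k$ and much smaller than $\beta/k$, which is possible only because $\eps_k$ is assumed to be much smaller than each of $\beta$, $\nu$ and $d_k$.
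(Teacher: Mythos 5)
Your computation of $|\sF_L|$ via the Counting Lemma and the verification that the cluster-assignment is a homomorphism are both correct and match the paper. The gap is in the second half.

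You invoke the plain Extension Lemma (Lemma~\ref{lem:extension}) to claim that every bad $(k-1)$-tuple extends to at most $(1+\beta')\prod_{i=2}^{k} d_i^{e_i(\cF)-e_i(\cF')} m^{s-(k-1)}$ copies of $\cF$. But the Extension Lemma only asserts this bound for all but a $\beta'$-fraction of the copies of $\cF'$; it gives no upper bound at all for the exceptional ones, and there is no reason those exceptional tuples should avoid the bad set from Proposition~\ref{prop:extpath}. For the exceptional tuples you would have to fall back on the trivial bound $m^{s-(k-1)}$ per tuple, contributing roughly $\beta'\, |\cJ_{Y'}|\, m^{s-(k-1)} \approx \beta' d_{Y'} m^s$ bad gadgets. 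Comparing this to $|\sF_L| \approx d_F m^s$, you would need $\beta' \ll d_F/d_{Y'} = \prod_{i=2}^{k} d_i^{e_i(\cF)-\binom{k-1}{i}}$, a quantity involving positive powers of $d_2,\dots,d_{k-1}$. The hierarchy of this lemma gives $\eps \ll \eps_k,d_2,\dots,d_{k-1}$ and $\eps_k \ll \beta,d_k$ but provides no relation between $\eps_k$ and $d_2,\dots,d_{k-1}$; since the Extension Lemma forces $\beta' \gg \eps_k$, the needed inequality $\beta' \ll d_F/d_{Y'}$ may simply be unachievable.

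The paper avoids this by a structurally different step: it applies the \emph{Dense} Extension Lemma (Lemma~\ref{lemma:denseextensionlemma}) to the $(k-1)$-complex $\mathcal{F}^-$ inside $\cJ$ alone. The Dense version needs only $\eps \ll \beta_2$, so the exceptional set $B_2$ can be taken of size $\beta_2|\cJ_{Y'}|$ with $\beta_2 \ll \eps_k, d_2,\dots,d_{k-1}$, i.e.\ genuinely smaller than $\eps_k$. For tuples outside $B_2$, the number of copies of $\cF$ in $\cG$ extending them is bounded by the number of copies of $\mathcal{F}^-$ in $\cJ$ extending them (since every partition-respecting copy of $F$ in $\cG$ is supported on a copy of $\mathcal{F}^-$ in $\cJ$), which the Dense Extension Lemma controls. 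The trivial bound for the truly exceptional tuples in $B_2$ is then absorbed precisely because $\beta_2$ can be made as small as needed relative to $\beta_1 d_{F^-\setminus\mathcal{Z}}$. To repair your argument, replace the application of Lemma~\ref{lem:extension} with this two-step bound: Dense Extension Lemma on $\mathcal{F}^-$ in $\cJ$, then the observation that $\cF$-extensions in $\cG$ are dominated by $\mathcal{F}^-$-extensions in $\cJ$.
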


\begin{proof}
	Let $Y = (Y_1, \dotsc, Y_k) \in \ori{H}$ denote the ordered core edge of $L$ and, for each $1 \leq i \leq k$, let $W_i = \{ W_{i,1}, \dotsc, W_{i,k-1} \}$ be the peripheral sets of $L$, ordered  such that $(W_{i,1}, \dotsc, W_{i,k-1}, Y_i)$ is consistent with $\ori{H}$.
	Note that $|V(F)| = k(2k+1)$.
	The bounds on $|\sF_L|$ are given by a straightforward application of the Counting Lemma (Lemma~\ref{lemma:counting}) inside the clusters $Y \cup W_1 \cup \dotsb \cup W_k$, and thus we omit more details.
	
	To prove that $|\sF_L \setminus \sF^\ext_L| \leq \beta |\sF_L|$, we will estimate the number of non-extensible gadgets in $\sF_L$.
	We sketch the proof first.
	Note that $F \in \sF_L$ is extensible if and only if a set of $k+1$ paths contained in $F$ (corresponding to $AC$ and $P_i b_i Q_i$ for $1 \leq i \leq k$) are extensible with respect to certain edges of the reduced graph $H$.
	This means that a set of $2(k+1)$ many $(k-1)$-tuples (two for each path, corresponding to its endpoints) are extensible according to certain edges of the reduced graph.
	To prove the result we show that the proportion of $F \in \cF^\ast_L$ which are spoiled by a single non-extensible $(k-1)$-tuple corresponding to $F$ is small.
	This will follow from an application of Proposition~\ref{prop:extpath} and the Dense Extension Lemma.
	Repeating this bound over all the $(k-1)$-sets that we need to care about, we get the result.
	Now we turn to the details.
	
	We begin by showing that the number of copies $F \in \sF_L$ such that the vertices corresponding to the starting end of $AC$ are not extensible is a small proportion of $|\sF_L|$.
	
	Let $Y' = (Y_1, \dotsc, Y_{k-1})$ and denote the ordered tuples in the $(k-1)$-th level of $\cJ$ in the clusters $\{Y_1, \dotsc, Y_{k-1} \}$  by $\cJ_{Y'} = \cJ_{(Y_1, \dotsc, Y_{k-1})}$.
	Let $d_{Y'} = \prod_{i=2}^{k-1} d_i^{\binom{k-1}{i}}$.
	By Lemma~\ref{lemma:countinglevelsslices} we have
	\begin{align}
	|\cJ_{Y'}| = (1 \pm \beta) d_{Y'} m^{k-1}. \label{equation:countinggadgets-extensibleornot-j}
	\end{align}
	
	Let $\beta_1$ be such that $\eps_k \ll \beta_1 \ll \beta, d_k, 1/k$.
	Let $B_1 \subseteq \cJ_{Y'}$ be the set of $(k-1)$-tuples which are not $(c, \nu, V(G))$-extensible leftwards to $(Y_1, \dotsc, Y_k)$.
	Using Proposition~\ref{prop:extpath} with $\beta_1$ playing the role of $\beta$, we deduce that
	\begin{align}
	|B_1| \leq \beta_1 |\cJ_{Y'}|. \label{equation:countinggadgets-extensibleornot-b1}
	\end{align}
	
	Now we prepare the setting for an application of the Dense Extension Lemma.
	Let $\beta_2$ be such that $\eps \ll \beta_2 \ll \eps_k, d_2, \dotsc, d_{k-1}$.
	Let $\phi\colon V(F) \rightarrow L$ be the obvious hypergraph homomorphism associated with $F$ and the reduced gadget $L$.
	Let $Z \subseteq V(F)$ correspond to the first $k-1$ vertices of the path $AC$ in $F$ (so $Z$ corresponds to the vertices $\{a_1, \dotsc, a_{k-1}\}$).
	Let $\mathcal{F}^-$ be the $(k-1)$-complex generated by the down-closure of $F$ and then removing the $k$-th layer.
	Let $\mathcal{Z} = \mathcal{F}^{-}[Z]$ be the induced subcomplex of $\mathcal{F}^{-}$ in $Z$.
	Note that $\mathcal{Z}$ consists of the down-closure of a $(k-1)$-edge and $\phi(a_i) = Y_i$ for all $1 \le i\le k-1$.
	Thus the labelled partition-respecting copies of $\mathcal{Z}$ in $\cJ$ correspond exactly to $\cJ_{Y'}$.
	Define \[d_{F^{-} \setminus \mathcal Z} = \prod_{i=2}^{k-1} d_i^{e_i(\mathcal{F}^-) - e_i(\mathcal{Z})}.\]
	Let $B_2 \subseteq \cJ_{Y'}$ be the set of $(k-1)$-tuples which do not extend to \[ (1 \pm \beta_2) d_{F^{-} \setminus \mathcal Z} m^{|V(F)| - (k-1)} \] labelled partition-respecting copies of $\mathcal{F}^-$ in $\cJ$.
	An application of the Dense Extension Lemma (Lemma~\ref{lemma:denseextensionlemma}) with $\beta_2$ playing the role of $\beta$ gives that
	\begin{align}
	|B_2| \leq \beta_2 |\cJ_{Y'}|. \label{equation:countinggadgets-extensibleornot-b2}
	\end{align}
	
	From $\beta_2 \ll d_2, \dotsc, d_{k-1}, \eps_k$, we deduce
	\begin{align}
	\beta_1 (1 + \beta_2) d_{F^{-} \setminus \mathcal Z} + \beta_2 \leq 3 \beta_1 d_{F^{-} \setminus \mathcal Z}, \label{equation:countinggadgets-extensibleornot-dversusb1}
	\end{align}
	and from inequality~\eqref{equation:countinggadgets-extensibleornot-A} together with the definition of $d_{Y'}$ and $d_{F^{-} \setminus \mathcal Z}$ we get (since $|V(F)| = k(2k+1)$)
	\begin{align}
	|\sF_L| = (1 \pm \beta) d_k^{e_k(\cF)} d_{F^{-} \setminus \mathcal Z} d_{Y'} m^{|V(F)|}. \label{equation:countinggadgets-extensibleornot-dFdZ}
	\end{align}
	
	Let $\cG = \cJ \cup G_{\cJ}$.
	Say that a labelled partition-respecting copy of $\cF$ in $\cG$ is \emph{good} if the vertices corresponding to $\{a_1, \dotsc, a_{k-1}\}$ are \emph{not} in $B_1 \cup B_2$.
	We will find an upper bound on the number of copies which are not good.
	For every $Z$ in $\cJ_{Y'}$, let $N^*(Z)$ be the number of labelled partition-respecting copies of $\cF$ in $\cG$ which extend $Z$.
	For every $Z \in \cJ_{Y'}$, $N^*(Z)$ is between $0$ and $m^{|V(F)| - (k-1)}$.
	Crucially, for every $Z \in \cJ_{Y'} \setminus B_2$ we also have $N^*(Z) \leq (1 + \beta_2) d_{F^{-} \setminus \mathcal Z} m^{|V(F)| - (k-1)}$ (because each labelled partition-respecting copy of $F$ is supported in a labelled partition-respecting copy of $\mathcal{F}^-$).
	Therefore, the number of copies which are not good is at most
	\begin{align*}
	\sum_{Z \in B_1 \cup B_2} N^*(Z)
	& = \sum_{Z \in B_1 \setminus B_2} N^*(Z) + \sum_{Z \in B_2} N^*(Z) \\
	& \leq \left[ |B_1| (1 + \beta_2) d_{F^{-} \setminus \mathcal Z} + |B_2| \right] m^{|V(F)| - (k-1)} \\
	& \stackrel{\eqref{equation:countinggadgets-extensibleornot-b1},~ \eqref{equation:countinggadgets-extensibleornot-b2}}{\leq}
	\left[ \beta_1 (1 + \beta_2) d_{F^{-} \setminus \mathcal Z} + \beta_2 \right] |\cJ_{Y'}| m^{|V(F)| - (k-1)} \\
	& \stackrel{\eqref{equation:countinggadgets-extensibleornot-dversusb1}}{\leq}
	3 \beta_1  d_{F^{-} \setminus \mathcal Z} |\cJ_{Y'}| m^{|V(F)| - (k-1)} \\
	& \stackrel{\eqref{equation:countinggadgets-extensibleornot-j}}{\leq}
	3 \beta_1 (1 + \beta) d_{F^{-} \setminus \mathcal Z} d_{Y'} m^{|V(F)|} \\
	& \stackrel{\eqref{equation:countinggadgets-extensibleornot-dFdZ}}{\leq} \frac{3 \beta_1(1 + \beta)}{(1 - \beta) d_k^{e_k(F)}} |\sF_L| \leq \frac{\beta}{2k+2} |\sF_L|,
	\end{align*}
	where we used the indicated inequalities and $\beta_1 \ll \beta, d_k, 1/k$ in the last step.
	
	Essentially the same argument shows that if we define (in the obvious way) good tuples for any $(k-1)$-set of vertices of $F$, the number of copies of $F$ which are not good with respect to that $(k-1)$-set is at most $\beta |\sF_L| / ( 2k+2 )$ as well.
	As sketched before, $F \in \sF_L \setminus \sF^\ext_L$ implies that $F$ is not good with respect to one of $2k+2$ many $(k-1)$-sets.
	Thus we get
	\begin{align*}
	|\sF_L \setminus \sF^\ext_L|
	\leq (2k+2) \beta |\sF_L| / ( 2k+2 )
	= \beta |\sF_L|,
	\end{align*}
	as desired.
\end{proof}

The following lemma bounds the sizes of $\mathscr{L}_{\ori{H}}$, $\sF^\ext_L$ and $\sF^\ext$.

\begin{lemma} \label{lemma:countinggadgets-global}
	Let $k,r,m,t \in \mathbb{N}$, and $d_2,\dots,d_k,\eps,\eps_k,c,\nu,\beta, \mu$ be such that
	\begin{align*}
	1/m & \ll 1/r,\eps \ll 1/t, c, \eps_k, d_2,\dots ,d_{k-1}, \\
	c & \ll d_2,\dots ,d_{k-1}, \\
	1/t & \ll \eps_k \ll \beta, d_k \leq 1/k, \quad \text{and} \quad \eps_k \ll \nu, \mu.
	\end{align*}
	Let $\mathbf{d} = (d_2, \dotsc, d_k)$, and
	let $(G, G_\cJ,\cJ, \cP, \ori{H})$ be an oriented $(k,m,t, \eps , \eps_k, r , \mathbf{d})$-regular setup.
	Suppose that $H$ has minimum relative $1$-degree at least $\mu$.
	Then
	\[ \frac{\mu^{k+1}}{2} \binom{t}{k} \binom{t}{k-1}^k \leq |\mathscr{L}_{\ori{H}}| \leq \binom{t}{k} \binom{t}{k-1}^k. \]
	Let $\cF$ be the $k$-complex corresponding to the down-closure of the $k$-graph $F$ as in Definition~\ref{definition:absorberinslice}.
	For each reduced gadget $L \in \mathscr{L}_{\ori{H}}$ in $\ori{H}$, we have
	\[ |\sF^\ext_L| = (1 \pm \beta) \left( \prod_{i=1}^{k} d_i^{e_i(\cF)} \right) m^{k(2k+1)} \]
	and moreover
	\[ |\sF^\ext| = \left( 1  \pm \beta \right) \left( \prod_{i=1}^{k} d_i^{e_i(\cF)} \right) m^{k(2k+1)} |\mathscr{L}_{\ori{H}}|. \]
\end{lemma}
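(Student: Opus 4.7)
The plan is to handle the three assertions in order: the bounds on $|\mathscr{L}_{\ori{H}}|$, the estimate on each $|\sF^\ext_L|$, and then the total count $|\sF^\ext|$. Parts two and three will follow almost immediately from Lemma~\ref{lemma:countinggadgets-extensibleornot} and a disjointness observation, so the only substantive step is the lower bound on $|\mathscr{L}_{\ori{H}}|$.

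For the upper bound, Definition~\ref{definition:reducedgadget} makes it immediate: a reduced gadget in $\ori{H}$ is determined by its unordered core edge $Y$ (the orientation of which is then forced by $\ori{H}$) together with $k$ unordered peripheral $(k-1)$-sets $W_1,\dotsc,W_k$; the orderings $(W_{i,1},\dotsc,W_{i,k-1})$ are then forced by the requirement that $(W_{i,1},\dotsc,W_{i,k-1},Y_i)$ be consistent with $\ori{H}$, since among the $k$ cyclic shifts of any oriented edge exactly one ends at a specified vertex. This yields $|\mathscr{L}_{\ori{H}}| \leq \binom{t}{k}\binom{t}{k-1}^k$.

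For the lower bound, I will count reduced gadgets by first selecting the core and then the peripherals one at a time. Double counting together with the minimum degree hypothesis gives $e(H) \geq \mu\binom{t}{k}$. Fixing one of these (automatically oriented) core edges $Y = (Y_1,\dotsc,Y_k)$, the number of edges of $H$ containing $Y_i$ is at least $\mu\binom{t-1}{k-1}$ and, as noted above, each such edge yields exactly one ordering $(W_{i,1},\dotsc,W_{i,k-1})$ making the tuple consistent with $\ori{H}$. Discarding the $W_i$ that meet the at most $k^2$ previously used vertices costs at most $k^2\binom{t-2}{k-2}$ choices, which is negligible compared to $\mu\binom{t-1}{k-1}$ under $1/t \ll \mu, 1/k$. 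Using $\binom{t-1}{k-1} = (1-o(1))\binom{t}{k-1}$, the product over $i=1,\dotsc,k$ is at least $(1-o(1))^k\mu^k\binom{t}{k-1}^k$, and combining with the count of core edges gives at least $\tfrac{\mu^{k+1}}{2}\binom{t}{k}\binom{t}{k-1}^k$ reduced gadgets.

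Given this, the estimate on $|\sF^\ext_L|$ is a direct consequence of Lemma~\ref{lemma:countinggadgets-extensibleornot}: choose $\beta' \ll \beta$ and apply the lemma with $\beta'$ in place of $\beta$, so that $|\sF^\ext_L| = |\sF_L| - |\sF_L \setminus \sF^\ext_L|$ lies within a $(1\pm \beta)$ factor of $\prod_{i=1}^{k} d_i^{e_i(\cF)} m^{k(2k+1)}$. Finally, for the total count, observe that every $\fS$-gadget $F$ determines a unique reduced gadget in $\ori{H}$, namely the one whose core is the oriented edge $Y$ supplied by \ref{item:gadget-bulk} and whose peripherals are the tuples $W_i$ of \ref{item:gadget-order}; hence $\sF^\ext$ is the disjoint union $\bigsqcup_{L \in \mathscr{L}_{\ori{H}}} \sF^\ext_L$, and summing the uniform estimate on $|\sF^\ext_L|$ over $L$ yields the stated formula. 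The main (and only real) obstacle is the lower bound on $|\mathscr{L}_{\ori{H}}|$, but this is a straightforward greedy count once the cyclic-shift bookkeeping is set up correctly.
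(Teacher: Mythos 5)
Your proof is correct and follows essentially the same approach as the paper: the upper bound is the same trivial count of cluster choices, the lower bound is the same greedy construction (core edge then peripherals, absorbing the overlap error into the factor of $1/2$), and the remaining two estimates come from Lemma~\ref{lemma:countinggadgets-extensibleornot} applied with a smaller $\beta'$ and the disjoint-union decomposition $\sF^\ext = \bigsqcup_L \sF^\ext_L$. Your added bookkeeping about the ordering of each $W_i$ being forced by consistency with $\ori{H}$ is a useful clarification that the paper leaves implicit.
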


\begin{proof}
	The lower bound on $|\mathscr{L}_{\ori{H}}|$ follows from choosing any edge in $\ori{H}$ to play the role of $Y$ and then greedily choosing $(k-1)$-tuples of neighbours, one for each cluster in $Y$, to play the roles of $W_i$ for all $1 \leq i \leq k$.
	The choice of $Y$ can be done in at least $\mu \binom{t}{k}$ ways, since $H$ has minimum relative $1$-degree at least $\mu$ and thus edge density at least $\mu$.
	Owing to the minimum $1$-degree, each choice of $W_i$ can in principle be done in $\mu \binom{t-1}{k-1}$ ways,
	but we need to subtract from the possible choices those that would reuse clusters already used in another tuple $W_i$.
	We just hide those error terms using $1/t \ll \mu, 1/k$ together with the factor of $1/2$ which appears in the lower bound.
	The upper bound is very crude, just assuming we can choose any $k$-set as $Y$ and any other $(k-1)$-set as $W_i$ for all $i$.
	
	Choose $\beta'$ such that $\eps_k \ll \beta' \ll \beta, d_k, 1/k$.
	The bounds on $|\sF^\ext_L|$ follows from applying Lemma~\ref{lemma:countinggadgets-extensibleornot} (with $\beta'$ in place of $\beta$).
	We get $|\sF_L| \ge |\sF^\ext_L| \ge (1 - \beta') |\sF_L|$, and using $\beta' \ll \beta$ gives the desired bounds.
	It is easy to see that
	\[ \sF^\ext = \bigcup_{L \in \mathscr{L}_{\ori{H}}} \sF^\ext_L,\]
	and the union is disjoint.
	Then the bounds on $|\sF^\ext|$ follow from taking the disjoint union of $\sF^\ext_L$ over all the possible choices of $L$.
\end{proof}

\subsection{Locally counting absorbing gadgets}\label{sec:locally-counting-absorbers}
In this subsection, we count the number of absorbing gadgets which are suitable to absorb a given $k$-set $X$.
We will use the representativity of regular setups (and slices) to argue that we can find well-supported  edges inside a regular complex, which then can be completed to many $\fS$-gadgets.

Suppose $\eps, \eps_k, d_2, \dotsc, d_k, c, \nu > 0$ and let $\mathbf{d} = (d_2, \dotsc, d_{k-1})$.
Suppose $\fS = (G,G_\cJ,\cJ, \cP, \ori{H})$ is an oriented $(k,m,t, \eps , \eps_k, r , \mathbf{d})$-regular setup.
Given a $k$-set $T \subseteq V(G)$, we let $\sF_T \subseteq \sF$ be the set of $\fS$-gadgets which are suitable for absorbing $T$ (as in Definition~\ref{def:absorbing-gadget}).
We also define $\sF_T^\ext$ as the number of these $\fS$-gadgets that are $(c, \nu)$-extensible, namely $\sF_T^\ext = \sF_T \cap \sF^\ext$.
Finally, recall that $N_{\cJ}(v, \delta) = \{ X \subseteq \cP\colon |X| = k-1,~ |N_G(v; \cJ_X)| > \delta |\cJ_X| \}$.

The next lemma states that for any $k$-set $T$, a lower bound on the number of extensible $\fS$-gadgets $\sF^\ext$ translates into a lower bound for  the number of extensible $\fS$-gadgets suitable for $T$.

\begin{lemma} \label{lemma:countinggadgets-local}
	Let $k,r,m,t \in \mathbb{N}$, and $d_2,\dots,d_k,\eps,\eps_k, c, \nu,\mu, {\theta}$ be such that 
	\begin{align*}
	1/m & \ll 1/r,\eps \ll 1/t, c, \eps_k, d_2,\dots ,d_{k-1}, \\
	c & \ll d_2,\dots ,d_{k-1}, \\
	1/t & \ll \eps_k \ll d_k \leq 1/k, \text{and} \\
	\eps_k & \ll \nu \ll  {\theta} \ll \mu \ll 1/k.
	\end{align*}
	Let $\mathbf{d} = (d_2, \dotsc, d_k)$, and
	let $(G, G_\cJ,\cJ, \cP, \ori{H})$ be an oriented $(k,m,t, \eps , \eps_k, r , \mathbf{d})$-regular setup.
	Suppose that $H$ has edge density at least $\mu$, and for every $v \in V(G)$ and every $Z \in \cP$, we have $|N_{\cJ}(v, \mu) \cap N_H(Z) | \ge \mu \binom{t}{k-1}$.
	Let $T \subseteq V(G)$ be a $k$-set.
	Then
	\[ |\sF_T^\ext| \ge  {\theta} |\sF^\ext|. \]
\end{lemma}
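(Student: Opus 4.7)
The plan is to split the argument into (a) selecting, among the reduced gadgets in $\mathscr{L}_{\ori{H}}$, those whose cluster positions are compatible with absorbing every $t_i \in T$, and (b) showing that for each such reduced gadget, a constant fraction of the corresponding $\fS$-gadgets in $\sF^\ext_L$ lie in $\sF_T$. Lemma~\ref{lemma:countinggadgets-global} guarantees that $|\sF^\ext_L|$ is essentially the same for every $L$, so it suffices to recover a constant fraction locally and then sum over the ``good'' reduced gadgets.

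First I would call a reduced gadget $L$ with core $Y = \{Y_1,\dots,Y_k\}$ and peripherals $W_1,\dots,W_k$ \emph{$T$-good} if $W_i \in N_{\cJ}(t_i,\mu)$ for every $1\leq i\leq k$. Iterating the hypothesis $|N_{\cJ}(v,\mu)\cap N_H(Z)|\geq \mu\binom{t}{k-1}$ together with the edge-density bound for $H$, I would build $T$-good reduced gadgets by first choosing the core $Y\in E(H)$ (at least $\mu\binom{t}{k}$ options) and then, for each $i$, choosing $W_i$ within $N_{\cJ}(t_i,\mu)\cap N_H(Y_i)$ (at least $\mu\binom{t}{k-1}$ options, up to orderings matching $\ori{H}$). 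This yields at least $c_0\,\mu^{k+1}|\mathscr{L}_{\ori H}|$ many $T$-good reduced gadgets, where $c_0=c_0(k)>0$ is absolute.

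Next, fix a $T$-good $L$. A gadget $F\in\sF_L$ decomposes into a bulk $(A,B,C)$ in $Y$ and, for each $i$, a branch $(P_i,Q_i)$ in the clusters of $W_i$ attached to $b_i \in B$; lying in $\sF_T$ additionally demands that the $k$ consecutive $(k-1)$-subsets of $P_i\cup Q_i$ lie in the \emph{common} link $N_G(b_i)\cap N_G(t_i)$ inside $\cJ_{W_i}$. The main obstacle is producing enough such dually constrained branches. I would use $(d_k,\eps_k,r)$-regularity of $G_\cJ$ with respect to the edge $\{Y_i\}\cup W_i\in E(H)$, taking as the regularity test the subgraph of $\hat{\cJ}_{\{Y_i\}\cup W_i}$ supported on $N_G(t_i;\cJ_{W_i})$ inside $W_i$ (which has density $\geq\mu\gg\eps_k$), to conclude that for all but at most $\sqrt{\eps_k}\, m$ vertices $b_i\in Y_i$ the common link $N_G(b_i)\cap N_G(t_i)\cap\cJ_{W_i}$ has relative density at least $\tfrac{1}{2}d_k\mu$ in $\cJ_{W_i}$. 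Call such $b_i$ \emph{nice}. For a nice $b_i$, Lemma~\ref{lemma:sidorenkito} applied with $S=N_G(b_i)\cap N_G(t_i)\cap\cJ_{W_i}$ and density parameter $\tfrac{1}{2}d_k\mu$ produces at least $c_1(k,d_k,\mu)\,|\mathcal Q_{\cJ}|$ branches satisfying both tight-path conditions, where $|\mathcal Q_{\cJ}|$ counts tight $(k-1)$-paths on $2k-2$ vertices in $\cJ$ through $W_i$. Since the Counting Lemma gives a total of $(1\pm\beta)d_k^k|\mathcal Q_{\cJ}|$ branches for $b_i$ in $\sF_L$, a constant fraction of them work for both $b_i$ and $t_i$.

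Multiplying the constant-fraction bound over the $k$ branches, integrating over the bulk (almost all of whose $b_1,\dots,b_k$ are simultaneously nice, and the negligible event $V(F)\cap T\neq\emptyset$ is discarded), one obtains $|\sF_L\cap\sF_T|\geq c_2(k,d_k,\mu)\,|\sF_L|$. Lemma~\ref{lemma:countinggadgets-extensibleornot} then upgrades this to $|\sF^\ext_L\cap\sF_T|\geq c_3\,|\sF^\ext_L|$ for some $c_3=c_3(k,d_k,\mu)>0$. Summing over the $T$-good reduced gadgets and invoking Lemma~\ref{lemma:countinggadgets-global} to compare $|\sF^\ext_L|$ across $L$'s yields
\[
|\sF^\ext\cap\sF_T|\;\geq\; c_3\cdot c_0\mu^{k+1}\cdot|\sF^\ext|\;\geq\;\theta\,|\sF^\ext|,
\]
where the last inequality uses the hierarchy $\theta\ll\mu$ (with $d_k, 1/k$ fixed). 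This completes the plan.
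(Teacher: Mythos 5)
Your overall plan matches the paper's two-layer decomposition: (a) count the $T$-good reduced gadgets (this is exactly the paper's Lemma~\ref{lemma:countinggadgets-local-reduced}), (b) for each such reduced gadget $L$ obtain a constant fraction of $\sF_L$ suitable for $T$ (this is the role of Lemma~\ref{lemma:countinggadgets-local-yield}), and (c) combine via Lemmas~\ref{lemma:countinggadgets-extensibleornot} and~\ref{lemma:countinggadgets-global}. Parts (a) and (c) are fine and match the paper. But your version of (b) takes a genuinely different route and has a gap.

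The issue is a spurious $d_k$-dependence. You fix $b_i$ first, show it is ``nice'', and then apply Lemma~\ref{lemma:sidorenkito} with $S = N_G(b_i)\cap N_G(t_i)\cap \cJ_{W_i}$, which has density only about $\tfrac12 d_k\mu$. This yields a $\sim (d_k\mu/16k)^{k+1}$-fraction of $\cQ_\cJ$; but the denominator you compare against, namely the branches of $\sF_L$ for a fixed $b_i$, is a $\sim d_k^{k}$-fraction of $\cQ_\cJ$. The per-branch ratio is therefore of order $d_k\mu^{k+1}/\mathrm{const}(k)$, and over $k$ branches this accumulates to a factor $\sim d_k^{k}\mu^{k(k+1)}$. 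Your final constant $c_3$ thus genuinely depends on $d_k$, whereas the lemma's hierarchy promises only $\theta\ll\mu,1/k$ — there is no clause $\theta\ll d_k^{k}\mu^{k(k+1)}$. In the surrounding framework the parameters $d_k$ and $\theta$ are chosen independently of each other (both $\ll\mu$ and $\gg\eps_k$, otherwise unrelated), so $d_k$ can be much smaller than $\theta$, and then the claimed inequality $c_3 c_0\mu^{k+1}\geq\theta$ fails. The paper avoids this entirely by choosing the branch paths $P_iQ_i$ \emph{only} inside the $(k-1)$-complex $\cJ$, subject solely to the $t_i$-constraint via Lemma~\ref{lemma:sidorenkito} with a set of density $\mu$ (no $d_k$), and then applying the Extension Lemma to extend each such choice to a full gadget. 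Since the extension factor $d_{\cF-\mathcal{Z}}$ — which carries all the $d_k$-powers — multiplies both $|\sF_L\cap\sF_T|$ and $|\sF_L|$ uniformly, it cancels in the ratio, leaving a bound depending only on $\mu$ and $k$. To salvage your route you would need either to strengthen the hierarchy to $\theta\ll d_k^{k}\mu^{k(k+1)}$ and then re-verify every downstream use of this lemma, or to reorganise so that the $b_i$-constraint enters through the extension factor, which is what the paper in effect does.

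Two smaller points. First, for membership in $\sF_L$ you must work with $N_{G_{\cJ}}(b_i)$ rather than $N_G(b_i)$, since Definition~\ref{definition:absorberinslice} requires $F\subseteq G_{\cJ}$. Second, the assertion ``the Counting Lemma gives a total of $(1\pm\beta)d_k^k|\cQ_\cJ|$ branches for $b_i$'' is not how the Counting Lemma operates: for a \emph{fixed} $b_i$ one needs the Extension Lemma, and even then the bound holds only for most $b_i$; the needed upper bound on the per-$b_i$ count (rather than just a lower bound or typicality statement) must also be argued carefully.
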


The rest of this subsection is dedicated to the proof of Lemma~\ref{lemma:countinggadgets-local}.
We will proceed in three steps.
First, we will show that there are many reduced gadgets in $\ori{H}$ which are suitable for $T$, roughly meaning that these reduced gadgets connect properly with the neighbours of $T$ in $\cJ$.
Secondly, we show that each reduced gadget which is good for $X$ yields many $\fS$-gadgets in $\sF_T$.
Combined with the first step, this yields that $\sF_T$ is large.
In the last step we will sort out the non-extensible gadgets and this will yield the required bound.

We proceed with the first step, making our definitions precise.
Suppose $T = \{t_1, \dotsc, t_k\}$ is a $k$-set in $V(G)$.
Given the family $\mathscr{L}_{\ori{H}}$ of all reduced absorbers in $\ori{H}$ and $\mu > 0$,
define the set $\mathscr{L}_{\ori{H}, T, \mu} \subseteq \mathscr{L}_{\ori{H}}$ of \emph{reduced $(T, \mu)$-absorbers} as the set of absorbers $Y \cup W_1 \cup \dotsb \cup W_k$ such that $W_i \subseteq N_{\cJ}(t_i, \mu)$ for all $1 \leq i \leq k$.
Equivalently, this means that $i$-th peripheral set $W_i$ lies inside $N_{\cJ}(t_i, \mu)$.
We begin by showing that $|\mathscr{L}_{\ori{H}, T, \mu}|$ is large.

\begin{lemma} \label{lemma:countinggadgets-local-reduced}
	Let $k,r,m,t \in \mathbb{N}$, and $d_2,\dots,d_k,\eps,\eps_k,c,\nu,\mu, \theta$ be such that  
	\begin{align*}
	1/m & \ll 1/r,\eps \ll 1/t, c, \eps_k, d_2,\dots ,d_{k-1}, \\
	c & \ll d_2,\dots ,d_{k-1}, \\
	1/t& \ll \eps_k \ll d_k \leq 1/k, \text{ and} \\
	\eps_k & \ll \nu \ll  {\theta} \ll \mu \ll 1/k.
	\end{align*}
	Let $\mathbf{d} = (d_2, \dotsc, d_k)$ and
	let $(G, G_\cJ,\cJ, \cP, \ori{H})$ be an oriented $(k,m,t, \eps , \eps_k, r , \mathbf{d})$-regular setup.
	Suppose that $H$ has edge density at least $\mu$,
	and that for every $v \in V(G)$ and every $Z \in \cP$, we have $|N_{\cJ}(v, \mu) \cap N_H(Z) | \ge \mu \binom{t}{k-1}$.
	Let $T \subseteq V(G)$ be a $k$-set.
	Then
	\[ |\mathscr{L}_{\ori{H}, T, \mu}| \ge  {\theta} |\mathscr{L}_{\ori{H}}|. \]
\end{lemma}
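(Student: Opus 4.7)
The plan is to count the reduced $(T,\mu)$-absorbers directly, choosing first the core edge and then each peripheral set. Recall that a labelled reduced gadget in $\ori{H}$ consists of an oriented core edge $Y=(Y_1,\dots,Y_k)\in\ori{H}$ together with, for each $1\le i\le k$, an ordered $(k-1)$-tuple $(W_{i,1},\dots,W_{i,k-1})$ of clusters with $W_i\cup\{Y_i\}\in E(H)$ and such that $(W_{i,1},\dots,W_{i,k-1},Y_i)$ is consistent with $\ori{H}$. Since each edge $W_i\cup\{Y_i\}$ of $H$ carries a unique orientation in $\ori{H}$, exactly one of the $k$ cyclic shifts of that oriented edge places $Y_i$ last. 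In particular, once the unordered $(k-1)$-set $W_i$ is fixed, the labelling $(W_{i,1},\dots,W_{i,k-1})$ is uniquely determined. Hence counting labelled reduced gadgets reduces to counting tuples $(Y,W_1,\dots,W_k)$ with $Y\in\ori{H}$, pairwise disjoint $W_i\subseteq\cP$, and $W_i\in N_H(Y_i)$ for every $i$.

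For the lower bound, the assumption that $H$ has edge density at least~$\mu$ gives at least $\mu\binom{t}{k}$ choices of oriented core~$Y$. Given such a $Y$, and for each $1\le i\le k$ in turn, the defining hypothesis
\[
    \left|N_{\cJ}(t_i,\mu)\cap N_H(Y_i)\right|\ge \mu\binom{t}{k-1}
\]
provides at least $\mu\binom{t}{k-1}$ candidates for $W_i$ enjoying both (a) $W_i\cup\{Y_i\}\in E(H)$ and (b) $W_i\in N_{\cJ}(t_i,\mu)$. At most $k^2\binom{t-1}{k-2}\le (\mu/2)\binom{t}{k-1}$ of these candidates fail the disjointness requirement with respect to $Y\cup W_1\cup\dots\cup W_{i-1}$, since $1/t\ll\mu$ and $k$ is fixed. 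Thus at least $(\mu/2)\binom{t}{k-1}$ valid choices remain at each step, giving
\[
   |\mathscr{L}_{\ori{H},T,\mu}|\ \ge\ \mu\binom{t}{k}\cdot\left(\frac{\mu}{2}\right)^{k}\binom{t}{k-1}^{k}\ =\ \frac{\mu^{k+1}}{2^{k}}\binom{t}{k}\binom{t}{k-1}^{k}.
\]

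Combining this with the trivial upper bound $|\mathscr{L}_{\ori{H}}|\le\binom{t}{k}\binom{t}{k-1}^{k}$ from Lemma~\ref{lemma:countinggadgets-global} yields $|\mathscr{L}_{\ori{H},T,\mu}|\ge (\mu^{k+1}/2^{k})\,|\mathscr{L}_{\ori{H}}|$, and the constant hierarchy $\theta\ll\mu$ (with $k$ fixed) ensures $\theta\le \mu^{k+1}/2^{k}$, which finishes the proof. The argument is essentially routine counting; there is no genuine obstacle beyond the small bookkeeping needed for the unique orientation of each peripheral edge and the negligible disjointness correction.
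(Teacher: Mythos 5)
Your proof is correct and follows essentially the same greedy counting approach as the paper's: fix an oriented core $Y \in \ori{H}$ (at least $\mu\binom{t}{k}$ choices), then for each $i$ pick $W_i \in N_H(Y_i) \cap N_{\cJ}(t_i,\mu)$ avoiding previously used clusters, and compare the resulting lower bound $\mu^{k+1}2^{-k}\binom{t}{k}\binom{t}{k-1}^{k}$ against the trivial upper bound on $|\mathscr{L}_{\ori{H}}|$ using $\theta\ll\mu$. Your extra remark that the consistency condition determines the labelling of each unordered $W_i$ uniquely is accurate and makes explicit a bijection that the paper leaves implicit, but it does not change the argument.
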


\begin{proof}
	Let $T = \{ t_1, \dotsc, t_k \}$.
	We show a procedure to construct reduced absorbers in $\ori{H}$ which are also reduced $(T, \mu)$-absorbers,
	then we count how many choices we have at every step.
	Since each possible sequence of choices will give a different reduced $(T, \mu)$-absorber, the desired lower bound will follow.
	
	Let $Y = (Y_1, \dotsc, Y_k)$ be an arbitrary oriented edge in $\ori{H}$.
	By assumption, for every $1 \leq i \leq k$, 
	we have $| N_H(Y_i) \cap N_{\cJ}(t_i, \mu) | \ge \mu \binom{t}{k-1}$.
	Thus we can greedily choose pairwise disjoint $(k-1)$-sets of clusters $W_1, \dotsc, W_k$ such that $W_i \in N_H(Y_i) \cap N_{\cJ}(t_i, \mu)$ for all $1 \leq i \leq k$.
	By construction, $Y \cup W_1 \cup \dotsb \cup W_k$ is a reduced $(T, \mu)$-absorber.
	
	Now we count how many options we have at every step.
	Since $H$ has edge density at least $\mu$, there are at least $\mu \binom{t}{k}$ possible choices for $Y \in H$, each one of them carrying an unique orientation in $\ori{H}$.
	Each $W_i$ is chosen among a set of size $\mu \binom{t}{k-1}$ and we only need to avoid selecting $W_i$ which intersects one of the previously selected sets.
	Since $1/t \ll \mu$, for sure we have at least $(\mu/2) \binom{t}{k-1}$ possible choices at each one of the $k$ steps.
	Thus the total number of reduced $(T, \mu)$-absorbers found in this way is at least
	\[ \mu \binom{t}{k} \left( \frac{\mu}{2} \binom{t}{k-1} \right)^k \ge {\theta} \binom{t}{k} \binom{t}{k-1}^{k} \ge {\theta} |\mathscr{L}_{\ori{H}}|,  \]
	where the second to last inequality follows from ${\theta} \ll \mu$,
	and the last bound follows from Lemma~\ref{lemma:countinggadgets-global}.
\end{proof}

Now we proceed with the second step, showing that every reduced $(T, \mu)$-gadget $L$ yields many $\fS$-gadgets in $\sF_L \cap \sF_T$.

\begin{lemma} \label{lemma:countinggadgets-local-yield}
	Let $k,r,m,t \in \mathbb{N}$, and $d_2,\dots,d_k,\eps,\eps_k,c, \nu,\mu, {\theta}$ be such that  
	\begin{align*}
	1/m & \ll 1/r,\eps \ll 1/t, c, \eps_k, d_2,\dots ,d_{k-1}, \\
	c & \ll d_2,\dots ,d_{k-1}, \\
	1/t & \ll \eps_k \ll d_k \leq 1/k, \text{ and} \\
	\eps_k &  \ll \nu \ll  {\theta} \ll \mu \ll 1/k.
	\end{align*}
	Let $\mathbf{d} = (d_2, \dotsc, d_k)$ and
	let $(G, G_\cJ,\cJ, \cP, \ori{H})$ be an oriented $(k,m,t, \eps , \eps_k, r , \mathbf{d})$-regular setup.
	Let $T \subseteq V(G)$ be a $k$-set and let $L \in \mathscr{L}_{\ori{H}}$ be a reduced $(T, \mu)$-gadget in $\ori{H}$.
	Then
	\[ |\sF_L \cap \sF_T| \ge  {\theta} |\sF_L|. \]
\end{lemma}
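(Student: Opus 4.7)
The goal is to show that at least a $\theta$-fraction of the $\fS$-gadgets in $\sF_L$ are suitable for absorbing $T = \{t_1,\ldots,t_k\}$. The key structural observation is this: for each $1 \leq i \leq k$, the vertices $p_{i,1},\ldots,p_{i,k-1},q_{i,1},\ldots,q_{i,k-1}$ forming the peripheral paths $P_i, Q_i$ carry a tight $(k-1)$-uniform path $\pi_i$ on $2(k-1)$ vertices in the $(k-1)$-th level of $\cJ$ restricted to the clusters $W_i$, and the requirement that $P_i t_i Q_i$ be a tight path in $G$ (the defining condition of $F \in \sF_T$) is equivalent to all $k$ edges of $\pi_i$ lying inside the auxiliary set
\[
M_i := \{e \in \cJ_{W_i} : e \cup \{t_i\} \in E(G)\}.
\]
Moreover, every $k$-edge of $F$ is either supported entirely in the clusters of $Y$ (in the core $ABC$ or $AC$) or contains some $b_i \in Y_i$ (in the peripheral $P_i b_i Q_i$), so no $k$-edge of $F$ is supported only on the peripheral vertices $\bigcup_i V(P_i \cup Q_i)$. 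Hence the induced subcomplex $\cH'$ of $\cF$ on these peripheral vertices is exactly the disjoint union of the down-closures of the $\pi_i$'s.

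I plan to lower bound $|\sF_L \cap \sF_T|$ in two stages. First, for each $1 \leq i \leq k$, apply Lemma~\ref{lemma:sidorenkito} with $W = W_i$ and $S = M_i$: since $W_i \subseteq N_{\cJ}(t_i,\mu)$ one has $|M_i| \geq \mu |\cJ_{W_i}|$, so at least an $\alpha := \tfrac{1}{2}(\mu/(8k))^{k+1}$ fraction of the partition-respecting copies of $\pi_i$ in $\cJ$ restricted to $W_i$ have all $(k-1)$-edges in $M_i$. The sets of clusters $W_1,\ldots,W_k$ are pairwise disjoint by Definition~\ref{definition:reducedgadget}, so these choices are independent across~$i$; together they yield at least $\alpha^k |\cH'_\cJ|$ copies of $\cH'$ whose restriction to each $W_i$ lies in $M_i$. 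Second, apply the Extension Lemma (Lemma~\ref{lem:extension}) with $\cF$ playing the role of $\cH$ and $\cH'$ as the induced subcomplex: all but at most $\beta |\cH'_\cG|$ copies of $\cH'$ extend to $(1 \pm \beta)\big(\prod_{j=2}^k d_j^{e_j(\cF) - e_j(\cH')}\big) m^{3k}$ copies of $\cF$ in $\cG := \cJ \cup G_\cJ$. Because $|\cH'_\cG| = |\cH'_\cJ|$ (as $\cH'$ is a $(k-1)$-complex) and we may take $\beta \ll \alpha^k$ within the constant hierarchy, this exceptional set is dwarfed by the good copies from the first stage, and every surviving extension lies in $\sF_L \cap \sF_T$.

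Multiplying the two counts, using $e_k(\cH') = 0$ together with the Dense Counting Lemma estimate $|\cH'_\cJ| = (1 \pm \beta)\big(\prod_{j=2}^{k-1} d_j^{e_j(\cH')}\big) m^{2k(k-1)}$, yields
\[
|\sF_L \cap \sF_T| \geq \tfrac{\alpha^k}{2}(1-\beta)^2 \Big(\prod_{j=2}^k d_j^{e_j(\cF)}\Big) m^{k(2k+1)}.
\]
Comparing with $|\sF_L| = (1 \pm \beta)\big(\prod_{j=2}^k d_j^{e_j(\cF)}\big) m^{k(2k+1)}$ from Lemma~\ref{lemma:countinggadgets-extensibleornot} gives $|\sF_L \cap \sF_T| \geq (\alpha^k/4)|\sF_L| \geq \theta|\sF_L|$, since the hierarchy $\theta \ll \mu \ll 1/k$ allows one to choose $\theta$ smaller than any fixed polynomial in $\mu$. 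The main subtlety is recognising that the peripheral tight $(k-1)$-uniform paths form precisely the induced $(k-1)$-subcomplex of $\cF$ on $\bigcup_i V(P_i \cup Q_i)$, which is what enables a direct application of the Extension Lemma; once this structural point is in place, the rest is a routine combination of Lemma~\ref{lemma:sidorenkito} with standard hypergraph regularity counting.
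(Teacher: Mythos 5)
Your proof is correct and takes essentially the same route as the paper: isolate the peripheral $(k-1)$-uniform tight paths as the induced subcomplex of $\cF$ on $\bigcup_i V(P_i\cup Q_i)$ (the paper's $\mathcal{Z}$, your $\cH'$), apply Lemma~\ref{lemma:sidorenkito} cluster-by-cluster using $W_i\subseteq N_\cJ(t_i,\mu)$, and bootstrap to full $\fS$-gadgets via the Extension Lemma, discarding the $\beta$-fraction of bad extensions. The observations you flag as subtleties (that no $k$-edge of $F$ is supported on peripheral vertices so $e_k(\cH')=0$, that the $W_i$ are pairwise disjoint, and that $|\cH'_\cG|=|\cH'_\cJ|$) are exactly what the paper's argument rests on as well.
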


\begin{proof}
	Let $T = \{ t_1, \dotsc, t_k \}$ and $L = Y \cup W_1 \cup \dotsb \cup W_k$.
	Similarly as the proof of Lemma~\ref{lemma:countinggadgets-local-reduced}, we show a procedure to construct gadgets in $\sF_L \cap \sF_T$.
	Then we count how many possibilities we have in each step.
	
	Let $1 \leq i \leq k$.
	We begin by choosing the paths $P_i, Q_i$ in $W_i = \{ W_{i,1}, \dotsc, W_{i,k-1} \}$.
	Let $\mathcal{Q}_{W_i}$ be the set of $(k-1)$-uniform tight paths $(v_1 v_2, \dots ,v_{2k-2})$ such that $v_j, v_{j+(k-1)} \in W_{i,j}$ for all $1 \leq j \leq k-1$ and such that its down-closure is in $\cJ$.
	Let $\mathcal{Q}_{W_i, t_i} \subseteq \mathcal{Q}_{W_i}$ be the set of those paths whose edges in the $(k-1)$-th level are in $N_G(t_i)$.
	
	Since $L$ is a reduced $(T, \mu)$-gadget, we have $W_i \in N_H(Y_i) \cap N_{\cJ}(t_i, \mu)$.
	In particular, this implies that $|N_G(t_i, \cJ_{W_i})| \ge \mu |\cJ_{W_i}|$.
	By Lemma~\ref{lemma:sidorenkito}, we get that
	\begin{align}
	|\mathcal{Q}_{W_i, t_i}| \ge \frac{1}{2} \left( \frac{\mu}{8k} \right)^{k+1} |\mathcal{Q}_{W_i}|. \label{equation:countinggadgets-local-yield-Pwx}	\end{align}
	This gives a lower bound on the possible choices in each $W_i$.
	Now our task is to show that from these choices we can build many absorbers which are suitable for $X$.
	
	We prepare the setup to apply the Extension Lemma, to show that most of the previous choices can be extended to many full absorbers in $\sF_L$.
	Let $\phi: V(F) \rightarrow V(L)$ be the obvious hypergraph homomorphism which labels the copies of $F$ in $\sF_L$.
	Let $Z \subseteq V(F)$ be equal to $\bigcup_{i=1}^k V(P_i) \cup V(Q_i)$, that is $Z$ corresponds to the union of the vertices of all the paths $P_i$ and $Q_i$, for all $1 \leq i \leq k$.
	Thus $|Z| = 2k(k-1)$.
	Let $\mathcal{Z} = \mathcal{F}[Z]$ be the induced subcomplex of $\mathcal{F}$ in $Z$.
	Note that $\mathcal{Z}$ consists of (the down-closure of) $k$ vertex-disjoint $(k-1)$-uniform tight paths on $2k-2$ vertices each, and the labelling $\phi$ indicates that the $i$-th path lies precisely in $\cQ_{W_i}$.
	Let $\cG = \cJ \cup G_\cJ$ and let $\mathcal{Z}_\cG$ (respectively $\mathcal{F}_\cG$) be the set of labelled partition-respecting copies of $\mathcal{Z}$ (respectively $\mathcal{F}$) in $\cG$.
	Let $\mathcal{F}$ be the $k$-complex generated by the down-closure of $F$.
	Let $\mathcal{Z} = \mathcal{F}[Z]$ be the induced subcomplex of $\mathcal{F}$ in $Z$.
	Note that $\mathcal{Z}$ consists of (the down-closure of) $k$ many vertex-disjoint $(k-1)$-uniform tight paths on $2k-2$ vertices each, and the labelling $\phi$ indicates that the $i$-th path lies precisely in $\cQ_{W_i}$.
	Let $\cG = \cJ \cup G_\cJ$ and let $\mathcal{Z}_\cG$ (respectively $\mathcal{F}_\cG$) be the set of labelled partition-respecting copies of $\mathcal{Z}$ (respectively $\mathcal{F}$) in $\cG$.
	Let $\beta_1$ be such that $\eps \ll \beta_1 \ll d_2, \dotsc, d_{k-1}, \eps_k$ and define $d_{\mathcal{Z}} = \prod_{i=2}^{k-1} d_i^{e_i(\mathcal{Z})}$.
	Then we have
	\begin{align}
	|\mathcal{Z}_\cG| = \prod_{i=1}^k |\cQ_{W_i}| = (1 \pm \beta_1) d_{\mathcal{Z}} m^{|V(Z)|}, \label{equation:countinggadgets-local-yield-JZ}
	\end{align}
	where the first equality follows from the structural description of $\mathcal{Z}$ and the second one from an application of the Dense Counting Lemma (Lemma~\ref{lemma:densecountinglemma}).
	
	Let $\mathcal{Z}_{\cG, T} \subseteq \mathcal{Z}_\cG$ be the labelled partition-respecting copies of $\mathcal{Z}$ whose $i$-th path lies in $\cQ_{W_i, t_i}$.
	Then inequality~\eqref{equation:countinggadgets-local-yield-Pwx} implies that
	\begin{align}
	|\mathcal{Z}_{\cG, T}| \ge \prod_{i=1}^k |\cQ_{W_i,t_i}| \ge \left( \frac{1}{2} \left( \frac{\mu}{8 k} \right)^{k+1} \right)^{k} \prod_{i=1}^k |\cQ_{W_i}| \ge 3  {\theta} |\mathcal{Z}_\cG|,\label{equation:countinggadgets-local-yield-JXZ}
	\end{align}
	where the last inequality follows from inequality~\eqref{equation:countinggadgets-local-yield-JZ} and $ {\theta} \ll \mu , 1/k$.
	
	Let $\beta_2$ be such that $\eps_k \ll \beta_2 \ll  {\theta}, d_k, 1/k$.
	Define $d_{\mathcal{F}-\mathcal{Z}} = \prod_{i=2}^{k} d_i^{e_i(\mathcal{F}) - e_i(\mathcal{Z})}$.
	Let $B \subseteq \mathcal{Z}_\cG$ be the set of labelled partition-respecting copies of $\mathcal{Z}$ which do not extend to $(1 \pm \beta_2) d_{\mathcal{F}-\mathcal{Z}} m^{|V(F)| - |V(Z)|}$ many labelled partition-respecting copies of $\mathcal{F}$ in $\cG$.
	An application of the Extension Lemma (Lemma~\ref{lem:extension}) with $\beta_2$ playing the role of $\beta$ gives that
	\begin{align}
	|B| \leq \beta_2 |\mathcal{Z}_\cG| \leq  {\theta} |\mathcal{Z}_\cG|, \label{equation:countinggadgets-local-yield-B}
	\end{align}
	where we used $\beta_2 \ll  {\theta}$ in the last inequality.
	
	From inequality~\eqref{equation:countinggadgets-extensibleornot-A}, $\eps_k \ll \beta_2 \ll d_k, 1/k$ and the definitions of $d_{\mathcal{F}-\mathcal{Z}}$ and $d_{\mathcal{Z}}$ we get
	\begin{align} 
	|\sF_L| = (1 \pm \beta_2) d_{\mathcal{F}-\mathcal{Z}} d_{\mathcal{Z}} m^{|V(F)|}. \label{equation:countinggadgets-local-yield-dFdZ}
	\end{align}
	
	We can now estimate $|\sF_L \cap \sF_T|$ by noting that any labelled partition-respecting copy of $\mathcal{F}$ in $\cG$ which extends a $Z \in \mathcal{Z}_{\cG, T}$ yields exactly one gadget in $\sF_L \cap \sF_T$.
	Also, by definition, every $Z \in \mathcal{Z}_{\cG,T} \setminus B$ extends to at least $(1 - \beta_2) d_{\mathcal{F}-\mathcal{Z}} m^{|V(F)| - |V(Z)|}$ such copies.
	Hence, we have
	\begin{align*}
	|\sF_L \cap \sF_T|
	& \ge |\mathcal{Z}_{\cG,T} \setminus B| (1 - \beta_2) d_{\mathcal{F}-\mathcal{Z}} m^{|V(F)| - |V(Z)|} \\
	& \ge (|\mathcal{Z}_{\cG,T}| - |B|) (1 - \beta_2) d_{\mathcal{F}-\mathcal{Z}} m^{|V(F)| - |V(Z)|} \\
	& \stackrel{\eqref{equation:countinggadgets-local-yield-JXZ},~\eqref{equation:countinggadgets-local-yield-B}}{\geq} 2  {\theta} |\mathcal{Z}_{\cG}| (1 - \beta_2) d_{\mathcal{F}-\mathcal{Z}} m^{|V(F)| - |V(Z)|} \\
	& \stackrel{\eqref{equation:countinggadgets-local-yield-JZ}}{\geq} 2  {\theta} (1 - \beta_1 ) (1 - \beta_2)
	d_{\mathcal{Z}} m^{|V(Z)|}
	d_{\mathcal{F}-\mathcal{Z}} m^{|V(F)| - |V(Z)|} \\
	& \ge 2  {\theta} (1 - 2\beta_2) d_{\mathcal{Z}} d_{\mathcal{F}-\mathcal{Z}} m^{|V(F)|} \\&
	\stackrel{\eqref{equation:countinggadgets-local-yield-dFdZ}}{\geq} 2  {\theta} \frac{1 - 2\beta_2}{1 + \beta_2} |\sF_L|
	\ge  {\theta} |\sF_L|,
	\end{align*}
	where we used the indicated inequalities and $\beta_2 \ll  {\theta}$ in the last step.
\end{proof}

\begin{proof}[Proof of Lemma~\ref{lemma:countinggadgets-local}]
	Let $ {\theta}'$ be such that $ {\theta} \ll  {\theta}' \ll \mu$.
	By Lemma~\ref{lemma:countinggadgets-local-yield}  with $ {\theta}'$ playing the role of $ {\theta}$, we have for each reduced $(T, \mu)$-gadget $L \in \mathscr{L}_{\ori{H}}$,
	\[ |\sF_T \cap \sF_L| \ge  {\theta}' |\sF_L|. \]
	Let $\beta$ be such that $\eps_k \ll \beta \ll d_k, 1/k,  {\theta}'$.
	Lemma~\ref{lemma:countinggadgets-extensibleornot} implies that $|\sF_L \setminus \sF^\ext_L| \leq \beta |\sF_L| \leq ( {\theta}'/2) |\sF_L|$.
	It follows that
	\[ |\sF^\ext_T \cap \sF_L| \ge |\sF_T \cap \sF_L| - |\sF_L \setminus \sF^\ext_L| \ge ( {\theta}'/2) |\sF_L| .\]
	By Lemma~\ref{lemma:countinggadgets-local-reduced} with $ {\theta}'$ playing the role of $ {\theta}$, we get that $|\mathscr{L}_{\ori{H}, T, \mu}| \ge  {\theta}' |\mathscr{L}_{\ori{H}}|$.
	Combining everything and using $ {\theta} \ll  {\theta}'$, we get
	\[ |\sF^\ext_T|
	\ge \sum_{L \in \mathscr{L}_{\ori{H}, T, \mu}} |\sF^\ext_T \cap \sF_L|
	\ge \frac{ {\theta}'}{2} \sum_{L \in \mathscr{L}_{\ori{H}, T, \mu}} |\sF_L|
	\ge  {\theta} |\sF^\ext|, \]
	where we used Lemma~\ref{lemma:countinggadgets-global} to bound $|\sF^\ext|$.
\end{proof}

\subsection{Short absorbing paths}\label{sec:short-absorbing-paths}

The results of the previous subsection imply that, in the setting of the Absorption Lemma (Lemma~\ref{lem:absorption}), every $k$-set $T$ is suitable for many extensible $\fS$-gadgets.
In this subsection, we show that one can select a well-distributed subset $\cA$ of $O(n/t)$ extensible $\fS$-gadgets such that every $k$-set $T$ is suitable for  $\Omega(n/t)$ of the elements of $\cA$.
In the next subsection, we connect $O(t)$ of such subsets $\cA$ up to obtain the absorbing path that meets the requirement of Lemma~\ref{lem:absorption}.

\begin{lemma}[Sampling absorbers] \label{lem:sampling-absorbers}
	Let $k,r,m,t \in \mathbb{N}$ and $d_2,\dots,d_k$, $\eps$, $\eps_k$, $c$, $\nu$, $\mu$, $\alpha$, $ {\theta}$, $\zeta$ be such that
	\begin{align*}
	1/m & \ll 1/r,\eps \ll 1/t, \zeta, \eps_k, d_2,\dots ,d_{k-1}, \\
	\zeta & \ll c \ll d_2,\dots ,d_{k-1}, \\
	1/t & \ll \eps_k \ll d_k, \nu \leq 1/k, \text{and} \\
	 {c} & \ll \eps_k \ll \alpha \ll  {\theta} \ll \mu  \ll 1/k.
	\end{align*}
	Let $\mathbf{d} = (d_2, \dotsc, d_k)$ and
	let $(G, G_\cJ,\cJ, \cP, \ori{H})$ be an oriented $(k,m,t, \eps , \eps_k, r , \mathbf{d})$-regular setup.
	Suppose that $G$ has $n$ vertices and $n \leq (1 + \alpha)mt$.
	Suppose that $H$ has edge density at least $\mu$ 
	and that for every $v \in V(G)$ and every $Z \in V(H)$, we have $|N_{\cJ}(v, \mu) \cap N_H(Z) | \ge \mu \binom{t}{k-1}$.
	Then there exists a set $\cA$ of pairwise disjoint $\fS$-gadgets which are $(c, \nu)$-extensible and such that
	\begin{enumerate}[\upshape (a)]
		\item \label{itm:sampling-absorbers-A-small} $|\cA| \leq  \zeta m$,
		\item \label{itm:sampling-absorbers-A-absorbing}  $|\cA \cap \sF^\ext_T| \geq \zeta {\theta} m$ for every $k$-set $T$ in $V(G)$ and
		\item \label{itm:sampling-absorbers-A-sparse}  $V(\cA)$ is $(4{\zeta}/t)$-sparse in $\cP$.
	\end{enumerate}	 
\end{lemma}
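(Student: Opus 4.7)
My plan is to sample $\cA$ probabilistically: include each $(c,\nu)$-extensible $\fS$-gadget in $\sF^\ext$ independently with probability $p = \zeta m/(2|\sF^\ext|)$, call the resulting family $\cA_0$, and then clean up by removing one member from each pair $F,F' \in \cA_0$ sharing a vertex. The global and local counts from Lemma~\ref{lemma:countinggadgets-global} and Lemma~\ref{lemma:countinggadgets-local} already control the relevant expectations; the remaining work is to verify simultaneous concentration of all the required events and to check that the cleanup step discards only a negligible number of gadgets.

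First, to apply Lemma~\ref{lemma:countinggadgets-local}, I would verify its hypothesis by combining the outreach condition on $H$ with Lemma~\ref{lemma:neighboursincomplex}, which yields $|N_{\cJ}(v, \mu') \cap N_H(Z)| \geq \mu' \binom{t}{k-1}$ for a suitable intermediate $\mu' > 0$. The counting lemmas then give $\expectation |\cA_0| = \zeta m/2$ and, for every $k$-set $T \subseteq V(G)$, $\expectation|\cA_0 \cap \sF^\ext_T| \geq 2\theta \zeta m$. For the cluster sparsity, the uniform distribution of gadgets across the $k^2$ cluster roles of a reduced gadget shows that each cluster $Z \in \cP$ is covered by a total of $O(k^2)|\sF^\ext|/t$ gadgets of $\sF^\ext$, each contributing at most $3$ vertices to $Z$, so $\expectation|V(\cA_0) \cap Z| = O(k^3)\zeta m/t$, which is of order $\zeta m/t$ with the $k$-dependence absorbed into the choice of $\zeta$. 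Chernoff's inequality (Lemma~\ref{lem:che}) then gives each failure probability at most $\exp(-\Omega(\theta \zeta m))$; since the hierarchy $1/m \ll \zeta$ forces $\zeta m$ to grow super-polynomially in $t$, a union bound over the $\binom{n}{k} + t + 1$ relevant events still yields $o(1)$ total failure probability.

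Finally, to enforce pairwise disjointness, I would count the intersecting pairs in $\sF^\ext$: by the uniform distribution of gadget vertices each $v \in V(G)$ lies in at most $O(k^2)|\sF^\ext|/n$ gadgets, so the total number of intersecting pairs is at most $O(k^4)|\sF^\ext|^2/n$, and the expected number landing in $\cA_0$ is $p^2 \cdot O(k^4)|\sF^\ext|^2/n = O(k^4 \zeta^2 m/t)$, which is much less than $\theta \zeta m$ since $\zeta \ll \theta$. Including this as one more event in the union bound (via Markov's inequality), with positive probability $\cA_0$ satisfies all the target bounds and has fewer than $\theta \zeta m$ intersecting pairs; removing one gadget from each such pair yields the required $\cA$. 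The main delicate point is the $\binom{n}{k}$-fold union bound in~\ref{itm:sampling-absorbers-A-absorbing}, which is what forces $p|\sF^\ext| = \zeta m$ to be not too small and thus dictates the condition $1/m \ll \zeta$ of the lemma's hierarchy.
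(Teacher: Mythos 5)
Your plan follows the same high-level strategy as the paper (sample $\fS$-gadgets independently with a small probability, verify concentration via Chernoff and Markov, then delete one member from each overlapping pair), and the treatment of parts~\ref{itm:sampling-absorbers-A-small} and~\ref{itm:sampling-absorbers-A-absorbing} and the overlap cleanup is essentially right. However, your argument for the cluster-sparsity condition~\ref{itm:sampling-absorbers-A-sparse} has a real gap, and it stems from your choice of $p$.

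You set $p = \zeta m/(2|\sF^\ext|)$. The paper instead sets $p = \zeta m/(2d_F m^{k(2k+1)} t^{k^2})$ where $d_F m^{k(2k+1)} t^{k^2}$ is the explicit \emph{upper} bound on $|\sF^\ext|$ from Lemma~\ref{lemma:countinggadgets-global}; note that $|\sF^\ext|$ may be smaller than this by a factor of order $\mu^{k+1}/(k!(k-1)!^k)$ (it is bounded below by roughly $\mu^{k+1} d_F m^{k(2k+1)}\binom{t}{k}\binom{t}{k-1}^k/2$), so your $p$ can exceed the paper's $p$ by a factor of roughly $k!(k-1)!^k/\mu^{k+1}$. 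The paper then observes that any fixed cluster lies in at most $t^{k^2-1}$ reduced gadgets, hence in at most $2d_F m^{k(2k+1)} t^{k^2-1}$ extensible gadgets, and gets $\expectation|Z'| \leq 6d_F m^{k(2k+1)} t^{k^2-1}\cdot p = 3\zeta m/t$, with \emph{no} $k$- or $\mu$-dependence, because the numerator of $p$ has been matched exactly to this per-cluster bound. With your $p$, the same per-cluster gadget bound yields $\expectation|V(\cA_0)\cap Z| \lesssim k!(k-1)!^k\mu^{-(k+1)}\cdot\zeta m/t$, which for $k\geq 3$ and small $\mu$ vastly exceeds the target $4\zeta m/t$. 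Your ``$k$-dependence absorbed into the choice of $\zeta$'' cannot rescue this, since $\zeta$ also appears in the $(4\zeta/t)$-sparse conclusion and is a fixed parameter, not a free constant. Moreover, the step where you deduce a per-cluster count of $O(k^2)|\sF^\ext|/t$ from ``uniform distribution across the $k^2$ cluster roles'' is only an averaging argument: it bounds the mean degree of a cluster in the gadget--cluster incidence bipartite graph, not the maximum, so it does not justify a bound on $\expectation|V(\cA_0)\cap Z|$ for an arbitrary $Z$.

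A second, more minor point: your first paragraph proposes to verify the hypothesis of Lemma~\ref{lemma:countinggadgets-local} via Lemma~\ref{lemma:neighboursincomplex}, but this is both unnecessary (the required inequality $|N_{\cJ}(v,\mu)\cap N_H(Z)|\geq\mu\binom{t}{k-1}$ is assumed verbatim in the statement of this lemma) and would not work as stated, since Lemma~\ref{lemma:neighboursincomplex} requires the regular setup to be representative while the present lemma does not assume representativity; the verification you describe belongs in the proof of the Absorption Lemma, not here.
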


\begin{proof} 
	Let $\beta > 0$ such that $\eps_k \ll \beta \ll d_k$.
	Let $F$ be the $k$-graph as in Definition~\ref{definition:absorberinslice} and let $\cF$ be the $k$-complex generated by its down-closure.
	Let $d_F = \prod^{k}_{i=2} d_i^{e_i(\cF)}$.
	By Lemma~\ref{lemma:countinggadgets-global}, the number $|\sF^\ext|$ of $\fS$-gadgets which are $(c, \nu)$-extensible satisfies
	\begin{align}
	|\sF^\ext|
	& \leq (1 + \beta)  d_F m^{k(2k+1)} \binom{t}{k} \binom{t}{k-1}^k
	\leq  d_F m^{k(2k+1)} t^{k^2}, \label{equation:absorberiterable-Fupper} \\
	|\sF^\ext|
	& \ge \frac{\mu^{k+1}}{2} (1 - \beta) d_F m^{k(2k+1)} \binom{t}{k} \binom{t}{k-1}^k, \nonumber \\
	& \ge \frac{\mu^{k+1}}{2 k^k (k-1)^{k^2}} d_F m^{k(2k+1)} t^{k^2} \ge  16 \theta^{1/2} d_F m^{k(2k+1)} t^{k^2}, \label{equation:absorberiterable-Flower}
	\end{align}
	where the second to last inequality follows as $1/t \ll \eps_k \ll \beta \ll d_k$,
	and the last inequality follows from ${\theta} \ll \mu, 1/k$.
	Similarly, Lemma~\ref{lemma:countinggadgets-global} allows us to bound, for each reduced gadget $L \in \mathscr{L}_{\ori{H}}$, the number of absorbing gadgets contained in $L$ by
	\begin{align}
	|\sF^\ext_{L}|
	& \leq 2 d_F m^{k(2k+1)}.
	\label{equation:absorberiterable-FLupper}
	\end{align}
	By Lemma~\ref{lemma:countinggadgets-local} (applied with $\theta^{1/2}$ playing the role of $\theta$) we also know that, for each $k$-set $T$ of vertices in $G$, the set $\sF^\ext_T$ of $(c, \nu)$-extensible $\fS$-gadgets associated with $T$ satisfies
	\begin{align}
	|\sF^\ext_T|
	& \ge \theta^{1/2} |\sF^\ext|
	\ge  6 \theta d_F m^{k(2k+1)} t^{k^2}, \label{equation:absorberiterable-FXlower}
	\end{align}
	where we used inequality~\eqref{equation:absorberiterable-Flower} for the last step.
	
	Next, we choose a random set $\sF' \subset \sF^\ext_T$ and show that with high probability, $\sF'$ satisfies properties~\ref{itm:sampling-absorbers-A-small}--\ref{itm:sampling-absorbers-A-sparse} of the lemma statement.
	Some of the chosen gadgets might overlap, but it is not likely that many pairs have this property.
	Hence we can simply delete the overlapping pairs while maintaining properties~\ref{itm:sampling-absorbers-A-small}--\ref{itm:sampling-absorbers-A-sparse}.
	
	To put this plan into action, let \[p = \frac{\zeta m}{2d_F m^{k(2k+1)} t^{k^2}}\]
	and define a subset $\sF' \subset \sF^\ext$ by including each $\fS$-gadget in $\sF'$ independently at random, each with probability $p$.
	By inequality~\eqref{equation:absorberiterable-Fupper}, we can bound the expectation 
	\begin{align}\label{equ:absorberiterable-exp-F}
	\expectation[|\sF'|] = p |\sF^\ext| \leq \frac{\zeta m}{2}.
	\end{align}
	
	For each $L \in \mathscr{L}_{\ori{H}}$, let $\sF'_L = \sF' \cap \sF^\ext_L$, and
	for each $k$-set $T$ of $V(G)$, let $\sF'_T = \sF' \cap \sF^\ext_T$.
	Note that for all $k$-sets $T$ of $V(G)$, we have 
	\begin{align}\label{equ:absorberiterable-exp-X}
	\expectation[|\sF'_T|] = p |\sF^\ext_T| \ge 3 \theta \zeta m.
	\end{align}
	by inequality~\eqref{equation:absorberiterable-FXlower}.
	For each $Z \in \cP$, let $Z'=V(\sF') \cap Z$.
	Note that any fixed cluster of $\cP$ is contained in at most $t^{k^2-1}$ reduced gadgets.
	So by inequality~\eqref{equation:absorberiterable-FLupper}, there are at most $2 d_F m^{k(2k+1)}t^{k^2-1}$ $\fS$-gadgets with vertices in that cluster.
	Also note that each $\fS$-gadget has at most $3$ vertices in the same cluster.
	Hence, we can bound
	\begin{align}\label{equ:absorberiterable-exp-L}
	\expectation[|Z'|] \leq 6 d_F m^{k(2k+1)}t^{k^2-1} p = \frac{3 \zeta m}{t}
	\end{align}
	for every cluster $Z \in \cP$.
	Since $\sF'$, $\sF'_L$ and $\sF'_T$ are binomially distributed, Chernoff's bound (Lemma~\ref{lem:che}) together with the inequalities~\eqref{equ:absorberiterable-exp-F},~\eqref{equ:absorberiterable-exp-X} and~\eqref{equ:absorberiterable-exp-L} gives that, for every $k$-set $T$ of $V(G)$ and every $Z \in \cP$,
	\begin{align*}
	\Pr\left(|\sF'| > \zeta m\right) &< \exp(-\Omega(m)) < \frac{1}{4},\\
	\Pr\left(|\sF'_T|  < 2 \theta \zeta m \right) &< \exp(-\Omega(m)) \leq \frac{1}{4n^k} \text{ and }
	\\ \Pr\left(|Z'|  >  \frac{4 \zeta m}{t} \right) &< \exp(-\Omega(m)) \leq \frac{1}{4t}.
	\end{align*}
	
	Next, we show that it is not likely that too many of the selected gadgets overlap.
	More precisely, we say that two distinct $\fS$-gadgets (or reduced gadgets) \emph{overlap}, if they share at least one vertex.
	Note that there are at most $k^4 t^{2k^2-1}$ pairs of overlapping reduced gadgets.
	Consequently, there at most $(k(2k+1))^2m^{2k(2k+1)-1} k^4 t^{2k^2-1}$ pairs of overlapping $\fS$-gadgets.
	Denote by $P$ the random variable that counts the number of those pairs which are both in $\sF'$.
	It follows that 
	\begin{align*}
	\expectation[P] &\leq (k(2k+1))^2m^{2k(2k+1)-1} k^4 t^{2k^2-1} p^2 
	\\&= (k(2k+1))^2m^{2k(2k+1)-1} k^4 t^{2k^2-1} \left(\frac{\zeta m}{2d_F m^{k(2k+1)} t^{k^2}}\right)^2
	\\&= \frac{(k(2k+1))^2 \cdot k^4 \zeta^2 {m} }{4d_F^2  t }
	\leq {\frac{\zeta \theta m}{4}},
	\end{align*}
	where the last inequality is due to $\zeta \ll d_2, \dotsc, d_{k-1}, d_k, 1/k, \theta$.
	By Markov's inequality, we can bound
	\begin{align*}
	\Pr\left(P > \zeta \theta m \right) \leq \frac{1}{4}.
	\end{align*}
	
	Hence, with positive probability, $\sF'$ satisfies
	\begin{enumerate}[(a$'$)]
		\item $|\sF'| \leq \zeta m$,
		\item $|\sF'_T| \geq 2 \zeta \theta m$ for every $k$-set $T$ of $V(G)$,
		\item $|V(\sF')|$ is $(4 \zeta /t)$-sparse in $\cP$ and
		\item at most $\zeta \theta m$ pairs of $\fS$-gadgets in $\sF'$ overlap.
	\end{enumerate}
	Fix such a set $\sF'$ and delete one gadget from every overlapping pair in it, to obtain $\mathcal{A} \subseteq \sF^\ext$.
	We deleted at most $\zeta \theta m$ gadgets in total, so it follows that $\mathcal{A}$ satisfies~\ref{itm:sampling-absorbers-A-small}--\ref{itm:sampling-absorbers-A-sparse}, as desired.
\end{proof}

\subsection{Proof of the Absorption Lemma}\label{sec:proof-absorption-lemma}

Now we are ready to give a proof of the Absorption Lemma.


\begin{proof}[Proof of Lemma~\ref{lem:absorption}]
	We begin by using the degree conditions of $G$ and $H$ to ensure that we can apply our earlier developed tools.
	Since $G$ has minimum relative $1$-degree at least $\delta + \mu$ and $\fS$ is a representative setup, Lemma~\ref{lemma:neighboursincomplex} implies that for every $v \in V(G)$, we have
	\begin{align}
	\left|N_{\cJ}\left(v, \frac{\mu}{3}\right)\right| \ge \left( \delta + \frac{\mu}{4} \right) \binom{t}{k-1}.
	\label{equation:absorbinglemma-degreeisrepresented}
	\end{align}
	
	Let $\zeta>0$ with $1/r,\eps \ll \zeta \ll c$, and let $\theta > 0$ with $\eta \ll \theta \ll \mu, 1/k$.
	The idea is to construct the path $P$ iteratively.
	More precisely, let $N = \lceil \eta t/(\theta \zeta) \rceil$.
	We show the following claim.
	\begin{claim}\label{cla:absorbinglemma}
		For each $0 \leq j \leq N$, there is a tight path $P_j \subseteq G$ with the following properties:
		\begin{enumerate}[\upshape (i$'$)]
			\item \label{itm:absorbing-path-proof-absorbing} $P_j$ is $(j \theta \zeta / t)$-absorbing in $G$,
			\item \label{itm:absorbing-path-proof-Pj-extending} $P_j$ is $(c, \nu)$-extensible and consistent with $\ori{H}$, 
			\item \label{itm:absorbing-path-proof-sparse-in-cP} $V(P_j)$ is $(100k^2 j \zeta/t)$-sparse in $\cP$ and $V(P_j) \cap C_j=\es$, where $C_j$ denotes the connection set of $P_j$.
		\end{enumerate}
	\end{claim}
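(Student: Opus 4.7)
The plan is to prove Claim~\ref{cla:absorbinglemma} by induction on $j$, with the base case $j=0$ handled by taking $P_0$ to be the empty path (so properties~\ref{itm:absorbing-path-proof-absorbing}--\ref{itm:absorbing-path-proof-sparse-in-cP} hold vacuously). For the inductive step $j-1\to j$, the strategy is to sample a new family of extensible absorbing gadgets avoiding $V(P_{j-1})\cup C_{j-1}$, merge them with $P_{j-1}$ using Lemma~\ref{lem:connecting-many-paths}, and verify that the enlarged path inherits all three properties with the updated parameters.

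More precisely, we first restrict the setup to $V(G)\setminus (V(P_{j-1})\cup C_{j-1})$ by invoking Lemma~\ref{lem:regular-slice-restriction} (feasible because $V(P_{j-1})\cup C_{j-1}$ is sparse in $\cP$ by induction). We then apply Lemma~\ref{lem:sampling-absorbers} inside the restricted setup to obtain a family $\cA_j$ of at most $\zeta m/(k+2)$ pairwise disjoint $(c,\nu)$-extensible $\fS$-gadgets with $V(\cA_j)$ being $(4\zeta/t)$-sparse in $\cP$ and $|\cA_j\cap \sF^\ext_T|\geq \zeta\theta m$ for every $k$-set $T\subseteq V(G)$. The degree hypothesis of Lemma~\ref{lem:sampling-absorbers} is satisfied by combining~\eqref{equation:absorbinglemma-degreeisrepresented} with the outreach condition~(F5) applied to $H$: via inclusion--exclusion, $|N_{\cJ}(v,\mu/3)\cap N_H(Z)|\geq (\mu/4)\binom{t}{k-1}$ for every $v\in V(G)$ and $Z\in V(H)$. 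Viewing each gadget as a union of $k+1$ extensible sub-paths ($AC$ together with $P_ib_iQ_i$ for $1\leq i\leq k$), we add $P_{j-1}$ to this collection and apply Lemma~\ref{lem:connecting-many-paths} to form a single tight path $P_j$ containing $P_{j-1}$ and every gadget of $\cA_j$ as subpaths. Property~\ref{itm:absorbing-path-proof-Pj-extending} is inherited from the extensibility of the chosen outer sub-paths, and consistency with $\ori{H}$ passes through the walk used to connect them. For~\ref{itm:absorbing-path-proof-sparse-in-cP}, each cluster receives at most $4\zeta m/t$ additional vertices from $V(\cA_j)$ and at most $50k^2\zeta m/t$ from the connecting paths (using the bound $10k^2(\text{number of sub-paths in }\cA_j\text{ meeting the cluster})+t^{t+3k}$ provided by Lemma~\ref{lem:connecting-many-paths} together with $1/m\ll 1/t$), so $V(P_j)$ is indeed $(100k^2j\zeta/t)$-sparse.

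The main work is~\ref{itm:absorbing-path-proof-absorbing}. Given $T\subseteq V(G)\setminus V(P_j)$ with $k\mid |T|$ and $|T|\leq (j\theta\zeta/t)n$, partition $T$ arbitrarily into $s=|T|/k$ many $k$-sets $T_1,\dotsc,T_s$. We seek a system of distinct absorbers: distinct gadgets $F_i\subseteq P_j$ with $F_i\in\sF^\ext_{T_i}$ for each $i$. By the inductive accumulation $\cA_1\cup\dotsb\cup\cA_j\subseteq V(P_j)$, every $T_i$ has at least $j\zeta\theta m$ candidate gadgets. Using $n\leq (1+\alpha)mt$, $k\geq 2$ and $\alpha\ll 1$, we bound $s\leq j\theta\zeta(1+\alpha)m/k\leq j\zeta\theta m$, so Hall's condition for the bipartite absorbing graph is trivially satisfied (each single $T_i$ already has $\geq s$ neighbours), yielding a matching. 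Applying the swaps $AC\to ABC$ and $P_{i,\ell}b_{i,\ell}Q_{i,\ell}\to P_{i,\ell}t_{i,\ell}Q_{i,\ell}$ inside each matched $F_i$ transforms $P_j$ into a path with the same endpoints and vertex set $V(P_j)\cup T$, as required. The principal technical obstacle is coordinating the constants: one must choose $\zeta$ small enough that $(k+2)|\cA_j|$ still fits under the cap of Lemma~\ref{lem:connecting-many-paths}, while guaranteeing that the accumulated sparseness at $j=N$, namely $100k^2\eta/\theta$, remains below~$\lambda$.
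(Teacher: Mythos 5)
Your overall plan mirrors the paper's: induct on $j$ with $P_0$ empty, restrict the slice to avoid the current path, sample fresh extensible gadgets with Lemma~\ref{lem:sampling-absorbers}, and join everything with Lemma~\ref{lem:connecting-many-paths}. The main deviation is in the verification of property~(i$'$): the paper partitions the set $S$ to be absorbed into $S_1$ (handled by the induction hypothesis applied to $P_j$) and $S_2$ (handled only by the freshly sampled $\cA'$), so at each inductive step the Hall-type argument needs only one round's worth of absorbers. You instead argue cumulatively over all $j$ sampled families, which means your per-$T_i$ candidate count and the number of $k$-sets $s$ must both scale with $j$, and this is where your constants break.

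Concretely, Lemma~\ref{lem:sampling-absorbers} with sampling parameter $\zeta'$ in a restricted setup with cluster size $m'\le m$ yields simultaneously $|\cA_j|\le\zeta'm'$ and $|\cA_j\cap\sF^\ext_T|\ge\zeta'\theta m'$. You want both $|\cA_j|\le\zeta m/(k+2)$ and $|\cA_j\cap\sF^\ext_T|\ge\zeta\theta m$, but the first forces $\zeta'\lesssim\zeta/(k+2)$ and then the second can only give $\gtrsim\zeta\theta m/(k+2)$, short by a factor $k+2$. Carrying that through, the cumulative candidate count is $\gtrsim j\zeta\theta m/(k+2)$ while $s\le j\theta\zeta(1+\alpha)m/k$, and $1/k>1/(k+2)$, so Hall fails. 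The fix is to sample with a larger parameter (the paper uses $2\zeta$) and then feed a proportionally larger $\zeta$-parameter into Lemma~\ref{lem:connecting-many-paths}; this is permissible because that lemma's cap is tunable. Alternatively, adopt the paper's $S=S_1\cup S_2$ split, which sidesteps the accumulation entirely. A secondary concern: you restrict to $V(G)\setminus(V(P_{j-1})\cup C_{j-1})$, but the connection set $C_{j-1}$ produced by the paper's construction is $V(G)\setminus V(P_{j-1})$, which is most of the graph, so that restriction would leave nothing. The paper restricts to avoid $V(P_j)$ only, and refreshes/shrinks the connection set after each join; you should do the same. You also implicitly need Lemma~\ref{lemma:resilientneighboursincomplex} to transport the bound $|N_\cJ(v,\mu/3)|$ from $\cJ$ to the restricted complex $\cJ'$ before invoking the sampling lemma there, which you omit but which is straightforward to supply.
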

	\begin{proofclaim}
		We can take $P_0$ to be the empty path.
		Now let us assume that $P_j$ satisfies the above conditions for $0 \leq j < N$.
		We will use Lemma~\ref{lem:sampling-absorbers} to choose new gadgets and Lemma~\ref{lem:connecting-many-paths} to connect everything up.
		
		As a first step, we restrict the graph $G$ to ensure that the new set of gadgets has an extension set that is not already occupied with the vertices of $V(P_j)$.
		For each $Z \in \cP$, select a subset $Z' \subset Z \sm V(P_j) $ of size $m' = (1 - \lambda) m$.
		This is possible by~\ref{itm:absorbing-path-proof-sparse-in-cP} and as $100k^2 j \zeta/t \leq (2\eta t/(\zeta \theta))(100k^2 \zeta/t) \leq \lambda$ which follows from $j \leq N$ and $\zeta \ll\eta \ll \lambda, \theta, 1/k$.
		Note that $m' \geq n/(2t)$ by the assumption of $n \leq (1 + \alpha)mt$.
		Let $\cP'=\{Z'\}_{Z \in \cP}$ and $V(\cP') = \bigcup_{Z' \in \cP'} Z'$.
		Let $G'_{\cJ'} = G_{\cJ}[V(\cP')]$ and $\cJ' = \cJ[V(\cP')]$ be the induced subgraphs and subcomplex, respectively.
		By the Slice Restriction Lemma (Lemma~\ref{lem:regular-slice-restriction}), $\fS' = (G, G'_{\cJ'},\cJ', \mathcal{P}', H)$ is a $(k, m', r, \sqrt{\eps}, \sqrt{\eps_k}, r, \mathbf{d})$-regular setup, as desired.
		
		Now we select a set $\cA'$ of fresh $\fS'$-gadgets in $G'_{\cJ'}$.
		By Lemma~\ref{lemma:resilientneighboursincomplex} (with $\mu/6$ in place of $\mu$) we have, for every $v \in V(G)$,  that $|N_{\cJ'}(v, \mu/6)| \ge |N_{\cJ}(v, \mu/3)| \ge \left( \delta + \frac{\mu}{4} \right) \binom{t}{k-1}$, where the last inequality is due to inequality~\eqref{equation:absorbinglemma-degreeisrepresented}.
		Together with the minimum $1$-degree of $H$, it follows that for every $v \in V(G)$ and every $Z \in V(H)$, we have
		\[ |N_{\cJ'}(v, \mu/6) \cap N_H(Z) | \ge \frac{\mu}{6} \binom{t}{k-1}. \]
		Thus we can invoke Lemma~\ref{lem:sampling-absorbers} with
		\begin{center}
			\begin{tabular}{ c  *{6}{|c} }
				object/parameter &$G'$& $\cJ'$ & $m'$  & $\mu/6$    & $2\zeta$  &   $4c$\\
				\hline
				playing the role of &$G$& $\cJ$ & $m$   & $\mu$ & $\zeta$  &   $c$
			\end{tabular}
		\end{center} 
		to obtain a set $\cA'$ of pairwise disjoint $\fS'$-gadgets which are $(4c, \nu)$-extensible and such that
		\begin{enumerate}[(a)]
			\item \label{itm:absorbing-path-proof-size-A'} $|\cA'| \leq 2\zeta m'${,}
			\item  \label{itm:absorbing-path-proof-absorbing-A'} $|\cA' \cap \sF_T| \geq 2 \zeta \theta m'$ for every $k$-set $T$ in $V(G)$ and
			\item \label{itm:absorbing-path-proof-sparse-A'} $V(\cA')$ is $(8 \zeta/t)$-sparse in $\cP'$.
		\end{enumerate}
		
		Next, we connect up the individual paths of the absorbing gadgets and also $P_j$.
		To start, let us prepare the input for Lemma~\ref{lem:connecting-many-paths}.
		Recall that, by Definition~\ref{definition:absorberinslice}~\ref{item:gadget-extensible}, each  $\fS'$-gadget in $\cA'$ consists of $k+1$ tight paths which are $(4c, \nu)$-extensible in $\fS'$.
		Let $\cA$ be the union of these paths for every gadget in $\cA'$ and with $P_j$.
		Also set  $C_{j+1} = V(G) \sm V(\cA)$.
		We claim that $\cA$ is a set of pairwise disjoint tight paths in $G$ such that 
		\begin{enumerate}[(a$'$)]  
			\item \label{itm:absorbing-path-proof-size-A} $|\cA| \leq (k+1)2\zeta m' +1$,
			\item \label{itm:absorbing-path-proof-sparse-A} $V(\cA)$ is $({100}k^2 j \zeta/t + {8} \zeta /t)$-sparse in $\cP$ and $V(\cA) \cap C_{j+1}=\es$ and
			\item \label{itm:absorbing-path-proof-exensible-A} every path in $\cA\sm \{P_j\}$ is $(2c, \nu, C_{j+1})$-extensible in $\fS$ and consistent with $\ori{H}$.
			Moreover, $P_j$ is $(c, \nu, C_{j+1})$-extensible in $\fS$ and consistent with $\ori{H}$.
		\end{enumerate}
		Indeed, part~\ref{itm:absorbing-path-proof-size-A} follows from~\ref{itm:absorbing-path-proof-size-A'} and the addition of $P_j$.
		Part~\ref{itm:absorbing-path-proof-sparse-A} follows from~\ref{itm:absorbing-path-proof-sparse-in-cP},~\ref{itm:absorbing-path-proof-sparse-A'} and the definition of $C_{j+1}$ (note that this implies, using the hierarchy and the bounds on $j \leq N$, that $V(\cA)$ is $\lambda$-sparse in $\cP'$ as well).
		Finally, part~\ref{itm:absorbing-path-proof-exensible-A} follows from~\ref{itm:absorbing-path-proof-Pj-extending} and~\ref{itm:absorbing-path-proof-sparse-A'} as ${8 \zeta m} /t \leq 2cm$.
		In particular, $P_j$ is $(c,\nu)$-extensible by~\ref{itm:absorbing-path-proof-Pj-extending}, while all other paths go from $(4c,\nu)$-extensible in $\fS'$ to $(2c,\nu)$-extensible in $\fS$.
		Moreover, the consistency with $\ori{H}$ is given by the consistency of $P_j$ and the definition of the $\fS'$-gadgets.
		
		Having established~\ref{itm:absorbing-path-proof-size-A}--\ref{itm:absorbing-path-proof-exensible-A}, we apply Lemma~\ref{lem:connecting-many-paths} with
		\begin{center}
			\begin{tabular}{ c  *{7}{|c} }
				object/parameter &$G'$& $\cJ'$ & $m'$ & $c$ & $C_{j+1}$ & $(k+1)4\zeta$ & {$\lambda$} \\
				\hline
				playing the role of &$G$& $\cJ$ & $m$ & $c$ & $C$ & $\zeta$ & $\lambda$
			\end{tabular}
		\end{center} 
		to obtain a tight path $P_{j+1}$ such that
		\begin{enumerate}[\upshape(A)]
			\item \label{itm:absorbing-path-proof-Pcontains} $P_{j+1}$ contains every path of $\cA$ as subpath,
			\item \label{itm:absorbing-path-proof-Pends} $P_{j+1}$ starts and ends with two paths different from $P_j$, 
			\item \label{itm:absorbing-path-proof-P-A-subset-partition} $V(P_{j+1}) \sm V(\cA) \subset V(\cP')$ and
			\item \label{itm:absorbing-path-proof-P-A-intersectio-Z} $V(P_{j+1}) \sm V(\cA)$ intersects in at most $   10k^2\mathcal{A}_Z +t^{t+3k}$ vertices with each cluster $Z \in \cP$, where $\cA_Z$ denotes the number of paths of $\cA$ that intersect with $Z$.
		\end{enumerate} 
		
		We claim that $P_{j+1}$ satisfies the conditions~\ref{itm:absorbing-path-proof-absorbing}--\ref{itm:absorbing-path-proof-sparse-in-cP} with $j+1$ in place of $j$.
		We begin by showing~\ref{itm:absorbing-path-proof-sparse-in-cP}.
		Note that for every cluster $Z \in \cP$, the number of paths of $\cA$ that intersect with $Z$ is bounded by $8\zeta m/t+1$.
		This follows by~\ref{itm:absorbing-path-proof-sparse-A'} and the definition of $\cA$.
		Hence~\ref{itm:absorbing-path-proof-P-A-intersectio-Z} implies that $V(P_{j+1}) \sm V(\cA)$ intersects in at most $100k^2 \zeta m/t$ vertices with each cluster $Z \in \cP$.
		Together with~\ref{itm:absorbing-path-proof-sparse-in-cP}, it follows that $\cA$ is $(100k^2 (j+1)\zeta m/t)$-sparse in $\cP$, which gives~\ref{itm:absorbing-path-proof-sparse-in-cP}.
		
		Next, we deduce~\ref{itm:absorbing-path-proof-Pj-extending}.
		As noted before, $V(P_{j+1}) \sm V(\cA)$ intersects in at most $100k^2 \zeta m/t \leq cm/4$ vertices with each cluster $Z \in \cP$, where the bound follows from $\zeta \ll c$.
		Moreover, by~\ref{itm:absorbing-path-proof-exensible-A} we have $V(\cA) \cap C_{j+1}=\es$.
		Hence we obtain~\ref{itm:absorbing-path-proof-Pj-extending} after deleting the vertices of $\cP_{j+1}$ from $C_{j+1}$. (So we go from $(2c,\nu)$-extensible as in~\ref{itm:absorbing-path-proof-exensible-A} to $(c,\nu)$-extensible after the deletion).
		Note that at this point it was crucial that $P_{j+1}$ starts and ends with two paths different from $P_j$, which was asserted in~\ref{itm:absorbing-path-proof-Pends}.\footnote{In fact, the only reason why we construct the absorbing paths iteratively is that we need to refresh the connection set after adding about $cn$ absorbing gadgets.}
		
		Finally, we claim that $P_{j+1}$ is $((j+1)\zeta \theta/t)$-absorbing in $G$, which gives~\ref{itm:absorbing-path-proof-absorbing}.
		To this end, let $S \subseteq V(G)$ be any set of size divisible by $k$ and at most $(j+1)\zeta \theta n / t$.
		Partition $S \setminus V(P_{j+1})$ into two sets $S_1$ and $S_2$, such that both $|S_1|, |S_2|$ are divisible by $k$ and $|S_1|$ is maximal such that $|S_1| \leq j \zeta \theta n / t$.
		Since $P_j$ is $(j \zeta \theta/ t)$-absorbing in $G$ and is a subpath of $P_{j+1}$, there exists a path $P'_j$ with the same endpoints as $P_j$ such that $S_1 \subseteq V(P'_j)$, so it remains to cover $S_2$.
		By the choice of $S_1$, we have that $|S_2| \leq \zeta \theta n / t + k \leq 2 \zeta^3 n / t \leq 2 (1 + \alpha) \zeta^3 m \leq 5 \zeta^3 m / 2$.
		Therefore we can partition $S_2$ into $\ell \leq 5 \zeta \theta m / (2k) \leq 2 \zeta \theta m'$ sets of size $k$ each, let $T_1, \dotsc, T_{\ell}$ be those sets.
		By~\ref{itm:absorbing-path-proof-absorbing-A'}, for each $1 \leq i \leq \ell$ we have $|\sF_{T_i} \cap \cA'| \ge \ell$.
		Thus we can associate each $T_i$ with a different gadget $F_i \in \cA'$.
		Each gadget $F_i$ yields a collection of $k+1$ many paths $P_{i,1}, \dotsc, P_{i,k+1}$, all of which are in $\cA$, and we can substitute those paths for a collection of different paths $P'_{i,1}, \dotsc, P'_{i,k+1}$ with the same endpoints such that $T_i \cup V(F_i) = V(P'_{i,1}) \cup \dotsb \cup V(P'_{i,k+1})$ (as illustrated in Figure~\ref{figure:absorber-after}).
		Since $P_j$ and each of the paths $P_{i,j}$ are subpaths of $P_{j+1}$,  the aforementioned substitutions result in a tight path $P'_{j+1}$ with the same endpoints as $P_{j+1}$ and whose vertex set is exactly $V(P_{j+1}) \cup T$, as desired.
	\end{proofclaim}
	To finish, note that $P_N$ has the desired properties.
	Indeed, by the choice of $N = \lceil \eta t / (\theta \zeta) \rceil$ we have $N \zeta \theta / t \ge \eta$, so $P_N$ is $\eta$-absorbing in $G$.
	Moreover, since $N(100k^2\zeta /t) \leq 200 k^2 \eta / \theta \leq \lambda$ (by the assumption of $\zeta \ll \eta \ll \lambda, \theta, 1/k$), $V(P_N)$ is $\lambda$-sparse in $\cP$.
\end{proof}

\section{Conclusion} \label{sec:discussion}
Rödl, Ruciński and Szemerédi determined the $d$-degree threshold of $k$-uniform Hamilton cycles whenever $d=k-1$.
Here we extended this result to $d= k-2$ and proved a general upper bound of $\hc_d(k) \leq 1 - 1 /(2(k-d))$.
Our proofs take place in the environment of frameworks and vicinities, which offer a novel perspective on these problems and could lead to further progress on the study of thresholds for tight Hamilton cycles.
To conclude this paper, we discuss some conjectures and problems in this strand of research.

In light of our work, the most basic question is how the minimum $d$-degree threshold of $k$-uniform tight Hamilton cycles behaves for $k-d\geq3$.
The work of Han and Zhao~\cite{HZ16} together with Theorem~\ref{thm:main-simple-general} shows that there are constants $c, C > 0$ such that $1 - c(k-d)^{-1/2} \leq \hc_d(k) \leq 1- C (k-d)^{-1}$ for all $k > d \ge 1$.
In our view, the left side of these two inequalities is more likely to reflect the truth.
In order to understand the asymptotic behaviour of the tight Hamilton cycle threshold it would be interesting to gauge whether $\hc_d(k) \leq 1-C' (k-d)^{-1/2}$ for a $C' > 0$.

A more challenging task consists in determining exact bounds.
In context of this, let us review the construction of Han and Zhao~\cite{HZ16}.

\begin{construction}\label{const:tight-general}
	For $1 \leq d \leq k-2$, let $\ell=k-d$.
	Choose $0 \leq j \leq k$  such that $(j-1)/k < \lceil \ell/2 \rceil /(\ell+1) < (j+1)/k$.
	Let $H$ be a $k$-graph on $n$ vertices and a subset of vertices  $X$ with $|X|= \lceil \ell/2 \rceil n /(\ell+1)$, such that $H$ contains precisely the edges $S$ for which $|S \cap X| \neq j$.
\end{construction}
Consider a $k$-graph $H$ as in the Construction~\ref{const:tight-general}.
By design, there is no tight walk between edges $S$ of type $|S \cap X| > j$ and edges $S'$ of type $|S' \cap X| < j$.
On the other hand, a simple averaging argument shows that a tight Hamilton cycle must contain an edge of each type.
Hence $H$ does not admit a tight Hamilton cycle.
Calculating the minimum degree of $H$, we obtain for instance that $\hc_d(k) \geq 1/2, 5/9$, $5/8$, $408/625$ for $k-d=1$, $2$, $3$, $4$.
(It can be shown that the value of $j$ and the size of $X$ in the construction maximises these bounds.)
We believe that the lower bounds obtained by Han and Zhao are best possible.

\begin{conjecture}\label{con:Han-Zhao-are-best-possible}
	The minimum $d$-degree threshold for $k$-uniform tight Hamilton cycles coincides with the lower bounds given by Construction~\ref{const:tight-general}.
\end{conjecture}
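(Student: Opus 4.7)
The plan is to exploit the structural reduction afforded by Theorems~\ref{thm:framework} and~\ref{thm:hamilton-vicinities}, which together yield $\hc_d(k) \leq \hv_d(k)$. Since Construction~\ref{const:tight-general} already provides the matching lower bound, the conjecture reduces to proving $\hv_d(k) \leq \delta^{\ast}_{d,k}$, where $\delta^{\ast}_{d,k}$ denotes the threshold associated with that construction. Writing $\ell = k-d$, this is an Erdős--Gallai--type problem on $\ell$-graphs: determine the relative edge density that forces each link graph of a $k$-graph above threshold to contain a tight component $C_S$ which is dense, matches well, houses a switcher and, crucially, overlaps with every other $C_{S'}$ so that outer connectivity is preserved.

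First I would fix a $k$-graph $R$ with $\alpha$-perturbed minimum relative $d$-degree $\delta^{\ast}_{d,k} + \mu$ and, for every $S \in \partial_d(R)$, select $C_S \subseteq L(S)$ to be the tight component of maximum ratio $e(C_S)/e(\partial(C_S))$, following Proposition~\ref{prop:large-component}. Combining Proposition~\ref{prop:dense-subgraph} with the Kruskal--Katona bounds used in Section~\ref{sec:finding-vicinities} should place $C_S$ at a relative edge density strictly above the critical value, immediately yielding properties~\ref{def:vicinity-inner-connectivity} and~\ref{def:vicinity-density}. From this density, Proposition~\ref{prop:switcher} and Proposition~\ref{prop:arc} should produce the divisibility witnesses, while Frankl's inequality (Lemma~\ref{lem:large-matching}) should give the space property~\ref{def:vicinity-space}; the arithmetic inputs for each of these propositions have to be checked against the exact value $\delta^{\ast}_{d,k}$, which amounts to a routine but non-trivial optimisation identifying the extremal configurations at that density---ideally mimicking, for general $\ell$, the self-contained argument carried out for $\ell = 2$ in Lemma~\ref{lem:cooley-mycroft}.

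The main obstacle will be establishing outer connectivity~\ref{def:vicinity-outer-connectivity}, namely that $C_S \cap C_{S'}$ contains an edge for every pair $S, S' \in \partial_d(R)$. The natural approach is a stability dichotomy: either some link graph is structurally close to a Han--Zhao partition, in which case the hypothesised degree excess $\mu$ should propagate the partition globally and produce a contradiction with the defining property of $\delta^{\ast}_{d,k}$; or every link graph is quantitatively far from the extremal structure, so its largest tight component has density well above $1 - \delta^{\ast}_{d,k}$ and any two such components must intersect by a pigeonhole argument carried out inside the common shadow $L(S \cup S')$. The delicate regime is when several tight components of comparable size coexist in a single link graph, so that the choice of $C_S$ is not canonical and must be made coherently across all $S$ simultaneously. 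Handling this seems to demand a fine-grained stability theorem for $\ell$-graphs of density $1 - \delta^{\ast}_{d,k}$, characterising the extremal configurations and controlling how their tight components glue together; this is precisely the step that appears substantially deeper for $\ell \geq 3$ than for the cases already resolved.
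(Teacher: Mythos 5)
What you are asked to establish is Conjecture~\ref{con:Han-Zhao-are-best-possible}, which the paper poses as an \emph{open problem} in Section~\ref{sec:discussion}; the paper contains no proof of it, so your proposal is necessarily a research programme rather than an argument to verify. The logical skeleton of the reduction is sound: Theorems~\ref{thm:framework} and~\ref{thm:hamilton-vicinities} give $\hc_d(k) \le \hv_d(k)$, Construction~\ref{const:tight-general} supplies the matching lower bound, so the conjecture would follow from $\hv_d(k) \le \delta^*_{d,k}$. You also correctly identify the recipe (pick the tight component of $L(S)$ maximising $e_\ell/e_{\ell-1}$ via Proposition~\ref{prop:large-component}, then verify~\ref{def:vicinity-inner-connectivity}--\ref{def:vicinity-density} with Propositions~\ref{prop:dense-subgraph}, \ref{prop:switcher}, \ref{prop:arc} and Lemma~\ref{lem:large-matching}) and the bottleneck (outer connectivity~\ref{def:vicinity-outer-connectivity}).

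The gap is that the hard part is not merely ``delicate'' --- it \emph{is} the conjecture, and your plan leaves it open. Two points make this concrete. First, you cannot route through $\eg(k-d)$ of Definition~\ref{def:erdos-gallai}: that notion bakes in tight-component density $1/2 + \gamma$ precisely so that any two components must share an edge by pigeonhole, but at $\delta = \delta^*_{d,k}$ the Kruskal--Katona bound of Proposition~\ref{prop:dense-subgraph} only delivers component density about $(\delta^*_{d,k})^{\ell}$, which is below $1/2$ for every $\ell \ge 2$ (already for $\ell=2$ Lemma~\ref{lem:cooley-mycroft} achieves only $4/9 + \gamma$). So one is forced into a direct intersection argument in the style of Lemma~\ref{lem:cooley-mycroft} item~\ref{itm:cooleymycroft-edge-in-common}, and no $\ell \ge 3$ analogue exists in the paper or, as far as we know, anywhere. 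Second, your stability dichotomy is the natural template but is only sketched: you neither prove such a stability theorem at the conjectured density nor show that a coherent choice of tight components across all of $\partial_d(R)$ is possible when several comparable components coexist --- the very difficulty you flag. The paper itself records exactly this as open in Conjecture~\ref{con:erdos-gallai} (with the explicit sub-problem $\eg(3)=5/8$ and a remark that even $\ell$ needs a generalisation of Lemma~\ref{lem:cooley-mycroft}). In short, your proposal charts the right territory and isolates the correct obstruction, but it does not constitute a proof.
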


In fact, in our view something stronger could be true.
Recall that $\hf_d(k)$ denotes the Hamilton framework threshold and $\hv_d(k)$ the Hamilton vicinity threshold.
Our main theoretical results (Theorem~\ref{thm:framework} and~\ref{thm:hamilton-vicinities}) state that $\hc_d (k) \leq \hf_d(k) \leq \hv_d(k)$ for all $1 \leq d \leq k-1$.
We believe these inequalities to be tight, or in other words it suffices to study vicinities to determine the thresholds of tight Hamilton cycles.
\begin{conjecture}\label{con:tight-cycles--general-threshold}
	We have $\hc_d (k) = \hv_d(k)$ for all $1 \leq d \leq k-1$.
\end{conjecture}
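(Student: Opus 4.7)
The plan is to establish the reverse inequality $\hv_d(k) \leq \hc_d(k)$, since the forward direction $\hc_d(k) \leq \hv_d(k)$ already follows from Theorems~\ref{thm:framework} and~\ref{thm:hamilton-vicinities}. The natural approach is a blow-up argument: given $\delta < \hv_d(k)$, one exhibits a $k$-graph $R$ witnessing this lower bound and uses it to produce an arbitrarily large $k$-graph $G$ of minimum $d$-degree close to $\delta$ that fails to contain a tight Hamilton cycle.

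Concretely, I would pick $\mu, \gamma > 0$ and, invoking the definition of $\hv_d(k)$, choose a $k$-graph $R$ on $t$ vertices of minimum relative $d$-degree at least $\delta + \mu$ that admits no $(\gamma,\delta)$-Hamilton $d$-vicinity. Then $G$ is taken to be an $m$-fold balanced (random) blow-up of $R$, with each vertex of $R$ replaced by a cluster of $m$ vertices. Standard Chernoff-type concentration yields $\overline{\delta}_d(G) \geq \delta + \mu/2$ with high probability. The crux is then to prove, by contradiction, a \emph{converse Vicinity Theorem}: any tight Hamilton cycle $C$ in $G$ must project down to a $(\gamma,\delta)$-Hamilton vicinity in $R$. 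The natural candidate family is $\cC = (C_S)_{S \in \partial_d(R)}$, where $C_S$ collects those $(k-d)$-sets $T$ in $L_R(S)$ for which the number of edges of $C$ whose cluster-index equals $S \cup T$ exceeds a small threshold.

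The main obstacle will be verifying properties \ref{def:vicinity-inner-connectivity}--\ref{def:vicinity-density} for $\cC$. Density \ref{def:vicinity-density} and space \ref{def:vicinity-space} should be essentially automatic by averaging, since $C$ is vertex-spanning and carries a perfect fractional matching, both of which transfer to individual links via a pigeonhole argument. Inner and outer connectivity \ref{def:vicinity-inner-connectivity}--\ref{def:vicinity-outer-connectivity} should follow by projecting the tight walks in $C$ down to $R$, although one needs some care because a projected walk may leave an individual $C_S$ through cluster-re-indexing; a second averaging over the starting positions of sub-walks of $C$ inside each cluster should recover what is needed. The truly delicate step, and the reason I expect this conjecture to be hard, is divisibility \ref{def:vicinity-divisibility}: an arc falls out easily from any $(k+1)$-vertex window of $C$, but forcing a switcher inside each $C_S$ requires a very specific three-edge configuration in the link that is not obviously enforced by the presence of a Hamilton cycle alone. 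A plausible way around this is to refine the definition of Hamilton vicinity, weakening the switcher condition into a more flexible divisibility witness while simultaneously proving an adapted version of Lemma~\ref{lem:connectivity} — this would keep Theorem~\ref{thm:hamilton-vicinities} intact, potentially lower $\hv_d(k)$, and bring both directions of the conjecture within reach.
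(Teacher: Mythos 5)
This statement is a conjecture, not a theorem, and the paper offers no proof of it; so there is nothing to compare your proposal against. What you have written is a sketch of an approach, and it has a genuine gap that goes beyond the switcher difficulty you flag. Your plan hinges on a \emph{converse Vicinity Theorem}: that a tight Hamilton cycle in the blow-up $G$ must project to a $(\gamma,\delta)$-Hamilton $d$-vicinity in $R$. This converse is false as stated. The paper itself points out (end of Section~\ref{sec:hamilton-frameworks}) that the space and outreach properties of Hamilton frameworks are strictly stronger than the actual properties of tight Hamilton cycles, and the same is true for the vicinity properties. A tight Hamilton cycle in $G$ projects to a tight closed walk in $R$, but the links $C_S$ that you would extract from this walk need not be tightly connected: already for $k=3$, $d=1$, each passage of the walk through a vertex $v$ contributes a short path in $L_R(v)$, and distinct passages contribute disjoint paths with no guarantee of a connecting edge. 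Likewise the density condition~\ref{def:vicinity-density} (that $C_S$ have edge density at least $1-\delta+\gamma$) is not ``essentially automatic'': whether your thresholded $C_S$ is dense depends delicately on the blow-up factor $m$ relative to $t$, and even for large $m$ there is no averaging argument forcing the walk's edges to spread uniformly over the $(k-d)$-sets $T$.

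There is also a structural mismatch in the very first step. The Hamilton vicinity threshold $\hv_d(k)$ is defined via a constant hierarchy $1/t \ll \alpha \ll \gamma \ll \mu$, so a witness $R$ to $\delta < \hv_d(k)$ depends on the chosen constants and on $t$ being large; to contradict $\hc_d(k) \leq \delta$ you need a coherent family of $G$'s for every $\mu > 0$, and it is not clear how to extract that from the definition. Finally, your proposed remedy --- weakening the switcher condition and re-proving Lemma~\ref{lem:connectivity} --- is a sensible research direction, but it changes the definition of Hamilton vicinity and therefore the value of $\hv_d(k)$, which means you would be proving a different statement, not Conjecture~\ref{con:tight-cycles--general-threshold} as written.
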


Finally, recall that the idea behind vicinities is to analyse the Hamilton cycle threshold through the perspective of link graphs.
We do not have a reason to believe that some link graphs behave fundamentally different than others.
Hence, the $k$-uniform Hamilton $d$-vicinity threshold could potentially by determined by solving an Erdős--Gallai-type problem for $(k-d)$-graphs.
To formalise this, we introduce a last piece of notation.

\newcommand{\eg}{\mathrm{eg}}
\begin{definition}\label{def:erdos-gallai}
	For $\ell \in\NATS$, let $\eg(\ell)$ be the smallest number $\delta >0$ such that, for every $\mu >0$, there are $\gamma >0$ and $n_0 \in \NATS$ with the following property.
	
	Suppose that $G$ is an $\ell$-graph on $n \geq n_0$ vertices with edge density at least $\delta + \mu$.
	Then there is a subgraph $C\subset G$ which is
	\begin{enumerate}[label=\textnormal{(\roman*)}]
		\item \label{itm:problem-erdos-gallai-connected}  tightly connected, \hfill {\upshape(connectivity)}
		\item \label{itm:problem-erdos-gallai-matching}  has a fractional matching of density $1/(\ell+1) + \gamma$ and\hfill {\upshape(space)}
		\item \label{itm:problem-erdos-gallai-dense}  has edge density at least $1/2+\gamma$. \hfill {\upshape(density)}
	\end{enumerate}
\end{definition}

The following result was implicitly shown in the proof of Theorem~\ref{thm:main-simple-general}.
\begin{theorem}\label{thm:erdos-gallai}
	For $1 \leq d \leq k-1$, we have $\hc_{d}(k) \leq \hv_d(k) \leq \eg(k-d)$.
\end{theorem}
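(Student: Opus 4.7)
The inequality $\hc_d(k) \leq \hv_d(k)$ is immediate from concatenating the Framework Theorem (Theorem~\ref{thm:framework}) with the Vicinity Theorem (Theorem~\ref{thm:hamilton-vicinities}), which together yield $\hc_d(k) \leq \hf_d(k) \leq \hv_d(k)$. The bulk of the work lies in establishing the second inequality $\hv_d(k) \leq \eg(k-d)$, and the plan is to mirror the proof of Lemma~\ref{lem:vicinity-threshold-general} while substituting the explicit threshold $2^{-1/\ell}$ (with $\ell = k-d$) by $\delta := \eg(\ell)$, and replacing the construction of $C_S$ via Proposition~\ref{prop:large-component} by a direct invocation of the defining property of $\eg(\ell)$.

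Concretely, I will fix $\delta = \eg(\ell)$ together with constants $1/t \ll \alpha \ll \gamma \ll \mu$ respecting the Hamilton vicinity hierarchy (Definition~\ref{def:vicinity-threshold}). Given a $k$-graph $R$ on $t$ vertices without isolated vertices and with $\alpha$-perturbed minimum relative $d$-degree at least $\delta + \mu$, the axiom~\ref{itm:perturbed-degree-density} of perturbed degree ensures that for every $S \in \partial_d(R)$, the link graph $L(S)$ is an $\ell$-graph on essentially $t$ vertices of edge density at least $\delta + \mu/2$. Feeding each $L(S)$ into the definition of $\eg(\ell)$ produces a subgraph $C_S \subseteq L(S)$ which is tightly connected, admits a fractional matching of density at least $1/(\ell+1) + 2\gamma$, and has edge density at least $1/2 + 2\gamma$. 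I then verify, property by property, that $\cC = (C_S)_{S \in \partial_d(R)}$ is a $(\gamma, \delta)$-Hamilton $d$-vicinity: property~\ref{def:vicinity-inner-connectivity} is immediate; property~\ref{def:vicinity-outer-connectivity} follows by the pigeonhole principle applied to two collections of $\ell$-subsets of $V(R)$, each of density above $1/2$; property~\ref{def:vicinity-space} follows from $1/(\ell+1) \geq 1/k$; and property~\ref{def:vicinity-density} uses $\delta \geq 1/2$, which forces $1 - \delta + \gamma \leq 1/2 + 2\gamma$.

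The main obstacle will be the divisibility property~\ref{def:vicinity-divisibility}, since the definition of $\eg(\ell)$ does not explicitly demand switchers or arcs. For the arc I plan to apply Proposition~\ref{prop:arc}: since $\delta = \eg(\ell) \geq 1/2$, the required inequality $\delta + \delta^{1 - 1/\ell} > 1$ reduces to $\tfrac{1}{2} + 2^{-(1 - 1/\ell)} > 1$, which is true for every finite $\ell \geq 2$ because $1 - 1/\ell < 1$. For the switcher inside each $C_S$ I will invoke Proposition~\ref{prop:switcher}; the density bound $1/2 + 2\gamma$ together with the tight connectivity of $C_S$ should provide the density ratio needed to trigger the proposition, exactly as in the closing paragraph of Lemma~\ref{lem:vicinity-threshold-general}. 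This is the most delicate ingredient to calibrate, as it hinges on the implicit interplay between the $1/2$-density condition encoded in $\eg(\ell)$ and the switcher threshold $\ell/(\ell + 2\sqrt{\ell-1})$, which coincide for $\ell = 2$ but require a short numerical verification for larger $\ell$. Once the switcher and arc are in place, Lemmas~\ref{lem:connectivity} and~\ref{lem:space} complete the verification that $\cC$ is a $(\gamma, \delta)$-Hamilton $d$-vicinity, yielding $\hv_d(k) \leq \eg(\ell)$.
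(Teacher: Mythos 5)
You correctly reduce $\hc_d(k) \leq \hv_d(k)$ to Theorems~\ref{thm:framework} and~\ref{thm:hamilton-vicinities}, and feeding each link graph $L(S)$ into Definition~\ref{def:erdos-gallai} to produce $C_S$ is the natural route for the second inequality. Your verifications of~\ref{def:vicinity-inner-connectivity}, \ref{def:vicinity-outer-connectivity}, \ref{def:vicinity-space}, \ref{def:vicinity-density} and of the arc portion of~\ref{def:vicinity-divisibility} (via Proposition~\ref{prop:arc}, using $\tfrac12 + (\tfrac12)^{1-1/\ell} > 1$ for every $\ell\geq 1$) all go through, given that $\eg(\ell)\geq 1/2$, which property~\ref{itm:problem-erdos-gallai-dense} indeed forces.

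The switcher half of~\ref{def:vicinity-divisibility} is, however, a genuine gap, and it is not resolved by the ``short numerical verification'' you envision. Proposition~\ref{prop:switcher} requires the \emph{ratio} $\nu_S/\nu_S' \geq \ell/(\ell + 2\sqrt{\ell-1}) + \mu$, where $\nu_S,\nu_S'$ are the edge densities of $C_S$ and $\partial(C_S)$, and for $\ell\geq 3$ this threshold is \emph{strictly greater} than $1/2$ (it equals $1/2$ only at $\ell=2$). Definition~\ref{def:erdos-gallai} supplies only $\nu_S\geq 1/2+2\gamma$ with $\gamma\ll\mu$, which yields merely $\nu_S/\nu_S'\geq 1/2+2\gamma$; tight connectivity of $C_S$ imposes no upper bound on $\nu_S'$ and therefore cannot supply the missing margin. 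In the proof of Lemma~\ref{lem:vicinity-threshold-general} this step was sound precisely because $C_S$ was obtained from Proposition~\ref{prop:large-component}, which delivers $\nu_S/\nu_S'\geq 2^{-1/\ell}+\mu\geq \ell/(\ell+2\sqrt{\ell-1})+\mu$ directly; the density bound $\nu_S\geq 1/2+\mu/2$ there is a \emph{derived consequence} (Proposition~\ref{prop:dense-subgraph}) carrying strictly less information than the ratio, which is what the switcher argument actually consumes. Closing the gap requires an ingredient not present in your argument or in the written paper: for instance a lower bound $\eg(\ell)\geq \ell/(\ell+2\sqrt{\ell-1})$ established by construction (together with an argument reconciling the max-ratio component from Proposition~\ref{prop:large-component} with the $\eg(\ell)$-given $C_S$), or a strengthening of Definition~\ref{def:erdos-gallai} to demand a switcher or to phrase~\ref{itm:problem-erdos-gallai-dense} as a ratio condition $\nu/\nu'\geq 1/2+\gamma$. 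The paper records Theorem~\ref{thm:erdos-gallai} as ``implicitly shown'' without a written proof, and the difficulty you glossed over is exactly where the implicit argument would need to be made explicit.
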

Observe that when $k-d$ is odd, then the $(k-d)$-uniform link graphs of Construction~\ref{const:tight-general} reveal the same lower bounds on $\eg(k-d)$, that is $\eg(1)  \geq 1/2$ and $\eg(3)\geq 5/8$.
We do not believe that this is a coincidence (see Figure~\ref{fig:conjectures}). 
\begin{conjecture}\label{con:erdos-gallai}
	When $k-d$ is odd, then $\eg(k-d)$ is attained by the $(k-d)$-uniform link graphs of Construction~\ref{const:tight-general}.
\end{conjecture}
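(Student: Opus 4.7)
I would begin by computing the target value explicitly. For odd $\ell$, Construction~\ref{const:tight-general} produces an extremal link graph on $V = X \cup \bar X$ with $|X|=|\bar X|=n/2$, consisting of all $\ell$-sets $T$ with $|T \cap X| \notin \{(\ell-1)/2, (\ell+1)/2\}$. Its edge density tends to $\delta^{\ast} := 1 - \binom{\ell}{(\ell-1)/2}/2^\ell$, and a direct inspection shows it splits into two symmetric tight components (edges with $|T \cap X| \leq (\ell-3)/2$ versus $|T \cap X| \geq (\ell+1)/2$), each of edge density $\delta^{\ast}/2 < 1/2$ and maximum fractional matching density exactly $1/(\ell+1)$. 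Thus the extremal configuration just barely fails properties~\ref{itm:problem-erdos-gallai-matching} and~\ref{itm:problem-erdos-gallai-dense} of Definition~\ref{def:erdos-gallai}, confirming $\eg(\ell) \geq \delta^{\ast}$. The plan is to prove the matching upper bound $\eg(\ell) \leq \delta^{\ast}$.

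I would follow the pipeline of Lemma~\ref{lem:vicinity-threshold-general}. Starting from an $\ell$-graph $G$ of edge density at least $\delta^{\ast} + \mu$, apply Proposition~\ref{prop:large-component} to extract a tight component $C$ with $e_\ell(C)/e_{\ell-1}(C) \geq \delta^{\ast} + \mu$. Property~\ref{itm:problem-erdos-gallai-connected} is then immediate. However, for $\ell \geq 3$ one has $\delta^{\ast} < \ell/(\ell+1)$ and $(\delta^{\ast})^\ell < 1/2$, so Proposition~\ref{prop:dense-subgraph} combined with Lemma~\ref{lem:large-matching} alone does not guarantee~\ref{itm:problem-erdos-gallai-matching} or~\ref{itm:problem-erdos-gallai-dense}. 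The extra density $\mu$ must be translated into sharper structural information about $C$.

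The heart of the argument should be a stability theorem: if $G$ has edge density $\delta^{\ast} + \mu$ and no tight component meets both thresholds in~\ref{itm:problem-erdos-gallai-matching} and~\ref{itm:problem-erdos-gallai-dense}, then $G$ is $\tau(\mu)$-close in edit distance to the extremal configuration above. This would let us split into two cases. If $G$ is far from the extremal, a quantitative refinement of Proposition~\ref{prop:dense-subgraph} pushing past the bipartite-type bottleneck should yield a tight component $C$ with $e(C) > (1/2 + \gamma)\binom{n}{\ell}$ and $e_\ell(C)/e_{\ell-1}(C) > \ell/(\ell+1)$, securing both properties via Lemma~\ref{lem:large-matching}. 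If $G$ is close to the extremal, the $\Omega(\mu n^\ell)$ excess edges outside the template must either bridge the two would-be tight components (merging them into a single component whose fractional matching uses both sides of the bipartition, hence exceeding density $1/(\ell+1)$) or bias one side sufficiently to exceed the $n/(\ell+1)$ matching barrier.

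The main obstacle is the stability step: Kruskal--Katona and the ratio bound in Proposition~\ref{prop:large-component} are tight precisely at the bipartite-type configurations, so one needs additional structural input to reconstruct the bipartition $(X,\bar X)$ from $G$ itself. I expect an iterative refinement to work, assigning each vertex $v$ to $X$ or $\bar X$ according to the intersection profile of its link with some tentative partition, and then using a switching argument to show that this assignment converges up to a small error. The case $\ell = 3$, where the target is $\delta^{\ast} = 5/8$, is a natural starting point: here a direct case analysis in the spirit of Lemma~\ref{lem:cooley-mycroft} should be feasible and would guide the approach for general odd $\ell$.
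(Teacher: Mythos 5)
The statement you are addressing is stated in the paper as a \emph{conjecture}, not a theorem: the authors explicitly leave it open and write that ``a first step towards this conjecture would be to show that $\eg(3)=5/8$.'' So there is no proof in the paper to compare against, and your proposal does not close the gap either: the central step you identify --- a stability theorem asserting that an $\ell$-graph of density $\delta^\ast+\mu$ with no good tight component must be close to the two-sided construction --- is precisely the part you flag as ``the main obstacle'' and for which you offer only the expectation that ``an iterative refinement \emph{should} work.'' Without that lemma the argument is a research programme, not a proof, and in particular the assertion that the excess $\Omega(\mu n^\ell)$ edges must either merge the two would-be components or break the matching barrier is not established; a priori the excess edges could be distributed so as to land inside the smaller of the two components without joining them or creating a large matching.

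There are also a few computational slips in your analysis of the extremal configuration that should be fixed even for the lower-bound sanity check. The relevant $\ell$-uniform link graph of Construction~\ref{const:tight-general} (for $\ell$ odd) forbids intersection with $X$ in a \emph{single} value $(\ell+1)/2$ (or symmetrically $(\ell-1)/2$), not the pair $\{(\ell-1)/2,(\ell+1)/2\}$; the formula $\delta^\ast=1-\binom{\ell}{(\ell-1)/2}/2^\ell$ you write down corresponds to excluding exactly one middle layer, consistent with $\eg(3)\ge 5/8$. Correspondingly the two tight components are \emph{not} of equal density $\delta^\ast/2$: the component with $|T\cap X|\le(\ell-1)/2$ has density $\sum_{i=0}^{(\ell-1)/2}\binom{\ell}{i}/2^\ell=1/2$ (this is what makes it extremal for Definition~\ref{def:erdos-gallai}\ref{itm:problem-erdos-gallai-dense}), while the other component with $|T\cap X|\ge(\ell+3)/2$ has density $\delta^\ast-1/2$, which is strictly smaller for $\ell\ge 3$. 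These corrections matter because the larger component sits \emph{exactly} at the density threshold $1/2$ and the matching threshold $1/(\ell+1)$, which is what makes the stability question delicate and is worth getting right before attempting it.
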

A first step towards this conjecture would be to show that $\eg(3) = 5/8$.
We remark that a similar conjecture can be formulated for even $k-d$ by generalising the statement of Lemma~\ref{lem:cooley-mycroft}.

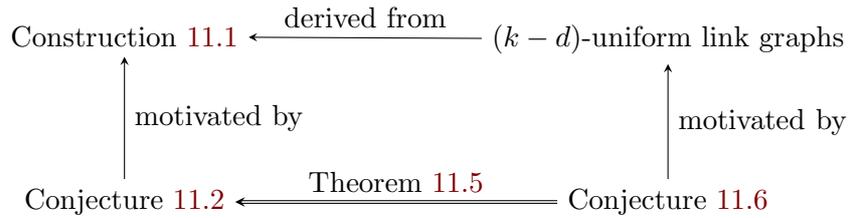
\begin{figure}
\centering

\begin{tikzpicture}
\matrix (m) [matrix of math nodes,row sep=4em,column sep=8em,minimum width=3em]
{
	\text{Construction~\ref{const:tight-general}} & \text{$(k-d$)-uniform link graphs} \\
	\text{Conjecture~\ref{con:Han-Zhao-are-best-possible}} & \text{Conjecture~\ref{con:erdos-gallai}} \\};

\path[-stealth] (m-1-2) edge node [above] {derived from} (m-1-1);
\path[-stealth] (m-2-1) edge node [right] {motivated by} (m-1-1);
\path[-stealth] (m-2-2) edge node [right] {motivated by} (m-1-2);
\path[-stealth] (m-2-2) edge [double] node [above] {Theorem~\ref{thm:erdos-gallai}} (m-2-1); 
\end{tikzpicture} 

\caption{Interplay between conjectures and constructions.}
\label{fig:conjectures}
\end{figure}

We remark that the combination of the connectivity and space property of Definition~\ref{def:erdos-gallai} are a special case of the Erdős--Gallai problem for tight cycles as studied by Allen, B\"ottcher, Cooley and Mycroft~\cite{ABCM17}.
Conversely, the combination of the connectivity and density property has, as far as we are aware, not been studied before and appears to be a natural question of independent interest.

Our belief here is that the size of an edge-maximal tight component undergoes a phase-transition once a certain threshold is passed.
Let us highlight the case $\ell = 3$, which is particularly relevant since it would give insight to understand $\hc_d(k)$ when $k-d=3$.
As demonstrated by the link graphs in Construction~\ref{const:tight-general}, there are $3$-graphs with edge density $5/8-o(1)$ in which the largest tight component has edge density $1/2 -o(1)$.
However, it might be that the edge density of the largest tight component in a $3$-graph of edge density $5/8+o(1)$ is much larger than $1/2+o(1)$ and instead of order at least $((1 + \sqrt 5)/4)^3 \approx 0.52951$.
(The latter bound is obtained from a $3$-graph $H$ on $n$ vertices with a subset of $n(1 + \sqrt 5)/4$ vertices $X$, and all edges that do not share precisely two elements in $X$.)

More generally, we could ask the following.
For given $k$ and $n$, let $f(\delta)$ be the edge density of a largest tight component in a $k$-graph on $n$ vertices.
Is $f$ continuous?
If not, what are its discontinuities?
We do not have answers to these questions, but they seem to present an inviting avenue for future research.

\section*{Acknowledgements}
We would like to thank Yi Zhao for bringing this problem to our attention and an anonymous referee for helpful comments that improved the presentation.

\bibliographystyle{amsplain}
\bibliography{bibliography}

\appendix

\section{Regular setups from regular slices} \label{appendix:slicesandsetup}

In the following, we sketch how the Regular Setup Lemma (Lemma~\ref{lem:regular-setup}) can be obtained from the Regular Slice Lemma of Allen, Böttcher, Cooley and Mycroft~\cite{ABCM17}.
Essentially, we can construct a regular setup via some modifications done in the output of the Regular Slice Lemma, which we state now.

Recall that given a regular complex $\cJ$ for a $k$-graph $G$ and $d_k > 0$, the weighted reduced $k$-graph $R(G)$ and the $d_k$-reduced graph $R_{d_k}(G)$ were introduced in Definition~\ref{definition:weightedreduced}.
We introduce some further notations to capture how well the degrees of vertices are transferred to a regular slice.
Recall that, for a set $S \subset V(G)$ of $d$ vertices, the {relative degree of $S$ in a $k$-graph $G$} of $n$ vertices was defined as $\reldeg(S;G) = \deg(S)/\binom{n-d}{k-d}$.
Similarly, if $G$ is instead a weighted $k$-graph with weight function $\reld$, then we define 
\[\reldeg(S;G) = \frac{\sum_{e \in G\colon S \subseteq e} \reld(e)}{\binom{n-d}{k-d}}.\]
In other words, $\reldeg(S;G)$ is the (weighted) proportion of edges of $G$ containing $S$.
Finally, for any set $X \subseteq V(G)$ we define the \emph{mean relative degree of $X$ in $G$}, denoted by $\reldeg(X; G)$, to be the mean average of $\reldeg(S;G)$ over all $d$-sets $S \subset X$.
Recall that $v$ is a vertex of $G$, we write $\deg_G(v)$ for the degree of $v$ in $G$ and $\reldeg_G(v) = \deg_G(v)/\binom{n-1}{k-1}$ for the relative degree of $v$ in $G$, and $\reldeg_G(v;\cJ)$ was introduced before Definition~\ref{definition:representativerooted}.

With this notation at hand, we can present the statement of the Regular Slice Lemma.

\begin{lemma}[Regular Slice Lemma {\cite[Lemma 10]{ABCM17}}]\label{lem:regular-slice}
	Let $k \geq 3$ be a fixed integer.
	For all positive integers $t_0$, positive~$\eps_k$ and all functions $r: \NATS \rightarrow \NATS$ and $\eps: \NATS \rightarrow (0,1]$,
	there are integers~$t_1$ and~$n_0$ such that the following holds for all $n \ge n_0$ which are divisible 
	by~$t_1!$. 
	Let $G$ be a $k$-graph whose vertex set $V$ has size $n$. 
	Then there exists a $(k-1)$-complex	$\cJ$ with $V(\cJ) = V$ which is a $(t_0,t_1,\eps(t_1),\eps_k,r(t_1))$-regular slice for
	$G$, such that $\cJ$ has the following additional properties.
	\begin{enumerate}[\upshape(R1)]
		\item\label{item:regularslice-degrees} For each $1 \le j \leq k-1$,
		each set $S$ of $j$ clusters of $\cJ$, 
		we have
		\[\big|\reldeg(S;R(G))-\reldeg(\cJ_S;G)\big|<\eps_k.\]
		\item\label{item:regularslice-rootedcount}
		For each vertex $v \in V(G)$, we have
		\[ \big|\reldeg_G(v;\cJ)-\reldeg_G(v) \big|<\eps_k \]
	\end{enumerate}
\end{lemma}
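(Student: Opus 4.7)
The plan is to derive this lemma from the Strong Hypergraph Regularity Lemma of R\"odl and Schacht. First, I would apply that lemma to $G$ with suitably chosen parameters (functions of $\eps_k, t_0, r, \eps$), obtaining a family of partitions $(\Part_1, \dotsc, \Part_{k-1})$, where $\Part_j$ partitions the $j$-sets supported on the \emph{polyads} produced at level $j-1$ into regular classes. The parameters are chosen so that the number $t$ of clusters of $\Part_1$ lies in $[t_0, t_1]$, all densities $d_j$ satisfy $1/d_j \in \NATS$ with $d_j \geq 1/t_1$, and $G$ is $(\eps_k, r(t_1))$-regular with respect to all but an $\eps_k$-fraction of the $(k-1)$-polyads. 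From this structure, I would build a $(k-1)$-complex $\cJ$ on $V(G)$ by selecting, for each $j$ from $2$ to $k-1$ and each polyad $P$ at level $j-1$, exactly one block of $\Part_j$ sitting on top of $P$; the choice will be made uniformly at random among the $1/d_j$ possibilities. The resulting $\cJ$ is $(t_0,t_1,\eps(t_1))$-equitable by construction, and the $(\eps_k, r(t_1))$-regularity of $G$ transfers down to the $k$-sets of clusters of $\cJ$ (since regularity with respect to a polyad implies regularity with respect to each of its constituent blocks), making $\cJ$ a $(t_0,t_1,\eps(t_1),\eps_k,r(t_1))$-regular slice for $G$.

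To verify property~\ref{item:regularslice-degrees}, I would compare the two sides directly. Fix a set $S$ of $j$ clusters of $\cJ$. The mean density $\reldeg(\cJ_S; G)$ is an average of the densities of $G$ over $k$-sets of vertices whose underlying $j$-set of clusters contains $S$; since almost all relevant $k$-sets of clusters are $(\eps_k, r(t_1))$-regular, on those tuples the density of $G$ on $\cJ_X$ equals (up to $\eps_k$) the weight $\reld(X)$ of $X$ in $R(G)$. Averaging over the remaining (at most $\eps_k$) irregular tuples contributes only an $O(\eps_k)$ error, which gives the bound $|\reldeg(S;R(G))-\reldeg(\cJ_S;G)|<\eps_k$ after adjusting constants slightly in the initial call to the regularity lemma.

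Property~\ref{item:regularslice-rootedcount} is where the randomness in the construction becomes essential and where I expect the main difficulty to lie. Fix a vertex $v \in V(G)$. Its relative rooted degree $\reldeg_G(v;\cJ)$ is the proportion of edges in $\cJ^{(k-1)}$ which, together with $v$, form an edge of $G$; and $\cJ^{(k-1)}$ is itself a random object since each block at each level was selected uniformly at random from equiprobable alternatives. A straightforward computation of expectations shows $\expectation[\reldeg_G(v;\cJ)] = \reldeg_G(v) \pm O(\eps_k)$, because the random block inside each polyad is a uniformly representative sample of $(k-1)$-sets. For concentration, I would use McDiarmid's inequality (Theorem~\ref{theorem:mcdiarmid}) applied to the independent block-choices at each level, with the bounded-difference constants controlled by the regularity densities $d_2, \dotsc, d_{k-1}$. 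Since the number of independent choices is at most $t_1^{k-1}$ and $n$ is huge compared to $t_1$, each individual deviation has probability much smaller than $1/n$, so a union bound over $v \in V(G)$ guarantees that with positive probability \emph{all} vertices satisfy~\ref{item:regularslice-rootedcount} simultaneously.

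The main obstacle will be packaging the randomness at all $k-2$ intermediate levels so that McDiarmid gives usable bounded-difference constants; switching a single block at level $j$ can alter $\reldeg_G(v;\cJ)$ by a factor governed by $\prod_{i>j} d_i$, so the hierarchy must be set up carefully, with the regularity parameter $\eps(t_1)$ sufficiently small compared to the intermediate densities. Once the union bound succeeds, fixing any realisation $\cJ$ satisfying both~\ref{item:regularslice-degrees} and~\ref{item:regularslice-rootedcount} (together with the regularity of $G$ with respect to its top level) yields the desired regular slice, completing the proof.
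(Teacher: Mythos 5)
The paper itself does not prove this lemma: it is imported verbatim from Allen, B\"ottcher, Cooley and Mycroft, with the appendix only explaining which items of their Lemma~10 specialise to (R1) and (R2) (taking $s=q=1$, $X=\cP$, $\ell=1$ and $H$ a single edge). So what you are attempting is a re-derivation of the cited result from the R\"odl--Schacht regularity lemma, and your overall plan (take the family of partitions, pick one block above each polyad uniformly at random to form the slice, transfer regularity, verify (R1) by averaging over regular $k$-sets of clusters) does follow the spirit of the original argument. Two of your steps are stated imprecisely but are fixable: the top level of the slice restricted to a $k$-set of clusters \emph{is} one of the polyads of the partition (it is not that ``regularity with respect to a polyad implies regularity with respect to its constituent blocks''), so to guarantee that $G$ is $(\eps_k,r)$-regular with respect to all but an $\eps_k$-fraction of the chosen $k$-sets one needs an expectation/Markov argument with a smaller initial regularity parameter; similarly (R1) needs the irregular tuples handled through such an averaging step.

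The genuine gap is in your treatment of (R2). The randomness in the construction consists of at most $\sum_{j=2}^{k-1}\binom{t}{j}$ independent block choices, a quantity bounded in terms of $t_1$ alone and completely independent of $n$. Consequently McDiarmid's inequality, applied to $\reldeg_G(v;\cJ)$ as a function of these choices, can only yield a failure probability of the form $\exp\bigl(-f(\eps_k,t_1,d_2,\dotsc,d_{k-1})\bigr)$, a constant not tending to $0$ as $n\to\infty$ (and, since the $d_j$ are only guaranteed to satisfy $d_j\ge 1/t_1$, the bounded differences need not even be small compared to $\eps_k$). Your claim that ``each individual deviation has probability much smaller than $1/n$'' is therefore false, and the union bound over all $n$ vertices collapses; this is exactly the delicate point of the rooted-degree property, which asserts representativity for \emph{every} vertex and cannot be obtained by per-vertex concentration over the slice choices plus a union bound over $V(G)$. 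One can also see the obstruction directly: nothing in the R\"odl--Schacht output controls the link of a single vertex, so a vertex whose link aligns with particular cells of the partition is misrepresented by a random slice with probability bounded away from $0$. Any correct proof must handle this issue by a different mechanism (as the original ABCM argument does), and your sketch as written does not.
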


We remark that the original Regular Slice Lemma has even more properties that $\cJ$ can be guaranteed to satisfy and we state only those that we need.
In particular, from their original statement~\cite[Lemma 10]{ABCM17} we only use the case where $s = 1$ (only one $k$-graph is regularised), $q = 1$ and $\mathcal{Q}$ is a trivial partition in one cluster.
Then~\ref{item:regularslice-degrees} follows from item (b) in~\cite[Lemma 10]{ABCM17} by setting $X = \cP$,
and~\ref{item:regularslice-rootedcount} follows from item (c) in~\cite[Lemma 10]{ABCM17} by setting $\ell = 1$ and $H$ to be a $k$-graph on $k$ vertices consisting of a single edge.

Note that~\ref{item:regularslice-degrees} compares the average of the relative degrees $\reldeg(\cJ_S;G)$ in the original graph with the weighted degrees of the weighted reduced graph $R(G)$, whereas the output of Lemma~\ref{lem:regular-setup} is in terms of the relative degrees in the $d_k$-reduced graph $R_{d_k}(G)$ instead.
However, this is possible due the the next lemma, which states that the relative degrees of $R(G)$ and $R_{d_k}(G)$ are not too far off if $d_k$ is small.

\begin{lemma}[{\cite[Lemma~12]{ABCM17}}] \label{lem:reduced-graph-degree}
	Consider $k,n,t,r \in \NATS$ and $\eps,\eps_k,d_k >0$.
	Let $G$ be a $k$-graph and let $\cJ$ be a
	$(\cdot,\cdot,\eps,\eps_k,r)$-regular slice for~$G$ with $t$
	clusters. Then for any set cluster $S$ of $\cJ$ we have
	\begin{align*} \label{eq:reduceddegree} 
	\reldeg(S;R_{d_k}(G)) \ge \reldeg(S;R(G)) - d_k - \zeta(S),
	\end{align*}
	where $\zeta(S)$ is defined to be the proportion of $k$-sets of clusters $Z$
	satisfying $S \in Z$ which are not $(\eps_k, r)$-regular with respect to $G$.
\end{lemma}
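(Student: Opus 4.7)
The plan is to expand the definitions and perform a direct decomposition of the weighted sum defining $\reldeg(S;R(G))$ according to the two properties (regularity and density) that distinguish $R(G)$ from $R_{d_k}(G)$. No regularity machinery is needed here; the argument is purely a counting/bookkeeping step.

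Let $j = |S|$, and note that the number of $k$-sets of clusters $X$ with $S \subseteq X$ is $\binom{t-j}{k-j}$. By definition,
\[
\reldeg(S;R(G)) = \frac{1}{\binom{t-j}{k-j}} \sum_{\substack{|X|=k \\ S \subseteq X}} \reld(X),
\]
where each weight $\reld(X) \in [0,1]$. I would split the set of $k$-sets $X \supseteq S$ into three classes: (i) those which are not $(\eps_k,r)$-regular with respect to $G$; (ii) those which are $(\eps_k,r)$-regular but satisfy $\reld(X) < d_k$; and (iii) those which are $(\eps_k,r)$-regular and satisfy $\reld(X) \ge d_k$. Only the $X$ in class (iii) contribute edges to $R_{d_k}(G)$.

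For class (i), there are at most $\zeta(S)\binom{t-j}{k-j}$ such $X$ by the definition of $\zeta(S)$, each contributing at most $1$ to the sum. For class (ii), there are at most $\binom{t-j}{k-j}$ such $X$, each contributing less than $d_k$. For class (iii), each of the $\reldeg(S;R_{d_k}(G))\binom{t-j}{k-j}$ relevant $X$ contributes at most $1$. Adding these three bounds and dividing by $\binom{t-j}{k-j}$ yields
\[
\reldeg(S;R(G)) \le \zeta(S) + d_k + \reldeg(S;R_{d_k}(G)),
\]
which rearranges to the claimed inequality. There is no real obstacle — the whole content is to notice that the weighted mass of $R(G)\setminus R_{d_k}(G)$ over $k$-sets containing $S$ is controlled by the irregularity fraction $\zeta(S)$ plus the density cutoff $d_k$.
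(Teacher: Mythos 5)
Your proof is correct. The paper does not prove this lemma itself—it cites it as \cite[Lemma~12]{ABCM17}—so there is no internal proof to compare against, but your direct decomposition of the weighted sum $\sum_{X \supseteq S} \reld(X)$ into irregular $k$-sets (relative measure at most $\zeta(S)$, weight at most $1$ each), regular $k$-sets with $\reld(X) < d_k$ (weight below $d_k$ each), and edges of $R_{d_k}(G)$ containing $S$ (weight at most $1$ each, contributing at most $\reldeg(S;R_{d_k}(G))\binom{t-j}{k-j}$) is precisely the standard counting argument, and it is valid regardless of whether one treats $R_{d_k}(G)$ as retaining the edge weights of $R(G)$ or not.
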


\begin{proof}[Proof sketch of Lemma~\ref{lem:regular-setup}]
	Apply Lemma~\ref{lem:regular-slice} with $k$, $t_0$, $\eps_k^2$, $r$, $\eps^2$ as input to get $t_1, n_0$ as output.
	Set $m_0 = \max\{ \lceil n_0/t_1 \rceil, 200 t_1 t_1! k \eps_k^{-3} \}$.
	As in the statement of Lemma~\ref{lem:regular-setup}, we are given a $k$-graph $G$ on at least $2 t_1 m_0 \ge 2 n_0$ vertices with relative degree $\overline{\delta}_d(G) \geq \delta +\mu$. 
	We have to show the existence of a suitable regular setup.
	
	Adding at most $t_1!$ isolated extra vertices to $G$, we obtain a $k$-graph $G'$ whose number of vertices is at least $n_0$ and divisible by $t_1!$.
	By the choice of $n_0$, there exists a $(k-1)$-complex $\cJ'$ with $V(\cJ') = V(G')$ which is a $(t_0, t_1, \eps^2(t_1), \eps_k^2, r(t_1))$-regular slice for $G'$.
	Let $\cP'$ be the ground partition of $\cJ'$,
	let $t$ be the number of clusters of $\cP'$,
	let $R' = R(G')$ be the weighted reduced graph of $G'$ on $\cJ'$ (whose vertex set is $\cP'$),
	and let $\mathbf{d} = (d_2, \dotsc, d_{k-1})$ be the density vector of $\cJ'$.
	By the properties detailed in Lemma~\ref{lem:regular-slice}, it follows that $\cP', R'$ satisfy the corresponding~\ref{item:regularslice-degrees}--\ref{item:regularslice-rootedcount} with $\eps_k^2$ in place of $\eps_k$.
	For brevity, we write $\eps = \eps(t_1)$ and $r = r(t_1)$ from now on.
	
	To get a regular slice for $G$ we need to remove the auxiliary vertices $V(G') \setminus V(G)$ and some more extra vertices from each cluster, to ensure that what remains in each cluster has the same size.
	Thus we choose $W \subseteq V(G')$ to be such that $W \supseteq V(G') \setminus V(G)$, $|W \cap Z_1| = |W \cap Z_2|$ and $|W \cap Z_1| \leq t_1!$ for every choice of clusters $Z_1, Z_2 \in \cP'$
	(it is certainly possible to find such a set, since $|V(G') \setminus V(G)| \leq t_1!$).
	Let $\cP$ be the partition of $V(G') \setminus W \subseteq V(G)$ whose clusters are $\{ Z \setminus W : Z \in \cP' \}$, and let $\cJ'$ be the restriction of $\cJ$ to $V(\cP)$.
	Thus $\cP$ has $t$ clusters.
	Let $R(G)$ be the weighted reduced graph of $G$ with respect to $\cJ$, and let $R_{d_k}$ be the corresponding $d_k$-reduced graph.
	Let $m$ be the common size of the clusters of $\cP$.
	
	From the Slice Restriction Lemma (Lemma~\ref{lem:regular-slice-restriction}) we deduce that $\cJ$ is a $(\cdot, \cdot, \eps, \eps_k, r)$-regular slice for $G$, with $V(\cJ) = V(G) \setminus W$ and density vector $\mathbf{d}$.
	Using this information it is straightforward to check that the properties of $\cJ', \cP', G'$ are essentially transferred to $\cJ, \cP, G$.
	For instance, if $G'$ is $(d, \eps_k^2, r)$-regular with respect to a $k$-set $X' = \{ Z'_1, \dotsc, Z'_k \}$, then $G$ is $(d, \eps_k, r)$-regular with respect to the corresponding $k$-set $X = \{Z_1, \dotsc, Z_k\}$ where $Z_i = Z'_i \setminus W$ for all $i \in [k]$.
	In particular, after doing the natural identification of the clusters in $\cP'$ with those of $\cP$ the weights and properties of $R'(G')$ are transferred to $R(G)$ in the obvious way, and we shall use this identification without further comment from now on.
	
	Note that if $X \in R_{d_k}(G)$, then $X$ is a $k$-tuple of clusters of $\cP$ such that $G$ is $(\eps_k, r)$-regular with respect to $X$ whose relative density which is \emph{at least} $d_k$.
	To satisfy the requirement of a regular setup, we require to find $G_{\cJ} \subseteq G$ such that the relative density of $G_{\cJ}$ with respect to $X$ is exactly $d_k$, simultaneously for every $X \in R_{d_k}(G)$.
	The existence of such a $G_{\cJ}$ is easily obtained from $G$ via an application of a Slicing Lemma (see for instance the work of Cooley, Fountoulakis, Kühn and Osthus~\cite[Lemma 8]{CFKO09}) which constructs $G_{\cJ}$ from $G$ by deleting edges at random to achieve the required density while still keeping the regularity properties.
	
	Now it is relatively straightforward to check that $\mathfrak{S} = (G, G_{\cJ}, \cJ, \cP, R_{d_k})$ is a $(k, m, t, \eps, \eps_k, \mathbf{d})$-regular setup.
	To see that $\mathfrak{S}$ is representative as well, one should use~\ref{item:regularslice-rootedcount} which, by construction, is satisfied by $G', \cJ'$ with $\eps^2_k$ in place of $\eps_k$.
	This is equivalent to saying that $\cJ'$ is $\eps_k^2$-rooted-degree-representative for~$G'$.
	In passing from $G', \cJ'$ to $G, \cJ$ we remove at most $t_1!$ vertices from each of the $t$ clusters, and thus at most $t t_1! \leq t_1 t_1!$ vertices in total.
	By the choice of $m$, this number is less than an $\eps^3_k/(200k)$ proportion of the size of each the clusters of $\cP$, and thus the rooted copies of edges in $\cJ$ are essentially preserved from $\cJ$.
	From this it is straightforward to check that $\cJ$ is $\eps_k$-rooted-degree-representative.
	(We remark that essentially the same argument is carried out with detailed calculations in Lemma~\ref{lemma:resilientneighboursincomplex} where a linear number of vertices is removed.)
	
	Now we check the extra properties which are required for a the regular setup $\mathfrak{S}$.
	The common size of each cluster in $\cP'$ was at least $n/t_1 \ge 2 m_0$ and at most $t_1!$ vertices were removed from each cluster to get to $\cP$, so we deduce $m \ge 2 m_0 - t_1! \ge m_0$, where the last bound follows from the choice of $m_0$.
	Since $t \ge t_0$ and $1/m \ll 1/t_0, \alpha$, we get $|W| \leq t_1! t \leq \alpha mt$.
	Since $mt = |V(\cJ)|$ and $V(G) \subseteq V(\cJ) \cup W$ we deduce $n \leq (1 + \alpha) mt$.
	
	We finish the proof by defining the $k$-graphs $R$ and $I$ and deriving their degree conditions.
	Let $I$ be the collection of $k$-sets of clusters of $\cP$ to which $G$ is not $(\eps_k, r)$-regular and set $R = R_{d_k} \cup I$ (note that the union is disjoint by the definition of $R_{d_k}$).	
	Since $\cJ'$ is a regular slice and the previous arguments, we get that the collection of irregular $k$-tuples of $\cJ$ satisfies $|I| \leq \eps_k \binom{t}{k}$.
	Note that the value $\zeta(S)$ appearing in Lemma~\ref{lem:reduced-graph-degree} is exactly $\reldeg(S;I)$.
	Thus we get, for every $d$-set of clusters $S$,
	\begin{align*}
	\reldeg(S;R)
	& = \reldeg(S;R_{d_k}) + \reldeg(S;I)
	= \reldeg(S;R'_{d_k}(G')) + \reldeg(S;I) \\
	& \ge \reldeg(S;R'(G')) - d_k
	\ge \delta + \mu/2,
	\end{align*}
	where the first inequality is due to Lemma~\ref{lem:reduced-graph-degree}, and the second inequality is~\ref{item:regularslice-degrees} applied to $R'(G')$ together with $d_k \ll \mu$.
	Thus $R$ has minimum relative $d$-degree at least $\delta + \mu / 2$, as required.	
\end{proof}

\end{document}